\pdfoutput=1

\documentclass[reqno]{amsart}
\usepackage{etex}
\usepackage[a4paper,hmarginratio=1:1]{geometry}
\usepackage{amssymb,amsfonts,amsmath}
\usepackage{comment} 
\usepackage[all,arc]{xy}
\usepackage{enumerate}
\usepackage{mathrsfs,mathtools}
\usepackage{todonotes,booktabs}
\usepackage{stmaryrd}
\usepackage{marvosym}
\usepackage{graphicx}
\usepackage{pdflscape} 
\usepackage{scalerel}
\usepackage{float}

\setlength{\marginparwidth}{2.6cm}
\usepackage{bbm}
\usepackage{microtype}
\usepackage{dsfont}
\usepackage{enumitem}
\usepackage{tikz}
\usepackage{tikz-cd}
\usetikzlibrary{trees}
\usetikzlibrary[shapes]
\usetikzlibrary[arrows]
\usetikzlibrary{patterns}
\usetikzlibrary{fadings}
\usetikzlibrary{backgrounds}
\usetikzlibrary{decorations.pathreplacing}
\usetikzlibrary{decorations.pathmorphing}
\usetikzlibrary{positioning}
\usetikzlibrary{decorations.markings}

	
\def\biblio{\bibliography{duality}\bibliographystyle{alpha}}

\usepackage{xcolor} 
\usepackage{graphicx}
\SelectTips{cm}{10}

\usepackage[pagebackref]{hyperref}

\definecolor{dark-red}{rgb}{0.5,0.15,0.15}
\definecolor{dark-blue}{rgb}{0.15,0.15,0.6}
\definecolor{dark-green}{rgb}{0.15,0.6,0.15}
\hypersetup{
    colorlinks, linkcolor=dark-red,
    citecolor=dark-blue, urlcolor=dark-green
}

\renewcommand*{\backref}[1]{}
\renewcommand*{\backrefalt}[4]{%
  \ifcase #1 %
No citations.
  \or
(cit. on p. #2).%
  \else
(cit on pp. #2).%
  \fi%
}

\usepackage[nameinlink,capitalise,noabbrev]{cleveref}
 

\newtheorem{thm}{Theorem}[section]
\newtheorem{theorem}[thm]{Theorem}
\newtheorem{corollary}[thm]{Corollary}
\newtheorem{proposition}[thm]{Proposition}
\newtheorem{lemma}[thm]{Lemma}

\newtheorem{warning}[thm]{Warning}
\newtheorem{dictionary}[thm]{Dictionary}

\newtheorem*{theorem*}{Theorem}
\newtheorem*{conjecture*}{Conjecture}
\newtheorem*{convention*}{Convention}

\newtheorem{thmx}{Theorem}

\theoremstyle{definition}
\newtheorem{definition}[thm]{Definition}
\newtheorem{example}[thm]{Example}

\theoremstyle{remark}
\newtheorem{remark}[thm]{Remark}
\newtheorem{convention}[thm]{Convention}

\usepackage{stackengine}
\usepackage{scalerel}

\makeatletter
\let\c@equation\c@thm
\makeatother
\numberwithin{equation}{section}

\makeatletter
\@namedef{subjclassname@2020}{%
  \textup{2020} Mathematics Subject Classification}
\makeatother



\newcommand{\cpctRecollement}[3]{
\xymatrix@C=2em{{#1} \ar[r]|-{#2} & {#3}}
}

\newcommand{\recollement}[5]{
\xymatrix@C=4em{{#1} \ar@{<-}[r]|-{#2} & #3 \ar@{<-}[r]|-{#4} \ar@{<-}@<1.5ex>[l]^-{{#2}_!} \ar@{<-}@<-1.5ex>[l]_-{{#2}^*} & #5, \ar@{<-}@<1.5ex>[l]^-{{#4}!} \ar@{<-}@<-1.5ex>[l]_-{{#4}^*}
}}
\let\lim\relax

\DeclareMathOperator{\Cat}{Cat}
\DeclareMathOperator{\stableCat}{Cat_{\infty}^{ex}}
\DeclareMathOperator{\stable2Cat}{Cat_{(\infty,2)}^{ex}}
\DeclareMathOperator{\lim}{lim}
\newcommand{\rlaxlim}{\mathrm{rlim}^\mathrm{lax}}

\newcommand{\F}{\mathbb{F}}
\newcommand{\N}{\mathbb{N}}
\newcommand{\Q}{\mathbb{Q}}
\newcommand{\R}{\mathbb{R}}
\newcommand{\Z}{\mathbb{Z}}

\newcommand{\bT}{\mathbb{T}}

\DeclareMathOperator{\cA}{\mathcal{A}}

\DeclareMathOperator{\cD}{\mathcal{D}}
\DeclareMathOperator{\cE}{\mathcal{E}}
\DeclareMathOperator{\cF}{\mathcal{F}}

\newcommand{\cL}{\mathcal{L}}

\DeclareMathOperator{\cP}{\mathcal{P}}

\DeclareMathOperator{\fX}{\mathfrak{X}}

\newcommand{\SSi}{\Sigma} 
 \newcommand{\e}{{(e)}}

\newcommand{\Gt}{\widetilde{G}}

\newcommand{\mcD}{\mathcal{D}}

\DeclareMathOperator{\Top}{Top}
\DeclareMathOperator{\Spc}{Spc}
\DeclareMathOperator{\Hom}{Hom}

\DeclareMathOperator{\colim}{colim}

\DeclareMathOperator{\Spec}{Spec}
\DeclareMathOperator{\Mod}{Mod}

\DeclareMathOperator{\Loc}{Loc}

\DeclareMathOperator{\Fun}{Fun}

\DeclareMathOperator{\ad}{ad}

\DeclareMathOperator{\id}{id}

\DeclareMathOperator{\unit}{\mathbbm{1}}

\DeclareMathOperator{\free}{free}

\DeclareMathOperator{\cofree}{cofree}

\DeclareMathOperator{\supp}{supp}

\DeclareMathOperator{\tors}{tors}
\DeclareMathOperator{\comp}{comp}

\DeclareMathOperator{\loc}{loc}

\DeclareMathOperator{\inv}{inv}

\DeclareMathOperator{\Sub}{Sub}
\DeclareMathOperator{\cons}{con}

\DeclareMathOperator{\rank}{rank}

\DeclareMathOperator{\gen}{gen}

\DeclareMathOperator{\type}{type}

\newcommand{\Stone}{\Top_{\mathrm{Stone}}}

\newcommand{\Spectral}{\Top_{\mathrm{Spec}}}

\newcommand{\Priestley}{\Top_{\mathrm{Pries}}}
\newcommand{\priestley}{\mathrm{Pries}}

\DeclareMathOperator{\Pries}{Pries}
\newcommand{\Prism}{\mathrm{Prism}}

\newcommand{\cotoral}{\preccurlyeq_{\mathrm{ct}}}

\newcommand{\Ord}{\mathrm{Ord}}

\newcommand{\sfK}{\mathsf{K}}

\newcommand{\sfT}{\mathsf{T}}
\newcommand{\sfS}{\mathsf{S}}
\newcommand{\sfD}{\mathsf{D}}

\newcommand{\leftsquigarrow}{%
\mathrel{\reflectbox{{$\rightsquigarrow$}}}}

\newcommand{\Sp}{\mathsf{Sp}}

\newcommand{\Lct}{{\bigwedge}_{ct}}
\newcommand{\mct}{\mathrm{max}_{ct}}
\newcommand{\sub}{\mathrm{Sub}}

\newcommand{\aut}{\mathrm{Aut}}


\newcommand{\sm}{\wedge}
\newcommand{\lra}{\longrightarrow}
\newcommand{\tensor}{\otimes}
\newcommand{\st}{\mid}
\newcommand{\normal}{\trianglelefteq}
\newcommand{\T}{\mathbb{T}}

\newsavebox\prismsym
\savebox\prismsym{\begin{tikzpicture}\draw[thick] (0,-0.1) -- (0.2,1);
\draw[thick, dotted] (-0.3,0.3) -- (0.7,0.3);
\draw[ultra thick] (0,-0.1) -- (-0.3,0.3);
\draw[ultra thick] (0,-0.1) -- (0.7,0.3);
\draw[ultra thick] (0.2,1) -- (0.7,0.3);
\draw[ultra thick] (0.2,1) -- (-0.3,0.3);\end{tikzpicture}}

\newcommand{\SpG}{\Sp_{G}}
\newcommand{\SpGQ}{\Sp_{G,\Q}}
\newcommand{\SpGQo}{\Sp_{G,\Q}^{\omega}}

\newcommand{\height}{\mathrm{ht}}

\newcommand{\cbht}{\height_\mathrm{CB}}
\newcommand{\thht}{\height_{\mathrm{Th}}}
\newcommand{\repht}{\height_\mathrm{Rep}}

\newcommand{\Tbar}{\overline{T}}

\newcommand{\TTW}{\widetilde{TW}}

\definecolor{darkBlue}{rgb}{0.0, 0.18, 0.65}

\newcommand{\DdgMod}[2][]{\sfD(\mathrm{dg}\text{-} {\Mod}_{{#2}}^{#1})}

\title{Prismatic decompositions and rational $G$-spectra}

\author{Scott Balchin}
 \address{Mathematical Sciences Research Centre, Queen's University Belfast, UK}
 \email{s.balchin@qub.ac.uk }

\author{Tobias Barthel}
\address{Max Planck Institute for Mathematics, Vivatsgasse 7, 53111 Bonn, Germany}
\email{tbarthel@mpim-bonn.mpg.de}

\author{J.~P.~C.~Greenlees}
\address{Warwick Mathematics Institute, Zeeman Building, Coventry CV4 7AL, UK}
\email{john.greenlees@warwick.ac.uk}

\date{\today}
\setcounter{section}{0}

\subjclass[2020]{55P91; 18F99, 22E15, 55P42, 55P62}

\begin{document}

\maketitle
\begin{abstract}
    We study the tensor-triangular geometry of the category of rational $G$-spectra for a compact Lie group $G$. In particular, we prove that this category can be naturally decomposed into local factors supported on individual subgroups, each of which admits an algebraic model. This is an important step and strong evidence towards the third author's conjecture that the category of rational $G$-spectra admits an algebraic model for all compact Lie groups. 
    
    To facilitate these results, we relate topological properties of the associated Balmer spectrum to structural features of the group $G$ and the category of rational $G$-spectra. A key ingredient is our presentation of the spectrum as a Priestley space, separating the Hausdorff topology on conjugacy classes of closed subgroups of $G$ from the cotoral ordering. We use this to prove that the telescope conjecture holds in general for rational $G$-spectra, and we determine exactly when the Balmer spectrum is Noetherian.  In order to construct the desired decomposition of the category, we develop a general theory of `prismatic decompositions' of rigidly-compactly generated tensor-triangulated categories, which in favourable cases gives a series of recollements for reconstructing the category from local factors over individual points of the spectrum. 
\end{abstract}



\setcounter{tocdepth}{1}
\makeatletter
\def\l@subsection{\@tocline{2}{0pt}{2.5pc}{5pc}{}}
\makeatother

\tableofcontents
\def\biblio{}

\newpage
\section*{Introduction}

The principal aim of this paper is to study the structural
properties of the category of  $G$-equivariant cohomology
theories taking rational values, where $G$ is a compact Lie
group. This is a fairly complicated category with a rich structure enabling us to describe it in general terms, and we explain which properties of the group lead to particularly good behaviour. 
The method applies to any tensor-triangulated category equipped with a sufficiently nice $\infty$-categorical enhancement, showing that if the Balmer
spectrum is well behaved, then the category can be reconstructed from 
local factors concentrated at single primes.

\subsection*{Context}
It is well known that equivariant cohomology theories $E_G^*(- )$ are represented
by $G$-spectra, so that there is a $G$-spectrum $E$ with
$[X,E]_G^*=E_G^*(X)$ for any based $G$-space $X$. As such, 
the category of $G$-equivariant cohomology theories is the homotopy
category of the category $\SpG$ of $G$-spectra. To reach a more algebraic
realm we will restrict attention to the category of 
cohomology theories with rational values, which is the homotopy
category of rational $G$-spectra which we denote $\SpGQ$. Based on results for particular groups, the third author made the following conjecture in the 1990s, 
eventually published in 2006:

\begin{conjecture*}[\cite{greenleesreport}]
For any compact Lie group $G$ there is  a graded abelian category $\cA(G)$ of injective dimension $\rank(G)$ and an equivalence 
\[
\SpGQ\simeq \sfD(\cA(G)) 
\]
between the category of rational $G$-equivariant cohomology theories and the derived category of differential-graded objects in $\cA(G)$. 
\end{conjecture*}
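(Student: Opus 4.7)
The strategy is to use the prismatic decomposition of $\SpGQ$ developed in this paper to split the conjecture into two more tractable problems: giving an algebraic description of each local factor at a conjugacy class of closed subgroups, and showing that the gluing data assembling these factors is itself algebraic. By the structural results summarised in the abstract, the Balmer spectrum $\Spc(\SpGQo)$ is presented as a Priestley space whose points are conjugacy classes $(H)$ of closed subgroups of $G$, with the cotoral order superimposed on a Hausdorff topology. The prismatic decomposition machinery then expresses $\SpGQ$ as the outcome of a series of recollements glued from local factors $\SpGQ_{(H)}$ at each such point, and the plan is to produce the target category $\cA(G)$ and the equivalence $\SpGQ \simeq \sfD(\cA(G))$ by performing these recollements on the algebraic side.

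First I would identify each local factor with the derived category of an explicit graded abelian category. For a closed subgroup $H$ with identity component $H^e$, the local factor should, after the appropriate localisation and completion, be equivalent to the derived category of modules over $C^*(BW_G^e(H);\Q)$ equipped with a residual action of $\pi_0 W_G(H)/W_G^e(H)$. The key ingredient is a formality statement for the corresponding $E_\infty$-algebra, which should follow from an intrinsic formality argument once one observes that the cohomology in question is a polynomial ring on generators of bounded degree and that the component-group action is compatible with the grading. This produces a candidate local abelian category $\cA(G,(H))$ of injective dimension $\rank(W_G^e(H)) \leq \rank(G)$.

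Next I would globalise this: the series of recollements from the prismatic decomposition must be transported across the local equivalences to give a matching series of algebraic recollements whose assembly defines $\cA(G)$. For each cotoral specialization $(K) \cotoral (H)$ one needs an algebraic incarnation of the connecting functor built from geometric fixed points at $K/H$ combined with inflation. For finite $G$ this is the Mackey functor picture, and in general one expects a sheaf-like structure over the Priestley space in which the Hausdorff direction patches local factors over locally closed subsets of conjugacy classes while the cotoral direction encodes the interaction between chains of subgroups, extending the Weyl-sheaf perspective.

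The main obstacle is this global assembly. Formality at a single point is a cohomological condition, but transferring it along cotoral specializations requires compatible formality choices that commute with the localisation and completion functors built into the prismatic decomposition. For tori and for finite groups this is understood, but the coexistence of non-trivial component groups with the cotoral/Hausdorff separation established in this paper produces genuine higher homotopical obstructions which must be shown to vanish, presumably via a Postnikov argument exploiting the bound $\rank(G)$ on the injective dimension. The Noetherianity criterion and the telescope conjecture proved here should be essential: the former to control the limits appearing in the assembly, and the latter to guarantee that the resulting abelian category has exactly the predicted injective dimension $\rank(G)$.
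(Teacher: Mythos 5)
This statement is a \emph{conjecture}, not a theorem: the paper does not prove it, and there is no ``paper's own proof'' against which to check your attempt. What the paper proves (\cref{thmx:prismaticdecomposition} and \cref{thm:catl2g}) is strictly weaker: $\SpGQ$ is dispersible, hence satisfies the categorical local-to-global principle, so it can be reconstructed as a finite sequence of recollements (equivalently, a right-lax limit, or a punctured cube limit) from the local factors $\Gamma_H\SpGQ\simeq \Sp^{\free}_{W_G(H)}$, each of which admits an algebraic model by Greenlees--Shipley. The paper is explicit that the remaining step---showing that the assembly process is itself formal, so that the output is $\sfD(\cA(G))$ rather than an a priori non-algebraic gluing---is open and deferred to future work.

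Your proposal reproduces the paper's roadmap (decompose over the prism, identify local factors algebraically, reassemble along the dispersion) and correctly identifies the genuine obstacle, namely transferring formality of the local factors along the Tate splicing data of the recollements. But you do not close that gap, and the two suggestions you offer for doing so are not substantiated. The claim that a Postnikov argument using the bound on injective dimension would kill the higher obstructions is exactly what needs to be proved and cannot simply be invoked. The claim that the telescope conjecture proved here would ``guarantee that the resulting abelian category has exactly the predicted injective dimension $\rank(G)$'' is a non-sequitur: the telescope conjecture concerns generation of kernels of smashing localizations by compact objects and says nothing about the homological dimension of a putative abelian model. There are also smaller inaccuracies in the local step. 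The local factor at $(H)$ is free (equivalently cofree) rational $W_G(H)$-spectra, whose algebraic model is torsion (or $L_0^I$-complete) dg modules over the twisted group ring $H^*(BW_G(H)_e)[W_G(H)_d]$ as in \eqref{eq:algebraic-free} and \cref{cor:alg_models_for_lambda}---not modules over $C^*(BW_G^e(H);\Q)$ with a residual finite-group action, and the distinction between the full module category and its torsion/complete subcategory is precisely what controls the injective dimension. So: reasonable strategy outline mirroring the paper's partial result, but not a proof, and the key formality step remains entirely open.
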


Numerous instances of this conjecture have been established including
for finite groups~$G$ \cite{greenlees_may,barnesfinite},
$SO(2)$~\cite{greenleesrationals1,shipleys1,BGKSs1},
$O(2)$~\cite{greenleeso2,barneso2},
$SO(3)$~\cite{greenleesso3,kedziorekso3}, and
tori~\cite{greenleestorus,GStorus}. The general case, however, remains open. Our main theorem establishes this conjecture up to extensions; for a more precise statement, see \cref{thmx:prismaticdecomposition} below:

\begin{theorem*}
For any compact Lie group $G$, the category $\SpGQ$ can be reconstructed from the categories of $G$-spectra with isotropy in a single conjugacy class of subgroups of $G$, and each of these has an algebraic model.
\end{theorem*}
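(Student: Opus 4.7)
The plan is to combine two independent ingredients. The first is the general theory of prismatic decompositions developed earlier in the paper, which I apply to $\SpGQ$; this yields the reconstruction statement. The second is an identification of each single-isotropy local factor with an algebraic model; this is handled one conjugacy class at a time using known algebraic models for simpler groups.

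For the reconstruction half, I would verify the hypotheses required to apply the prismatic decomposition framework to $\SpGQ$. The Balmer spectrum of rational $G$-spectra is analyzed earlier in the paper as a Priestley space, separating the compact Hausdorff topology on conjugacy classes of closed subgroups of $G$ from the cotoral ordering that furnishes the specialization relation. Together with the telescope conjecture for $\SpGQ$ (also established in the paper), this supplies exactly the structural input needed to apply the prismatic decomposition theorem, producing a series of recollements that reassemble $\SpGQ$ from pieces concentrated at individual points of $\Spc(\SpGQ^\omega)$. Since those points are precisely the conjugacy classes of closed subgroups of $G$, each prismatic factor is the subcategory of rational $G$-spectra with isotropy in a single conjugacy class.

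For the algebraic model half, I work one conjugacy class $(H)$ at a time. Restriction to $N_G H$ followed by the geometric $H$-fixed-point functor should identify the single-isotropy factor at $(H)$ with the category of rational $W_G H$-equivariant spectra whose isotropy is concentrated at the trivial subgroup, where $W_G H = N_G H / H$ is the Weyl group. Because $W_G H$ is a compact Lie group whose identity component $(W_G H)_e$ is a torus and whose component group $\pi_0(W_G H)$ is a finite group, one can combine the existing algebraic model of Greenlees--Shipley for free rational torus-equivariant spectra with the finite-group algebraic model of Barnes, equivariantly with respect to the component group action, to produce an algebraic model for the local factor at $(H)$.

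The main obstacle is the second half: once the Priestley/telescope analysis of $\Spc(\SpGQ^\omega)$ is in place, applying the prismatic decomposition machinery is a formal verification, but identifying each local factor with an explicit algebraic model requires delicate bookkeeping with the Weyl group---in particular the semidirect interaction between the finite component group $\pi_0(W_G H)$ and the identity component---and careful assembly of existing algebraic models in the correct level of generality. Crucially, this step does \emph{not} require controlling the extensions between distinct conjugacy classes, which is exactly why the prismatic decomposition framework is the right tool: it isolates the single-isotropy problem from the global assembly problem, leaving the latter as the remaining step in the full conjecture.
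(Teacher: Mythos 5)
Your overall plan matches the paper's — use the prismatic decomposition machinery to reconstruct $\SpGQ$ from local factors supported at single conjugacy classes, then identify each factor as rational free $W_G(H)$-spectra via restriction and geometric fixed points, then appeal to an algebraic model for free spectra. But there are two genuine gaps in the details.

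First, the telescope conjecture is not the right hypothesis for applying the prismatic decomposition machinery. The categorical local-to-global principle (\cref{thm:catl2g}) requires that $\Prism(\SpGQo)$ be \emph{dispersible}, i.e., that the Thomason filtration (iteratively removing cotorally minimal $h$-isolated points) is finite and exhaustive. This is a separate and independent result from the telescope conjecture, established via the dimension dispersion (\cref{prop:dim_dispersion}) and the height formula (\cref{thm:heightformula}). The telescope conjecture for $\SpGQ$ (\cref{cor:telescopeconjecture}) is a different structural consequence — it follows from stratification plus generic Noetherianity — and plays no role in constructing the recollement filtration. If you tried to literally use the telescope conjecture in place of dispersibility, the argument would not go through: the recollements are built from the \emph{specific} finite localizations determined by the strata of the dispersion, and you need the strata to be discrete and the filtration finite, neither of which is implied by the telescope conjecture.

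Second, your claim that the Weyl group $W_G(H) = N_G(H)/H$ always has identity component a torus is false in general; for instance $W_{SO(3)}(1) = SO(3)$, which is semisimple. So the proposed assembly of the Greenlees--Shipley torus model with Barnes' finite-group model cannot work as stated. The fix is that you do not need this workaround: the Greenlees--Shipley theorem in \cite{GSfree} already gives an algebraic model for rational free $K$-spectra for any connected compact Lie group $K$ (not just tori), namely torsion dg-modules over $H^\ast(BK)$, and the extension to arbitrary compact Lie groups $K$ is recorded in \eqref{eq:algebraic-free} as $\Sp^{\free}_{K,\Q} \simeq \DdgMod[\tors]{H^\ast(BK_e)[K_d]}$, using the twisted group ring built from the action of $\pi_0(K)$ on $H^\ast(BK_e)$. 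Applying this with $K = W_G(H)$ as in \cref{cor:alg_models_for_lambda} finishes the argument.
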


Roughly speaking, the idea is to exploit the tensor-triangular geometry of $\SpGQ$ to construct a filtration on $\SpGQ$ whose strata agree with the ones of the conjectural algebraic model. The remaining step to a full solution of the third author's conjecture thus becomes to prove that the assembly process is formal; we intend to return to this in future work. 

The techniques developed in this paper have applications beyond the construction of algebraic models. We give a convenient presentation of the Balmer spectrum of $\SpGQ$ as a Priestley space and establish several structural properties of it, which is then in turn used to elucidate the geometry of $\SpGQ$. Detailed statements of our results are given in the following subsections.

\subsection*{Algebraic models and reconstruction}

Experience and ancient wisdom dictates the expected form of the 
category $\cA (G)$. The simplest $G$-spaces $X$ to understand are
the free $G$-spaces, so it is wise to begin by understanding those. Secondly, for a general space $X$ it is wise
to  consider the $H$-fixed point spaces $X^H$ for
all closed subgroups $H$, remembering the action of the Weyl group
$W_G(H)=N_G(H)/H$. In summary, we attempt to study $G$-spaces $X$ by
considering the free $W_G(H)$-spaces $X^H\sm EW_G(H)_+$ for all subgroups $H$.

We apply the same philosophy to $G$-spectra. The first critical ingredient is an understanding of free $G$-spectra. To this end, the third author and Shipley \cite{GSfree} have provided an algebraic model for free $G$-spectra: if $G$ is
connected, this is an equivalence between the category of rational free $G$-spectra and the derived category of differential graded torsion $H^\ast(BG)$-modules:
\begin{equation}
    \SpGQ^{\mathrm{free}} \simeq \DdgMod[\tors]{H^\ast(BG)},
\end{equation}
essentially using the torsion module $H_*(EG_+\sm_G X)$ as the invariant of $X$. In
general, if $G$ has identity component $G_e$, we remember the action of
the component group $G_d=\pi_0(G)=G/G_e$ on $G_e$ to construct the twisted group ring $H^*(BG_e)[G_d]$. With this, on the level of $\infty$-categories we obtain a natural symmetric monoidal equivalence 
\begin{equation}\label{eq:algebraic-free}
    \SpGQ^{\mathrm{free}} \simeq  \DdgMod[\tors]{H^*(BG_e)[G_d]}
\end{equation}
inspired by passage to the torsion module, as above.

The idea is that the  category $\cA (G)$ is constructed by assembling the data from each
closed subgroup $H$, and the local data at $H$ is the free
$W_G(H)$-spectrum formed from the geometric fixed point spectrum
$\Phi^HX$. Of course the information at different subgroups is
related, and a construction of a model must describe the relationship
in detail. We are not concerned here with the construction of an
abelian category,
but we will give general structural results at the homotopical level and give strong evidence for the
conjecture in general by showing the category of $G$-spectra admits a
filtration whose subquotients are exactly of the expected form. 

Our main result in this direction is as follows: 

\begin{thmx}\label{thmx:prismaticdecomposition}
 Let $G$ be a compact Lie group. Then the category $\SpGQ$ can be reconstructed from the categories $\DdgMod[\tors]{H^*(BH_e)[H_d]}$ as $H$ runs through conjugacy classes of subgroups of $G$. 
\end{thmx}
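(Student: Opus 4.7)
The plan is to apply the general theory of prismatic decompositions, developed earlier in the paper, to the category $\SpGQ$ and to identify each of the resulting local factors with the algebraic model appearing in the theorem. The main inputs are the structural results the paper has already established about the Balmer spectrum $\Spc(\SpGQ^\omega)$: its presentation as a Priestley space that separates the Hausdorff topology on conjugacy classes of closed subgroups from the cotoral preorder, together with the telescope conjecture for $\SpGQ$. Combined with the rigidly-compactly generated symmetric monoidal $\infty$-categorical enhancement of $\SpGQ$, these should verify the hypotheses of the prismatic decomposition theorem, producing a reconstruction of $\SpGQ$ via a series of recollements whose strata are indexed by points of the spectrum, i.e.\ by conjugacy classes $(H)$ of closed subgroups of $G$.

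The first step is therefore to confirm that $\SpGQ$ fits into the prismatic framework. One checks that the cotoral ordering provides a well-behaved specialisation relation, that compact generators have support patterns aligned with the Priestley structure, and that the localising and colocalising subcategories at each point of the spectrum are compatible with the tensor structure. Given the structural results proved earlier in the paper, this should reduce to largely formal verifications, with the telescope conjecture ensuring that the smashing localisations involved in forming the local factors are controlled by the compact objects.

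The second step is to identify the local factor at the point corresponding to a conjugacy class $(H)$. Classical equivariant machinery suggests that this local factor should be the category of rational free $W_G(H)$-spectra, accessed via the $H$-geometric fixed point functor together with restriction to the Weyl group $W_G(H) = N_G(H)/H$. Applying the Greenlees--Shipley equivalence \eqref{eq:algebraic-free} with $G$ replaced by $W_G(H)$ then produces the stated derived category of torsion dg-modules over the twisted cohomology ring as the algebraic model at $(H)$.

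The main obstacle is the translation from the abstract prismatic formalism to the concrete equivariant geometry. Because compact Lie groups with positive-dimensional components produce Balmer spectra that are generally not Noetherian, the decomposition has to handle infinite towers of recollements, and one must carefully control the compatibility of the geometric fixed point constructions with the order-theoretic data supplied by the Priestley structure, so that the local factors are genuinely captured over individual points rather than only over closed subsets. Ensuring that the identification of local factors with algebraic models proceeds uniformly as $H$ varies, so that the recollement diagrams assemble correctly into a global reconstruction, is where the bulk of the technical work lies; the remaining formality of the assembly process is explicitly deferred to future work.
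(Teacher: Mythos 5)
Your overall shape is right: apply the prismatic machinery to get a decomposition of $\SpGQ$ into local factors, identify the local factor at $(H)$ with free $W_G(H)$-spectra via geometric fixed points and restriction to the Weyl group, and then invoke the Greenlees--Shipley equivalence \eqref{eq:algebraic-free}. But there are two substantive misdirections in how you propose to reach the decomposition, both of which would leave genuine gaps.

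First, the telescope conjecture is not an input to the reconstruction theorem. In the paper the telescope conjecture (\cref{cor:telescopeconjecture}) is a \emph{separate consequence} of stratification plus a generically Noetherian spectrum, proved independently via \cite{BHS2023}; it plays no role in establishing the categorical local-to-global principle. The actual engine is \emph{dispersibility}: one must exhibit a dispersion on $\Prism(\SpGQo)$, i.e.\ a filtration whose successive strata consist of isolated minimal points. The paper does this by showing the dimension of subgroups (equivalently rank, equivalently the Thomason height) defines a finite dispersion (\cref{prop:dim_dispersion}, \cref{thm:heightformula}). Your proposal describes this step as ``largely formal verifications,'' but it is the hard core of the argument and depends on Montgomery--Zippin, tom Dieck's structure theory, and the reduction to finite groups acting on tori carried out in \cref{sec:subgroups}; in particular, that removing Thomason points terminates is not automatic, and establishing weak visibility of all primes (needed to even define the local factors $\Gamma_x\sfT$) flows from \cref{prop:dispersibleimpliestd}.

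Second, your concern about having to ``handle infinite towers of recollements'' in the non-Noetherian case is precisely the issue the finite dispersibility result resolves. The spectrum can indeed fail to be Noetherian (exactly when $G$ is not a finite central extension of a torus, \cref{thm:noetherian}), but the Thomason height is always bounded by $\rank(G)$, so the recollement tower has \emph{finitely many} stages regardless. You present an obstacle that does not exist under the paper's framework; conversely, you do not supply the ingredient (a finite dispersion) that actually makes \cref{thm:catl2g} apply. As written, your argument would not close: without exhibiting a dispersion you cannot invoke the categorical local-to-global principle, and no amount of appeal to the telescope conjecture substitutes for it.
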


In fact, we formulate the reconstruction process explicitly as a homotopy limit whose combinatorics are governed by the geometry of the space of subgroups of $G$, as explained in detail below. Our general methods are directed towards this type of result, and we will rephrase the project in a way that helps make this plain, and then restate the method at the end of the introduction.

\subsection*{A revisionist account}
Returning to structural statements, the first main point is that the
category of $G$-spectra admits a tensor product in such a way that its 
homotopy category becomes a tensor-triangulated (tt)
category. In this context the basic tool for understanding the
structure of $\Spc(\SpGQ^\omega)$ is the Balmer spectrum \cite{balmer_spectrum}; we will recall the definitions
below, but we note for now that the spectrum is a spectral space, i.e., a topological space with the same character as the
prime spectrum of a commutative ring. 

The third author identified the Balmer spectrum $\Spc(\SpGQo)$ of
finite rational $G$-spectra in
\cite{greenlees_bs}. It is rather easy to see that for any closed
subgroup $H$, the set 
\[
\cP_H=\{ X \in \SpGQo\st \Phi^HX\simeq_1*\}
\]
is a tt-prime, evidently depending only on the conjugacy class of $H$. This gives a map 
\[
\cP_{\bullet} \colon \sub(G)/G\stackrel{\cong}\lra \Spc(\SpGQo).
\]
The first fact is that $\cP_{\bullet}$ is indeed a bijection, which shows that the support of a finite rational spectrum $X$ is precisely the geometric
isotropy
\[
\supp (X)=\{ (K) \in \Sub(G)/G \st \Phi^KX\not \simeq_1 *\}.
\]
The closed sets of the Zariski topology giving the Balmer spectrum is by definition generated by the supports
of finite objects $X$. 

The identification of supports already lets us
reinterpret the ancient wisdom of transformation groups (that the
important thing is to look at the geometric isotropy, which is a
collection of conjugacy classes of subgroups) in intrinsic
structural terms. It is not hard to prove this bijection using 
isotropy separation together with 
tom Dieck's identification of the endomorphisms of the unit
object and the equivalence of \eqref{eq:algebraic-free}. The second piece of structure is the containment
order in the Balmer spectrum. One can see that 
the Borel--Hsiang--Quillen Localization Theorem shows that if $K$ is
normal in $H$ with $H/K'$ a torus (with $K'$ some conjugate of $K$), then $\cP_K \subseteq \cP_H$. One may
easily construct enough finite spectra to show that the reverse implication
holds so that
\[
\xymatrix{
\cP_K \subseteq \cP_H \ar@{<=>}[r]& K\cotoral H,
}
\]
where the {\em cotoral} ordering $K\cotoral H$ means that $K$ is conjugate to $K'$ normal in $H$
with $H/K'$ a torus.  The results of \cite{greenlees_bs} also describe the
Zariski topology on the Balmer spectrum in terms of the subgroup
structure of $G$, but the description is
difficult to work with in practice. One may first think that $\sub (G)$ consists
of compact subspaces of the compact space $G$, and it is therefore a
metric space using the Hausdorff metric; the most obvious topology on
$\sub(G)/G$ is the {\em $h$-topology}, which is the topology induced by the
Hausdorff metric. The $h$-topology is Hausdorff and, when $\dim G >0$, 
completely different from the Zariski topology,
but many of our new results here are made possible by expressing the Zariski topology in terms of more familiar structures. The following is the subject of \cref{thm:prism}.

\begin{thmx}\label{thm:prismsratgspectra}
    Let $G$ be a compact Lie group. Then the closed sets of the Zariski topology are precisely the collections of subgroups closed in the $h$-topology and closed under cotoral specialization.
\end{thmx}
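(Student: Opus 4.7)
The plan is to prove the two inclusions of the claimed equality separately. For necessity, since Zariski closed sets are by definition intersections of supports $\supp(X)$ of finite rational $G$-spectra, and both the classes of $h$-closed sets and sets closed under cotoral specialization are stable under arbitrary intersection, it suffices to verify that each individual support $\supp(X)$ has both properties. Closure under cotoral specialization is immediate from the identification $\cP_K \subseteq \cP_H \iff K \cotoral H$ recalled in the excerpt, together with the general fact that supports in a Balmer spectrum are closed under specialization of primes. The $h$-closure of $\supp(X)$ for finite $X$ is the first substantive ingredient: I would reduce to suspension spectra of finite $G$-CW complexes, whose supports are controlled by finitely many orbits $G/K$, and invoke the classical result (Montgomery--Zippin, Effros) that the set of conjugacy classes of subgroups with nontrivial fixed point set in a given finite $G$-space is $h$-closed.

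For sufficiency, let $V \subseteq \Sub(G)/G$ be $h$-closed and closed under cotoral specialization. The plan is to realise $V$ as an intersection of supports: for each $(H) \notin V$ I would construct a finite $G$-spectrum $X_{(H)}$ with $V \subseteq \supp(X_{(H)})$ and $(H) \notin \supp(X_{(H)})$, so that $V = \bigcap_{(H) \notin V} \supp(X_{(H)})$ is Zariski closed. By the hypotheses on $V$, for each $(H) \notin V$ there is an $h$-open set $U$ that is closed under cotoral generalization, contains $(H)$, and is disjoint from $V$; the task then reduces to realising the complement $U^c$ as the support of a single finite $G$-spectrum.

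The main obstacle is this realisation step. My approach would be to establish Priestley separation for the triple $(\Sub(G)/G, h, \cotoral)$: given $(K) \not\cotoral (H)$, produce an $h$-clopen set closed under cotoral specialization that contains $(K)$ but not $(H)$, and realise such separators as supports of finite spectra. The case in which $(K)$ and $(H)$ are already $h$-separated is handled by compactness and Hausdorffness of the $h$-topology, which supplies the clopen separator; the delicate case is when $(H)$ lies in the $h$-closure of $(K)$ but $(K) \not\cotoral (H)$, which I would treat using an Euler-class obstruction from the Borel--Hsiang--Quillen localization theorem to force the desired separation. Finite spectra realising these separators should be assembled from the tools developed in \cite{greenlees_bs}, whose description of the Zariski topology, though cumbersome, already captures the requisite behaviour; the conceptual step is to translate it into the clean $(h, \cotoral)$ formulation of the statement.
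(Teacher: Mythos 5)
Your overall strategy---prove the two inclusions separately, with a Priestley-style separation as the crux---is broadly aligned with the paper's route through \cref{prop:basicclosed}, \cref{thm:prism} and \cref{cor:zfclosed}, but it contains a genuine gap in the necessity direction and only defers the sufficiency direction to prior work without supplying an argument.

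The gap is in your claim that $\supp(X)$ is $h$-closed for an arbitrary finite rational $G$-spectrum $X$ ``by reducing to suspension spectra of finite $G$-CW complexes.'' The Montgomery--Zippin argument does indeed show that $\supp(\Sigma^\infty_+ Y)$ is $h$-clopen for a finite $G$-CW complex $Y$, but this property does not visibly propagate through the thick closure generating $\SpGQo$: for a cofibre sequence $X\to Y\to Z$ one only has the inclusion $\supp(Z)\subseteq\supp(X)\cup\supp(Y)$, so the support of a cofibre can be an a priori uncontrolled proper subset, and passing to retracts can shrink supports further. In other words, the collection of compact objects with $h$-clopen, cotorally down-closed support is not obviously a thick subcategory, so establishing the property on generators is insufficient. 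The $h$-clopenness (in fact the full characterization) of $\supp(X)$ for arbitrary finite $X$ is precisely the substantive content of the type-function classification invoked in \cref{prop:basicclosed}, via \cite[Corollary 5.8]{BarthelGreenleesHausmann2020}, and your reduction does not replicate it. On the sufficiency side you correctly isolate the realization of clopen separators as supports of finite spectra as the remaining obstacle; this is the other direction of \cref{prop:basicclosed}, due to \cite[Theorem 8.4]{greenlees_bs}. The paper, having established \cref{prop:basicclosed}, feeds it into \cref{thm:prism} and then applies the Priestley correspondence abstractly to pass from basic closed sets to all closed sets; reproducing this by direct construction of separators for each pair of primes, as your proposal sketches, would be considerably more laborious and would still ultimately rest on the same inputs.
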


This resulting presentation of the Balmer spectrum $\Spc(\SpGQo)$ has been vital for our understanding of the category of rational $G$-spectra: it  allows for a significantly better control over the key point-set topological properties of the space and also makes transparent how to filter it in a way that is compatible with the structure of $\SpGQ$. 

In fact, the separation of the topology into a Hausdorff space and an ordering is a general phenomenon. It is captured by the concept of a \emph{Priestley space} which provides a presentation of a spectral space in terms of a Stone space equipped with a certain poset structure. Let us take a moment to discuss the psychological benefits of working with Priestley spaces. It is often the case that the topology of the Balmer spectrum is determined purely by the inclusions of the prime ideals, and this observation has led to fruitful developments. However, this is not always the case, and indeed will not be the case for us; this is a source of substantial complication in the theory. Nonetheless, an amplified version of this approach can still be used if one can determine some extra data, namely the constructible topology of the Balmer spectrum which is always a Stone space. This Stone space along with the inclusions between primes then uniquely determines the Balmer spectrum. We call the corresponding pair the \emph{prism}. The result of \cref{thm:prismsratgspectra} can thus be restated as
\[
         \Prism(\SpGQo) = ((\Sub(G)/G)_h, \cotoral)
\]
and places the focus on 
\begin{itemize}
    \item[(i)] the Hausdorff metric on the closed 
subgroups, and the resulting $h$-topology on the space of conjugacy 
classes and
\item[(ii)] the cotoral inclusion relation,
\end{itemize}
an observation made (even integrally) in \cite{BarthelGreenleesHausmann2020}. As a result we are involved in detailed work with spaces of subgroups.  The key
topological fact is the Montgomery--Zippin theorem stating that any
subgroup $K$ close enough to $H$ is subconjugate to $H$. We also need
to use the structure of compact Lie groups and the pioneering work of T.~tom Dieck from the 1970s.

\subsection*{Structural consequences}
Using the description of the Balmer spectrum in terms of its Priestley space, we will show that the  spectral spaces occurring as $\Spc(\SpGQo)$
are  rather special, but the range of behaviours as $G$ varies is quite
rich, so that it is a valuable source of examples.  In \cref{sec:finitenessproperties} we explore topological properties of the Balmer spectrum. Our first structural result is a full classification of when the Balmer spectrum is Noetherian.

\begin{thmx}
The Balmer spectrum of $\SpGQo$ is Noetherian if and only if $G$ is a 
finite central extension of a torus.  
\end{thmx}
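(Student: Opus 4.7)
The plan is to use the prismatic description of the Balmer spectrum from \cref{thm:prismsratgspectra}: Zariski-closed subsets of $\Spc(\SpGQo)$ are exactly the $h$-closed subsets of $\Sub(G)/G$ that are closed under cotoral specialization (equivalently, downward-closed in $\cotoral$), so Noetherianity amounts to the descending chain condition on such subsets.

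For the forward implication I argue by contrapositive. If $G$ is not a finite central extension of a torus, then either (a) $G^e$ is not a torus, or (b) $G^e = T$ is a torus but $G/G^e$ acts non-trivially on $T$. In either case I extract a closed subgroup $P \leq G$ isomorphic to $\mathrm{Pin}(2)$ or $O(2)$: in case (a) as the normaliser of a maximal torus of a rank-$1$ semisimple summand of $G^e$, in case (b) as $T_\alpha \rtimes \langle g \rangle$ for an invariant circle $T_\alpha \leq T$ on which some $g$ acts by inversion. The finite dihedral subgroups $D_n \leq P$ then give distinct $G$-conjugacy classes $(D_n)_G \in \Sub(G)/G$ with $(D_n)_G \to (P)_G$ in the Hausdorff metric. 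Because each $D_n$ is finite and the only normal $K \trianglelefteq P$ with $P/K$ a torus is $K = P$ itself, the cotoral downward closures of $(D_n)_G$ and $(P)_G$ are both singletons, so each
\[
F_N := \{(D_n)_G \noloc n \geq N\} \cup \{(P)_G\}
\]
is $h$-closed and closed under cotoral specialization, hence Zariski-closed. By Montgomery--Zippin, $(D_N)_G$ is $h$-isolated from the tail $\{(D_n)_G \noloc n > N\}$, so $(D_N)_G \in F_N \setminus F_{N+1}$, producing a strictly descending infinite chain and contradicting Noetherianity.

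For the reverse implication, assume $G^e = T$ is a central torus with $G/T$ finite. Centrality forces every $G$-conjugation action on $\Sub(G)$ to factor through the finite group $G/T$, so up to conjugation a closed subgroup $H \leq G$ is determined by the pair $(H^e, \bar{H})$ consisting of the subtorus $H^e \leq T$ and the image $\bar{H} \leq G/T$. This yields a finite stratification of $\Sub(G)/G$, and the absence of a $\mathrm{Pin}(2)$-type normaliser inside $G$ rules out the accumulation mechanism used above. A careful analysis of the $h$-topology on the subtorus lattice of $T$ combined with the downward-closure condition in $\cotoral$ then shows that every descending chain of Zariski-closed subsets must stabilise. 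The main obstacle is this reverse direction: one must confirm that finiteness of $G/T$ combined with centrality of $T$ really precludes infinite descending chains, which ultimately rests on the rigidity of the cotoral order on subtori of a fixed torus together with the lack of non-trivial Weyl actions on $T$.
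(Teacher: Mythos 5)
Your high-level plan — translate Noetherianity through \cref{thm:prismsratgspectra} into a descending chain condition on $h$-closed, cotorally down-closed subsets — is the same as the paper's. The difficulties are in the execution of both implications.

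\textbf{Forward direction, case (b).} You claim that if $G_e=T$ is a torus and $G_d$ acts non-trivially, there is a $G_d$-invariant circle $T_\alpha\leq T$ on which some $g$ acts by inversion, yielding an $O(2)$-type subgroup $P=T_\alpha\rtimes\langle g\rangle$. This fails in general. For example, take $G_e=T^2$ and $G_d=\Z/3$ acting via a rotation of order $3$ on $H_1(T^2;\Q)$ (i.e.\ a hexagonal lattice symmetry): there is no $\langle g\rangle$-invariant one-dimensional subtorus at all, let alone one with an inversion action, and indeed no $O(2)$ or $\mathrm{Pin}(2)$ subgroup of $G$ exists because $O(2)$ has a $\Z/2$ component group but $\Z/3$ has no index-$2$ subgroup. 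The conclusion that the spectrum is non-Noetherian in this case is still true, but the mechanism is different: you must argue that $\Phi(G)$ is infinite (tom Dieck's result, used by the paper, says exactly that $\Phi(G)$ is infinite once the component group acts non-trivially on the maximal torus), and then take cotoral down-closures of a strictly descending chain of $h$-closed subsets of $\Phi(G)$. Your concrete construction via dihedral accumulations is a correct special case — case (a), with $\mathrm{Pin}(2)$ inside a rank-$1$ semisimple subgroup (not necessarily a direct summand), does work — but it does not cover all groups that fail to be finite central extensions of a torus.

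\textbf{Reverse direction.} Here you have an incorrect assertion and a missing argument. The assertion that a closed subgroup $H\leq G$ is, up to conjugation, determined by the pair $(H_e,\bar H)$ is false: take $G=T\times\Z/2$ and compare $\{1,(t_0,f)\}$ with $\{1,(1,f)\}$ where $t_0$ is a point of order $2$ in $T$. Both have trivial identity component and image $\Z/2$, but they are not conjugate since $T$ is central. More importantly, the claimed ``careful analysis'' that rules out infinite descending chains is not actually given. The paper's proof supplies a genuine argument here: it first observes (via tom Dieck and \cref{lem:phifinite_permanence}) that $\Phi(H)$ is finite for every subgroup $H$, hence every Zariski-basic closed subset $\Lct(M)$ has $M$ finite; it then defines the \emph{multirank} of a closed set $Z$ as the tuple $(m_r,\ldots,m_0)$ counting cotorally maximal elements of $Z$ of each rank, and shows this is a strictly decreasing invariant in the lexicographic order, which is a well-order on $\N^{r+1}$. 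This is precisely the rigidity you allude to but do not establish. Without some replacement for the multirank argument, the reverse implication is unproved.Your high-level plan — translate Noetherianity through \cref{thm:prismsratgspectra} into a descending chain condition on $h$-closed, cotorally down-closed subsets — is the same as the paper's. The difficulties are in the execution of both implications.

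\textbf{Forward direction, case (b).} You claim that if $G_e=T$ is a torus and $G_d$ acts non-trivially, there is a $G_d$-invariant circle $T_\alpha\leq T$ on which some $g$ acts by inversion, yielding an $O(2)$-type subgroup $P=T_\alpha\rtimes\langle g\rangle$. This fails in general. For example, take $G_e=T^2$ and $G_d=\Z/3$ acting via a rotation of order $3$ on $H_1(T^2;\Q)$ (a hexagonal lattice symmetry): there is no $\langle g\rangle$-invariant one-dimensional subtorus at all, let alone one with an inversion action, and indeed no $O(2)$ or $\mathrm{Pin}(2)$ subgroup of $G$ exists because $O(2)$ has a $\Z/2$ component group but $\Z/3$ has no index-$2$ subgroup. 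The conclusion that the spectrum is non-Noetherian in this case is still true, but the mechanism is different: you must argue that $\Phi(G)$ is infinite (tom Dieck's result, used by the paper, says exactly that $\Phi(G)$ is infinite once the component group acts non-trivially on the maximal torus), and then take cotoral down-closures of a strictly descending chain of $h$-closed subsets of $\Phi(G)$. Your concrete construction via dihedral accumulations is a correct special case — case (a), with $\mathrm{Pin}(2)$ inside a rank-$1$ semisimple subgroup (not necessarily a direct summand), does work — but it does not cover all groups that fail to be finite central extensions of a torus.

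\textbf{Reverse direction.} Here you have an incorrect assertion and a missing argument. The assertion that a closed subgroup $H\leq G$ is, up to conjugation, determined by the pair $(H_e,\bar H)$ is false: take $G=T\times\Z/2$ and compare $\{1,(t_0,f)\}$ with $\{1,(1,f)\}$ where $t_0$ is a point of order $2$ in $T$. Both have trivial identity component and image $\Z/2$, but they are not conjugate since $T$ is central. More importantly, the claimed ``careful analysis'' that rules out infinite descending chains is not actually given. The paper's proof supplies a genuine argument here: it first observes (via tom Dieck and \cref{lem:phifinite_permanence}) that $\Phi(H)$ is finite for every subgroup $H$, hence every Zariski-basic closed subset $\Lct(M)$ has $M$ finite; it then defines the \emph{multirank} of a closed set $Z$ as the tuple $(m_r,\ldots,m_0)$ counting cotorally maximal elements of $Z$ of each rank, and shows this is a strictly decreasing invariant in the lexicographic order, which is a well-order on $\N^{r+1}$. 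This is precisely the rigidity you allude to but do not establish. Without some replacement for the multirank argument, the reverse implication is unproved.
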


In fact the finite central extensions of a torus are precisely the
compact Lie groups with finitely many conjugacy classes of subgroups with finite
Weyl groups, or equivalently they are the groups not containing a
subgroup $H$ which has identity component $H_e$ a torus and a component group $H_d$ acting non-trivially on it. These are precisely the groups whose rational Burnside ring is Noetherian, so in this case the spectrum is Noetherian if and only if the endomorphism ring of the unit is.

While not all the spectra that we encounter are Noetherian, the
general behaviour is excellent. We recall that a space is called \emph{generically Noetherian} if the generalization closure of each point is a Noetherian subspace. We have the following:

\begin{thmx}
For any compact Lie group the Balmer spectrum of $\SpGQo$ is 
generically Noetherian.
\end{thmx}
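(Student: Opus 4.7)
The plan is to fix a point $(H) \in \Spc(\SpGQo)$ and show that its generalization closure
\[
U_H \;=\; \{(K) \in \Spc(\SpGQo) : H \cotoral K\}
\]
is Noetherian in the induced Zariski topology, by constructing a continuous surjection onto $U_H$ from a Noetherian space. Unpacking the definition of $\cotoral$, each class $(K) \in U_H$ admits a representative with $H \trianglelefteq K \leq N_G(H)$ such that $K/H$ is a torus in the Weyl group $W_G(H) := N_G(H)/H$; conversely, every subtorus $T \leq W_G(H)$ gives rise to such a $K$ via $K = \pi_H^{-1}(T)$, where $\pi_H \colon N_G(H) \twoheadrightarrow W_G(H)$ is the quotient. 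This yields a surjection
\[
\sigma \colon \{\text{subtori of } W_G(H)\} \twoheadrightarrow U_H, \qquad T \longmapsto (\pi_H^{-1}(T)).
\]

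Next I would verify that $\sigma$ is Zariski-continuous. By \cref{thm:prismsratgspectra}, the Zariski topology is determined by the $h$-topology together with the cotoral ordering, so it suffices to check that $\sigma$ is order-preserving and $h$-continuous. Monotonicity for $\cotoral$ follows from the observation that $T_1 \leq T_2$ yields $\pi_H^{-1}(T_1) \trianglelefteq \pi_H^{-1}(T_2)$ with quotient $T_2/T_1$ still a torus, so $\pi_H^{-1}(T_1) \cotoral \pi_H^{-1}(T_2)$. For $h$-continuity, the map $T \mapsto \pi_H^{-1}(T)$ is continuous in the Hausdorff metric since $\pi_H$ is a continuous proper quotient map realized as a principal $H$-bundle, and the passage to $G$-conjugacy classes is likewise continuous.

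To produce a Noetherian source I restrict $\sigma$ to subtori of a fixed maximal torus $T_{\max}$ of the identity component $W_G(H)_e$: every subtorus of $W_G(H)$ lies in $W_G(H)_e$ and, by the conjugacy theorem for maximal tori in connected compact Lie groups, is $W_G(H)_e$-conjugate to a subtorus of $T_{\max}$; since any such conjugation lifts to an $N_G(H)$-conjugation of the corresponding preimages, the restriction is still surjective onto $U_H$. Now $T_{\max}$ is a torus, hence a finite central extension of a torus, so by the earlier Noetherian theorem $\Sub(T_{\max}) = \Spc(\SpGQo_{T_{\max}})$ is Noetherian, and its subspace of subtori is therefore also Noetherian. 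Since continuous surjections preserve Noetherianness, $U_H$ is Noetherian. The main obstacle is the $h$-continuity of $\sigma$: the assertion that $T \mapsto \pi_H^{-1}(T)$ is Hausdorff-continuous requires care and uses the local triviality of the principal bundle $\pi_H$.
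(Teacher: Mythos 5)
Your proposal is correct and follows essentially the same line as the paper's proof of \cref{prop:spgqgen}: both arguments identify the generalization closure of $(H)$ with (conjugacy classes of) subtori of a maximal torus of $W_G(H)$, deduce Noetherianness from the torus case via \cref{prop:noetherian} and \cref{thm:noetherian}, and transport the conclusion back along a continuous map. The only real divergence is in the technical point you flag --- the $h$-continuity of $T \mapsto \pi_H^{-1}(T)$; the paper instead proves continuity and closedness of the pushforward $p_*$ along a quotient of compact Lie groups (\cref{lem:pstarco}), uses \hyperref[item:mztheorem]{(MZ)}, and appeals to compactness of the $\Sub(-)$-spaces to invert it on the relevant subspace, thereby sidestepping the local-triviality argument you invoke.
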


Being generically Noetherian is sufficient to show that any single prime
is the difference of two Thomason subsets, and hence that both completion
and cellularization at a single prime are very well behaved. As a more
global statement, knowing that $\SpGQ$ is stratified by \cite{greenlees_bs}, we may apply a result of Barthel--Heard--Sanders \cite[Theorem 9.11]{BHS2023} to deduce that the kernels of all smashing localizations are generated by compact
objects, that is:

\begin{thmx}
The telescope conjecture holds in $\SpGQ$ for every compact Lie group $G$. 
\end{thmx}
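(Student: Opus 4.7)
The plan is essentially a direct application of the criterion of Barthel--Heard--Sanders: by \cite[Theorem 9.11]{BHS2023}, a rigidly-compactly generated tt-category $\cT$ satisfies the telescope conjecture as soon as (i) it is stratified by its Balmer spectrum in the sense of Balmer--Favi, and (ii) $\Spc(\cT^\omega)$ is generically Noetherian. My plan is to verify each of these hypotheses for $\cT = \SpGQ$, where $G$ is an arbitrary compact Lie group, and then invoke the cited result.

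For the rigid compact generation of $\SpGQ$, this is classical (inherited from $\SpG$ by rationalization). For the first hypothesis, I appeal to the main theorem of \cite{greenlees_bs}, in which the third author proves that $\SpGQ$ is stratified by the support theory associated with $\Spc(\SpGQo)$, i.e.\ there is a bijection between localizing ideals of $\SpGQ$ and arbitrary subsets of the Balmer spectrum. The second hypothesis is exactly the preceding theorem of this paper, which is itself extracted from the prismatic presentation $\Prism(\SpGQo) = ((\Sub(G)/G)_h, \cotoral)$ of \cref{thm:prismsratgspectra}: for any conjugacy class $(H)$, its generalization closure in $\Spc(\SpGQo)$ consists of those $(K)$ with $K \cotoral H$, which by the structure of compact Lie groups is a Noetherian subspace. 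Once both inputs are in place, the telescope conjecture follows formally from BHS.

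The main obstacle is not in the present deduction, which is a one-line combination, but rather in lining up conventions. In particular, one must check that the stratification notion of \cite{greenlees_bs} matches the Balmer--Favi stratification used in \cite{BHS2023}. This is straightforward, since in both settings stratification is characterised by the canonical bijection between localizing tensor ideals and subsets of the Balmer spectrum via the Balmer--Favi support; minimality of the residue categories at each prime is already contained in \cite{greenlees_bs}. A secondary point to check is that generic Noetherianity of $\Spc(\SpGQo)$ suffices as stated, i.e.\ we do not need the stronger (and generally false) property of the full spectrum being Noetherian; this is precisely the improvement exploited in \cite[Theorem 9.11]{BHS2023}, which is why the present strategy works in the full generality of compact Lie groups rather than only for finite central extensions of tori.
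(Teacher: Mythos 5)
Your proposal is correct and is essentially identical to the paper's own proof: both invoke \cite[Theorem 9.11]{BHS2023}, supplying stratification from \cite{greenlees_bs} and generic Noetherianity from the prismatic description of $\Spc(\SpGQo)$. The convention-matching point you raise is the same one the paper addresses by citing \cite[Theorem 12.22]{BHS2023}, where the stratification of \cite{greenlees_bs} is recast in the Balmer--Favi framework.
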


\subsection*{The height filtration}
The last of the  properties of a  Balmer spectrum that is critical to
our method is that of admitting a good filtration: we give a precise
definition in \cref{sec:dispersions}, but the
essential features are that it is finite, and the  strata are
discrete. Because this has important structural implications, we call the
filtration a {\em dispersion} and that a
spectral space with this property is {\em dispersible}. 

We prove  that the Balmer spectrum $\fX_G=\Spc(\SpGQo)$
 is dispersible.  We start with $\fX_G$ and declare 
the Thomason points (those which are $h$-isolated and cotorally minimal) 
to be of height 0. Removing points of height 0, we obtain a new space $\delta_{T} \fX_G$ and the 
Thomason points of this are of height 1, and so forth. We show
that this filtration is finite, and in fact all  subgroups are of height $\leqslant \rank (G)$; the precise height of each 
subgroup $H$ depends on the action of the component group $H_d$ on the 
identity component of the centre of $H_e$. In particular we have in \cref{thm:heightformula}:

\begin{thmx}
Let $G$ be a compact Lie group with subgroup $H$. We consider the identity component of each subgroup $H$, and let $T$ be the identity component of its centre. The component group $H_d$ acts on $H_1(T; \Q)$: the height of $H$ is the number of simple summands of this rational representation.
\end{thmx}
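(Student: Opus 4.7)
The plan is to induct on $n := \mathrm{ss}(H)$, the number of simple $\Q[H_d]$-summands of $H_1(T;\Q)$; this quantity is well-defined because the inner $H_e$-action fixes the central torus $T = Z(H_e)_e$, so the conjugation action of $H$ descends to $H_d$, and $\Q[H_d]$ is semisimple by Maschke's theorem. Throughout I fix the isotypic decomposition
\[
H_1(T;\Q) = V_0^{n_0} \oplus \bigoplus_{i\ge 1} V_i^{n_i}
\]
with $V_0 = \Q$ the trivial representation, so $n = n_0 + \sum_{i\ge 1} n_i$. The guiding picture is that the $n_0$ trivial summands realise cotoral descent below $H$, while the remaining $n - n_0$ simple summands realise Hausdorff accumulation once the cotoral chain is exhausted.

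For the base case $n = 0$, the torus $T$ is trivial so $H_e$ is semisimple, $H/[H,H]$ is discrete, and $H$ has no non-trivial torus quotient, giving cotoral minimality. For $h$-isolation, Montgomery--Zippin restricts attention to subgroups $K \subseteq H$; the connected closed subgroups of a semisimple compact Lie group form a discrete family up to conjugacy, so any $K$ sufficiently Hausdorff-close to $H$ satisfies $K_e = H_e$, and a component-group argument then forces $K = H$. Hence $H$ is Thomason, of height $0$.

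For the inductive step one first classifies proper cotoral predecessors $K \cotoral H$: such a $K$ is determined, up to conjugacy in $H$, by an $H_d$-invariant closed subgroup $A' \subsetneq H_e^{\mathrm{ab}}$ with $H_e^{\mathrm{ab}}/A'$ a torus carrying trivial $H_d$-action. Since $H_e^{\mathrm{ab}}$ is isogenous to $T$ as an $H_d$-module, such $A'$ corresponds rationally to a subrepresentation $V_0^m \oplus \bigoplus_{i\ge 1} V_i^{n_i}$ with $0 \le m < n_0$. One verifies that the central torus $T_K$ is the sub-torus of $T$ rationally spanning $A'$ and that the $K_d$-action on $T_K$ factors through its image in $H_d$, so $\mathrm{ss}(K) = m + \sum_{i\ge 1} n_i < n$. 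By the inductive hypothesis every proper cotoral predecessor of $H$ then has height $< n$, so $H$ is cotorally minimal in $\delta_T^n \fX_G$. For $h$-isolation there, any accumulating sequence $L_j \to H$ with $L_j \neq H$ up to conjugacy may, by Montgomery--Zippin, be taken inside $H$, and the compact-Lie structure of $L_j$---its identity component, component group, and induced action on its own central torus---is used to verify $\mathrm{ss}(L_j) < n$, placing each $L_j$ outside $\delta_T^n \fX_G$.

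For the reverse inequality $n \le \mathrm{height}(H)$: if $n_0 \ge 1$, the cotoral predecessor with $m = n_0 - 1$ has height $n - 1$ by induction and survives in $\delta_T^{n-1}\fX_G$, obstructing cotoral minimality of $H$. If instead $n_0 = 0$ and $n \ge 1$, then $H$ is already cotorally minimal in $\fX_G$, and one constructs explicit ``dihedral-type'' $h$-accumulators by extending the finite torsion subgroups $T[m] \subseteq T$ via the $H_d$-action, producing accumulators realising every $\mathrm{ss}$-value in $\{0, 1, \dots, n-1\}$; the accumulator with $\mathrm{ss} = n - 1$ obstructs $h$-isolation of $H$ in $\delta_T^{n-1}\fX_G$. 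The hardest part of the argument will be precisely this last structural analysis: identifying all the partially-finite subgroups of $H$ that Hausdorff-accumulate at $H$ and computing their $\mathrm{ss}$-value from the compact Lie structure. Here the prism presentation of \cref{thm:prismsratgspectra} is essential, as it cleanly separates the Hausdorff topology on conjugacy classes from the cotoral ordering and reduces the analysis to case work on the $\Q[H_d]$-decomposition of $H_1(T;\Q)$.
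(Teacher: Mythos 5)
Your overall architecture (induction on $n = \mathrm{ss}(H)$, reduction via Montgomery--Zippin, cotoral descent realising trivial summands, $h$-accumulation realising non-trivial summands) closely parallels the paper's proof, which instead inducts on the height $s$ and proves at each stage that $\thht(H)=s \Leftrightarrow \cbht(H)=s \Leftrightarrow \repht(H)=s$. Your classification of cotoral predecessors (a predecessor $K$ removes a trivial sub-summand from $V_0^{n_0}$ and leaves the non-trivial isotypic parts untouched, because $H/K$ is an $H_d$-trivial torus and the induced action of $K_d$ on $T_K$ factors through $H_d$) is correct and is a more explicit account of what the paper's inequality $V_\alpha(K)\leqslant V_\alpha(G)$ encodes. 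That said, the key technical work is asserted rather than carried out, and some of the assertions as written are wrong or incomplete.

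First, the base case is not actually established. You claim that "any $K$ sufficiently Hausdorff-close to $H$ satisfies $K_e = H_e$," but this is false without using semisimplicity in an essential way: for $H$ a torus, the finite subgroups $C_n$ accumulate at $H$ with $(C_n)_e = 1 \neq H_e$. What is true is that semisimple groups are not approximated in this way, but that is precisely the content of \cref{prop:topphi} and \cref{cor:isolated_criterion}, whose proof requires a genuine reduction argument (peeling off normalizers of semisimple parts via \cref{lem:semisimplenormalizers}, controlling normalizers of approximating subtori via \cref{lem:torusnormalizer}) and ultimately Turing's theorem that a connected compact Lie group is a Hausdorff limit of finite subgroups if and only if it is a torus. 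Quoting "the connected closed subgroups form a discrete family" and "a component-group argument then forces $K = H$" does not substitute for this.

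Second, you explicitly defer the inductive $h$-isolation step ("the hardest part of the argument will be precisely this last structural analysis"), but this is where the theorem actually lives. The paper's argument reduces to a toral subgroup $G_t = H(T,\sigma)$ using \cref{prop:topphi}, constructs the accumulators $K_i = H(T_i,\sigma)$ with $T_i$ the image of $T'\times T''[i]\times S$ (replacing one simple isotypic summand by its torsion points), and checks directly that $\repht(K_i) = \repht(G_t) - 1$; conversely, if $\repht(G) = s$ and $K_i\to G$ with $K_i \in \pmb{P}(G)_{\geqslant s}$, the inclusions $V_\alpha(K_i) \leqslant V_\alpha(G)$ must all be equalities, forcing $(K_i)_e = G_e$ and then $K_i = G$. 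Your proposal names this analysis but does not perform it. Finally, your induction on $\mathrm{ss}(H)$ implicitly identifies the height of a subgroup $K \subseteq H$ computed in $\Sub(G)/G$ with that computed in $\Sub(H)/H$; this ambient-independence is \cref{lem:height_restriction} and must be proved as a preliminary (the paper does so by a separate induction on height, using the commutative square of Priestley inclusions).
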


Furthermore we will show that the 
 strata $\delta^s_{T}\fX_G\setminus \delta^{s-1}_{T}\fX_G$ are discrete, the importance of which we now discuss.

\subsection*{The categorical local-to-global principle}

If we suppose our tensor-triangulated category admits a good enhancement as the homotopy category of a symmetric monoidal stable $\infty$-category, we may draw further 
structural conclusions. The most obvious is that if we partition the
Balmer spectrum into finitely many clopen sets, the $\infty$-category
splits; in fact one can show that $\SpGQ$ admits such a partition into rather
standard pieces, but we will not prove this here. Instead, we give a
general procedure showing that if the Balmer  spectrum is  dispersible then the underlying 
$\infty$-category can be reconstructed by a finite sequence of recollements (we say that the
category satisfies the {\em categorical local-to-global principle}),
from local categorical factors supported at individual primes. The following is \cref{thm:catl2g}:

\begin{thmx}
Suppose $\sfT$ is a rigidly-compactly generated tensor-triangulated
category with a good enhancement. If the Balmer spectrum of $\sfT^\omega$ is dispersible, then $\sfT$ satisfies  the categorical local-to-global principle. 
\end{thmx}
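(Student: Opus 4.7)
The plan is to proceed by induction on the length $n$ of the dispersion
\[
\fX = \delta_T^0 \fX \supsetneq \delta_T^1 \fX \supsetneq \cdots \supsetneq \delta_T^n \fX = \varnothing,
\]
where at the $k$-th stage the stratum $Z_k := \delta_T^k \fX \setminus \delta_T^{k+1}\fX$ is by hypothesis a discrete set of Thomason points of the spectral space $\delta_T^k \fX$. The base case $n = 0$ is trivial, as then $\sfT \simeq 0$, so the content lies in the inductive step.

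For the inductive step, set $U = \fX \setminus Z_0$. Since the stratum $Z_0 \subset \fX$ is a union of closed points each with quasi-compact complement, it is a Thomason subset of $\fX$. Invoking the good enhancement to access the $\infty$-categorical machinery of tensor-triangulated recollements, this Thomason subset produces a smashing recollement
\[
\sfT_{Z_0} \longrightarrow \sfT \longrightarrow \sfT_{U},
\]
in which $\sfT_{Z_0}$ is the full subcategory supported on $Z_0$ and $\sfT_{U}$ is the corresponding finite localization. Both factors inherit rigid-compact generation and a good enhancement, and one has $\Spc(\sfT_{U}^\omega) \cong U = \delta_T \fX$ carrying its induced dispersion of length $n-1$.

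It remains to decompose $\sfT_{Z_0}$ over the individual points $x \in Z_0$. Since $Z_0$ is discrete in its subspace topology and each singleton $\{x\}$ is itself a Thomason subset of $\fX$ (by the very definition of a Thomason point), the points of $Z_0$ give rise to an orthogonal family of smashing idempotents in $\sfT_{Z_0}$ whose join is the unit. This yields a decomposition
\[
\sfT_{Z_0} \simeq \prod_{x \in Z_0} \sfT_{x},
\]
where $\sfT_x$ denotes the local factor supported at the prime $x$. Applying the inductive hypothesis to $\sfT_{U}$ and splicing with the displayed recollement produces a finite tower of recollements reconstructing $\sfT$ from the local pieces $\sfT_x$, which is exactly the categorical local-to-global principle.

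The main obstacle is verifying that the ambient structure — rigid-compact generation, symmetric monoidal structure, and crucially the good enhancement — is inherited by the recollement terms $\sfT_{Z_0}$ and $\sfT_U$, so that the induction can be iterated. A related subtlety is that the stratum $Z_0$ may be infinite, so the product decomposition of $\sfT_{Z_0}$ cannot be obtained as a mere idempotent splitting at the triangulated level and must be executed in the stable $\infty$-categorical setting; this is precisely where the hypothesis of a good enhancement becomes essential. I would address it by working throughout with compactly generated stable $\infty$-categories and checking that the standard colocalizing-subcategory and Verdier-quotient constructions preserve this class.
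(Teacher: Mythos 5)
Your inductive strategy — peeling off the height-zero stratum $Z_0$ via the recollement $\sfT_{Z_0}\to\sfT\to\sfT_U$, decomposing $\sfT_{Z_0}$ over the points of $Z_0$, and recursing on $\sfT_U$ — is exactly the iterated-recollement sketch the paper offers as motivation in \cref{ssec:iteratedrecoll}, and the product decomposition of the stratum is \cref{prop:pointwisedecomp}. The plan is sound, but two steps are left open.

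You conclude with ``a finite tower of recollements reconstructing $\sfT$, which is exactly the categorical local-to-global principle,'' but \cref{def:catl2g} asks for an equivalence of $\sfT$ with a right-lax limit over $[n]$ of products of local factors. An $n$-step tower of recollements is not that object until one assembles the Tate splicing data coherently into a single left-lax diagram $[n]\to\stable2Cat$ and shows that its right-lax limit is $\sfT$; that assembly is the genuine work here, carried out by \cref{lemma:iteraterll} and \cref{prop:iterated}, and the paper explicitly flags the iterative sketch as insufficient for precisely this reason. Relatedly, your inductive hypothesis produces a decomposition of $\sfT_U$ in terms of its \emph{own} local factors $\Gamma_x\sfT_U$, and you need the base-change identification $\Gamma_x\sfT_U\simeq\Gamma_x\sfT$ (\cref{prop:base_change}) to land on the stated conclusion for $\sfT$. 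Second, the assertion that the points of $Z_0$ give ``an orthogonal family of smashing idempotents whose join is the unit'' and that this splits $\sfT_{Z_0}\simeq\prod_{x\in Z_0}\sfT_x$ is not a formal consequence of idempotent combinatorics once $Z_0$ is infinite: one must verify both that $\bigoplus_{x\in Z_0}\Gamma_xM\to M$ is an equivalence for $M$ supported on $Z_0$ and that the resulting adjunction with the product category is an equivalence. The paper does this by tensoring against compact objects $\kappa(x)$ with $\supp\kappa(x)=\{x\}$ and $\kappa(x)\otimes\kappa(y)=0$ for $x\ne y$ (\cref{lem:disjointvisible}, \cref{prop:pointwisedecomp}). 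You rightly flag the infinite stratum as a subtlety, but you attribute the difficulty to the $\infty$-categorical enhancement; in fact the resolution is this rigidity/compactness argument, while the enhancement is what makes the lax-limit formalism available in the first place.
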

 
Applying this to rational $G$-spectra along with \eqref{eq:algebraic-free} we obtain convincing evidence for the third author's conjecture:

\begin{thmx}
    For any compact Lie group $G$, the Balmer spectrum $\Spc(\SpGQo)$
    is dispersible. Hence $\SpGQ$ satisfies the categorical
    local-to-global principle and can be reconstructed from the local
    factors $\Gamma_H \SpGQ  \simeq  \Sp_{W_G(H),\Q}^{\free}$ as
    $H$ varies through closed subgroups.
\end{thmx}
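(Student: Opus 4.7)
The plan is to verify that $\fX_G := \Spc(\SpGQo)$ is dispersible, from which the categorical local-to-global principle follows formally via \cref{thm:catl2g}, and then to identify the local factors using geometric fixed points. Throughout I would work with the Priestley presentation $\Prism(\SpGQo) = ((\Sub(G)/G)_h, \cotoral)$ of \cref{thm:prismsratgspectra}, which separates the two ingredients of dispersibility (finiteness and discreteness) into concrete conditions on the $h$-topology and the cotoral order respectively.

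Finiteness of the height filtration follows immediately from \cref{thm:heightformula}: since the height of a conjugacy class $(H)$ is the number of rational simple summands of the $H_d$-representation on $H_1(T; \Q)$, where $T$ is the identity component of the centre of $H_e$, it is bounded above by $\dim T \leqslant \rank(G)$, and the filtration has length at most $\rank(G)+1$. Discreteness of each stratum is essentially formal: each stratum consists of Thomason points of a subspace obtained by iterating $\delta_T$, and every such point is by definition $h$-isolated in the ambient subspace, so the induced topology on the stratum is automatically discrete. What genuinely needs proof is that every nonempty layer of the filtration actually contains Thomason points, so that the filtration strictly decreases. Here I would combine generic Noetherianity (producing cotorally minimal classes in every nonempty subspace) with the Montgomery--Zippin theorem and tom Dieck's structural results on $\Sub(G)/G$: any class $(H)$ that is cotorally minimal among the remaining subgroups has only $h$-approximants that are subconjugate to $H$, and a dimension/height count then forces such an approximant to be either conjugate to $H$ or strictly cotorally below a class already removed at an earlier stage. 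This establishes the requisite $h$-isolatedness and closes the induction.

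Granted dispersibility, the categorical local-to-global principle is immediate from \cref{thm:catl2g}, since $\SpGQ$ arises as the homotopy category of a presentably symmetric monoidal stable $\infty$-category and hence admits a good enhancement. To identify the local factors I would use standard isotropy separation: the local category $\Gamma_H \SpGQ$ at the prime $(H)$ consists of rational $G$-spectra with geometric isotropy concentrated at $(H)$, and the geometric fixed point functor $\Phi^H$ provides a symmetric monoidal equivalence between this category and the category of rational free $W_G(H)$-spectra. Combining this with the algebraic model \eqref{eq:algebraic-free} applied to $W_G(H)$ yields the desired description $\Gamma_H \SpGQ \simeq \Sp_{W_G(H),\Q}^{\free}$. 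The principal obstacle is the existence step in the previous paragraph: the general theory of spectral spaces does not guarantee $h$-isolated points at every layer, and it is only through the specific geometry of compact Lie groups --- via Montgomery--Zippin and tom Dieck --- that such points appear at every stage of the filtration.
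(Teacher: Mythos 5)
Your route differs substantially from the paper's, and it contains a circularity. The paper's proof of dispersibility is \cref{prop:dim_dispersion}: the \emph{dimension} of subgroups already defines a finite dispersion on $\Prism(\SpGQo)$, because dimension strictly decreases under proper cotoral inclusion, and the second condition of \cref{defn:dispersion} follows from Montgomery--Zippin together with the fact that a proper subgroup of a connected compact Lie group has lower dimension. Since the Thomason filtration is the coarsest dispersion (\cref{lem:thomasinfiltisdisp}, \cref{rem:thomasonisuniversal}), the existence of this simpler dispersion automatically forces the Thomason filtration to be exhaustive, so there is nothing further to check --- one never has to establish exhaustiveness of the Thomason filtration directly.

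Your proposal instead tries to verify exhaustiveness of the Thomason filtration head-on, citing \cref{thm:heightformula} for finiteness. That is circular in the paper's logical order: the proof of \cref{thm:heightformula} opens by invoking the fact that the Thomason height provides a finite dispersion, which is precisely the content of \cref{prop:dim_dispersion}. You cannot appeal to the height formula to prove the dispersibility on which its own proof rests, at least not without reorganizing that induction so that it runs without a priori knowledge of finiteness. Your remaining ingredient --- existence of Thomason points at every layer via generic Noetherianity, Montgomery--Zippin, and tom Dieck --- is not carried out, and the appeal to generic Noetherianity is misdirected: that property governs Noetherianity of generalization closures and does not by itself produce $h$-isolated cotorally minimal points in arbitrary closed up-sets. (The cotorally minimal points always exist in a spectral space; the delicate part is $h$-isolation, and this is exactly what the dimension dispersion handles painlessly.) Your observations about discreteness of strata being automatic for the Thomason filtration, and the identification of the local factors via geometric fixed points and the model for free $W_G(H)$-spectra, are both correct and match the paper.
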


We will describe this reconstruction explicitly in a  number of small
examples in \cref{sec:catstratforGspectra}.

\subsection*{Related work}
Some of the abstract machinery developed in this paper is related to work of Stevenson~\cite{stevenson_localglobal} and Ayala--Mazel-Gee--Rozenblyum~\cite{AMGR}.  We refer to the beginning of \cref{part:prisms} for a detailed description of our approach and its relation to the aforementioned papers.

\subsection*{Outline of the document}

 In \cref{part:prisms} we will describe the relevant general machinery and
 strategy for tensor-triangulated categories with an $\infty$-categorical enhancement. 
In Sections \ref{sec:priestley} and  \ref{sec:dispersions} we recall the 
apparatus of spectral spaces, with a particular focus on the language of Priestley spaces and good filtrations, and in 
\cref{sec:priestleyintt} we make explicit the reformulation of concepts
from tensor-triangular geometry in the Priestley setting. Finally, in 
\cref{sec:obstrat} we describe how dispersions give us decompositions
of objects, and in \cref{sec:catstrat_lax,sec:catstrat_strict} we describe the corresponding
decompositions of categories in terms of lax and strict limits, respectively.

In \cref{part:gspectra} we apply the language and results of
\cref{part:prisms} to the category of rational $G$-spectra. \cref{sec:sub} recalls
standard facts about spaces of subgroups of compact Lie
groups. \cref{sec:prism} gives the  description of the Balmer spectrum
$\Spc(\SpGQo)$ in familiar terms using Priestley
spaces. \cref{sec:subgroups} contains the key results about
spaces of subgroups, their topologies and the cotoral ordering: the
fact that semisimple groups cannot be approximated by subgroups
reduces many arguments to finite groups acting on
tori. \cref{sec:finitenessproperties} applies the results of the
previous section to prove general finiteness
results. \cref{sec:dispersionsforGspec} describes the Thomason height
filtration and derives a formula for it in terms of the representation theory of the given subgroup. In the final  \cref{sec:catstratforGspectra}, we make the categorical
local-to-global principle explicit for some small groups, in particular covering all examples studied in the literature previously.

\subsection*{Conventions}

We collect some pieces of standard notation and terminology that we will use throughout the paper.

\newcommand{\Nvee}{{\N^+}}
\begin{itemize}
	\item $\N = \{0, 1, 2, \dots\}$ denotes the set of natural
          numbers. 
    \item $\Nvee =\{ 1, 2, \dots\}$ denotes the set of positive natural
          numbers.
	\item $\mathbb{P}$ denotes the set of prime numbers. 
	\item $[n] = \{0,1,2, \dots, n\}$.
 	\item For a subset $S \subseteq X$ in a set $X$, we write $S^c = X \setminus S$ for its complement in $X$.
        \item Priestley spaces will be denoted by boldface letters such as $\pmb{P}$, while the underlying ordered Stone space is then written $(P, \leqslant)$.
	\item Minimal points are closed in the Balmer spectrum. Accordingly, when drawing an ordered topological space, the minimum elements will be at the bottom of the graphic.
\end{itemize}
\subsection*{Acknowledgements }

We are grateful to Markus Hausmann, Greg Stevenson for conversations related to the results of this project and thank Luca Pol and Jordan Williamson for useful comments on a preliminary draft of this paper. SB and TB would like to thank the Max Planck Institute for Mathematics for its hospitality, and were supported by the European Research Council (ERC) under Horizon Europe (grant No.~101042990). JPCG is grateful to the EPSRC for support from EP/P031080/1. The authors would also like to thank the Hausdorff Research Institute for Mathematics for its hospitality and support during the trimester program `Spectral Methods in Algebra, Geometry, and Topology', funded by the Deutsche Forschungsgemeinschaft under Germany's Excellence Strategy – EXC-2047/1 – 390685813.

\newpage

\part{Prismatic Decompositions}\label{part:prisms}

The goal of this part is to set up a general framework for constructing models of a tensor-triangulated (tt) category $\sfT$ from its local factors. The general idea is easy to describe: $\sfT$ should be viewed as a sheaf of tt-categories over its Balmer spectrum $\Spc(\sfT^{\omega})$. Locality in $\sfT$ then refers to objects supported in a singleton, and a suitable filtration of the spectrum should lift to a categorical filtration of $\sfT$. 

In order to make this precise, it is crucial to be able to construct filtrations that interact correctly with the geometry of $\sfT$. Many prominent examples, such as the derived category of a Noetherian commutative ring or the equivariant stable homotopy category for a finite group, have the property that the topology on $\Spc(\sfT^{\omega})$ is captured completely by the specialization poset of $\Spc(\sfT^{\omega})$, which then significantly simplifies the problem. In the context of equivariant homotopy theory for a general compact Lie group, however, this is no longer the case and leads to rich topological phenomena on the spectrum. This necessitates the development of a theory that works in the generality of arbitrary spectral spaces. The key steps of our approach are:
    \begin{enumerate}
        \item Use Priestley spaces to present the spectral topology on $\Spc(\sfT^{\omega})$ as an ordered Stone space. This captures the extra data required to determine the spectral topology via its specialization order, namely the underlying constructible topology. We call the Priestley space of $\Spc(\sfT^{\omega})$ the prism, denoted $\Prism(\sfT^{\omega})$. For convenience, we have collected the required background material in \cref{sec:priestley} and \cref{sec:priestleyintt}.
        \item Transposed to the realm of Priestley spaces, a modification of Stevenson's dimension functions \cite{stevenson_localglobal}, dubbed dispersions, then provide a good notion of filtration on $\Prism(\sfT^{\omega})$. We introduce dispersions and establish their key properties in \cref{sec:dispersions} and \cref{sec:priestleyintt}. In particular, a dispersion then leads to a single filtration on the prism, in a way which takes into account both the specialization order and the constructible topology. Stevenson's work shows that any object in $\sfT$ can be reconstructed from local pieces via such a filtration; this is the objectwise local-to-global principle discussed in \cref{sec:obstrat}.
        \item In order to lift the filtration to a categorical decomposition of $\sfT$, we then employ the theory of Ayala--Mazel-Gee--Rozenblyum \cite{AMGR}. Their input is a suitable filtration on $\sfT$ such as the one constructed in Step (2), and the output is the reconstruction of $\sfT$ as a right-lax limit of local categorical factors. In \cref{sec:catstrat_lax,sec:catstrat_strict} and following \cite{AMGR}, we make this reconstruction process explicit in two ways: as an iteration of recollements and as a (strict) homotopy limit over a suitable cubical diagram of categories. 
    \end{enumerate}
The abstract theory will be put in action in \cref{part:gspectra}, and we invite the reader mostly interested in the applications to rational $G$-spectra to skip ahead to that part.

\section{Priestley spaces}\label{sec:priestley}

In this first section, we introduce the concept of Priestley spaces as a convenient presentation of spectral spaces. Our main reference for these results comes from \cite[\S 1.5]{book_spectralspaces}, while the original reference is \cite{priestley}. We begin by recalling the definition of a spectral space.

\subsection{Spectral spaces}

Spectral spaces can be characterised as those spaces arising as the Zariski spectrum $\Spec(R)$ for $R$ a commutative ring~\cite{Hochster}. The following provides an intrinsic characterization of spectral spaces.

\begin{definition}
A topological space $X$ is \emph{spectral} if satisfies the following properties:
\begin{enumerate}
    \item finite intersections of quasi-compact opens in $X$ (and hence $X$ itself) are quasi-compact, 
    \item the collection of quasi-compact opens in $X$ forms a basis for the topology of $X$,
    \item and $X$ is sober, i.e., for every non-empty closed and irreducible subset $C \subseteq X$ there exists a unique $x \in X$ whose closure is $C$.
\end{enumerate}
A continuous map $f\colon X \to Y$ between spectral spaces is called \emph{spectral} if it is quasi-compact, i.e., the preimage under $f$ of any quasi-compact open subset of $Y$ is quasi-compact in $X$. We write $\Spectral$ for the category of spectral spaces and spectral maps between them.\footnote{A further characterization of spectral spaces is as (cofiltered) inverse limits of finite posets.} 
\end{definition}

Of particular importance for a spectral space $X$ is the \emph{specialization order}, which provides a partial order on the points of $X$.

\begin{definition}
For $X$ a spectral space, and $x,y \in X$, we say that $y$ is a \emph{specialization} of $x$ if $y \in \overline{\{x\}}$ and write $y \leftsquigarrow x$. The relation $\leftsquigarrow$ will be referred to as the \emph{specialization order}.
\end{definition}

\begin{convention}\label{conv:ordering}
    Let $X$ be a spectral space. We will follow the convention that $y\leqslant x$ if and only if $y \leftsquigarrow x$. In particular, closed points of the space are minimal with respect to the specialization order while generic points are maximal. (We alert the reader to the fact that this convention is opposite of what is used in \cite{book_spectralspaces}. Our choice is particularly convenient when applied to the study of tensor-triangulated categories (cf.~\cref{ssec:ttgeometry}).)
\end{convention}

Priestley spaces give a presentation of a spectral space $X$ which records the topology on $X$ via an ordered topological space where the ordering is given by the specialization order. In preparation for the theory, we now introduce the definition of the constructible topology for a spectral space along with its properties. We recall that a topological space $X$ is a \emph{Stone space}\footnote{In \cite{book_spectralspaces}, Stone spaces are referred to as Boolean spaces. The latter term is sometimes used for the more general class of zero-dimensional, locally compact Hausdorff spaces, which is why we will use the more traditional terminology.} if it is totally disconnected, compact, and Hausdorff. The collection of Stone spaces and continuous maps between them forms the category $\Stone$. 

\begin{definition}\label{defn:constructibletopology}
Let $X$ be a spectral space. The \emph{constructible topology} on $X$ has a subbasis of open subsets consisting of the quasi-compact opens of $X$ and their complements. The resulting topological space is denoted $X_{\cons}$ and called the \emph{patch space} of $X$. A subset of $X$ is called \emph{constructible} if it is open and quasi-compact in $X_{\cons}$.
\end{definition}

\begin{proposition}[{\cite[\S 1.3]{book_spectralspaces}}]\label{prop:constructibletopology}
Let $X$ be a spectral space.
\begin{enumerate}[label=(\roman*)]
\item The patch space $X_{\cons}$ is a Stone space, and a subset $S \subseteq X$ is constructible if and only if it is clopen in $X_{\cons}$.
\item The patch space of a Stone space $X$ coincides with $X$, hence the patch space construction is idempotent, i.e., $(X_{\cons})_{\cons} = X_{\cons}$.
\item The assignment of $X \mapsto X_{\cons}$ is functorial and exhibits $\Stone$ as a coreflective subcategory of $\Spectral$.
\end{enumerate}
\end{proposition}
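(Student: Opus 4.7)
My plan is to prove (i) first, as the main content, and then derive (ii) and (iii) more directly.

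For (i) I would verify the three Stone space axioms separately. Hausdorffness uses the $T_0$ property of spectral spaces: given distinct $x,y$, some quasi-compact open $U$ drawn from the basis contains one but not the other, so $U$ and $U^c$ separate them in $X_{\cons}$. Total disconnectedness is immediate because the defining subbasis of $X_{\cons}$ consists of sets that are individually clopen in $X_{\cons}$. The main obstacle is compactness, which I would establish via Alexander's subbase theorem applied to the subbasis of quasi-compact opens together with their complements: a pure cover by quasi-compact opens reduces at once to the quasi-compactness of $X$ itself, while a mixed cover $\{U_i\}_{i\in I}\cup\{V_j^c\}_{j\in J}$ is handled by invoking the finite intersection property on the closed sets $V_j$ together with the quasi-compactness of $X \setminus \bigcup_i U_i$, using that finite intersections of quasi-compact opens are quasi-compact open. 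Once compactness is in hand, the equivalence constructible $\iff$ clopen in $X_{\cons}$ follows from the standard observation that in a compact Hausdorff space, open-and-quasi-compact coincides with clopen.

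For (ii), the key point is that in a Stone space $X$ the quasi-compact open subsets are precisely the clopens (an open subset of a compact Hausdorff space is quasi-compact if and only if it is closed). Hence the subbasis defining the constructible topology coincides with the clopen basis of $X$, yielding $X_{\cons} = X$. Idempotence is then an instance of this applied to the Stone space produced in (i).

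For (iii), functoriality is immediate, since a spectral map pulls back quasi-compact opens to quasi-compact opens and hence preserves the defining subbasis of the patch topology. For the coreflection I would verify the natural adjunction $\Hom_{\Spectral}(i(X), Y) \cong \Hom_{\Stone}(X, Y_{\cons})$ for $X$ Stone and $Y$ spectral. Both sides unpack to the same condition on continuous maps $X \to Y$: that preimages of quasi-compact opens of $Y$ are clopen in $X$. Spectral maps satisfy this because quasi-compact opens in a Stone space are clopen; continuous maps into $Y_{\cons}$ satisfy it because quasi-compact opens of $Y$ are subbasic clopens of $Y_{\cons}$. Once (i) and (ii) are in hand, this identification is essentially definitional.
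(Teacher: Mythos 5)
Most of the proposal is sound — the Hausdorffness and total disconnectedness arguments, the constructible $\iff$ clopen step, and parts (ii) and (iii) are fine — but the compactness argument in (i) has a genuine gap in the mixed-cover case. The sets $V_j^c$ are closed, not open, in the original topology on $X$, so the quasi-compactness of the closed set $X\setminus\bigcup_i U_i$ does not yield a finite subcover from $\{V_j^c\}$: that would require an \emph{open} cover. Similarly, an appeal to the finite intersection property on ``the closed sets $V_j$'' cannot go through as stated, since the $V_j$ are quasi-compact opens; and if one tries to argue with quasi-compact subsets in place of closed ones, a family of quasi-compact sets with the FIP need not have nonempty intersection in a non-Hausdorff space. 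More fundamentally, your argument nowhere invokes sobriety, and that hypothesis is load-bearing. For instance, $\N\cup\{\infty\}$ with the cofinite topology is quasi-compact and $T_0$, has a basis of quasi-compact opens closed under finite intersection, yet its patch topology is discrete and hence noncompact — the only missing spectrality axiom is soberness.

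The standard route to compactness (as in \cite{book_spectralspaces}) passes through the bounded distributive lattice $L$ of quasi-compact opens: given a subbasic cover $\{U_i\}\cup\{V_j^c\}$ with no finite subcover, the ideal of $L$ generated by the $U_i$ is disjoint from the filter generated by the $V_j$ (otherwise one extracts a finite subcover), so it extends to a prime ideal $\mathcal{P}$ still disjoint from that filter. Sobriety then produces a unique point $x$ whose trace on $L$ is the complementary prime filter, and one checks that $x$ lies in no $U_i$ and in every $V_j$, contradicting the cover. Equivalently, one can embed $X_{\cons}$ patch-continuously as a closed subspace of $\{0,1\}^{L}$ and invoke Tychonoff, or use Hochster's presentation of $X$ as an inverse limit of finite $T_0$ spaces. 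In any of these arguments the sobriety of $X$ is the essential input beyond the bookkeeping you describe, and it is exactly what your proposal omits.
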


\subsection{Priestley spaces}
We will now introduce the theory of Priestley spaces. The punchline is as follows: any spectral space $X$ is uniquely presented by the ordered topological space $(X_{\cons}, \leftsquigarrow)$, and moreover this assignment facilitates an isomorphism of categories. Let us begin by recalling the definition of an ordered topological (sub)space.

An \emph{ordered topological space} is a pair $\pmb{X} = (X, \leqslant)$ where $X$ is a topological space and $\leqslant$ is a partial ordering on the underlying set of $X$. For $S \subseteq X$, we can talk about ordered subspaces $\pmb{S} = (S, \leqslant)$ of $\pmb{X}$ by restricting the order. The same logic applies to constructions such as the complement of a subspace. 

Let us fix some further terminology before we continue. Suppose that $\pmb{X} = (X, \leqslant)$ is an ordered topological space. An ordered subspace $\pmb{S} = (S, \leqslant) \subseteq \pmb{X}$ is \emph{open} (\emph{closed}) if $S \subseteq X$ is open (closed). As usual, a subset is called \emph{clopen} if it is both open and closed. An ordered subspace $\pmb{S}$ is an \emph{up-set} (\emph{down-set}) if for all $s \in S$ we have $x \in S$ whenever $x \geqslant s$ (whenever $x \leqslant s$).

We do not want to consider all ordered spaces, only those where the ordering can be realised as the specialization ordering for a spectral space. If one unravels the properties of specialization, one is led to the following separation axiom.

\begin{definition}\label{def:spectralorder}
Let  $\pmb{X} = (X, \leqslant)$ be an ordered topological space. Then $\leqslant$ is a \emph{spectral order} if it satisfies the \emph{Priestley separation axiom}:
\[
\text{for all } x \not\leqslant y \text{ there exists a clopen up-set } \pmb{U} \text{ such that } x \in \pmb{U} \text{ and } y \not\in \pmb{U}.
\]
\end{definition}

We now wish to apply this perspective to spectral spaces. Indeed, the specialization relation on a spectral space is an example of a spectral order for the constructible topology:

\begin{lemma}\label{lem:isspectralorder}
If $X$ is a spectral space, then the specialization order $\leftsquigarrow$ is a spectral order on $X_{\cons}$.
\end{lemma}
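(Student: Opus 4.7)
The plan is to unpack the definition of the Priestley separation axiom and observe that it essentially reduces to the basic fact that quasi-compact open subsets of a spectral space form a basis and are stable under generalization.

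Concretely, I would start from a pair of points $x, y \in X$ with $x \not\leqslant y$, i.e., $x \notin \overline{\{y\}}$ in the original spectral topology on $X$. By definition of the closure operator, this means there is an open subset of $X$ containing $x$ but not $y$. Since the quasi-compact opens form a basis for the spectral topology, I can refine this to a quasi-compact open $U \subseteq X$ such that $x \in U$ and $y \notin U$. By \cref{defn:constructibletopology} (or rather by the identification of constructible subsets with clopens of $X_{\cons}$ in \cref{prop:constructibletopology}(i)), any quasi-compact open of $X$ is constructible and therefore clopen in $X_{\cons}$.

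The only remaining point is to check that $U$ is an up-set for the order $\leqslant$ on $X_{\cons}$, which under \cref{conv:ordering} amounts to $U$ being closed under generalization in $X$. This is a standard fact about open subsets of any topological space: if $u \in U$ and $z \geqslant u$, meaning $u \in \overline{\{z\}}$, then the open neighborhood $U$ of $u$ must meet $\{z\}$, whence $z \in U$. Assembling these observations yields a clopen up-set in $X_{\cons}$ separating $x$ from $y$ in the precise sense required by \cref{def:spectralorder}.

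I do not foresee any real obstacle here: every step is a direct appeal to either the basis property of quasi-compact opens, the identification of the constructible topology, or the elementary fact that open sets are generalization-closed. The only subtle issue is bookkeeping the orientation of the order (recall that under \cref{conv:ordering} closed points are minimal, so \emph{up-sets are generalization-closed}, not specialization-closed), and making sure the direction of the separation axiom matches. Once this is sorted, the proof is essentially a one-line argument.
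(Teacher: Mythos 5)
Your proof is correct, and since the paper states this lemma without giving a proof (referring instead to \cite[\S 1.5]{book_spectralspaces}), there is no in-paper argument to compare against; your write-up is precisely the standard verification one would find there. The orientation bookkeeping is handled correctly: under \cref{conv:ordering}, $x \leqslant y$ means $x \in \overline{\{y\}}$, so $x \not\leqslant y$ gives an open set (refinable to a quasi-compact open by the basis property) containing $x$ and missing $y$; such a set is constructible, hence clopen in $X_{\cons}$, and it is an up-set because open sets are closed under generalization, which is exactly the up-set condition for this order convention.
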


This motivates the main definition of this section, due to \cite{priestley}.

\begin{definition}
A \emph{Priestley space} $\pmb{P} = (P,\leqslant)$ is a Stone space $P$ equipped with a spectral order $\leqslant$. If $\pmb{P} = (P,\leqslant)$ and $\pmb{Q} = (Q,\leqslant)$ are Priestley spaces, then a map $f \colon \pmb{P} \to \pmb{Q}$ is a \emph{Priestley map} if it is continuous as a map of Stone spaces $P \to Q$ and monotone for the spectral orders. The collection of Priestley spaces and Priestley maps between them assembles into a category $\Priestley$.
\end{definition}

The following is immediate from \cref{prop:constructibletopology} combined with \cref{lem:isspectralorder}.

\begin{corollary}\label{lem:spectopriestley}
If $X$ is a spectral space then $(X_{\cons}, \leftsquigarrow)$ is a Priestley space.
\end{corollary}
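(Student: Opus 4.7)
The plan is to unpack the definition of a Priestley space and verify its two constituent properties using the results already established. A Priestley space is, by definition, a Stone space equipped with a spectral order, so it suffices to check: (a) $X_{\cons}$ is a Stone space, and (b) the specialization order $\leftsquigarrow$ is a spectral order on $X_{\cons}$. Property (a) is precisely part (i) of \cref{prop:constructibletopology}, and property (b) is exactly \cref{lem:isspectralorder}. Combining these and comparing with the definition of a Priestley space yields the corollary immediately; this is why the excerpt prefaces the statement with ``the following is immediate from\dots''.

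If one were instead to expand the content of \cref{lem:isspectralorder} rather than cite it, the argument I have in mind runs as follows. Given $x \not\leftsquigarrow y$, equivalently $x \notin \overline{\{y\}}$, the basis property of spectral spaces supplies a quasi-compact open $U$ with $x \in U$ and $y \notin U$ (for instance, first take $V = X \setminus \overline{\{y\}}$, then refine to a basic quasi-compact open around $x$). Such a $U$ is clopen in $X_{\cons}$ by the very definition of the constructible topology, and it is an up-set for $\leftsquigarrow$ in the convention of \cref{conv:ordering} because every open subset of a spectral space is stable under generization. This produces the required separating clopen up-set and closes the Priestley separation axiom.

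The main — indeed essentially the only — potential pitfall is the orientation of the order: \cref{conv:ordering} reverses the convention of \cite{book_spectralspaces}, so for us an ``up-set'' means ``closed under generization'' rather than ``closed under specialization''. Once this bookkeeping is settled, there is no remaining content beyond the routine verification above, and the corollary follows by packaging parts (a) and (b) together.
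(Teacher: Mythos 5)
Your proof is correct and follows the paper's argument exactly: the paper dismisses the corollary as ``immediate from \cref{prop:constructibletopology} combined with \cref{lem:isspectralorder},'' which is precisely your parts (a) and (b). The additional paragraph unpacking \cref{lem:isspectralorder} is a correct bonus, and you are right that the only subtlety is the order convention of \cref{conv:ordering}.
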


We now discuss how to go the other way, that is, constructing a spectral space from a Priestley space.  Suppose that we have a Priestley space $\pmb{P}= (P,\leqslant)$. Then we can equip $P$ with two additional topologies coming from the spectral order: 
\begin{itemize}
    \item $\tau_u$ which has open sets given by the open subsets of $P$ which are also up-sets; 
    \item $\tau_d$ which has open sets given by the open subsets of $P$ which are also down-sets. 
\end{itemize}

\begin{lemma}
If $\pmb{P}= (P,\leqslant)$ is a Priestley space, then the spaces $(P, \tau_u)$ and $(P, \tau_d)$ are spectral spaces.
\end{lemma}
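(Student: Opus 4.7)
The plan is to reduce to showing $(P,\tau_u)$ is spectral, since $\tau_d$ is the analogous topology for the reverse spectral order $\geqslant$---this is again a spectral order because the complement of a clopen up-set is a clopen down-set, so the dual separation property holds. For $(P,\tau_u)$, the three axioms of a spectral space will follow from the following claims: (a) the clopen up-sets of $\pmb{P}$ form a basis for $\tau_u$; (b) the quasi-compact opens of $\tau_u$ are precisely the clopen up-sets; and (c) $(P,\tau_u)$ is sober.

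For (a), given an open up-set $U$ and a point $x \in U$, each $y \in P\setminus U$ satisfies $x \not\leqslant y$ (otherwise $U$ being an up-set would force $y \in U$), so the Priestley separation axiom yields a clopen up-set $V_y$ with $x \in V_y$ and $y\notin V_y$. The family $\{P\setminus V_y\}_{y\notin U}$ covers the closed (hence compact) set $P\setminus U$, so extracting a finite subcover produces a clopen up-set $V := V_{y_1}\cap\cdots\cap V_{y_n}$ with $x \in V \subseteq U$. For (b), every clopen up-set is closed in the compact space $P$ and therefore $\tau_u$-quasi-compact, while any $\tau_u$-quasi-compact open is a finite union of the basic clopen up-sets from (a) and hence itself a clopen up-set. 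Axioms (1) and (2) of a spectral space are then immediate, since clopen up-sets are closed under finite unions and intersections and $P$ itself is a clopen up-set.

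The main obstacle is (c), sobriety. The $\tau_u$-closed subsets are exactly the closed down-sets of $\pmb{P}$, and Priestley separation implies that every principal down-set $\downarrow x$ is closed in $P$, so $\overline{\{x\}}^{\tau_u} = \downarrow x$; this already yields injectivity of $x \mapsto \overline{\{x\}}^{\tau_u}$. For surjectivity onto closed irreducibles, the key auxiliary fact---proved in the same style as (a) via Priestley separation and compactness---is that the down-closure $\downarrow K$ of any closed set $K \subseteq P$ is again closed. Now suppose $C$ is an irreducible $\tau_u$-closed set with two distinct maximal elements $x,y$ (existence of a maximal element comes from a Zorn plus compactness argument applied to the nested closed sets $\uparrow z \cap C$). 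Pick a clopen up-set $U$ with $x\in U$, $y\notin U$, and write
\[
C = \bigl(C \cap (P\setminus U)\bigr) \;\cup\; \downarrow(C \cap U).
\]
Both pieces are closed down-sets (using the down-closure lemma for the second), $x$ avoids the first, and $y$ avoids the second---if $y \leqslant z \in C\cap U$, maximality of $y$ in $C$ would force $y = z \in U$, a contradiction. This nontrivial decomposition contradicts irreducibility, so $C$ has a unique maximal element $x$ and thus $C = \downarrow x$. Packaging the down-closure lemma into this decomposition is the subtlest step of the argument; the rest is a fairly mechanical unwinding of Priestley separation together with compactness of $P$.
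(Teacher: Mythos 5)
Your argument is correct. Note that the paper itself does not include a proof of this lemma---it is stated as a recalled fact from the theory of Priestley spaces with the reference being~\cite[Theorem~1.5.11]{book_spectralspaces}---so there is no internal proof to compare against. Your proof is essentially the standard one: reduce to $\tau_u$ by duality, use Priestley separation plus compactness of $P$ to show the clopen up-sets form a basis and coincide with the quasi-compact opens, and establish sobriety by showing that down-closures of closed sets are closed, that every closed down-set has maximal elements (Zorn plus the finite intersection property), and that an irreducible closed down-set cannot have two distinct maximal elements. All the compactness arguments check out: the ``down-closure of a closed set is closed'' lemma, the closedness of $\uparrow z$ and $\downarrow x$, and the decomposition $C = (C \cap (P\setminus U)) \cup \downarrow(C\cap U)$ in the irreducibility step are all justified by exactly the kind of Priestley-separation-plus-compactness argument you indicate. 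One point you state but do not spell out is why the unique maximal element $x$ satisfies $C = \downarrow x$: given $c \in C$, run the Zorn/compactness argument inside $\uparrow c \cap C$ to produce a maximal element of $C$ above $c$, which must then be $x$. This is implicit in your argument but worth flagging as the final step.
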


The assignment $\pmb{P} = (P, \leqslant) \mapsto (P, \tau_u)$ provides a functor $\tau_u \colon \Priestley \to \Spectral$.  We also have the functor  $\priestley \colon \Spectral \to \Priestley$ coming from \cref{lem:spectopriestley}. The following theorem tells us that these functors are inverses of one another in the strongest sense.

\begin{theorem}[{\cite[1.5.15]{book_spectralspaces}}]\label{thm:priestleyspaces}
The functor $\priestley \colon \Spectral \to \Priestley$ is an isomorphism of categories with inverse $\tau_u \colon \Priestley \to \Spectral$.
\end{theorem}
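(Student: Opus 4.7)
The plan is to prove the two functors are mutually inverse by checking that each composite is the identity on objects, and then observing that morphism compatibility follows almost for free. The two functors agree on underlying sets, so the content of the theorem reduces to identifying the relevant topologies and orders, and verifying that Priestley morphisms and spectral morphisms correspond under this identification.

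I would begin with the composite $\tau_u \circ \priestley$ on a spectral space $X$. The key claim is that the quasi-compact opens of $X$ coincide with the clopen up-sets of $(X_{\cons}, \leftsquigarrow)$. One direction is easy: a quasi-compact open $U$ of $X$ lies in the subbasis of $X_{\cons}$, so both $U$ and $U^c$ are open there, making $U$ clopen; moreover, every open set in a topological space is an up-set for the specialization order, since if $x\in U$ and $x \leqslant x'$ (that is, $x\in\overline{\{x'\}}$), then $x'\in U$ (otherwise $x'\in U^c$ is closed, forcing $x\in\overline{\{x'\}}\subseteq U^c$). The converse is the main technical point. Given a clopen up-set $V\subseteq X_{\cons}$ and $x\in V$, the up-set property gives $x\not\leqslant y$ for every $y\in V^c$, so for each such $y$ one can find a quasi-compact open $U_y$ of $X$ with $x\in U_y$ and $y\notin U_y$. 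Since $V^c$ is closed in the compact space $X_{\cons}$ and the sets $U_y^c$ are constructible, compactness in $X_{\cons}$ yields a finite subcover $V^c\subseteq U_{y_1}^c\cup\cdots\cup U_{y_n}^c$, so $W_x := U_{y_1}\cap\cdots\cap U_{y_n}$ is a quasi-compact open of $X$ with $x\in W_x\subseteq V$. Hence $V=\bigcup_{x\in V} W_x$ is open in $X$, and it is quasi-compact there since it is compact in the finer constructible topology.

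For the reverse composite $\priestley \circ \tau_u$, I would start from a Priestley space $\pmb{P}=(P,\leqslant)$ and use the preceding lemma to know $(P,\tau_u)$ is spectral. A direct consequence of the Priestley separation axiom, combined with compactness of $P$, is that the clopen up-sets form a basis for $\tau_u$: given a $\tau_u$-open $U$ and $x\in U$, each $y\in U^c$ satisfies $x\not\leqslant y$, and compactness of $U^c$ allows the finite intersection of separating clopen up-sets to lie in $U$. From this it follows that the quasi-compact opens of $(P,\tau_u)$ are precisely the clopen up-sets of $\pmb{P}$, and hence the constructible topology is generated as a subbasis by clopen up-sets together with clopen down-sets. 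Two-sided Priestley separation shows this collection is Hausdorff and separates points, and as a coarser topology on the compact Hausdorff space $P$ it must coincide with the original Stone topology. Finally, the specialization order on $(P,\tau_u)$ recovers $\leqslant$: the closure of $\{x\}$ in $\tau_u$ is the smallest closed down-set containing $x$, and the standard fact that principal down-sets $\downarrow\!\{x\}$ are closed in any Priestley space (again via the separation axiom) identifies this closure with $\{y : y\leqslant x\}$.

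The morphism statements are routine once the objects are identified: a spectral map $f\colon X\to Y$ is automatically constructibly continuous (preimages of subbasic opens are constructible) and monotone for specialization (by continuity of $f$), so it defines a Priestley map under $\priestley$. Conversely, a Priestley map $f\colon\pmb{P}\to\pmb{Q}$ pulls back open up-sets to open up-sets, hence is continuous for $\tau_u$, and it preserves quasi-compact opens because, by the basis argument above, these are just the clopen up-sets. The main obstacle in the whole proof is the technical interface described in the second paragraph, namely producing, inside a clopen up-set $V$, a genuinely quasi-compact open neighborhood of each point; this is where the Priestley separation axiom is used in its strongest form and where the abstract compatibility between the specialization order and the underlying constructible topology becomes tangible.
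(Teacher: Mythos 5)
The paper does not contain its own proof of this theorem; it is cited directly from \cite[1.5.15]{book_spectralspaces} without argument, so there is no internal proof to compare against. Your reconstruction is correct and follows the standard line of proof from the literature. The technical heart is exactly where you place it: given a clopen up-set $V$ of $(X_{\cons},\leftsquigarrow)$ and $x\in V$, separating $x$ from each $y\in V^c$ by a quasi-compact open $U_y$ of $X$ and invoking compactness of $V^c$ in the patch space to extract a finite subfamily whose intersection is a quasi-compact open neighborhood of $x$ inside $V$. The reverse composite is also handled correctly: clopen up-sets form a basis for $\tau_u$ by the separation axiom plus compactness, the constructible topology of $(P,\tau_u)$ is a coarser Hausdorff topology on the compact Hausdorff $P$ (hence equal to it), and the $\tau_u$-specialization order recovers $\leqslant$ because the principal down-set $\downarrow x$ is closed in $P$ (again by separation). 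The morphism checks are as you say.

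One presentational remark. In the forward composite you establish ``quasi-compact opens of $X$ $=$ clopen up-sets of $X_{\cons}$'' and then conclude the topologies agree, but that conclusion also needs the fact that clopen up-sets form a basis for $\tau_u$, which you only state (and prove) in the reverse-composite paragraph. The argument there does apply to $(X_{\cons},\leftsquigarrow)$, but you should say explicitly that the Priestley separation axiom for $(X_{\cons},\leftsquigarrow)$ (i.e.\ \cref{lem:isspectralorder}) is being invoked at that point, or alternatively prove directly that any open up-set $V$ of $X_{\cons}$ is open in $X$ by intersecting $V^c$ with $\bigvee(x)=\bigcap\{U : U\text{ qc open},\ x\in U\}$ and using patch-compactness to find a finite quasi-compact intersection inside $V$; that variant handles all open up-sets at once, not only the clopen ones, and removes the need to establish the basis fact as a separate step.
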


\begin{remark}
It is worth emphasizing that when referring to open or closed subsets of a Priestley space we are referring to {\em its} topology, which is the constructible topology on the associated spectral space. 
\end{remark}
Unravelling definitions we arrive at the following observations.

\begin{corollary}[{\cite[Theorem 1.5.11]{book_spectralspaces}}]\label{cor:topologydesc}
    Let $\pmb{P} = (P, \leqslant)$ be a Priestley space with associated spectral space $(P,\tau_u)$. Then:
    \begin{enumerate}[label=(\arabic*)]
        \item The open sets of $(P,\tau_u)$ are in bijection with the open up-sets of $\pmb{P}$.
        \item The closed sets of $(P,\tau_u)$ are in bijection with the closed down-sets of $\pmb{P}$.
        \item A basis of open sets of $(P,\tau_u)$ is given by the clopen up-sets of $\pmb{P}$.
        \item\label{item:basicclosed} A basis of closed sets of $(P,\tau_u)$ is given by the clopen down-sets of $\pmb{P}$. 
    \end{enumerate}
\end{corollary}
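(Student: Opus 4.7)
The plan is to observe that parts (1) and (2) are essentially definitional, while (3) and (4) are the content of the corollary and rely on the Priestley separation axiom together with compactness of the underlying Stone space.

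For (1), by the very definition of $\tau_u$, a subset of $P$ is open in $(P,\tau_u)$ if and only if it is both open in $P$ and an up-set, establishing the bijection at once. For (2), I would simply take complements: the complement of an open up-set is a closed down-set (and vice versa), and complementation is a bijection on the power set of $P$ that sends $\tau_u$-closed subsets to $\tau_u$-open subsets.

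The main content is in (3), for which my plan is to show that given any $\tau_u$-open set $U$ and any $x\in U$, there exists a clopen up-set $V$ with $x\in V\subseteq U$. For each $y\in U^c$, the fact that $U$ is an up-set forces $y\not\geqslant x$, i.e.\ $x\not\leqslant y$. Invoking the Priestley separation axiom from \cref{def:spectralorder}, I obtain for each such $y$ a clopen up-set $W_y\subseteq P$ with $x\in W_y$ and $y\notin W_y$. The complements $\{W_y^c\}_{y\in U^c}$ then form an open cover of $U^c$ in the Stone space $P$. Since $U^c$ is closed in the compact space $P$, it is itself compact, so I can extract a finite subcover $W_{y_1}^c,\dots,W_{y_n}^c$. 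Setting $V:=W_{y_1}\cap\cdots\cap W_{y_n}$ yields a clopen up-set (clopen sets and up-sets are both closed under finite intersection) satisfying $x\in V$ and $V\cap U^c=\emptyset$, hence $V\subseteq U$, as required. Part (4) then follows formally by taking complements in (3), since the complement of a clopen up-set is a clopen down-set, and complementation exchanges bases of open sets with bases of closed sets.

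The only nontrivial step is (3), and the only obstacle there is the bookkeeping around the Priestley separation axiom; once one notices that the right separation to use is precisely ``$x\not\leqslant y$'' for $y\in U^c$, the rest is a standard Stone-space compactness argument. No auxiliary results beyond \cref{def:spectralorder} and the compactness of $P$ are needed.
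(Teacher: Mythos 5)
Your proof is correct, and it is essentially the standard argument for this result: the paper itself does not give a proof but simply cites \cite[Theorem 1.5.11]{book_spectralspaces}, and your argument is the one appearing there. Parts (1) and (2) are indeed definitional; the compactness argument in (3) — using the Priestley separation axiom to produce clopen up-sets $W_y$ separating $x$ from each $y\in U^c$, then extracting a finite subcover of the compact set $U^c$ by the complements — is exactly right, and (4) follows by complementation.
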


Given \cref{thm:priestleyspaces}, we see that we lose no information in representing a spectral space by its associated Priestley space. It allows us to separate out the specialization order and constructible topology in a useful manner which will prove advantageous in our main application of interest in \cref{part:gspectra}.

\begin{remark}\label{rem:noethpriestley}
We will use topological terms to describe the corresponding Priestley space. For example, if $\pmb{P}=(P, \leqslant)$ is a Priestley space whose corresponding spectral space is Noetherian, we will say that $\pmb{P}$ is a \emph{Noetherian Priestley space}. Explicitly, this means that $\pmb{P}$ satisfies the descending chain condition on closed down-sets. For a host of other equivalent characterizations of Noetherianness in the context of spectral spaces, we refer to \cite[Section 8.1]{book_spectralspaces}.
\end{remark}

We mentioned previously that there are two natural topologies associated to any Priestley space, namely $\tau_u$ and $\tau_d$. So far, we have only seen the role of $\tau_u$. The topology $\tau_d$ has a very natural interpretation in the world of spectral spaces, namely it exhibits the \emph{inverse topology} (sometimes referred to as the \emph{Hochster dual}) of the topology $\tau_u$ which is an involution on the category $\Spectral$. Let us define the ingredients of the inverse topology as it will be convenient for us to have later.

\begin{definition}\label{defn:thomason}
Let $X$ be a spectral space. A subset $V \subseteq X$ is \emph{Thomason} if it can be written as a union
\[
V = \bigcup_{\lambda} V_{\lambda}
\]
where each $V_\lambda$ is closed with quasi-compact complement.
\end{definition}

\begin{definition}\label{defn:inversetopology}
Let $X$ be a spectral space. The \emph{inverse topology} on $X$ is the space which has the same points as $X$, and whose open subsets are given by the Thomason subsets of $X$. We will denote the inverse topology as $X_{\inv}$.
\end{definition}

\begin{proposition}[{\cite[Theorem 1.4.3]{book_spectralspaces}}]\label{prop:inversetopology}
If $X$ is a spectral space, then so is $X_{\inv}$. A subset $U \subseteq X_{\inv}$ is open and quasi-compact in $X_{\inv}$ if and only if $U \subseteq X$ is the complement of a quasi-compact open subset in $X$. Moreover, the specialization order $\leftsquigarrow_{\inv}$ on $X_{\inv}$ is the inverse of the specialization order $\leftsquigarrow$ on $X$, i.e., for two elements $x,y \in X$ we have $x \leftsquigarrow y$ if and only if $y \leftsquigarrow_{\inv} x$. 
\end{proposition}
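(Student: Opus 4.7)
The plan is to derive all three claims from the Priestley space reformulation of spectral spaces furnished by \cref{thm:priestleyspaces}. The key observation is that reversing the order defines an involution $\pmb{P} = (P, \leqslant) \mapsto \pmb{P}^\op := (P, \geqslant)$ on $\Priestley$: the Priestley separation axiom is symmetric in $\leqslant$ and $\geqslant$, because a clopen down-set separator for $x \not\geqslant y$ in $\pmb{P}$ is the same datum as a clopen up-set separator for $y \not\leqslant x$ in $\pmb{P}^\op$. Writing $\pmb{P} = (X_{\cons}, \leftsquigarrow)$ for the Priestley space associated to $X$ via \cref{lem:spectopriestley}, the strategy is to show that $X_{\inv}$ coincides with the spectral space $\tau_u(\pmb{P}^\op)$.

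First I would translate Thomason subsets into the Priestley language. By \cref{cor:topologydesc}, a subset $C \subseteq X$ is closed with quasi-compact complement if and only if $C$ is a clopen down-set of $\pmb{P}$, equivalently a clopen up-set of $\pmb{P}^\op$. Applying \cref{cor:topologydesc} now to the Priestley space $\pmb{P}^\op$, the clopen up-sets of $\pmb{P}^\op$ form a basis for the topology of $\tau_u(\pmb{P}^\op)$. Consequently a subset $V \subseteq X$ is Thomason --- by definition a union of closed sets with quasi-compact complement --- exactly when it is a union of basic opens of $\tau_u(\pmb{P}^\op)$, i.e., when it is open in $\tau_u(\pmb{P}^\op)$. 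Hence $X_{\inv} = \tau_u(\pmb{P}^\op)$ as topological spaces, and claim (1) follows from \cref{thm:priestleyspaces} applied to the Priestley space $\pmb{P}^\op$.

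For claim (2), a subset is open and quasi-compact in $X_{\inv} = \tau_u(\pmb{P}^\op)$ if and only if it is a clopen up-set of $\pmb{P}^\op$, i.e., a clopen down-set of $\pmb{P}$, which by the preceding paragraph is precisely the complement in $X$ of a quasi-compact open. For claim (3), under \cref{conv:ordering} the specialization order on a spectral space coincides with the order of its underlying Priestley space; since the order on $\pmb{P}^\op$ is by construction the reverse of that on $\pmb{P}$, the specialization order $\leftsquigarrow_{\inv}$ on $X_{\inv}$ is the reverse of $\leftsquigarrow$ on $X$, as required.

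The main obstacle --- while technically shallow --- is the translation step identifying Thomason subsets of $X$ with the opens of $\tau_u(\pmb{P}^\op)$. The forward inclusion from unions of clopen down-sets of $\pmb{P}$ into open down-sets is immediate, whereas the converse uses the basis property of \cref{cor:topologydesc}, whose justification genuinely invokes the spectral axioms of $X$ (equivalently, the compactness of $X_{\cons}$). Everything else is bookkeeping in the Priestley dictionary.
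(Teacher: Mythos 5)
Your proof is correct, and the chain of reasoning holds up: the reversal $\pmb{P} \mapsto \pmb{P}^\op$ is an involution on $\Priestley$ (your verification of symmetry in the separation axiom is exactly right: from $y \not\leqslant x$ take a clopen up-set $\pmb{U}$ containing $y$ and missing $x$, then $\pmb{U}^c$ is the required clopen $\leqslant$-down-set), and then \cref{cor:topologydesc} applied to $\pmb{P}^\op$ identifies $X_{\inv}$ with $\tau_u(\pmb{P}^\op)$, from which all three claims drop out. One point worth flagging explicitly (you gesture at it): the identification of quasi-compact opens of $\tau_u(\pmb{P}^\op)$ with clopen up-sets of $\pmb{P}^\op$ uses not only that the latter form a basis (\cref{cor:topologydesc}(3)) but also that a quasi-compact open is a \emph{finite} union of basic opens and that clopen up-sets are closed under finite unions --- this is where compactness of the Stone space enters a second time, alongside its role in the basis property you do mention.

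The paper gives no proof of this proposition; it is cited directly from the textbook of Dickmann--Schwartz--Tressl, whose argument works directly with spectral spaces (via the Hochster-dual lattice of open sets), and the paper then records the Priestley-theoretic reformulations \emph{afterwards} as \cref{lem:inverseisspec} and \cref{lem:invonpriestley}. Your proof inverts this logical order: you establish the Priestley-side facts first and \emph{derive} the proposition as a consequence of \cref{thm:priestleyspaces}. This is a genuine, self-contained alternative route, and arguably the more natural one given the paper's overall commitment to working on the Priestley side of the duality. What the textbook approach buys is independence from the full Priestley isomorphism of categories; what your approach buys is that once the Priestley machinery is in place, the result is essentially free --- you prove a stronger structural statement (the Priestley-space involution and the translation of Thomason subsets into open down-sets) and the proposition falls out as a corollary, which is closer in spirit to the way the paper actually uses the result in the rest of \cref{sec:priestley}.
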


One can check that the inverse of a spectral order is again a spectral order.

\begin{lemma}\label{lem:inverseisspec}
Let $\pmb{P}=(P,\leqslant)$ be a Priestley space. Then the \emph{inverse} $\pmb{P}_{\inv} = (P, \leqslant_{\inv})$ is a Priestley space.
\end{lemma}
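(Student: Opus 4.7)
The plan is to verify the Priestley separation axiom for the inverse order $\leqslant_{\inv}$, since the underlying space $P$ is unchanged (and is already a Stone space). The only data that needs checking is therefore the compatibility between the unchanged clopen structure of $P$ and the reversed partial order.

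First I would record the two elementary order-theoretic reformulations that drive the whole argument. On the one hand, by definition of the inverse order, $x \not\leqslant_{\inv} y$ if and only if $y \not\leqslant x$. On the other hand, a subset $S \subseteq P$ is an up-set with respect to $\leqslant_{\inv}$ if and only if it is a down-set with respect to $\leqslant$; consequently, taking complements exchanges clopen up-sets for $\leqslant$ with clopen up-sets for $\leqslant_{\inv}$.

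Given these translations the verification is immediate. Suppose $x \not\leqslant_{\inv} y$, so that $y \not\leqslant x$. Apply the Priestley separation axiom for $\pmb{P}$ to the pair $(y,x)$: there exists a clopen up-set $\pmb{V}$ for $\leqslant$ with $y \in \pmb{V}$ and $x \notin \pmb{V}$. Its complement $\pmb{V}^c$ is then clopen and is a down-set for $\leqslant$, hence a clopen up-set for $\leqslant_{\inv}$, with $x \in \pmb{V}^c$ and $y \notin \pmb{V}^c$. This is exactly the required separating clopen up-set for the inverse order.

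There is no real obstacle here; the only thing to be careful about is the bookkeeping of which order one is taking up-sets with respect to, and the observation that complementation converts the Priestley condition for $\leqslant$ into the Priestley condition for $\leqslant_{\inv}$. Alternatively, one could simply invoke \cref{thm:priestleyspaces} together with \cref{prop:inversetopology}: the inverse topology construction is an involution on $\Spectral$ that reverses the specialization order, and transporting this across the isomorphism $\priestley \colon \Spectral \xrightarrow{\cong} \Priestley$ yields exactly the operation $\pmb{P} \mapsto \pmb{P}_{\inv}$, so the result lands in $\Priestley$ by construction.
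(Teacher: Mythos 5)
Your verification is correct, and it is exactly the routine check the paper leaves to the reader (the paper merely says ``One can check that the inverse of a spectral order is again a spectral order'' and gives no written proof). The key observation — that complementation interchanges clopen up-sets for $\leqslant$ with clopen up-sets for $\leqslant_{\inv}$, so the Priestley separation axiom for $(y,x)$ under $\leqslant$ yields the separating clopen up-set for $(x,y)$ under $\leqslant_{\inv}$ — is exactly right. Your alternative route via \cref{thm:priestleyspaces} and \cref{prop:inversetopology} is also valid, though note that the paper states \cref{lem:invonpriestley} \emph{after} this lemma, so the intended logical order is to first establish that $\leqslant_{\inv}$ is a spectral order by the direct argument you give, and only then record the compatibility with Hochster duality; using the second route to prove the first would invert the paper's order of presentation.
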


\begin{lemma}\label{lem:invonpriestley}
The involution $(-)_{\inv}$ on the category $\Spectral$ corresponds to taking the inverse order on the category $\Priestley$.
\end{lemma}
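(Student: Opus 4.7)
The plan is to verify directly that $\priestley : \Spectral \to \Priestley$ intertwines the two involutions, using the explicit descriptions supplied by \cref{defn:constructibletopology} and \cref{prop:inversetopology}. By the isomorphism of categories in \cref{thm:priestleyspaces}, it suffices to exhibit a natural identification $\priestley(X_{\inv}) = \priestley(X)_{\inv}$ of Priestley spaces for every spectral space $X$, and to check that this identification is compatible with morphisms.

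On objects, the key observation is that the constructible topology is insensitive to passage to the inverse topology. A subbasis for $X_{\cons}$ in \cref{defn:constructibletopology} is given by the quasi-compact opens of $X$ together with their complements, and \cref{prop:inversetopology} identifies the quasi-compact opens of $X_{\inv}$ as exactly the complements of the quasi-compact opens of $X$. Therefore $X$ and $X_{\inv}$ give rise to the same subbasis and hence to the same Stone space $(X_{\inv})_{\cons} = X_{\cons}$. Combining this with the second half of \cref{prop:inversetopology}, which records that the specialization order on $X_{\inv}$ is $\leftsquigarrow_{\inv}$, I would conclude
\[
\priestley(X_{\inv}) = ((X_{\inv})_{\cons}, \leftsquigarrow_{\inv}) = (X_{\cons}, \leftsquigarrow)_{\inv} = \priestley(X)_{\inv},
\]
where the right-hand side is a Priestley space by \cref{lem:inverseisspec}.

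For naturality on morphisms, a continuous map $f : X \to Y$ is spectral exactly when it pulls back quasi-compact opens to quasi-compact opens; by \cref{prop:inversetopology} this is equivalent to $f$ being spectral as a map $X_{\inv} \to Y_{\inv}$. Dually, a continuous map between Stone spaces is monotone for a spectral order if and only if it is monotone for its opposite, so the Priestley morphisms $\priestley(X) \to \priestley(Y)$ are the same as those $\priestley(X)_{\inv} \to \priestley(Y)_{\inv}$. Assembling these facts gives the claimed compatibility. I do not anticipate any real obstacle here: once the preceding propositions are in hand, the statement reduces to an unwinding of definitions, with the essential content being the symmetry of the subbasis in \cref{defn:constructibletopology} under complementation.
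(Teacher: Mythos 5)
Your proof is correct. The paper in fact states this lemma without proof, treating it as a routine consequence of the preceding results, so there is no argument in the paper to compare against; but the route you take is exactly the one that a written-out proof would follow. The two load-bearing observations are precisely the ones you isolate: the subbasis defining the constructible topology in \cref{defn:constructibletopology} is symmetric under complementation, so by the identification of the quasi-compact opens of $X_{\inv}$ in \cref{prop:inversetopology} one gets $(X_{\inv})_{\cons} = X_{\cons}$ on the nose; and the same proposition supplies the reversal of the specialization order. Your treatment of morphisms is also correct---since preimages commute with complementation, a spectral map $X \to Y$ is automatically spectral as a map $X_{\inv} \to Y_{\inv}$, and monotonicity with respect to an order and its opposite are the same condition---so the identification $\priestley(X_{\inv}) = \priestley(X)_{\inv}$ is indeed natural, giving the claimed compatibility of $\priestley$ with the two involutions.
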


\begin{remark}\label{rem:thomasoninpriest}
In light of \cref{lem:invonpriestley} we see that if $X$ is a spectral space, then the Thomason subsets of $X$ are the open down-sets of the associated Priestley space. In particular, the Thomason points are exactly the isolated minimal points.
\end{remark}

It should be evident that any ordered subspace of a Priestley space need not be a Priestley space (indeed, a subspace of a Stone space need not be Stone). We obtain the following result via the corresponding result in spectral spaces from~\cite[Theorem 2.1.3]{book_spectralspaces}.

\begin{lemma}\label{lem:subpriestley}
Let $\pmb{P}= (P,\leqslant)$ be a Priestley space. Then any closed up-set of $\pmb{P}$ is again a Priestley space.
\end{lemma}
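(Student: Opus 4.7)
The plan is to transport the statement across the isomorphism $\priestley : \Spectral \rightleftarrows \Priestley : \tau_u$ of \cref{thm:priestleyspaces} and invoke the corresponding spectral-space statement \cite[Theorem~2.1.3]{book_spectralspaces}, which asserts that every pro-constructible subset of a spectral space inherits a spectral structure from the subspace topology.

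Concretely, let $S \subseteq \pmb{P}$ be a closed up-set. Since the Stone topology on $P$ is exactly the constructible topology of the associated spectral space $(P,\tau_u)$, the closedness of $S$ in $\pmb{P}$ identifies $S$ as a pro-constructible subset of $(P,\tau_u)$. Applying \cite[Theorem~2.1.3]{book_spectralspaces} exhibits $S$ as a spectral subspace. I would then identify the Priestley space $\priestley(S)$ attached to this spectral structure: its underlying Stone space is the subspace $S \subseteq P$, because patch topologies restrict to subspaces, and its specialization order is $\leqslant|_S$. This produces precisely $(S,\leqslant|_S)$, as desired.

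If a self-contained argument is preferred, the direct route is equally short. First, $S$ is a Stone space as a closed subspace of the Stone space $P$. To verify the Priestley separation axiom (\cref{def:spectralorder}) for $\leqslant|_S$, given $x,y \in S$ with $x \not\leqslant y$, the axiom applied to $\pmb{P}$ provides a clopen up-set $\pmb{U}$ with $x \in \pmb{U}$ and $y \notin \pmb{U}$; then $\pmb{U} \cap S$ is clopen in the subspace topology on $S$, and it remains an up-set in $(S,\leqslant|_S)$ because any $s' \in S$ with $s' \geqslant s \in \pmb{U} \cap S$ lies in $\pmb{U}$ by the up-set property of $\pmb{U}$ in $\pmb{P}$. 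There is no substantive obstacle; the entire content is the observation that intersection with $S$ preserves both clopenness and the up-set property.
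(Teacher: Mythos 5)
Your first argument reproduces the paper's own proof exactly: the paper states that the lemma is obtained by transport across $\priestley\colon\Spectral\rightleftarrows\Priestley$ from the fact (\cite[Theorem 2.1.3]{book_spectralspaces}) that pro-constructible subsets of a spectral space are spectral subspaces, and this is precisely your first paragraph, including the (correct) identification that the patch topology of the spectral subspace is the closed Stone subspace of $P$ and its specialization order is $\leqslant|_S$.

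Your second, self-contained argument is a genuine alternative and is also correct, and it is arguably the more informative of the two. It buys two things. First, it makes the bare content of the lemma visible: the only real obstacle to a subspace of a Priestley space being Priestley is Stone-ness (as the paper remarks just above the lemma), and that is handled entirely by closedness, while the separation axiom descends trivially because clopenness and the up-set property are both preserved under intersection with $S$. Second, it reveals that the up-set hypothesis on $S$ is never used: any closed subset of a Priestley space, equipped with the restricted order, is again a Priestley space. The paper restricts to closed up-sets because that is the case needed in \cref{cor:strataissub}, and because for a closed up-set the embedding $S\hookrightarrow\pmb{P}$ additionally corresponds, on the spectral-space side, to the inclusion of a closed subspace of $\tau_u(\pmb{P})$; but the Priestley-space conclusion itself does not require it.
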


\subsection{The examples}

It is high time for some examples. We will explore Priestley spaces arising from a fixed Stone space of interest.

\begin{definition}\label{def:onepointcpt}
Let $X$ be a topological space, and write $X^* = X \cup \{\infty\}$ topologized by taking the open sets to be all of the open subsets $U$ of $X$ together with the sets $V = (X \backslash C) \cup \{\infty\}$ where $C$ is closed and quasi-compact in $X$. Then we say that the open inclusion $c\colon X \to X^*$ is the \emph{Alexandroff extension} of $X$. 
\end{definition}

If $X$ is locally compact, Hausdorff but not quasi-compact, then $X^*$ is compact Hausdorff and the image of $c$ is dense. In this case we refer to $X^{*}$ as the \emph{one-point compactification} of $X$. If $X$ is discrete, then $X^{*}$ is a Stone space.

\begin{example}\label{example:onepointcpt}
To illustrate this construction, we may take $X = \N$ to be the natural numbers with the discrete topology, whose one-point compactification will be the Stone space appearing in our guiding examples of Priestley spaces. Note that $\N^*$ is homeomorphic to the space $\{0\} \cup \{\frac{1}{n+1}\mid n \in \N\} \subset \R$ and can thus be visualized as in \cref{fig:Nast2}.
\end{example}

Equipped with our Stone space $\N^*$, we are in a position to explore some spectral orders on it along with the resulting spectral spaces under the isomorphism of categories from \cref{thm:priestleyspaces}.

\begin{example}\label{ex:priestleyonepointcpt}
Out first order, $\leqslant_1$ will be the  {\em trivial order} (equality) and as such we will denote it by $=$. This is a spectral order so that $(\N^{*}, =)$ is a Priestley space. As there is no data contained within the ordering, the corresponding spectral space is $\N^{*}$ itself. More generally, the same observation applies to any Stone space $X$ in place of $\N^{*}$. This space is represented in \cref{fig:Nast2}.
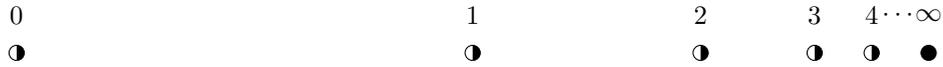
\begin{figure}[ht!]
\centering
\begin{tikzpicture}[xscale=-1]
\node at (12,1.5) {0};
\node at (6,1.5) {1};
\node at (3,1.5) {2};
\node at (1.5,1.5) {3};
\node at (0.75,1.5) {4};
\node at (0.375,1.5) {$\cdots$};
\node at (0,1.5) {$\infty$};
\draw[fill=white] (12,1) circle (.1cm);
 \fill[black] (12,0.9) arc (270:90:.1cm);
\draw[fill=white] (6,1) circle (.1cm);
 \fill[black] (6,0.9) arc (270:90:.1cm);
\draw[fill=white] (3,1) circle (.1cm);
 \fill[black] (3,0.9) arc (270:90:.1cm);
\draw[fill=white] (1.5,1) circle (.1cm);
 \fill[black] (1.5,0.9) arc (270:90:.1cm);
\draw[fill=white] (.75,1) circle (.1cm);
 \fill[black] (.75,0.9) arc (270:90:.1cm);
\draw[fill=black] (0,1) circle (.1cm);
\end{tikzpicture}\caption{The spectral space associated to the Priestley space $\pmb{N}^*_{=}$. The half coloured circles indicate that the subsets $\{n\}$ corresponding to $n \in \N$ are clopen, while the accumulation point $\infty$ is closed but not open.}\label{fig:Nast2}
\end{figure}
\end{example}

\begin{example}\label{ex:specz}
There is an order $\leqslant_2$ on $\N^{*}$ defined by $x \leqslant_2 y$ if and only if $x = y$ or $y = \infty$ for any two points $x,y \in \N^{*}$ (i.e., the compactifying point $\infty$ is the maximal element). One can check that the ordering $\leqslant_2$ is a spectral ordering. 

Let us determine the resulting spectral space coming from the topology $\tau_u$. Any up-set of $\leqslant_2$ contains the point $\infty$, so we are reduced to checking which open sets of $\N^*$ contain $\infty$. By construction of the one point compactification these look like $V = (X \backslash C) \cup \{\infty\}$ where $C$ is closed and compact in $\N$ (i.e., $C$ is finite). As such the resulting space can be pictorially described as in \cref{fig:specZ}. It is homeomorphic to the familiar spectral space $\Spec(\Z)$ by choosing an identification $\N \cong \mathbb{P}$ where $\mathbb{P}$ is the set of prime numbers. In particular it is a Noetherian topological space (i.e., the closed subsets satisfy the descending chain condition).
\begin{figure}[ht!]
\begin{tikzpicture}[yscale=1]
\draw [->,line join=round,decorate, decoration={zigzag, segment length=6, amplitude=.9,post=lineto, post length=2pt}]  (0,0) -- (-3.75,-1.2);
\draw[white, fill=white] (0,0) circle (.1cm);
\draw [->,line join=round,decorate, decoration={zigzag, segment length=6, amplitude=.9,post=lineto, post length=2pt}]  (0,0) -- (3.75,-1.2);
\draw[white, fill=white] (0,0) circle (.3cm);
\draw [->,line join=round,decorate, decoration={zigzag, segment length=6, amplitude=.9,post=lineto, post length=2pt}]  (0,0) -- (-1.8,-1.25);
\draw [->,line join=round,decorate, decoration={zigzag, segment length=6, amplitude=.9,post=lineto, post length=2pt}]  (0,0) -- (1.8,-1.25);
\draw [->,line join=round,decorate, decoration={zigzag, segment length=6, amplitude=.9,post=lineto, post length=2pt}]  (0,0) -- (0.0,-1.4);
\draw[white, fill=white] (0,0) circle (.3cm);
\draw[dotted, fill=black!10,thick] (0,0) circle (.15cm);
\draw[fill = black] (-4,-1.25) circle (0.1cm);
\draw[fill = black] (-2,-1.5) circle (0.1cm);
\draw[fill = black] (0,-1.75) circle (0.1cm);
\draw[fill = black] (2,-1.5) circle (0.1cm);
\node at (0,.5) {$\infty$};
\node at (-4,-1.75) {$0$};
\node at (-2,-2.0) {$1$};
\node at (0,-2.25) {$2$};
\node at (2,-2.0) {$3$};
\node at (4,-1.75) {$\cdots$};
\end{tikzpicture}\caption{The spectral space associated to the Priestley space $\pmb{N}^*_2 = (\N^*, \leqslant_2)$. The solid bullets represent closed points, while the dotted gray circle indicates the unique generic point.}\label{fig:specZ}
\end{figure}
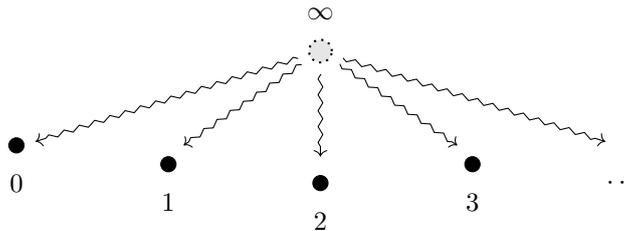
\end{example}

\begin{example}\label{ex:nbar}
We now consider a third ordering on $\N^*$ which we denote $\leqslant_3$ given by the opposite of the usual size order on $\N$ and declaring $\infty \leqslant_3 x$ for all $x \in \N^*$. Once again we can check that $\leqslant_3$ is a spectral order. The spectral space associated to $(\N^{*},\leqslant_3)$ can be visualized as in \cref{fig:nbar}. Unlike in \cref{ex:specz} the resulting topological space is not Noetherian as we have an infinite descending chain of closed subsets (by contrast, the Hochster dual topology on this space is Noetherian).

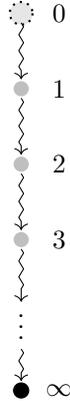
\begin{figure}[ht!]
\centering
\begin{tikzpicture}[yscale=-1]
\draw[dotted, fill=black!10,thick] (0,0) circle (.15cm);
\draw[fill = black!25, draw = none] (0,1) circle (0.1cm);
\draw[fill = black!25, draw = none] (0,2) circle (0.1cm);
\draw[fill = black!25, draw = none] (0,3) circle (0.1cm);
\node at (0,4.05) {$\vdots$};
\node at (0.5,0.0) {0};
\node at (0.5,1) {1};
\node at (0.5,2) {2};
\node at (0.5,3) {3};
\node at (0.5,5) {$\infty$};
\draw[fill = black] (0,5) circle (0.1cm);
\draw [->,line join=round,decorate, decoration={zigzag, segment length=6, amplitude=.9,post=lineto, post length=2pt}]  (0,0.125) -- (0,1.75/2);
\draw [->,line join=round,decorate, decoration={zigzag, segment length=6, amplitude=.9,post=lineto, post length=2pt}]  (0,2.25/2) -- (0,3.75/2);
\draw [->,line join=round,decorate, decoration={zigzag, segment length=6, amplitude=.9,post=lineto, post length=2pt}]  (0,4.25/2) -- (0,5.75/2);
\draw [->,line join=round,decorate, decoration={zigzag, segment length=6, amplitude=.9,post=lineto, post length=2pt}]  (0,6.25/2) -- (0,7.65/2);
\draw [->,line join=round,decorate, decoration={zigzag, segment length=6, amplitude=.9,post=lineto, post length=2pt}]  (0,8.9/2) -- (0,9.75/2);
\end{tikzpicture}
\caption{The spectral space associated to the Priestley space $\pmb{N}^*_3 = (\N^*, \leqslant_3)$. The solid bullet represents the closed point, the dotted gray circle indicates the (open) generic point, and the grey bullets are points which are neither open or closed.}\label{fig:nbar}
\end{figure}
\end{example}

\begin{example}\label{ex:speczinv}
We now consider a fourth ordering on $\N^*$ which we denote $\leqslant_4$. Define $x \leqslant_4 y$ if and only if $x=y$ or $x=\infty$. That is, $\leqslant_4$ is the inverse of the partial order $\leqslant_1$ as introduced in \cref{ex:specz}. As it is the inverse of a spectral order, $\leqslant_4$ is again a spectral order by \cref{lem:inverseisspec}. We see that $\pmb{N}_4^\ast = (\N^\ast, \leqslant_4)$ is therefore the inverse of the Priestley space $\pmb{N}^*_2$ from \cref{ex:specz} and as such can be presented as in \cref{fig:specZinv}. We highlight that the minimal point $\infty$ is not isolated in $\N^\ast$, this is in stark contrast to the minimal points of the space of \cref{ex:specz} where the minimal points are indeed isolated in $\N^\ast$. In fact, one can see that $\pmb{N}_4^\ast$ has no isolated minimal points whatsoever. 
\begin{figure}[ht!]
\begin{tikzpicture}[yscale=-1]
\draw[white, fill=white] (0,0) circle (.3cm);
\node[circle] at (0,0) (e) {};
\draw[fill = black] (0,0) circle (0.1cm);
\draw[draw = black] (-4,-1.25) circle (0.1cm);
\draw[draw = black] (-2,-1.5) circle (0.1cm);
\draw[draw = black] (0,-1.75) circle (0.1cm);
\draw[draw = black] (2,-1.5) circle (0.1cm);
\node at (0,.5) {$\infty$};
\node at (-4,-1.75) {$0$};
\node at (-2,-2.0) {$1$};
\node at (0,-2.25) {$2$};
\node at (2,-2.0) {$3$};
\node at (4,-1.75) {$\cdots$};
\draw [->,line join=round,decorate, decoration={zigzag, segment length=6, amplitude=.9,post=lineto, post length=2pt}]  (-3.75,-1.2)  -- (e);
\draw [->,line join=round,decorate, decoration={zigzag, segment length=6, amplitude=.9,post=lineto, post length=2pt}]  (3.75,-1.2) -- (e);
\draw [->,line join=round,decorate, decoration={zigzag, segment length=6, amplitude=.9,post=lineto, post length=2pt}]  (-1.8,-1.25) -- (e);
\draw [->,line join=round,decorate, decoration={zigzag, segment length=6, amplitude=.9,post=lineto, post length=2pt}]  (1.8,-1.25)  -- (e);
\draw [->,line join=round,decorate, decoration={zigzag, segment length=6, amplitude=.9,post=lineto, post length=2pt}]  (0.0,-1.4)  -- (e);
\end{tikzpicture}\caption{The spectral space associated to the Priestley space $\pmb{N}_4^\ast = (\N^*, \leqslant_4)$. The solid bullet represents a closed point, while the white circles indicate the open points.}\label{fig:specZinv}
\end{figure}
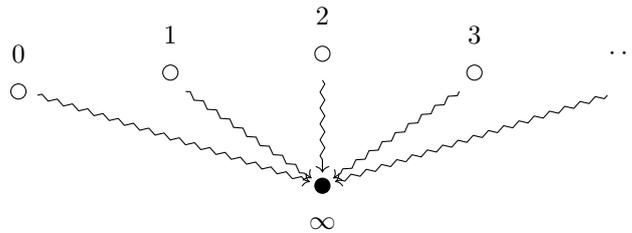
\end{example}

\section{Dispersions of Priestley spaces}\label{sec:dispersions}

In this section we will introduce the theory of \emph{dispersions} on Priestley spaces. These are certain exhaustive filtrations of Priestley spaces such that the corresponding strata have a particularly nice form. The definition of a dispersion is inspired by Stevenson's spectral dimension functions on the corresponding spectral spaces~\cite{stevenson_localglobal}. We will return to this observation in more detail in \cref{rem:stevenson}. 

\subsection{Dispersions}\label{ssec:dispersions}

Let us begin with defining a certain motivating filtration (which we will eventually see it is the universal such). In what follows we write $\Ord$ for the class of ordinals, and we remind the reader that for a Priestley space $\pmb{P} = (P, \leqslant)$ the isolated minimal points are in correspondence with the Thomason points of the corresponding spectral space (see \cref{rem:thomasoninpriest}).

\begin{definition}\label{defn:cbrank}
Let $\pmb{P} = (P, \leqslant)$ be a Priestley space, and write $\delta_T\pmb{P}$ for the set of points which are not isolated and minimal in $\pmb{P}$. The set $\delta_T \pmb{P}$ is called the \emph{Thomason derivative} of $\pmb{P}$. For $\lambda \in \Ord$ we define $\delta^{\lambda}_T \pmb{P}$ as a subset of $\pmb{P}$ via transfinite recursion on $\lambda$:
    \begin{itemize}
        \item $\delta^0_T \pmb{P} = \pmb{P}$;
        \item $\delta^{\lambda + 1}_T \pmb{P} = \delta_T \delta^{\lambda}_T \pmb{P}$;
        \item If $\lambda$ is a limit ordinal then $\delta^{\lambda}_T \pmb{P} = \cap_{\nu < \lambda} \delta^{\nu}_T \pmb{P}$.
    \end{itemize}
Moreover, we set $\delta_T^{\infty} \pmb{P} = \bigcap_{\lambda \in \Ord}\delta_T^{\lambda} \pmb{P}$. For a point $p \in \pmb{P}$ we then define the \emph{Thomason height} of $p$ as follows:
\[
{\height}_{\pmb{P}}(p) = 
\begin{cases}
    \infty & \text{if } p \in \delta^{\infty}_T\pmb{P} \\
    \sup(\lambda \in \Ord \mid p \in \delta_T^{\lambda}\pmb{P}) & \text{otherwise}.
\end{cases}
\]
If the ambient space is clear from context, we also write $\height(p)$ for $\height_{\pmb{P}}(p)$. Finally, the \emph{Thomason height} of a non-empty Priestley space $\pmb{P}$, denoted $\height(\pmb{P})$, is the supremum of all $\lambda \in \Ord$ with $\delta_T^{\lambda}\pmb{P} \neq \varnothing$ or $\infty$ if the supremum does not exist. We will drop the subscript $T$ when it is clear from context.
\end{definition}

\begin{example}\label{ex:cantorbendixsonisthomason}
    In the case that $\pmb{P} = (P, =)$ is a Priestley space with a trivial ordering then the Thomason filtration as defined above coincides with the \emph{Cantor--Bendixson filtration} of $\pmb{P}$. In short, this is the filtration on $P$ which removes isolated points at each stage: for the trivial ordering, the Thomason filtration coincides with the Cantor--Bendixson filtration. For general spectral orders removing Thomason points of a spectral space $X$ corresponds to removing isolated points of $X_{\inv}$.
\end{example}

\cref{defn:cbrank} provides us with filtrations for all Priestley spaces. However, we are only going to be interested in those filtrations which are \emph{exhaustive}, that is, which do not attain height $\infty$. We abstract the features of exhaustive Thomason filtrations in the following definition of a \emph{dispersion}.

\begin{definition}\label{defn:dispersion}
Let $\pmb{P} = (P, \leqslant)$ be a Priestley space. A \emph{dispersion} on $\pmb{P}$ (if it exists) is a function $\chi \colon \pmb{P} \to \Ord$ such that for all $p,q \in P$:
\begin{enumerate}[label=(\arabic*)]
    \item if $p < q$, then $\chi(p) < \chi(q)$;\label{dispersion1}
    \item for any closed subset $S \subseteq P$, if $a \in S$ is a non-isolated point then there exists $s \in S$ such that $\chi(s) < \chi(a)$.\label{dispersion2}
\end{enumerate}
If such a dispersion exists we shall say that $\pmb{P} = (P, \leqslant)$ is \emph{dispersible}. If $n = \max_{p \in P}(\chi(p)) \in \N$, then we say that $n$ is the \emph{height} of the dispersion and that $\pmb{P}$ is \emph{finitely dispersible}.
\end{definition}

\begin{definition}\label{defn:prismaticspacefiltration}
Let $\pmb{P} = (P, \leqslant)$ be a dispersible Priestley space with associated dispersion $\chi \colon \pmb{P} \to \Ord$. Given $\lambda \in \Ord$, we write $\pmb{P}_{\lambda} = \chi^{-1}(\{ \lambda \})$ for the corresponding \emph{stratum}. The corresponding filtration is then defined as
\[
\pmb{P}_{< \lambda} = \bigcup_{\beta < \lambda}\pmb{P}_{\beta} = \chi^{-1}(\{\beta \in\Ord\mid \beta < \lambda\}) \quad \text{and} \quad \pmb{P}_{\geqslant \lambda} = (\pmb{P}_{< \lambda})^c.
\]
\end{definition}

A dispersion gives rise to an ascending filtration by open subsets of the spectral space associated with $\pmb{P}$, a particularly simple form of a topological stratification. 

\begin{proposition}\label{prop:dispersionsbasicproperties}
With notation as above, for every $\lambda \in \Ord$ we have:
    \begin{enumerate}[label=(\arabic*)]
        \item $\pmb{P}_{< \lambda}$ is an open down-set in $\pmb{P}$;
        \item $\pmb{P}_{\geqslant \lambda}$ is closed up-set in $\pmb{P}$;
        \item $\pmb{P}_{\lambda}$ consists of isolated points in $\pmb{P}_{\geqslant \lambda}$.\label{dispersionsbasicproperties:3}
    \end{enumerate}
\end{proposition}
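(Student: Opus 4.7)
The plan is to exploit the two defining properties of a dispersion in sequence: the strict monotonicity axiom handles the up-set/down-set clauses, while the second axiom — which forces $\chi$ to decrease at non-isolated points of closed subsets — yields both the closedness of $\pmb{P}_{\geqslant \lambda}$ and the isolatedness statement of~(3). Since $\pmb{P}_{<\lambda}$ and $\pmb{P}_{\geqslant \lambda}$ are complementary, the up-set/down-set clauses of~(1) and~(2) and the open/closed clauses of~(1) and~(2) can be treated jointly.

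First I would verify the up-set/down-set clauses, which are immediate from the strict monotonicity: if $q \in \pmb{P}_{<\lambda}$ and $p \leqslant q$ in $\pmb{P}$, then either $p = q$ or $\chi(p) < \chi(q) < \lambda$, so $p \in \pmb{P}_{<\lambda}$; dually $\pmb{P}_{\geqslant \lambda}$ is an up-set.

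The main step is to show that $\pmb{P}_{\geqslant \lambda}$ is closed, via a minimal counterexample argument. Suppose for contradiction that the closure $S = \overline{\pmb{P}_{\geqslant \lambda}}$ properly contains $\pmb{P}_{\geqslant \lambda}$, and choose $a \in S \setminus \pmb{P}_{\geqslant \lambda}$ with $\chi(a)$ as small as possible, which is available by well-ordering of $\Ord$. Since $a$ lies in the closure of $\pmb{P}_{\geqslant \lambda}$ but not in $\pmb{P}_{\geqslant \lambda}$ itself, every neighbourhood of $a$ in $S$ contains a point of $\pmb{P}_{\geqslant \lambda}$ other than $a$, so $a$ is a non-isolated point of the closed set $S$. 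The second axiom of a dispersion then produces some $s \in S$ with $\chi(s) < \chi(a) < \lambda$; in particular $s \in S \setminus \pmb{P}_{\geqslant \lambda}$, contradicting the minimality of $\chi(a)$. Openness of $\pmb{P}_{<\lambda}$ follows by complementation.

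Finally,~(3) follows from~(2) together with one more appeal to the second axiom: now that $\pmb{P}_{\geqslant \lambda}$ has been shown to be closed, any non-isolated point $a \in \pmb{P}_\lambda$ of $\pmb{P}_{\geqslant \lambda}$ would force the existence of some $s \in \pmb{P}_{\geqslant \lambda}$ with $\chi(s) < \chi(a) = \lambda$, contradicting $\chi(s) \geqslant \lambda$. The only real subtlety is keeping the logical order straight — the second axiom can only be invoked once the ambient subset is known to be closed, so~(3) genuinely relies on first establishing~(2), and no transfinite induction on $\lambda$ is needed.
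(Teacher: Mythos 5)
Your argument is correct, and it takes a genuinely different route from the paper's proof. The paper establishes (2) by transfinite induction on $\lambda$: assuming $\pmb{P}_{\geqslant\lambda-1}$ is a closed up-set, it applies the already-proved implication $(2)\Rightarrow(3)$ at the predecessor stage to conclude that $\pmb{P}_{\lambda-1}$ is an open down-set inside $\pmb{P}_{\geqslant\lambda-1}$, so that $\pmb{P}_{<\lambda}=\pmb{P}_{<\lambda-1}\cup\pmb{P}_{\lambda-1}$ remains an open down-set, with the obvious intersection argument at limit ordinals. Your minimal-counterexample argument collapses this to a single application of the dispersion axiom: choosing $a\in\overline{\pmb{P}_{\geqslant\lambda}}\setminus\pmb{P}_{\geqslant\lambda}$ with $\chi(a)$ minimal, observing that $a$ is automatically non-isolated in the closed set $S=\overline{\pmb{P}_{\geqslant\lambda}}$, and invoking axiom (2) to get $s\in S$ with $\chi(s)<\chi(a)<\lambda$, which puts $s$ in $S\setminus\pmb{P}_{\geqslant\lambda}$ with a strictly smaller height, contradiction. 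This is cleaner and avoids the recursion entirely, though the paper's inductive form has the side benefit of exhibiting the stage-by-stage structure that is later replayed in the iterated-recollement construction.

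One small point worth flagging. In the paper's proof of (3), the verification that $p\in\pmb{P}_\lambda$ is ``isolated'' in $\pmb{P}_{\geqslant\lambda}$ breaks into two cases — $p$ being non-minimal, and $p$ being topologically non-isolated — so the intended meaning of ``isolated'' there is ``isolated minimal'' (i.e.\ a Thomason point of the subspace $\pmb{P}_{\geqslant\lambda}$). Your proof of (3) addresses only topological isolation. Minimality is in fact immediate from axiom (1) of \cref{defn:dispersion}: if $q<p$ with $q\in\pmb{P}_{\geqslant\lambda}$ then $\chi(q)<\chi(p)=\lambda$, contradiction. You should append this one-line observation to make the claim match what the downstream argument (the openness of $\pmb{P}_\lambda$ as a \emph{down-set} inside $\pmb{P}_{\geqslant\lambda}$) actually requires.
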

\begin{proof}
    First of all, by construction, $\pmb{P}_{<\lambda}$ is the complement of $\pmb{P}_{\geqslant \lambda}$ in $\pmb{P}$, so Statements (1) and (2) are equivalent. Next, we will prove that $(2)$ implies $(3)$ for any given $\lambda \in \Ord$. Indeed, consider some $p \in \pmb{P}_{\lambda}$ and assume $\pmb{P}_{\geqslant \lambda}$ is a closed up-set in $\pmb{P}$. We wish to show that $p \in \pmb{P}_{\geqslant \lambda}$ is isolated. If $p$ is not isolated then there are two potential situations:
        \begin{enumerate}[label=(\roman*)]
            \item $p$ is not minimal with respect to the ordering $\leqslant $ in $\pmb{P}_{\geqslant \lambda}$.\label{item:op1}
            \item $p$ is not topological isolated in the underlying space of $\pmb{P}_{\geqslant \lambda}$.\label{item:op2}
        \end{enumerate}
    If \ref{item:op1} fails then it gives the existence of some $q \in \pmb{P}_{\geqslant \lambda}$ with $q < p$. By \cref{defn:dispersion}\ref{dispersion1} this implies that $\chi(q) < \chi(p)$. But $\chi(p) = {\lambda}$, and as such $\chi(q) < \lambda$. In particular, $q \not\in \pmb{P}_{\geqslant \lambda}$, a contradiction. Similarly, if \ref{item:op2} fails, then we appeal to  \cref{defn:dispersion}\ref{dispersion2} which implies the existence of some $s \in \pmb{P}_{\geqslant \lambda}$ with $\chi(s) < \chi(p)$, and again, this forces $\chi(s) < \lambda$ and thus $s \notin \pmb{P}_{\geqslant \lambda}$, a contradiction. Therefore, $p$ is isolated in $\pmb{P}_{\geqslant \lambda}$ and (3) holds for the given $\lambda$.

    In order to finish the proof, it now suffices to verify by transfinite induction on $\lambda \in \Ord$ that $\pmb{P}_{\geqslant \lambda}$ is a closed up-set in $\pmb{P}$. The base induction holds because $\pmb{P}_{\geqslant 0} = \pmb{P}$. Suppose first that $\lambda$ is a successor ordinal. By induction hypothesis, we know that $\pmb{P}_{< \lambda-1}$ is an open down-set in $\pmb{P}$ and that $\pmb{P}_{\lambda-1}$ is an open down-set in $\pmb{P}_{\geqslant \lambda-1}= (\pmb{P}_{< \lambda-1})^c$. This implies that $\pmb{P}_{< \lambda} = \pmb{P}_{< \lambda-1} \cup \pmb{P}_{\lambda-1}$ is an open down-set of $\pmb{P}$, hence the complement $\pmb{P}_{\geqslant \lambda}$ is a closed up-set in $\pmb{P}$. Finally, if $\lambda$ is a limiting ordinal, then $\pmb{P}_{\geqslant \lambda} = \cap_{\mu < \lambda} \pmb{P}_{\geqslant \mu}$. As an intersection of closed up-sets in $\pmb{P}$, it is then itself a closed up-set, as desired. 
\end{proof}

\begin{corollary}\label{cor:strataissub}
Let $\lambda \in \Ord$. Then $\pmb{P}_{\geqslant \lambda}$  is a Priestley subspace of $\pmb{P}$.
\end{corollary}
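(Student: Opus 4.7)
The plan is very short: this corollary is essentially an immediate consequence of two results already available at this point. First, I would recall from \cref{prop:dispersionsbasicproperties}(2) that $\pmb{P}_{\geqslant \lambda}$ is a closed up-set in $\pmb{P}$. This is the content of the main technical work, which was done in the preceding proposition via transfinite induction on $\lambda$.

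Second, I would invoke \cref{lem:subpriestley}, which asserts precisely that any closed up-set of a Priestley space inherits the structure of a Priestley space. Chaining these two statements together yields the conclusion: the underlying topological space of $\pmb{P}_{\geqslant \lambda}$ is a closed subspace of the Stone space $P$, hence compact, Hausdorff, and totally disconnected, and the Priestley separation axiom (\cref{def:spectralorder}) is inherited from $\pmb{P}$ because $\pmb{P}_{\geqslant \lambda}$ is an up-set, so clopen up-sets in $\pmb{P}$ restrict to clopen up-sets in $\pmb{P}_{\geqslant \lambda}$.

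There is no real obstacle here, since the heavy lifting has been absorbed into \cref{prop:dispersionsbasicproperties} and \cref{lem:subpriestley}. The only thing worth flagging is that \cref{lem:subpriestley} was stated as a consequence of the spectral-space result \cite[Theorem 2.1.3]{book_spectralspaces}, so for a reader unfamiliar with that statement one might wish to recall, under $\tau_u$, that closed up-sets of $\pmb{P}$ correspond to closed subspaces of the associated spectral space, and that such subspaces are themselves spectral. In keeping with the short nature of the statement, I would present the proof in a single sentence combining the two references.
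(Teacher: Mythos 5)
Your proof matches the paper's exactly: the paper's one-line argument likewise cites \cref{prop:dispersionsbasicproperties} for $\pmb{P}_{\geqslant\lambda}$ being a closed up-set and then invokes \cref{lem:subpriestley}. The additional remarks you append about unwinding \cref{lem:subpriestley} are accurate but unnecessary for the proof.
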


\begin{proof}
As $\pmb{P}_{\geqslant \lambda}$ is a closed up-set in $\pmb{P}$, this follows from \cref{lem:subpriestley}.
\end{proof}

We stated that the definition of a dispersion was designed to abstract the features of the Thomason filtration on a Priestley space. The next result shows that this is indeed the case.

\begin{lemma}\label{lem:thomasinfiltisdisp}
    Let $\pmb{P} = (P, \leqslant)$ be a Priestley space. Then the Thomason filtration of \cref{defn:cbrank} satisfies \ref{dispersion1} and \ref{dispersion2} of \cref{defn:dispersion}. As such, if the Thomason filtration on $\pmb{P}$ is exhaustive then it is a dispersion on $\pmb{P}$.
\end{lemma}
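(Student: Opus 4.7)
My plan is to verify the two defining conditions of a dispersion directly for the candidate function $\chi = \height_{\pmb{P}}$. The preliminary structural fact I would establish first, by transfinite induction on $\lambda$, is that each $\delta^\lambda_T \pmb{P}$ is a closed up-set of $\pmb{P}$. At a successor stage one removes the Thomason points of $\delta^\lambda_T \pmb{P}$, but these are by definition the isolated minimal points, so they form an open down-set whose complement remains a closed up-set. At a limit ordinal, the intersection of closed up-sets is again a closed up-set. Combined with \cref{lem:subpriestley}, this shows each $\delta^\lambda_T \pmb{P}$ is itself a Priestley space, so that minimality and isolation within it make sense and one can iterate the construction.

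For property (1), suppose $p < q$ and set $\lambda := \chi(p)$, so $p \in \delta^\lambda_T \pmb{P}$. Since $\delta^\lambda_T \pmb{P}$ is an up-set and $p < q$, we also have $q \in \delta^\lambda_T \pmb{P}$. The strict inequality then witnesses that $q$ is not minimal in $\delta^\lambda_T \pmb{P}$, hence $q$ is not a Thomason point of $\delta^\lambda_T \pmb{P}$. Therefore $q \in \delta_T(\delta^\lambda_T \pmb{P}) = \delta^{\lambda+1}_T \pmb{P}$, giving $\chi(q) \geq \lambda + 1 > \chi(p)$.

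For property (2), let $S \subseteq \pmb{P}$ be closed and $a \in S$ be non-isolated in $S$ with $\lambda := \chi(a)$. Assume for contradiction that every $s \in S$ satisfies $\chi(s) \geq \lambda$. Since the set $\{\mu \in \Ord : x \in \delta^\mu_T \pmb{P}\}$ is always downward closed (limit stages being absorbed via the intersection definition), the condition $\chi(s) \geq \lambda$ implies $s \in \delta^\lambda_T \pmb{P}$, so $S \subseteq \delta^\lambda_T \pmb{P}$. By definition of the Thomason height $\chi(a) = \lambda$, the point $a$ lies in $\delta^\lambda_T \pmb{P}$ but not in $\delta^{\lambda+1}_T \pmb{P}$, and is therefore a Thomason point---in particular isolated---in $\delta^\lambda_T \pmb{P}$. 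Subspace topology transitivity (an open singleton in $\delta^\lambda_T \pmb{P}$ cuts down to an open singleton in any $S \subseteq \delta^\lambda_T \pmb{P}$) then makes $\{a\}$ open in $S$, contradicting the non-isolation of $a$ in $S$.

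The main point that requires care is the limit-ordinal bookkeeping in the transfinite induction for the structural lemma, together with the dual observation that $\{\mu : a \in \delta^\mu_T \pmb{P}\}$ is always a closed initial segment of $\Ord$ rather than merely having $\chi(a)$ as an unattained supremum. Once these are in hand the two properties fall out from elementary manipulations, and the final sentence of the lemma---that exhaustiveness of the filtration upgrades this $\Ord$-valued function to an honest dispersion---is immediate from the definition of dispersibility.
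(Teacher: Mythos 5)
Your proof is correct and takes essentially the same conceptual route as the paper's argument, but you execute it more carefully in a couple of places worth noting. For property \ref{dispersion2}, the paper's proof asserts the existence of a convergent sequence $s_i \to a$ in $S$. Since a Stone space need not be first-countable, a non-isolated point of a closed subset does not in general admit a non-trivial convergent sequence; your argument avoids this by working directly with the subspace topology (an open singleton $\{a\}$ in $\delta^{\lambda}_T\pmb{P}$ restricts to an open singleton in $S \subseteq \delta^{\lambda}_T\pmb{P}$), which is valid in full generality. You also make explicit two small facts that the paper uses only implicitly: that each $\delta^\lambda_T\pmb{P}$ is a closed up-set (the paper essentially re-derives this inside \cref{prop:dispersionsbasicproperties}), and that the set $\{\mu : x \in \delta^\mu_T\pmb{P}\}$ is a downward-closed, sup-closed initial segment of $\Ord$, so that $\chi(x)\geqslant\lambda$ is genuinely equivalent to $x \in \delta^\lambda_T\pmb{P}$. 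For property \ref{dispersion1}, your direct argument ($p<q$ with $p\in\delta^{\chi(p)}_T\pmb{P}$ forces $q\in\delta^{\chi(p)}_T\pmb{P}$ and non-minimal there, hence $q\in\delta^{\chi(p)+1}_T\pmb{P}$) is just the contrapositive of the paper's proof by contradiction; the content is identical. One caveat that applies equally to your write-up and to the paper's: when $\chi(a)=\infty$ (equivalently $a\in\delta^\infty_T\pmb{P}$), neither argument produces a Thomason point to work with, so both conditions really only make sense on the exhaustive part of the filtration; the final sentence of the lemma is doing more work than the first sentence suggests.
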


\begin{proof}
    We suppose that there are two points $p < q$ in $P$ such that $\height(p) \geqslant \height(q)$. Then by definition $q$ is minimal and isolated in $\pmb{P}_{\geqslant \height(q)}$, but $p$ is simultaneously less than $q$ and an element of $\pmb{P}_{\geqslant \height(q)}$, a contradiction. As such, the Thomason filtration satisfies \cref{defn:dispersion}\ref{dispersion1}.

    Now, assume that $S \subseteq P$ is a closed subset with $a \in S$ non-isolated but with no $s \in S$ such that $\height(s) < \height(a)$. By assumption there is a convergent sequence $s_i \to a$ in $S$. Consider the subspace $S \cap \pmb{P}_{\geqslant \height(a)}$. This subset is closed in $P$. As $a \in S \cap \pmb{P}_{\geqslant \height(a)}$ is isolated, it follows that infinitely many of the $s_i$ are not in this intersection. In particular, there exists some $s_i$ such that $\height(s_i) < \height(a)$ as required. 

    If the Thomason filtration is exhaustive then it defines a dispersion.
\end{proof}

\begin{remark}
    The proof of \cref{lem:thomasinfiltisdisp} shows us that we can strengthen \cref{defn:dispersion}\ref{dispersion2} to ask for the existence of infinitely many $s \in S$ with $\chi(s) < \chi(a)$.
\end{remark}

\begin{remark}\label{rem:thomasonisuniversal}
While any given Priestley space may admit many dispersions, the Thomason filtration provides the coarsest such dispersion, since it simply proceeds by the greedy algorithm. Thus, for any dispersion $\chi$ we will have $\chi(p) \geqslant \height(p)$ for any $p \in \pmb{P}$.    
\end{remark}

The following result will be an essential ingredient in our prismatic machinery. The corresponding statement for topological spaces can be found, for example in \cite[Proposition 8.1.1]{PPbook}. We also remark that the following is not an if and only if statement; for a counter-example consider the Priestley space $\pmb{C} = (C, =)$ where $C$ is the Cantor space.

\begin{lemma}\label{prop:dispersibleimpliestd}
 Let $\pmb{P} = (P, \leqslant)$ be a dispersible Priestley space, then every point $p \in \pmb{P}$ can be written as the intersection of an open down-set and a closed up-set. We can moreover take the closed up-set to be the upwards closure of $p$, denoted $\bigvee(p)$.
\end{lemma}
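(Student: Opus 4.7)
The plan is to construct the required open down-set directly from the given dispersion $\chi \colon \pmb{P} \to \Ord$. Fix $p \in \pmb{P}$ and set $\lambda = \chi(p)$. My candidate will be
\[
D := \pmb{P}_{<\lambda+1} = \chi^{-1}\bigl(\{\beta \in \Ord \mid \beta \leqslant \lambda\}\bigr),
\]
which is an open down-set of $\pmb{P}$ by \cref{prop:dispersionsbasicproperties}(1). The task then reduces to establishing the equality $\{p\} = D \cap \bigvee(p)$.

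First I would record that $\bigvee(p)$ is automatically a closed up-set and so the proposed intersection has the shape claimed: it is visibly an up-set, and the Priestley separation axiom (\cref{def:spectralorder}) expresses $\bigvee(p)$ as the intersection of all clopen up-sets containing $p$, hence it is closed. So no separate argument for the closed up-set is required beyond what the Priestley structure already gives.

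For the key equality, the inclusion $\{p\} \subseteq D \cap \bigvee(p)$ is immediate, since $p \in \bigvee(p)$ by reflexivity and $\chi(p) = \lambda$ places $p$ in $D$. For the reverse inclusion, suppose $q \in D \cap \bigvee(p)$; then $p \leqslant q$ and $\chi(q) \leqslant \lambda = \chi(p)$. If $p < q$ then \cref{defn:dispersion}\ref{dispersion1} would force $\chi(p) < \chi(q)$, contradicting $\chi(q) \leqslant \chi(p)$; hence $q = p$, and the equality is verified.

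There is essentially no main obstacle, since the heavy lifting has been packaged into \cref{prop:dispersionsbasicproperties} together with the Priestley separation axiom; the proof only uses axiom \ref{dispersion1} of a dispersion explicitly, with axiom \ref{dispersion2} entering implicitly through the proof of \cref{prop:dispersionsbasicproperties} that ensures $D$ is open rather than merely a down-set.
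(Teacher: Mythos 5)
Your proof is correct, and it takes a genuinely different (and arguably more transparent) route than the paper's. The paper argues by observing that the closed up-set $\bigvee(p)$, being itself a Priestley space, must contain a Thomason point $q$, and then shows $q=p$ by a short contradiction; the open down-set is produced implicitly by the definition of a Thomason point. You instead \emph{construct} the open down-set explicitly as $\pmb{P}_{<\chi(p)+1}$, appeal to \cref{prop:dispersionsbasicproperties}(1) for its openness and down-closedness, and then verify the intersection equality directly with a one-line appeal to axiom~\ref{dispersion1} of \cref{defn:dispersion}. The trade-off: your argument makes the dispersion's role in producing the witness visible and avoids the (slightly delicate) claim that the Priestley subspace $\bigvee(p)$ inherits a Thomason point, whereas the paper's proof is shorter if one is comfortable taking the inheritance of Thomason points to closed up-sets for granted. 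Both versions ultimately use condition~\ref{dispersion2} only indirectly, via the openness of the filtration steps. One small expository remark: the fact that $\bigvee(p)$ is a closed up-set is standard Priestley theory and could be cited from \cref{lem:subpriestley} or \cite[1.5.11]{book_spectralspaces} rather than rederived from the separation axiom, but your derivation is correct.
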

\begin{proof}
Consider $p \in \pmb{P}$ and the corresponding closed up-set $\bigvee(p)$. By assumption $\bigvee(p)$ contains a Thomason point, say $q$. We prove that $p=q$. Assume not, then $p \not\in \{q\} = \bigvee(p) \cap \pmb{U}$ for $\pmb{U}$ some open down-set. Then $p \in \pmb{P} \setminus \pmb{U}$ and thus $\bigvee(p) \subseteq  \pmb{P} \setminus \pmb{U}$ and as such $\bigvee(p) \cap \pmb{U} = \varnothing$, a contradiction.  Hence $p$ is Thomason in $\bigvee(p)$ and the result follows.
\end{proof}

This property on points is one that we will constantly need to refer to. As such we shall provide it with a definition.

\begin{definition}\label{defn:locallyclosed}
Let  Let $\pmb{P} = (P, \leqslant)$ be a Priestley space. A point $p \in \pmb{P}$ is \emph{weakly visible} if $p = \bigvee(p)\cap \pmb{U}$ for some  open down-set $\pmb{U}$ in $\pmb{P}$.
\end{definition}

We note that a point in a spectral space $X$ is weakly visible in the sense of \cite{BHS2023} if and only if the point is weakly visible in the associated Priestley space.

\begin{remark}\label{rem:stevenson}
    As remarked in the introduction of this section, the definition of dispersion is inspired by the spectral dimension functions of Stevenson \cite{stevenson_localglobal}. A \emph{spectral dimension function} on a spectral space $X$ is a function $\dim \colon X \to \Ord$ such that:
    \begin{enumerate}
        \item $\dim$ does not take limit ordinal values;
        \item for $x,y \in X$ if $x \in \{y\}$ then $\dim(x) \leqslant \dim(y)$ with equality if and only if $x=y$.
        \item the subset $X_{\leqslant \alpha}$ of those points $x$ with $\dim(x) \leqslant \alpha$ is a Thomason subset of $x$.
    \end{enumerate}
    If we compare this to the definition of a dispersion (after suitably translating the above description of spectral dimension functions to the realm of Priestley spaces) we see that the difference is that for a dispersion the  strata $\pmb{P}_\lambda$ are discrete (\cref{prop:dispersionsbasicproperties}\ref{dispersionsbasicproperties:3}), whereas in a spectral dimension function the strata need  not be discrete. We remark, however, that all examples appearing in \cite{stevenson_localglobal} do have this additional discreteness property.
\end{remark}

\subsection{The examples}

Let us return to the examples that we introduced in \cref{sec:priestley}, and discuss their dispersions. In light of \cref{rem:thomasonisuniversal} we will just discuss the Thomason dispersion of the spaces (provided that it exists).

\begin{example}[\cref{ex:priestleyonepointcpt} continued]\label{ex:nstartisdisp}
We consider the space $(\mathbb{N}^*, =)$. In this case the Thomason dispersion splits the space into the isolated points of $\mathbb{N}^*$ and the accumulation point $\{ \infty \}$:
\[
 (\mathbb{N}^*, =)_0 = \{n\mid n\in \N\} \qquad \text{and} \qquad   (\N^*, =)_1  = \{\infty\}.
\]
In particular, $(\mathbb{N}^*, =)$ is dispersible of height 1. 
\end{example}

\begin{example}[\cref{ex:specz} continued]
In the case of $(\mathbb{N}^*, \leqslant_{2})$, the points $n \in \N$ are all isolated and minimal with respect to $\leqslant$. As such, similar to the case of $(\mathbb{N}^*, =)$ we will have a dispersion of height one given as
\[
 (\mathbb{N}^*, \leqslant_{2})_0 = \{n\mid n\in \N\} \qquad \text{and} \qquad   (\N^*, \leqslant_{2})_1  = \{\infty\}.
\]
The similarity between this dispersion and the one in \cref{ex:nstartisdisp} is exactly what motivated this study. The only difference between the two is the way in which the strata are glued in the reassembly process that we will eventually see.
\end{example}

\begin{example}[\cref{ex:nbar} continued]
    We consider the Priestley space $(\mathbb{N}^*, \leqslant_{3})$. We note that the only minimal element with respect to $\leqslant_{3}$ is the point $\{\infty\}$. However, $\{\infty\}$ is not isolated in $\N^*$. As such, there are no isolated minimal points, and it follows that this Priestley space is not dispersible.
\end{example}

\begin{example}[\cref{ex:speczinv} continued]
We consider the Priestley space $(\mathbb{N}^*, \leqslant_{4})$. We once again note that we have no isolated minimal points at all. Indeed, $\infty$ is the only minimal point with respect to $\leqslant_{4}$, but it is not isolated in $\mathbb{N}^*$. As such $(\mathbb{N}^*, \leqslant_{4})$ cannot be dispersible.
\end{example}

\subsection{Amenable Priestley spaces}\label{subsec:amenablepriestley}
For an arbitrary Priestley space $\pmb{P} = (P, \leqslant)$, there are two natural candidate dispersions:
\begin{enumerate}
	\item One can take the Thomason height of $\pmb{P}$ as defined above.
	\item One can take the Thomason height of the underlying Stone space $P$ or, in other words, the Cantor--Bendixson filtration on $P$ (see \cref{ex:cantorbendixsonisthomason}). This is the Thomason height on the \emph{trivialization of $\pmb{P}$}, denoted $\pmb{P}_{=}$, which we define as $(P, =)$.
\end{enumerate}

In general, these two heights will be wildly different. Indeed, consider the Priestley space of \cref{ex:speczinv}. In this case there is no Thomason dispersion on $\pmb{P}$, but the underlying Stone space of $\N^*$ admits a height one dispersion. However, there are  exceptional cases where the strata of these two dispersions coincide. This happens, for example, in \cref{ex:specz}, and  in our main example of interest in \cref{part:gspectra}. We now provide a formal definition of amenable Priestley spaces for future reference.

\begin{definition}\label{defn:anemabile}
Let $\pmb{P} = (P, \leqslant)$ be a Priestley space. Then we say that $\pmb{P}$ is \emph{amenable} if the strata of the Thomason dispersions (when they exist) on $\pmb{P}$ and $\pmb{P}_{=}$ coincide.
\end{definition}

\begin{remark}
    Given \cref{ex:cantorbendixsonisthomason}, we can reword the property of being amenable by saying that the Thomason dispersion of $\pmb{P} = (P, \leqslant)$ coincides with the Cantor--Bendixson filtration of the underlying Stone space $P$.
\end{remark}

Consequently, for an amenable Priestley space $\pmb{P} = (P,\ \leqslant)$ we see that the topology of $P$ alone determines the properties of the Thomason dispersion.

\section{Priestley spaces in tensor-triangular geometry}\label{sec:priestleyintt}

In the previous sections we have explored the abstract theory of Priestley spaces and dispersions on them. We will now bring the theory of tensor-triangular geometry in the sense of \cite{balmer_spectrum} into the picture. Throughout we will assume the existence of a good model for our tensor-triangulated category so that we can, among other things, discuss homotopy (co)limits. Our enhancement of choice will be via the theory of symmetric monoidal stable $\infty$-categories as developed by Lurie \cite{htt,ha}. In \cref{sec:obstrat} we will want to focus our attention on those tensor-triangulated categories which are rigidly-compactly generated and for these we pick a suitable model in commutative algebras in $\Cat_\infty^\omega$, the $\infty$-category of compactly generated $\infty$-categories. By \cite[Proposition 5.5.7.8]{ha} $\Cat_\infty^\omega$ is equivalent to the $\infty$-category of idempotent complete essentially small $\infty$-categories via passage to the full subcategory of compact objects. The inverse equivalence is provided by passage to ind-objects. 

\subsection{Tensor-triangular geometry}\label{ssec:ttgeometry}

We begin with a rapid review of some basic notions from tensor-triangular geometry, as introduced by Balmer \cite{balmer_spectrum}, and translated into the language of Priestley spaces.

\begin{definition}
Let $\sfK$ be an essentially small tensor-triangulated category. As a set, the \emph{Balmer spectrum} $\Spc(\sfK)$ consists of the prime, thick tensor-ideals of $\sfK$. That is, those proper thick subcategories closed under tensoring with arbitrary objects of $\sfK$ such that if $a \otimes b \in \mathsf{P}$ then $a \in \mathsf{P}$ or $b \in \mathsf{P}$.

For $x \in \sfK$ the support of $x$ is defined as
\[
\supp(x) = \{\mathsf{P} \in \Spc(\sfK) \mid x \not\in \mathsf{P} \}.
\]
As $x$ ranges through the objects of $\sfK$, hese give the basis of closed subsets for the Zariski topology on $\Spc(\sfK)$. 
\end{definition}

The main observation to make is that for any such $\sfK$, the space $\Spc(\sfK)$ is in fact a spectral space. In line with the perspective taken here, we now wish to use the presentation of the Balmer spectrum as a Priestley space. 

\begin{definition}
Let $\sfK$ be an essentially small tensor-triangulated category. The \emph{prism}\footnote{Besides capturing the intuitive geometric interpretation of the concept, the term is an abbreviation for Priestley spectrum.} of $\sfK$ is the Priestley space associated to the Balmer spectrum:
    \[
        \Prism(\sfK) \coloneqq \Pries(\Spc(\sfK)).
    \]
\end{definition}

One may check that for $\mathsf{P}$ and $\mathsf{Q}$ that $\mathsf{P} \leftsquigarrow \mathsf{Q}$ if and only if $\mathsf{P} \subseteq \mathsf{Q}$. Consequently, when looking at the corresponding Priestley space, according to \cref{conv:ordering}, we have $\mathsf{P} \leqslant \mathsf{Q}$ if and only if $\mathsf{P} \subseteq \mathsf{Q}$. This allows us to recast the main classification result of Balmer in terms of prisms:

\begin{proposition}
Let $\sfK$ be an essentially small tensor-triangulated category. Then there is a bijection facilitated by the theory of supports:
\[
\{\text{Thick tensor-ideals of } \sfK \} \overset{\sim}{\longleftrightarrow} \{\text{Open down-sets of } \Prism(\sfK) \}. 
\]
\end{proposition}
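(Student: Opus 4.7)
The plan is to derive this as a direct translation of Balmer's classification theorem through the Priestley space dictionary. Recall that Balmer's original classification states that the assignment
\[
\sfJ \longmapsto \bigcup_{x \in \sfJ} \supp(x)
\]
gives a bijection between thick tensor-ideals of $\sfK$ and Thomason subsets of the spectral space $\Spc(\sfK)$, with inverse sending a Thomason subset $V$ to the tensor-ideal $\{x \in \sfK \mid \supp(x) \subseteq V\}$. So it suffices to identify Thomason subsets of $\Spc(\sfK)$ with open down-sets of $\Prism(\sfK)$.

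This identification is exactly the content of \cref{rem:thomasoninpriest}, which itself follows from \cref{prop:inversetopology} and \cref{lem:invonpriestley}: the Thomason subsets of a spectral space $X$ are by definition the open subsets of the inverse topology $X_{\inv}$, and passage to the inverse topology corresponds under the isomorphism of categories $\Pries\colon \Spectral \xrightarrow{\cong} \Priestley$ to reversing the spectral order. Hence the open sets of $X_{\inv}$ correspond to the open up-sets of the inverse Priestley space $\Pries(X)_{\inv}$, which are precisely the open down-sets of $\Pries(X) = \Prism(\sfK)$.

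Combining these two bijections yields the claimed identification. One should also verify that under \cref{conv:ordering} this is compatible with the specialization convention: since $\mathsf{P} \leftsquigarrow \mathsf{Q}$ iff $\mathsf{P} \subseteq \mathsf{Q}$ (as recalled in the paragraph preceding the statement), the minimal elements of $\Prism(\sfK)$ correspond to minimal primes, and the resulting down-sets are indeed those subsets closed under passage to contained primes, as is consistent with the description of Thomason subsets as unions of supports.

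There is no real obstacle here — the statement is a repackaging of Balmer's classification in the Priestley language, and both inputs (Balmer's theorem and the Thomason/open down-set correspondence) are available by the point of the statement. The only subtlety worth spelling out in a short remark is the order convention: had we followed the opposite convention of \cite{book_spectralspaces}, thick tensor-ideals would correspond to open up-sets rather than down-sets, but under \cref{conv:ordering} the formulation given in the proposition is correct.
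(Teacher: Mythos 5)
Your proof is correct and is exactly the intended argument: Balmer's classification (thick tensor-ideals correspond to Thomason subsets) composed with the dictionary identifying Thomason subsets of $\Spc(\sfK)$ with open down-sets of $\Prism(\sfK)$, which the paper records in \cref{rem:thomasoninpriest}. The paper states the proposition without a separate proof, flagging it as a recasting of Balmer's result, and your write-up fills in precisely that translation.
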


We now introduce the main conceptual definition of this section.

\begin{definition}\label{def:ttdispersible}
Let $\sfK$ be an essentially small tensor-triangulated category. 
    \begin{enumerate}
        \item We say that $\sfK$ is \emph{dispersible} if $\Prism(\sfK)$ is a dispersible Priestley space.
        \item If the dispersion is of finite height $n$, we say that $\sfK$ is \emph{finitely dispersible}.
        \item If $\Prism(\sfK)$ is amenable in the sense of \cref{defn:anemabile}, we will say that $\sfK$ is \emph{amenable}.
    \end{enumerate}
If $\sfK$ arises as the full subcategory $\sfT^{\omega}$ of compact objects in a rigidly-compactly generated $tt$-category $\sfT$, we will apply the same terminology as introduced above also to $\sfT$. In particular, we will refer to a pair $(\sfT,\chi)$ consisting of a rigidly-compactly generated tt-category $\sfT$ and a dispersion $\chi$ on $\Prism(\sfT^{\omega})$ as a \emph{dispersible tt-category}.
\end{definition}

To conclude this section, we will outline a dictionary between language regarding pointset topological definitions for Balmer spectra and the corresponding statement in the prism. We recall that a point of a spectral space is \emph{weakly visible} if it can be written as $x = V \cap W^c$ where $V,W$ are Thomason subsets.

\begin{dictionary}\label{tab:dictionary}
Let $\sfK$ be an essentially small tensor-triangulated category with Balmer spectrum $\Spc(\sfK)$ and prism $\Prism(\sfK)$.
	\begin{itemize}
		\item A subset $V \subseteq \Spc(\sfK)$ is Thomason if and only if it is an open down-set of  $\Prism(\sfK)$.
		\item A point $x \in \Spc(\sfK)$ is Thomason if and only if it is isolated and minimal in $\Prism(\sfK)$.
		\item A point $x \in \Spc(\sfK)$ is weakly visible if and only if it can be written as the intersection of an open down-set and a closed up-set in $\Prism(\sfK)$. That is, it is weakly visible in $\Prism(\sfK)$ (cf. \cref{defn:locallyclosed}).
	\end{itemize}
\end{dictionary}

\subsection{The examples}

Let us introduce some prominent examples of interest. Our first two examples will be of the form $\sfK = \sfD(R)^\omega$ where $R$ is a commutative ring. In this case, the Hopkins--Neemann--Thomason theorem \cite{neemanchromtower, thomasonclassification} says that the Balmer spectrum of $\sfK$ is homeomorphic to the Zariski spectrum $\Spec(R)$, and we will use this without further remark.

\begin{example}\label{ex:running1}
Let $\sfK = \sfD(\mathbb{Z})^\omega$. Then $\Prism(\sfK)$ is isomorphic to the Priestley space given in \cref{ex:specz}, where the accumulation point corresponds to the tt-prime ideal generated by the finitely generated torsion abelian groups. In particular, $ \sfD(\mathbb{Z})^\omega$ is dispersible of height 1.
\end{example}

\begin{example}\label{ex:running2}
Let $k$ be a field equipped with the discrete topology and $R= C(\N^*, k)$ be the ring of continuous functions from $\N^*$ to $k$. Then $\Prism(\sfD(R)^\omega) \cong (\N^*, =)$ as in \cref{ex:priestleyonepointcpt}. As such, $ \sfD(R)^\omega$ is dispersible of height 1.
\end{example}

\begin{example}
Let $\sfK = \mathsf{SH}^\omega$, the category of finite $p$-local spectra for some prime $p$. Then $\Prism(\sfK)$ is the Priestley space  $(\mathbb{N}^*, \leqslant_3) $ introduced in \cref{ex:nbar}. In particular $\mathsf{SH}^\omega$ is not dispersible.
\end{example}

\section{Objectwise local-to-global via dispersions}
\label{sec:obstrat}

We now recall the necessary theory of localizations with respect to certain subsets of the prism. We refer the reader to \cite{balmerfavi_idempotents, bhv} for more details. Recall for $\sfS$ a full subcategory of $\sfT$, the \emph{right-orthogonal} $\sfS^\bot$ is the full subcategory of $\sfT$ on those objects $Y \in \sfT$ such that $\Hom(X,Y) \simeq 0$ for all $X \in \sfS$.

\subsection{Localizations and colocalizations}\label{ssec:localization}

Let $\pmb{U}$ be a open down-set of $\pmb{P}$, i.e., equivalently a Thomason subset of the Balmer spectrum, with  complement $\pmb{U}^c = \pmb{Z}$. We define $\sfT(\pmb{U})_{\tors}$ to be the localizing ideal in $\sfT$ generated by the set of compact objects 
\[\{a \in \sfT^\omega \mid \supp(a) \subseteq \pmb{U}\}\]
and denote the right-orthogonal of $\sfT(\pmb{U})_{\tors}$ by $\sfT(\pmb{Z})$. This admits a further right-orthogonal denoted $\sfT(\pmb{U})_{\comp}$. There are corresponding inclusion functors 
\begin{align*}
\iota_{\tors} &\colon \sfT(\pmb{U})_{\tors} \hookrightarrow \sfT,\\
\iota_{\loc} &\colon \sfT(\pmb{Z}) \hookrightarrow \sfT,\\
\iota_{\comp} &\colon \sfT(\pmb{U})_{\comp} \hookrightarrow \sfT.
\end{align*}
We now recall the relevant results regarding the above \emph{local duality context} formalism from~\cite{bhv}. 

\begin{proposition}\label{prop:localduality}
Let $\sfT$ and $\pmb{U}$ be as above.
\begin{enumerate}
    \item The functor $\iota_{\tors}$ has a right adjoint $\Gamma_{\pmb{U}}$, and the functors $\iota_{\loc}$ and $\iota_{\comp}$ have left adjoints denoted $L_{\pmb{U}}$ and $\Lambda_{\pmb{U}}$ respectively. These induce natural cofibre sequences
    \[\Gamma_{\pmb{U}} X \to X \to L_{\pmb{U}}X \qquad \text{and} \qquad
    V_{\pmb{U}}X \to X \to \Lambda_{\pmb{U}} X\]
    for all $X \in \sfT$. In particular $\Gamma_{\pmb{U}}$ is a colocalization and $L_{\pmb{U}}$ and $\Lambda_{\pmb{U}}$ are localizations.
    \item The functor $L_{\pmb{U}} \colon \sfT \to \sfT(\pmb{Z})$ is a finite and in particular smashing localization, while $\Gamma_{\pmb{U}} \colon \sfT \to \sfT(\pmb{U})_{\tors}$ is a smashing colocalization. In particular, there are natural equivalences 
    \[\Gamma_{\pmb{U}}(X) \simeq X \otimes \Gamma_{\pmb{U}} {\unit} \qquad L_{\pmb{U}}(X) \simeq X \otimes L_{\pmb{U}} {\unit}\]
    for all $X \in \sfT$.
    \item The functors $\Lambda_{\pmb{U}} \iota_{\tors}\colon \sfT(\pmb{U})_{\tors} \to \sfT({\pmb{U}})_{\comp}$ and $\Gamma_{\pmb{U}} \iota_{\comp} \colon \sfT({\pmb{U}})_{\comp} \to \sfT({\pmb{U}})_{\tors}$ are mutually inverse equivalences of stable $\infty$-categories. Moreover, there are natural equivalences of functors
    \[\Lambda_{\pmb{U}} \Gamma_{\pmb{U}} \xrightarrow{\sim} \Lambda_{\pmb{U}} \qquad \Gamma_{\pmb{U}} \xrightarrow{\sim} \Gamma_{\pmb{U}} \Lambda_{\pmb{U}}.\]
    \item When viewed as endofunctors on $\sfT$ via the inclusions, the functors $(\Gamma_{\pmb{U}}, \Lambda_{\pmb{U}})$ form an adjoint pair in that there is a natural equivalence
    \[\hom(\Gamma_{\pmb{U}} X, Y) \simeq \hom(X, \Lambda_{\pmb{U}} X)\]
    for all $X,Y \in \sfT$. In particular we have $\hom(\Gamma_{\pmb{U}} \unit, Y) \simeq \Lambda_{\pmb{U}} Y$.
    \item For every $X \in \sfT$  there is a homotopy bicartesian square
    \[
    \xymatrix{X \ar[r] \ar[d] & \Lambda_{\pmb{U}} X \ar[d] \\ L_{\pmb{U}}X \ar[r] & L_{\pmb{U}} \Lambda_{\pmb{U}} X \,.}
    \]
    whose horizontal and vertical fibres are $V X$ and $\Gamma X$ respectively.
\end{enumerate}
\end{proposition}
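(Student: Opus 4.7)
The plan is to identify the setup with the abstract local duality framework of \cite{bhv}, from which the entire package (1)--(5) is a formal consequence. The only input required by that framework is a compactly generated localizing tensor-ideal inside a rigidly-compactly generated tt-category. In our case $\sfT(\pmb{U})_{\tors}$ is localizing by construction; it is generated, as a localizing subcategory, by the set $\{a \in \sfT^\omega \mid \supp(a) \subseteq \pmb{U}\}$ of compact objects of $\sfT$, which gives compact generation; and this set is a tensor-ideal in $\sfT^\omega$ because $\supp(a \otimes b) = \supp(a) \cap \supp(b) \subseteq \pmb{U}$ for any $a$ in the set and any $b \in \sfT^\omega$, with rigidity of $\sfT$ upgrading this to stability under tensoring with arbitrary objects of $\sfT$.

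Given this input, the individual statements follow by standard arguments. First, Brown representability applied to $\sfT(\pmb{U})_{\tors}$ produces the right adjoint $\Gamma_{\pmb{U}}$ to $\iota_{\tors}$, and $L_{\pmb{U}}$ is then defined as $\cofib(\Gamma_{\pmb{U}} X \to X)$; a second application to the (again compactly generated) right-orthogonal $\sfT(\pmb{Z})$ yields $\Lambda_{\pmb{U}}$ together with the companion cofiber sequence, establishing (1). Compact generation of $\sfT(\pmb{U})_{\tors}$ by tensor-ideal generators forces $\Gamma_{\pmb{U}}$ to preserve colimits and hence to be smashing, and the projection formula $\Gamma_{\pmb{U}}(X) \simeq X \otimes \Gamma_{\pmb{U}}(\unit)$ drops out by comparing two coproduct-preserving exact endofunctors of $\sfT$ that agree on $\unit$; the statement for $L_{\pmb{U}}$ then follows from the cofiber sequence, giving (2). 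The equivalence (3) reduces to the two identities
\[
\Gamma_{\pmb{U}} \Lambda_{\pmb{U}} \simeq \Gamma_{\pmb{U}} \quad \text{and} \quad \Lambda_{\pmb{U}} \Gamma_{\pmb{U}} \simeq \Lambda_{\pmb{U}},
\]
each obtained by applying the relevant (co)localization to the cofiber sequences of (1) and using that $V_{\pmb{U}} X$ and $L_{\pmb{U}} X$ lie in $\sfT(\pmb{Z}) \subseteq \ker \Gamma_{\pmb{U}} \cap \ker \Lambda_{\pmb{U}}$. The adjunction (4) is then a three-step rewriting using $(\iota_{\tors}, \Gamma_{\pmb{U}})$-adjointness, the equivalence (3), and $(\Lambda_{\pmb{U}}, \iota_{\comp})$-adjointness. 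Finally, the square (5) arises from the map of cofiber sequences induced by $X \to \Lambda_{\pmb{U}} X$, whose leftmost vertical map is the equivalence $\Gamma_{\pmb{U}} X \simeq \Gamma_{\pmb{U}} \Lambda_{\pmb{U}} X$ from (3); stability then forces the right square to be bicartesian, and its vertical and horizontal fibres are read off as $\Gamma_{\pmb{U}} X$ and $V_{\pmb{U}} X$ respectively.

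The main technical subtlety, and essentially the only point at which the hypotheses must be used delicately, is the identification $\ker \Lambda_{\pmb{U}} = \sfT(\pmb{Z})$ that underwrites the containment $V_{\pmb{U}} X \in \sfT(\pmb{Z})$ in the arguments above. This reduces to the double-orthogonality statement ${}^\perp(\sfT(\pmb{Z})^\perp) = \sfT(\pmb{Z})$, which in turn is guaranteed by the rigidly-compactly generated structure of $\sfT$ via standard presentability and Bousfield-localization yoga. Everything else is a formal consequence of having a recollement of stable $\infty$-categories with smashing outer functors.
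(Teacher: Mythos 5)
Your proposal is correct and follows essentially the same route as the paper: the paper itself offers no new argument for this proposition but simply recalls it as a consequence of the local duality formalism of Barthel--Heard--Valenzuela (\cite{bhv}), after observing that $\sfT(\pmb{U})_{\tors}$ is a localizing tensor-ideal generated by compact objects. You verify the hypotheses (that the generating set of compacts is a thick tensor-ideal, using $\supp(a\otimes b)=\supp(a)\cap\supp(b)$ and rigidity) and then sketch the standard derivations, which matches the intended reading.
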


This can be reorganized into recollement (\cite{BBD1981}), the definition of which we now recall for the convenience of the reader following \cite[Definition A.8.1]{ha}.

\begin{definition}
    Let $\mathsf{C}$ be a presentable stable $\infty$-category. A \emph{stable recollement} of $\mathsf{C}$ is a diagram
    \[
\xymatrix@C=20mm{
\mathsf{C}_0 \ar@{<-}[r]|{j^\ast} \ar@<-2ex>@{^(->}[r]_{j_\ast}  \ar@<2ex>@{^(->}[r]^{j_!}  & \mathsf{C} \ar@<2ex>@{->}[r]^{i^\ast} \ar@<0ex>@{<-^)}[r]|{i_\ast} \ar@<-2ex>@{->}[r]_{i^!}  & \mathsf{C}_1 
} 
\]
of adjunctions between presentable stable $\infty$-categories such that
\[
\mathrm{im}(j_!) = \mathrm{ker}(i^\ast), \quad \mathrm{im}(i_\ast) = \mathrm{ker}(j^\ast), \quad \mathrm{im}(j_\ast) = \mathrm{ker}(i^!). 
\]
Since all recollements appearing in this paper are stable, we will drop this adjective from now on.
\end{definition}

The data of a local duality context is equivalent to a recollement; for an explicit reference recording this observation, see \cite[Proposition 2.12]{BHV2}. The recollement involving the torsion subcategory then takes the following form:
    \begin{equation}\label{eq:recollementform}
    \xymatrix@C=20mm{
    \sfT(\pmb{U})_{\tors} \ar@{<-}[r]|-{\Gamma_{\pmb{U}}} \ar@<-2ex>@{^(->}[r]_-{\iota_{\comp}\circ \Lambda_{\pmb{U}}}  \ar@<2ex>@{^(->}[r]^-{\iota_{\tors}}  & \sfT \ar@<2ex>@{->}[r]^-{L_{\pmb{U}}} \ar@<0ex>@{<-^)}[r]|-{\iota_{\loc}} \ar@<-2ex>@{->}[r]_-{V_{\pmb{U}}}  & \sfT(\pmb{Z}) \rlap{.}} 
    \end{equation}
There is a similar and equivalent recollement using the complete subcategory $\sfT(\pmb{U})_{\comp}$ in place of $\sfT(\pmb{U})_{\tors}$.

\begin{definition}\cite{greenlees_axiomatic}\label{defn:tateconstruction}
The \emph{Tate construction} is defined as the lax symmetric monoidal functor $t_{\pmb{U}} = L_{\pmb{U}} \Lambda_{\pmb{U}} \colon \sfT(\pmb{U})_{\tors} \to \sfT(\pmb{Z})$. When convenient we may view $t_{\pmb{U}}$ as an endofunctor on $\sfT$. 
\end{definition}

\begin{remark}\label{rem:warwickduality}
    By \emph{Warwick duality} we have an alternative description of the Tate construction as $t_{\pmb{U}} = L_{\pmb{U}} \Lambda_{\pmb{U}} \simeq \Sigma V_{\pmb{U}} \Gamma_{\pmb{U}}$ (\cite[Corollary 2.5]{greenlees_axiomatic}), reflecting the `complete' counterpart of the recollement \eqref{eq:recollementform}.
\end{remark}

A fact that will play an important role in the following sections is that the Tate construction encodes the entire recollement. 

\begin{proposition}[{\cite[Remark A.8.12]{ha}}]\label{cor:homotopy-cartesian-height1}
Suppose that $\sfT$ is equipped with an enhancement as a stable symmetric monoidal $\infty$-category and consider an open down-set $\pmb{U}$ of $\Prism(\sfT^\omega)$ with complement $\pmb{U}^c = \pmb{Z}$. Then there is a homotopy pullback
\begin{equation}\label{eq:cospan}
\begin{gathered}
\xymatrix{
\sfT \ar[r] \ar[d] & \sfT(\pmb{U}) \ar[d]^{t_{\pmb{U}} } \\ \Fun([1] , \sfT(\pmb{Z})) \ar[r]_-{\pi_1} & \sfT(\pmb{Z}) \,.
}
\end{gathered}
\end{equation}
The equivalence between $\sfT$ and the homotopy pullback of the cospan sends an object $X \in \sfT$ to the diagram
\[
\xymatrix{
& \Lambda_{\pmb{U}}X \ar[d] \\
(L_{\pmb{U}}X \to L_{\pmb{U}} \Lambda_{\pmb{U}}X) \ar[r] & L_{\pmb{U}} \Lambda_{\pmb{U}}X \rlap{.}
}
\]
\end{proposition}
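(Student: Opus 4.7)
The plan is to derive this pullback decomposition from the recollement displayed in \cref{eq:recollementform} via the standard gluing description of stable recollements, essentially as a specialization of Lurie's general framework in \cite[\S A.8]{ha}. Since \cref{prop:localduality} already supplies the relevant adjunctions, the fracture square, and the identification $t_{\pmb{U}} = L_{\pmb{U}} \Lambda_{\pmb{U}}$, the task is to repackage the existing data.

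First, I would unfold what an object of the pullback is. The functor $\pi_1 \colon \Fun([1], \sfT(\pmb{Z})) \to \sfT(\pmb{Z})$ evaluates an arrow at its target, so a point of the pullback is a triple $(Y, A, f\colon A \to t_{\pmb{U}} Y)$ with $Y \in \sfT(\pmb{U})$ and $A \in \sfT(\pmb{Z})$. The candidate comparison functor $F \colon \sfT \to \sfT(\pmb{U}) \times_{\sfT(\pmb{Z})} \Fun([1], \sfT(\pmb{Z}))$ then sends $X$ to the triple $(\Lambda_{\pmb{U}} X,\ L_{\pmb{U}} X,\ L_{\pmb{U}}(\eta_X))$, where $\eta_X \colon X \to \Lambda_{\pmb{U}} X$ is the completion unit and $L_{\pmb{U}}(\eta_X) \colon L_{\pmb{U}} X \to L_{\pmb{U}} \Lambda_{\pmb{U}} X = t_{\pmb{U}}(\Lambda_{\pmb{U}} X)$. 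This is well-defined because the fracture square of \cref{prop:localduality}(5) is a homotopy pullback.

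Second, I would construct a candidate inverse $G$ sending a triple $(Y, A, f)$ to the pullback inside $\sfT$ of $\iota_{\loc}(A) \to \iota_{\loc}(t_{\pmb{U}} Y) \leftarrow \iota_{\comp}(Y)$, where the left map is $\iota_{\loc}(f)$ and the right map is the natural localization comparison for $Y$. To verify that $F$ and $G$ are mutually inverse, the fracture square of \cref{prop:localduality}(5) identifies $G \circ F(X) \simeq X$. Conversely, applying $L_{\pmb{U}}$ and $\Lambda_{\pmb{U}}$ to the defining pullback of $G(Y,A,f)$, using that $L_{\pmb{U}}$ is smashing and that $\Lambda_{\pmb{U}} \iota_{\loc} \simeq 0$ while $\Lambda_{\pmb{U}} \iota_{\comp} \simeq \id$, one extracts natural equivalences $\Lambda_{\pmb{U}} G(Y,A,f) \simeq Y$ and $L_{\pmb{U}} G(Y,A,f) \simeq A$ with the correct comparison map; combining these yields $F \circ G \simeq \id$.

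The main obstacle is ensuring these identifications are coherent at the $\infty$-categorical level rather than merely on homotopy categories, including functoriality of the pullback construction and the required higher coherences. Since this is a purely formal consequence of the recollement structure, the cleanest route is to invoke the general fact from \cite[\S A.8]{ha}: any stable recollement exhibits its ambient category as a homotopy pullback along its gluing functor. One then identifies the gluing functor of \cref{eq:recollementform} with $t_{\pmb{U}}$ via Warwick duality (\cref{rem:warwickduality}), which delivers both the pullback square and the explicit formula for the equivalence on objects recorded in the statement.
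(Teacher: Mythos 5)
Your proposal is correct and essentially reproduces the paper's argument: the paper gives no independent proof but simply cites Lurie's \cite[Remark A.8.12]{ha}, specializing the general statement that any stable recollement exhibits the ambient category as a pullback along the gluing functor, exactly as you do in your final paragraph, and your explicit description of the comparison functor matches the formula recorded in the proposition. One small imprecision: the identification of the gluing functor $i^\ast j_\ast$ of the recollement \eqref{eq:recollementform} with $t_{\pmb U}$ is immediate from the definitions ($i^\ast j_\ast = L_{\pmb U}\circ\iota_{\comp}\Lambda_{\pmb U}$ restricted to torsion objects is $L_{\pmb U}\Lambda_{\pmb U} = t_{\pmb U}$), so Warwick duality (\cref{rem:warwickduality}) is not actually needed at that step.
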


\subsection{Support, strata, and local factors}\label{ssec:support+strata}

One can use the (idempotent functors) $\Gamma_{\pmb{U}}$ and $L_{\pmb{U}}$ to extend the theory of supports from just the compact objects to all objects. The idea is that the image of $\Gamma_{\pmb{U}}$ should ``detect objects supported at ${\pmb{U}}$'' while 
$L_{\pmb{U}}$ should ``detect objects supported away from ${\pmb{U}}$''. As such, if $x$ is weakly visible in $\Prism(\sfT^\omega)$, i.e., we can isolate it as the intersection  ${\pmb{U}} \cap {\pmb{V}}^c$ where $\pmb{U}$ and $\pmb{V}$ are open down-sets, then we can we can detect support ``exactly at $x$'' by considering the image of $\Gamma_{\pmb{U}}L_{\pmb{V}}$; we denote this functor by $\Gamma_x$. We refer to $\Gamma_x \sfT$ and its objects as the \emph{local factors (at $x$)}.

\begin{warning}
It is essential to alert the reader that while this notation is standard in tensor-triangular geometry, it contradicts ordinary usage in commutative algebra where $\Gamma_\wp$ refers to support in primes contained in $\wp$, and where $\Lambda_\wp$ is the derived completion at $\wp$. Similarly, for an abelian group $M$,  $L_{(p)}M=M[1/p]$ (localizing away from $p$) which is of course very different from $M_{(p)}$ (localizing at $p$).
\end{warning}

More generally, for any two open down-sets $\pmb{U},\pmb{V} \subseteq \pmb{P}$  with $ \pmb{X} \coloneqq {\pmb{U}} \cap {\pmb{V}}^c$ we can consider the idempotent functor $\Gamma_{\pmb{X}}= \Gamma_{\pmb{U}}L_{\pmb{V}}$. As we will explain next, the construction of $\Gamma_{\pmb{X}}$ is independent of the ambient prism in an appropriate sense. Indeed, write $\pmb{P} = (P, \leqslant)\coloneqq \Prism(\sfT^\omega)$ for the prism of $\sfT$ and pick $\pmb{V} \subseteq \pmb{Z}$ an open down-set of $\pmb{Z}$. With this set-up we can consider the local category $\sfT(\pmb{Z})$, and inside here consider the idempotent functor $\Gamma_{\pmb{V}}^{\pmb{Z}}$. Alternatively, we can form the down-closure $\pmb{V}'$ of $\pmb{V}$ in the ambient Priestley space $\pmb{P}$. Let $\pmb{V} = \pmb{U}^c \cap \pmb{V}'$ be  the intersection in $\pmb{P}$ and form the idempotent functor $\Gamma_{\pmb{V}}^{\pmb{P}} = \Gamma_{\pmb{V}'}^{\pmb{P}}L_{\pmb{U}}^{\pmb{P}}$ on $\sfT$. We will prove that these two functors $\Gamma_{\pmb{V}}^{\pmb{Z}}$ and $\Gamma_{\pmb{V}}^{\pmb{P}}$, and consequently their essential images, are naturally equivalent. A version of this result was first proven by Stevenson under the additional assumption that $\pmb{P}$ is Noetherian and $\pmb{V}$ is a point:

\begin{proposition}[Stevenson]\label{prop:base_change}
    Let $\sfT$, $\pmb{P}$, $\pmb{U}$, $\pmb{Z} = \pmb{U}^c$, and $\pmb{V}$ be as above. The finite localization functor $L_{\pmb{U}}\colon \sfT \to \sfT(Z)$ induces an equivalence
        \[
            \xymatrix{\Gamma_{\pmb{V}}^{\pmb{P}} \sfT \ar[r]^-{\sim} & \Gamma_{\pmb{V}}^{\pmb{Z}} \sfT(\pmb{Z}).}
        \]
\end{proposition}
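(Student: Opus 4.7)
The plan is to exhibit an explicit inverse to $L_{\pmb{U}} \colon \Gamma_{\pmb{V}}^{\pmb{P}}\sfT \to \Gamma_{\pmb{V}}^{\pmb{Z}}\sfT(\pmb{Z})$, namely the composite $\Gamma_{\pmb{V}'}^{\pmb{P}}\iota_{\loc}$, and verify that both round-trips are naturally equivalent to the identity. The crucial observation that makes this work is that on objects already lying in $\sfT(\pmb{Z})$ the functor $L_{\pmb{U}}$ acts as the identity, while on objects whose support is contained in $\pmb{V}'$ the colocalization $\Gamma_{\pmb{V}'}^{\pmb{P}}$ is also an equivalence (both facts from \cref{prop:localduality}). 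The two functors are thus essentially restrictions of the identity to different realizations of ``objects supported on $\pmb{V}$''.

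My first step is to check that both functors have the claimed targets. Given $X = \Gamma_{\pmb{V}'}^{\pmb{P}}L_{\pmb{U}}^{\pmb{P}}(Y) \in \Gamma_{\pmb{V}}^{\pmb{P}}\sfT$, smashness of $L_{\pmb{U}}$ together with the smashing colocalization $\Gamma_{\pmb{V}'}^{\pmb{P}}$ commuting with $L_{\pmb{U}}$ shows that $X \in \sfT(\pmb{Z})$, so $L_{\pmb{U}}(X) \simeq X$, and the support computation $\supp_{\pmb{P}}(X) \subseteq \pmb{V}' \cap \pmb{Z} = \pmb{V}$ places $X$ in $\Gamma_{\pmb{V}}^{\pmb{Z}}\sfT(\pmb{Z})$ under the identification $\Prism(\sfT(\pmb{Z})^{\omega}) \simeq \pmb{Z}$. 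Conversely, for $W \in \Gamma_{\pmb{V}}^{\pmb{Z}}\sfT(\pmb{Z})$ the object $\Gamma_{\pmb{V}'}^{\pmb{P}}(W)$ is both in the image of $\Gamma_{\pmb{V}'}^{\pmb{P}}$ and in $\sfT(\pmb{Z})$ (since $\Gamma_{\pmb{V}'}^{\pmb{P}}$ commutes with $L_{\pmb{U}}$), hence lies in the image of $\Gamma_{\pmb{V}'}^{\pmb{P}}L_{\pmb{U}}^{\pmb{P}} = \Gamma_{\pmb{V}}^{\pmb{P}}$.

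The second step is to verify the two composites. The composite $L_{\pmb{U}} \circ \Gamma_{\pmb{V}'}^{\pmb{P}}\iota_{\loc}$ on $W \in \Gamma_{\pmb{V}}^{\pmb{Z}}\sfT(\pmb{Z})$ equals $\Gamma_{\pmb{V}'}^{\pmb{P}}(W)$, and I claim this is just $W$. It suffices to show $L_{\pmb{V}'}^{\pmb{P}}(W) \simeq 0$; since $\Gamma_{\pmb{V}}^{\pmb{Z}}\sfT(\pmb{Z})$ is compactly generated by objects of the form $L_{\pmb{U}}(a)$ with $a \in \sfT^{\omega}$ and $\supp_{\pmb{P}}(a) \cap \pmb{Z} \subseteq \pmb{V}$, I check it on these generators:
\[
L_{\pmb{V}'}^{\pmb{P}}L_{\pmb{U}}^{\pmb{P}}(a) \simeq L_{\pmb{V}' \cup \pmb{U}}^{\pmb{P}}(a) \simeq 0,
\]
the last equivalence because $\supp_{\pmb{P}}(a) = (\supp_{\pmb{P}}(a) \cap \pmb{U}) \cup (\supp_{\pmb{P}}(a) \cap \pmb{Z}) \subseteq \pmb{U} \cup \pmb{V} \subseteq \pmb{U} \cup \pmb{V}'$. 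The reverse composite $\Gamma_{\pmb{V}'}^{\pmb{P}}\iota_{\loc} \circ L_{\pmb{U}}$ on $X \in \Gamma_{\pmb{V}}^{\pmb{P}}\sfT$ collapses to $\Gamma_{\pmb{V}'}^{\pmb{P}}(X) \simeq X$, using $L_{\pmb{U}}X \simeq X$ (as $X \in \sfT(\pmb{Z})$) and the fact that $X$ is already in the image of $\Gamma_{\pmb{V}'}^{\pmb{P}}$.

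The main obstacle lies in the identification of compact generators of $\Gamma_{\pmb{V}}^{\pmb{Z}}\sfT(\pmb{Z})$ with images under $L_{\pmb{U}}$ of compact objects $a \in \sfT^{\omega}$ with $\supp_{\pmb{P}}(a) \cap \pmb{Z} \subseteq \pmb{V}$, and in the support-compatible identification $\Prism(\sfT(\pmb{Z})^{\omega}) \simeq \pmb{Z}$. Both are standard consequences of $L_{\pmb{U}}$ being a finite localization (so that $L_{\pmb{U}}$ preserves compact objects and $\sfT(\pmb{Z})^{\omega}$ is the Verdier quotient of $\sfT^{\omega}$ by the thick ideal of compacts supported on $\pmb{U}$), but these are the points where care must be exercised with the ambient framework of rigidly-compactly generated $\infty$-categories.
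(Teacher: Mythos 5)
Your proposal is correct in spirit and takes a genuinely different route from the paper. The paper's proof is a two-line reduction: since everything is a full subcategory of $\sfT$ and all functors in sight are smashing, it suffices by the projection formula to prove a single equivalence of idempotents $L_{\pmb{U}}\Gamma_{\pmb{V}}^{\pmb{P}}\unit \simeq \Gamma_{\pmb{V}}^{\pmb{Z}}\unit$, which is then delegated to the base-change formulas for idempotents in \cite[Lemma 1.27 and Remark 1.28]{BHS2023}. You instead construct an explicit quasi-inverse $\Gamma_{\pmb{V}'}^{\pmb{P}}\iota_{\loc}$ and check both round trips, reducing the key cancellation $L_{\pmb{V}'}^{\pmb{P}}W \simeq 0$ to the compact generators $L_{\pmb{U}}(a)$. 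In effect you are re-deriving the cited idempotent base-change by hand; this is more self-contained but requires tracking the identification of compacts of $\sfT(\pmb{Z})$ and their supports, which the paper gets for free from the citation.

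One small imprecision deserves flagging. In your first step you write that the support computation $\mathrm{Supp}_{\pmb{P}}(X)\subseteq \pmb{V}$ ``places $X$ in $\Gamma_{\pmb{V}}^{\pmb{Z}}\sfT(\pmb{Z})$''. For a general $X$, containment of Balmer--Favi support in a Thomason subset does not by itself imply membership in the associated torsion ideal (that implication is part of the local-to-global principle, which is not being assumed here). The fix is short and in the same spirit as your Step 2: $X = \Gamma_{\pmb{V}'}^{\pmb{P}}L_{\pmb{U}}Y$ lies in $\Loc^{\otimes}(\sfT^{\omega}(\pmb{V}'))$ by definition of the colocalization; since $L_{\pmb{U}}$ is a coproduct-preserving smashing localization and sends each generator $a$ with $\supp(a)\subseteq\pmb{V}'$ to $L_{\pmb{U}}(a)\in\sfT(\pmb{Z})^{\omega}$ with $\supp_{\pmb{Z}}(L_{\pmb{U}}a)\subseteq\pmb{V}$, one gets $X = L_{\pmb{U}}X \in \Loc^{\otimes}_{\sfT(\pmb{Z})}\bigl(\sfT(\pmb{Z})^{\omega}(\pmb{V})\bigr) = \Gamma_{\pmb{V}}^{\pmb{Z}}\sfT(\pmb{Z})$. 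With this substitution your argument is complete and correct; the paper's route is shorter chiefly because it absorbs this kind of bookkeeping into the cited lemma.
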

\begin{proof}
    This is essentially \cite[Proposition 8.4]{stevenson_13}, and the argument there extends to the case considered here. For convenience, we will sketch the proof. Since all categories involved are full subcategories of $\sfT$, it suffices to verify the claim objectwise on the level of homotopy categories. The projection formula for $L_{\pmb{U}}$ then reduces further to showing that there is an equivalence of idempotents 
        \[  
            L_{\pmb{U}}\Gamma_{\pmb{V}}^{\pmb{P}}\unit \simeq \Gamma_{\pmb{V}}^{\pmb{Z}}\unit.
        \]
    Unwinding the definitions, this follows from the base-change formulas for idempotents established in the generality needed here in \cite[Lemma 1.27 and Remark 1.28]{BHS2023}.
\end{proof}

In order to have a good support theory for big objects, we would like every point of the prism to be weakly visible. We recall from \cref{prop:dispersibleimpliestd} that if the prism is a dispersible Priestley space, then all points are weakly visible, and as such we can discuss support at all points. From now on, we will assume that all points of our prism are weakly visible. The next definition recovers the Balmer--Favi notion of support \cite{balmerfavi_idempotents}, as extended to the weakly visible setting in \cite{BHS2023}:

\begin{definition}\label{def:bigsupport}
Let $\sfT$ be a rigidly-compactly generated tensor-triangulated category and $X \in \sfT$ an arbitrary object. Then we define the \emph{support} of $X$ as the set
\[
\mathrm{Supp}(X) = \{ x \in \Prism(\sfT^\omega) \mid \Gamma_x X \neq 0 \}.
\] 
\end{definition}

Let $\pmb{P} = \Prism(\sfT^{\omega})$ and consider a dispersion $\chi\colon \pmb{P} \to \Ord$ on $\sfT$. The resulting filtration on $\pmb{P}$ of \cref{ssec:dispersions} can then be lifted to the categorical level via support, as follows. For any $\lambda \in \Ord$, define $\sfT_{<\lambda} \coloneqq \sfT(\pmb{P}_{<\lambda})$ as the full subcategory of $\sfT$ of all objects supported in the open down-set $\pmb{P}_{<\lambda}$ of $\pmb{P}$. It then follows from \cref{prop:localduality} that there is a finite localization 
    \begin{equation}\label{eq:finitelocalization}
        \sfT \to \sfT_{\geqslant \lambda} \coloneqq \sfT(\pmb{P}_{\geqslant\lambda})
    \end{equation}
with kernel $\pmb{P}_{<\lambda}$. By construction, this localization functor induces an identification of the prism of $\sfT_{\geqslant \lambda}$ with $\pmb{P}_{\geqslant\lambda}$, i.e., 
    \begin{equation}\label{eq:prism_identification}
       \xymatrix{\Prism(\sfT_{\geqslant \lambda}^{\omega}) \ar[r]^-{\cong} & \pmb{P}_{\geqslant\lambda} \subseteq \pmb{P}.} 
    \end{equation}
Finally, we call the full subcategory $\sfT(\pmb{P}_{\lambda}) = \sfT_{\lambda} \subseteq \sfT_{\geqslant \lambda} \subseteq \sfT$ of all objects supported in $\pmb{P}_{\lambda}$ the \emph{stratum} of $\sfT$ at $\lambda$. Note that, in light of \cref{prop:base_change}, the stratum is independent of the ambient category.

\subsection{The local-to-global principle}\label{ssec:localtoglobal}

Now that we have recalled the theory of support for a rigidly-compactly generated tensor-triangulated category, we can begin to assemble the pieces of our theory in full. If $X \in \sfT$, one may ask when $X$ can be reconstructed from its ``pieces supported at individual primes'', that is, when can $X$ be built from the $\Gamma_x X$. This is exactly the statement of the local-to-global principle, as introduced by Benson--Iyengar--Krause \cite{bik11} and Stevenson \cite{stevenson_13}. 

\begin{definition}
Let $\sfT$ be a rigidly-compactly generated tensor-triangulated category. We say that $\sfT$ satisfies the \emph{local-to-global principle} if for any object $X \in \sfT$, $X \in \Loc^\otimes (\Gamma_x X \mid x \in \mathrm{Supp}(X))$.
\end{definition}

If $\sfT$ satisfies the local-to-global principle, then any object in $\sfT$ can be reconstructed from its local factors. To highlight the importance of this property, we note that the local-to-global principle in conjunction with mininality of the local factors $\Gamma_x \sfT$ is equivalent to the classification of localizing ideals of $\sfT$ via arbitrary subsets of the Balmer spectrum \cite{BHS2023}.

Recall from \cref{rem:stevenson} that the notion of a dispersion gives rise to a \emph{spectral dimension function} on the associated spectral space. The relevance of this comes from the following theorem of Stevenson, which is the first place we make use of the existence of a monoidal model\footnote{Under the additional hypothesis that the spectrum is Noetherian, the local-to-global principle is proven without the assumption of the existence of a model in \cite[Theorem 3.22]{BHS2023}. We suspect that the methods there can be extended to cover the generality of \cref{prop:localtoglobal} as well. However, we remark that Stevenson in fact proves a stronger version of the local-to-global principle for localizing subcategories as opposed to localizing ideals.} for our tensor-triangulated category:

\begin{proposition}[Stevenson]\label{prop:localtoglobal}
Let $\sfT$ be a rigidly-compactly generated tensor-triangulated category admitting a model. If $\sfT$ is dispersible, then $\sfT$ satisfies the local-to-global principle.
\end{proposition}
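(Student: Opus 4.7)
The plan is to translate our setup into the framework of Stevenson's spectral dimension functions, as alluded to in \cref{rem:stevenson}, and then invoke Stevenson's local-to-global theorem from \cite{stevenson_localglobal}.

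First I would show that the dispersion $\chi\colon \Prism(\sfT^\omega)\to\Ord$ can be massaged into a spectral dimension function $\dim\colon \Spc(\sfT^\omega)\to\Ord$. The three axioms to verify are: $\dim$ avoids limit ordinal values, strict specialization is sent to strict inequality, and the sublevel sets $\Spc(\sfT^\omega)_{\leqslant \alpha}$ are Thomason. The second is immediate from \cref{defn:dispersion}(1) under the identification $p\leqslant q \Leftrightarrow q\leftsquigarrow p$ from \cref{ssec:ttgeometry}. The third is the main substantive point and follows directly from \cref{prop:dispersionsbasicproperties}(1), which says $\pmb{P}_{<\lambda+1}$ is an open down-set of the prism, combined with the dictionary of \cref{tab:dictionary} identifying open down-sets with Thomason subsets. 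The first axiom can be arranged by composing $\chi$ with a canonical order-preserving reindexing $\Ord\to\Ord$ that skips limit ordinals; this preserves the strict monotonicity needed for the other two axioms.

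Next I would feed this spectral dimension function into Stevenson's machinery. Stevenson's theorem decomposes an arbitrary object $X\in\sfT$ via a transfinite tower of finite localizations associated to the filtration $\sfT_{\geqslant \lambda}$ defined in \eqref{eq:finitelocalization}, with successive fibres controlled by the strata $\sfT_{\lambda}$. The key additional input, which is stronger than what Stevenson's definition abstractly requires but crucial here, is that each stratum $\pmb{P}_\lambda$ is discrete by \cref{prop:dispersionsbasicproperties}\ref{dispersionsbasicproperties:3}. This forces $\sfT_\lambda$ to split (up to completion) as a product indexed by the points of $\pmb{P}_\lambda$, so that support in the stratum is tautologically governed by the local factors $\Gamma_x$ for $x\in\pmb{P}_\lambda$. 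Combining these observations with the recollement of \cref{prop:localduality} then expresses $X$, by transfinite induction on $\lambda$, as built from the $\Gamma_x X$ using colimits and extensions.

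The main obstacle is the passage through limit ordinals in the transfinite induction: one must check that if the local-to-global principle holds in $\sfT_{\geqslant \mu}$ for every $\mu<\lambda$, it descends to $\sfT_{\geqslant \lambda}$. The compatibility required here is that $\Gamma_x$ is computed consistently as the ambient category is cut down, and this is precisely \cref{prop:base_change}. The existence of a symmetric monoidal $\infty$-categorical enhancement enters exactly at this step, since the gluing of the successive stages of the transfinite tower is a homotopy-coherent construction; without a model, the resulting diagrams would only make sense up to incoherent homotopy, and the limit step would fail to produce a genuine object of $\sfT$.
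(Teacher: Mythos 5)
Your first paragraph is the proof, and it matches the paper's argument exactly: a dispersion yields, after the reindexing to avoid limit ordinal values, a spectral dimension function in Stevenson's sense, and the conclusion is then Stevenson's theorem from \cite{stevenson_localglobal}. The paper's own proof simply invokes \cref{rem:stevenson} for the translation, so you have spelled out the details it compresses. Two caveats about your surrounding commentary: the ordering convention is stated backwards ---\cref{conv:ordering} gives $p\leqslant q \Leftrightarrow p\leftsquigarrow q$, not $q\leftsquigarrow p$ --- though your verification of Stevenson's second axiom against \cref{defn:dispersion}(1) still goes through once corrected; and discreteness of the strata $\pmb{P}_\lambda$ is \emph{not} needed for the object-level local-to-global principle, as \cref{rem:stevenson} emphasizes that Stevenson's theorem applies to spectral dimension functions whose strata need not be discrete. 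That extra rigidity only becomes essential for the \emph{categorical} local-to-global principle treated later.
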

\begin{proof}
    In \cite{stevenson_localglobal}, Stevenson proves that any rigidly-compactly generated tensor-triangulated category $\sfT$ admitting a model and equipped with a spectral dimension function satisfies the local-to-global principle. The statement of the proposition is then a translation to the context of dispersions, keeping in mind \cref{rem:stevenson}. 
\end{proof}

We conclude this section with a classical example of the local-to-global principle in action.

\begin{example}\label{ex:derivedabelianobj}
Let us consider the example of $\sfD(\Z)$. Recall from \cref{ex:running1} that this admits a height 1 dispersion where the open down-set $\pmb{U}$ consists of the collection of non-zero prime ideals, and $\pmb{Z} = \pmb{U}^c = \{(0)\}$. Applying~\cref{prop:localduality} we obtain the following homotopy cartesian square, allowing us to reconstruct arbitrary objects:
\[
\xymatrix{
M \ar[r] \ar[d] & \prod_p M_p^\wedge \ar[d] \\
\mathbb{Q} \otimes M \ar[r] & \mathbb{Q} \otimes \prod_p M_p^\wedge \rlap{.}
}
\]
We note that this is exactly the Hasse square for abelian groups. In the next section, we will explain how to generalize and categorify this observation. 
\end{example}

\section{The categorical local-to-global principle I: lax limits}\label{sec:catstrat_lax}

In the previous section we saw that if we have a rigidly-compactly generated tensor-triangulated category $\sfT$ which is dispersible, then $\sfT$ satisfies the local-to-global principle. This is a  statement regarding objects. In the final two sections of this part we wish to discuss how to lift this to a \emph{categorical local-to-global principle}, that is, a statement regarding reconstructing the tensor-triangulated category $\sfT$ from its local factors $\Gamma_{x} \sfT$. In fact, we will deconstruct our dispersible tt-category $\sfT$ in three increasingly explicit ways, namely as 
    \begin{itemize}
        \item an iterated recollement (\cref{ssec:iteratedrecoll}),
        \item a lax limit of a lax categorical diagram (\cref{ssec:laxdiagram_reconstruction}), and
        \item a strict limit of a categorical cubical diagram (\cref{sec:adelicification}),
    \end{itemize}
working naturally from the abstract to more concrete descriptions. An outline of our approach is given in \cref{ssec:overview}. We reiterate the following convention.

\begin{convention}\label{con:rcgentt}
We will assume throughout this section and the next that our rigidly-compactly generated tensor-triangulated category is equipped with an enhancement as a stable symmetric monoidal $\infty$-category. 
\end{convention}

In this section and the next, we work within the $(\infty,2)$-category of $\infty$-categories. More precisely, we will consider lax directed diagrams in the $(\infty,2)$-category $\stable2Cat$ of (not necessarily small) stable $\infty$-categories, exact functors, and natural transformations between them. We refer to \cite[Appendix B]{AMGR} for the relevant material from the theory of $(\infty,2)$-categories. Moreover, the $(\infty,1)$-category of (not necessarily small) stable $\infty$-categories and exact functors will be denoted by $\stableCat$; it may be regarded as the $(\infty,2)$-subcategory of $\stable2Cat$ in which all natural transformations are invertible.

\subsection{Overview}\label{ssec:overview}

Let $\sfT$ be a dispersible tt-category with dispersion $\chi\colon \pmb{P} \coloneqq \Prism(\sfT^{\omega}) \to [n]$ of height $n$. We have seen in \cref{prop:dispersionsbasicproperties} that there is an associated stratification of the prism $(\pmb{P}_{<k})$ by open down-sets in $\pmb{P}$ and with strata denoted by $\pmb{P}_{k}$, for $k \in [0,n]$. Our goal in this section and the next is to construct a corresponding categorical decomposition of $\sfT$.

The idea is simple enough: first, set $\sfT_{\geqslant 0} = \sfT$. We then proceed by induction, where at each stage $k \in [0,n]$ of the filtration, we form quotient of $\sfT_{\geqslant k}$ by the full subcategory spanned by those objects supported in $\pmb{P}_{k}$. By assumption, this process terminates at a finite stage and thus provides a decomposition of $\sfT$ in terms of a sequence of recollements. In a second step, we wish to unfold this iterative description into a closed description of $\sfT$. Our approach relies on the concept of left-lax diagrams reviewed in more detail in \cref{ssec:laxdiagrams+limits} below. The main purpose of this section is to explain how to extract from the given filtration $(\pmb{P}_{<k})$ a left-lax diagram $\cL_{(\sfT,\chi)}\colon [n] \to \Cat_{\infty}$ whose right-lax limit recovers $\sfT$. However, due to the $(\infty,2)$-categorical nature of this approach, it can be difficult to work with in practice. Therefore, in a final step carried out in \cref{sec:catstrat_strict}, we show that any such left-lax diagram may be rewritten as a punctured $(n+1)$-dimensional cube $\cD_{(\sfT,\chi)}$ whose (strict) homotopy limit is equivalent to $\sfT$. 

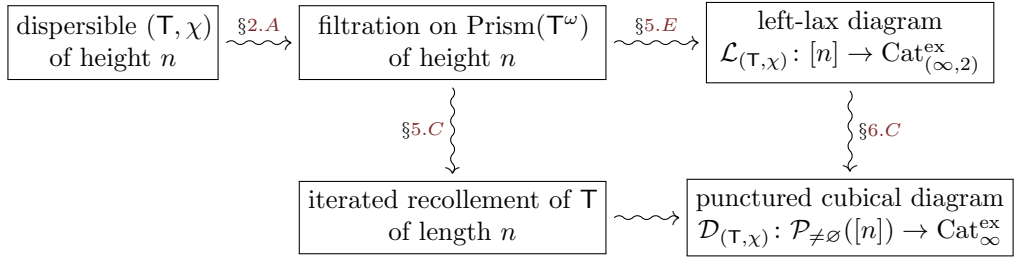
\begin{figure}[ht!]
\centering
\xymatrix@R=3em{
\fbox{\text{\parbox{1in}{\centering dispersible $(\sfT,\chi)$ \\ of height $n$}}}
\ar@{~>}[r]^-{\S\ref{ssec:dispersions}} &
\fbox{\text{\parbox{1.5in}{\centering filtration on $\Prism(\sfT^\omega)$ \\ of height $n$}}}
\ar@{~>}[r]^-{\S\ref{ssec:laxdiagram_reconstruction}} \ar@{~>}[d]_-{\S\ref{ssec:iteratedrecoll}} &
\fbox{\text{\parbox{1.4in}{\centering left-lax diagram \\ $\cL_{(\sfT,\chi)}\colon [n] \to \stable2Cat$}}}
\ar@{~>}[d]^-{\S\ref{sec:adelicification}} \\
& 
\fbox{\text{\parbox{1.5in}{\centering iterated recollement of $\sfT$ \\ of length $n$}}}
\ar@{~>}[r] &
\fbox{\text{\parbox{1.6in}{\centering punctured cubical diagram \\ $\cD_{(\sfT,\chi)}\colon\mathcal{P}_{\neq \varnothing}([n]) \to \stableCat$}}}
}
\caption{Schematic summary of our approach.}\label{fig:schematic}
\end{figure}

\subsection{The categorical local-to-global principle}\label{ssec:catlocaltoglobal}

The main goal of this section is to express a dispersible tt-category $\sfT$ as a suitable lax limit over a lax diagram $\cL_{\sfT,\chi}$ constructed from the dispersion $\chi$ of $\sfT$. The key features of such a decomposition of $\sfT$ are captured abstractly in the categorical local-to-global principle, formulated in \cref{def:catl2g} below. This definition relies on the notion of lax diagrams and lax limits of $\infty$-categories, which we will briefly review in \cref{ssec:laxdiagrams+limits}. In the remainder of this section we will then prove that if a category is dispersible, then it satisfies the categorical local-to-global principle, while moreover describing the reconstruction process iteratively and in a number of examples. 

\begin{definition}\label{def:catl2g}
Let $\sfT$ be a rigidly-compactly generated tensor-triangulated category. Then we say that $\sfT$ satisfies the \emph{categorical local-to-global principle} if $\sfT$ it is equivalent to a right-lax limit over a poset of products of local factors $\{ \Gamma_x \sfT \mid x \in \Prism(\sfT^\omega) \}$.
\end{definition}

We can now formulate the main theorem of this section, with the subsequent subsections providing the ingredients of the proof.

\begin{theorem}\label{thm:catl2g}
    Let $\sfT$ be a dispersible rigidly-compactly generated tensor-triangulated category. Then $\sfT$ satisfies the categorical local-to-global principle.
\end{theorem}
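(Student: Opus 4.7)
The plan is to proceed by induction on the height $n$ of the dispersion $\chi$, peeling off one stratum at a time by means of \cref{cor:homotopy-cartesian-height1}, and exploiting the fact that every stratum $\pmb{P}_k$ is topologically discrete inside $\pmb{P}_{\geqslant k}$ by \cref{prop:dispersionsbasicproperties}. Retaining the notation of \eqref{eq:finitelocalization}, write $\sfT_{\geqslant k} = \sfT(\pmb{P}_{\geqslant k})$, so that $\sfT = \sfT_{\geqslant 0}$, and let $\sfT_k \simeq \Gamma_{\pmb{P}_k}\sfT_{\geqslant k}$ denote the torsion subcategory supported on the $k$-th stratum. The inductive hypothesis is that $\sfT_{\geqslant k+1}$ has already been presented as the right-lax limit of a left-lax diagram $\cL^{\geqslant k+1}\colon [k+1,n] \to \stable2Cat$ whose value at $j$ is $\prod_{x \in \pmb{P}_j}\Gamma_x \sfT$, where \cref{prop:base_change} is used to identify local factors invariantly of the ambient category.

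For the base case $k=n$, the prism $\Prism(\sfT_{\geqslant n}^{\omega}) = \pmb{P}_n$ is a discrete Priestley space, and any rigidly-compactly generated tt-category with discrete prism splits as the product of its local factors, so $\sfT_{\geqslant n} \simeq \prod_{x \in \pmb{P}_n}\Gamma_x \sfT$. For the inductive step, $\pmb{P}_k$ is an open down-set in $\pmb{P}_{\geqslant k}$ with complement $\pmb{P}_{\geqslant k+1}$, so \cref{cor:homotopy-cartesian-height1} applied inside $\sfT_{\geqslant k}$ expresses it as the homotopy pullback $\Fun([1],\sfT_{\geqslant k+1}) \times_{\sfT_{\geqslant k+1}} \sfT_k$, with gluing map given by the Tate construction $t_{\pmb{P}_k}\colon \sfT_k \to \sfT_{\geqslant k+1}$. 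The discreteness of $\pmb{P}_k$ inside $\pmb{P}_{\geqslant k}$ yields $\sfT_k \simeq \prod_{x \in \pmb{P}_k}\Gamma_x\sfT$, and the pullback itself is exactly the standard presentation of the right-lax limit of the $[1]$-indexed lax diagram given by $t_{\pmb{P}_k}$. Splicing this $[1]$-step with the lax diagram provided by the inductive hypothesis extends $\cL^{\geqslant k+1}$ by one slot to a lax functor over $[k,n]$ whose right-lax limit is $\sfT_{\geqslant k}$.

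Iterating down to $k=0$ and packaging the accumulated Tate gluing data produces a left-lax diagram $\cL_{(\sfT,\chi)}\colon [n] \to \stable2Cat$ with $\cL_{(\sfT,\chi)}(k) = \prod_{x\in \pmb{P}_k}\Gamma_x\sfT$ whose right-lax limit is $\sfT$, which is precisely the categorical local-to-global principle of \cref{def:catl2g}. The main obstacle is not the objectwise identification of the strata but the coherent assembly of the iterated recollements into a single honest left-lax diagram: one must verify that the Tate constructions at successive levels promote compatibly to $2$-morphisms in $\stable2Cat$, and that their composition faithfully reproduces the iterated pullback. This is exactly the content we import from the theory of stratified stable $\infty$-categories of \cite{AMGR}, as already announced in \cref{ssec:overview}; an alternative, and more concrete, route is to finesse this coherence by rewriting the left-lax diagram as the strict punctured-cube diagram developed in \cref{sec:catstrat_strict}. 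For dispersions of transfinite height the same induction applies, taking cofiltered limits of recollements at limit ordinals, which are compatible with the relevant smashing localizations since each $\sfT_{\geqslant\lambda}$ is a Bousfield localization of $\sfT$.
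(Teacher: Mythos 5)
Your argument is essentially the paper's: an induction on the height of the dispersion, peeling off strata via \cref{cor:homotopy-cartesian-height1} and assembling the resulting $[1]$-indexed recollements into a single left-lax diagram via the splicing mechanism of \cref{lemma:iteraterll}, with \cref{prop:base_change} ensuring the local factors are independent of the ambient category. The only organizational difference is that you identify each stratum $\sfT_k$ with $\prod_{x\in\pmb{P}_k}\Gamma_x\sfT$ inside the induction, whereas the paper first builds the lax diagram with values $\sfT_i$ (\cref{prop:iterated}) and only afterward identifies the strata as products (\cref{prop:pointwisedecomp}).

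Two small caveats worth flagging. First, your assertion that a rigidly-compactly generated tt-category with discrete prism ``splits as the product of its local factors'' (and likewise that discreteness of $\pmb{P}_k$ in $\pmb{P}_{\geqslant k}$ yields $\sfT_k \simeq \prod_{x\in\pmb{P}_k}\Gamma_x\sfT$) is stated as if immediate, but it is the content of \cref{prop:pointwisedecomp} and rests on the existence of compact objects $\kappa(x)$ with singleton support and pairwise trivial tensor products (\cref{lem:disjointvisible}); this step should be cited or argued rather than asserted. Second, your closing sentence on transfinite height is speculative: the paper's proof (via \cref{prop:iterated} and \cref{lemma:iteraterll}) is carried out only for finite height, and the suggested passage to cofiltered limits of recollements at limit ordinals would need a genuine argument that the splicing data assembles coherently across the limit. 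Since the paper explicitly restricts to the finite case (\cref{defn:rll}), it is safer to omit or clearly demarcate this aside.
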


There will be two main ingredients for this proof. In \cref{ssec:laxdiagram_reconstruction} we will first see how one can use iterated right-lax limits to build a dispersible category $\sfT$ out of its strata $\sfT_i$ where $\pmb{P}$ is the prism of $\sfT$ and $\sfT_i=\sfT(\pmb{P}_i)$ consists of objects supported exactly on the stratum $\pmb{P}_i$. Secondly, in \cref{sec:localfactors}, we will prove that these strata can be split into individual local factors, and then assemble the result. 

In \cref{sec:catstrat_strict}, we will describe how to rewrite the aforementioned right-lax limits as strict homotopy limits over a punctured cube. This rewrites the categorical local-to-global principle for $\sfT$ in a more \emph{adelic} flavour. We will also provide two illuminating examples in this section which will be representative of the situations that we will encounter in \cref{part:gspectra}.

\begin{remark}
The reconstruction methods that we discuss here via right-lax limits can also be obtained as a special case of the \emph{macrocosm principle} of Ayala--Mazel--Gee-Rozenblyum \cite{AMGR}.  Explicitly, the results of \cref{sec:obstrat} imply that our filtration $(\sfT_{< i})$ on $\sfT$ is a stratification $[n] \to \sfT$ in the sense of Ayala--Mazel-Gee-Rozenblyum. The results of \cref{ssec:laxdiagram_reconstruction} then follow from an application of the `macrocosm reconstruction theorem' \cite[Theorem 2.5.14]{AMGR} using the fact that any the poset $[n]$ for $n \in \N$ is down-finite. We include a sketch of the proof of the result using a strategy via direct iteration because it clarifies the construction in this case. For the special case of $[n]$, this construction was also considered in the realm of chromatic homotopy theory by the second author in joint work with Antol\'{i}n-Camarena~\cite{fracture_cube}. 
\end{remark}

\begin{remark}\label{rem:moduleapproach}
In this paper we are only considering reconstruction theorems where the building blocks are the local factors $\Gamma_x \sfT$, but this is not the only option available to us. The first and third authors prove the existence of ``adelic models'' of rigidly-compactly generated tensor-triangulated categories where the building blocks appearing in the decomposition are module categories \cite{adelicm}.  Comparisons between these different models in the case that $\sfT$ has Noetherian Balmer spectrum of dimension one can be found in \cite{adelic1}. We will revisit this observation in \cref{subsec:circle}.

We anticipate that it is possible to look at the image of the tensor unit $\unit$ in the categorical local-to-global decomposition and then consider modules over this cubical diagram to obtain an adelic model. We intend to pursue this line of thinking elsewhere. 
\end{remark}

\subsection{Iterated recollements}\label{ssec:iteratedrecoll}

Suppose $(\sfT,\chi)$ is a \emph{dispersible tt-category of height $n$}, i.e., a tt-category $\sfT$ satisfying \cref{con:rcgentt} and equipped with finite dispersion $\chi\colon \pmb{P}\coloneqq \Prism(\sfT^{\omega}) \to [n]$ of height $n$. In order to motivate the constructions that will follow, and to orient our intuition, we begin with a direct approach to building a decomposition of $\sfT$ via iterated recollements. 

Consider the first filtration step $\pmb{P}_{<1} \subseteq \pmb{P}$ with complement $\pmb{P}_{\geqslant 1}$. As explained in \cref{ssec:localization} and \cref{ssec:support+strata}, the finite localization $\sfT \to \sfT_{\geqslant 1} = \sfT(\pmb{P}_{\geqslant 1})$ from \cref{eq:finitelocalization} is part of a recollement which is controlled by the associated Tate construction
    \[
        t_0\colon \sfT_0 \to \sfT_{\geqslant 1}.
    \]
In other words, $\sfT$ can be reconstructed from $\sfT_{\geqslant 1}$ and its zeroth stratum $\sfT_0$ via the recipe of \cref{cor:homotopy-cartesian-height1}. Under the localization, the prism of $\sfT_{\geqslant 1}$ identifies with $\pmb{P}_{\geqslant 1}$ (see \eqref{eq:prism_identification}).  Therefore we would like to continue this process with $\sfT_{\geqslant 1}$ and the dispersion restricted to $\pmb{P}_{\geqslant 1}$. The next lemma says that this is indeed possible, in the expected way:

\begin{lemma}\label{lem:iterative_step}
    Let $(\sfT, \chi)$ be a dispersible tt-category of height $n$ and let $\chi_{\geqslant 1}\colon \pmb{P}_{\geqslant 1} \to [n]$ be the restriction of $\chi$ to $\pmb{P}_{\geqslant 1}$. Then $(\sfT_{\geqslant 1}, \chi_{\geqslant 1})$ is a dispersible tt-category of height $n-1$.
\end{lemma}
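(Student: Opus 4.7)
The plan is to verify the three ingredients implicit in the statement: that $\sfT_{\geqslant 1}$ is a rigidly-compactly generated tensor-triangulated category with a good enhancement, that its prism agrees with $\pmb{P}_{\geqslant 1}$, and that the restricted function $\chi_{\geqslant 1}$ is a dispersion on this prism of height $n-1$ (after the canonical reindexing shift $k \mapsto k-1$, since the stratum $\pmb{P}_0$ is removed).

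First I would observe that $\sfT_{\geqslant 1} = \sfT(\pmb{P}_{\geqslant 1})$ arises as the finite localization of $\sfT$ at the Thomason subset $\pmb{P}_{<1}$ described in \eqref{eq:finitelocalization}. Such finite localizations of rigidly-compactly generated tt-categories are themselves rigidly-compactly generated, and the symmetric monoidal stable $\infty$-categorical enhancement descends through the localization, so \cref{con:rcgentt} continues to hold for $\sfT_{\geqslant 1}$. The identification $\Prism(\sfT_{\geqslant 1}^\omega) \cong \pmb{P}_{\geqslant 1}$ is \eqref{eq:prism_identification}, and by \cref{prop:dispersionsbasicproperties} together with \cref{cor:strataissub} the right-hand side is indeed a Priestley subspace of $\pmb{P}$.

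Next I would verify the two axioms of \cref{defn:dispersion} for $\chi_{\geqslant 1}$. Condition \ref{dispersion1} is immediate from restriction: any comparable pair $p<q$ in $\pmb{P}_{\geqslant 1}$ is also a comparable pair in $\pmb{P}$, so $\chi(p)<\chi(q)$ and hence $\chi_{\geqslant 1}(p)<\chi_{\geqslant 1}(q)$. For condition \ref{dispersion2}, the point to check is that the required topological data for subsets of $\pmb{P}_{\geqslant 1}$ is inherited from $\pmb{P}$. Since $\pmb{P}_{\geqslant 1}$ is a closed up-set in $\pmb{P}$ by \cref{prop:dispersionsbasicproperties}, any $S \subseteq \pmb{P}_{\geqslant 1}$ closed in the subspace topology is automatically closed in $\pmb{P}$, and non-isolatedness of a point $a\in S$ is intrinsic to $S$ and does not depend on the ambient Priestley space. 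Applying axiom \ref{dispersion2} for $\chi$ thus yields $s\in S$ with $\chi(s)<\chi(a)$; since $s \in \pmb{P}_{\geqslant 1}$ automatically, this translates into $\chi_{\geqslant 1}(s)<\chi_{\geqslant 1}(a)$, as required. The height statement is then clear, as the restricted dispersion takes values in $\{1,\dots,n\}$ and becomes a dispersion of height $n-1$ after the shift.

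I do not expect any serious obstacle, as the verification is essentially a careful unwinding of the definitions. The only point that merits a moment of attention is the compatibility of the topological condition \ref{dispersion2} with restriction, which hinges precisely on the fact (already established in \cref{prop:dispersionsbasicproperties}) that $\pmb{P}_{\geqslant 1}$ is closed in $\pmb{P}$.
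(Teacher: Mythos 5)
Your argument is correct and follows essentially the same route as the paper's own proof. Both of you note that a finite localization of a rigidly-compactly generated tensor-triangulated category inherits the symmetric monoidal $\infty$-categorical enhancement, identify $\Prism(\sfT_{\geqslant 1}^{\omega})$ with the closed up-set $\pmb{P}_{\geqslant 1}$, check the two axioms of \cref{defn:dispersion} for the restriction $\chi_{\geqslant 1}$, and observe the shift in height by one. The only difference is cosmetic: the paper dispatches the dispersion-axiom check as ``straightforward'' and instead stresses the identification $(\Prism(\sfT_{\geqslant 1}^{\omega}))_{\geqslant k} = \pmb{P}_{\geqslant k+1}$ via \cref{prop:base_change}, whereas you spell out the verification of axiom \ref{dispersion2}, which is a welcome level of detail. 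Your key observation — that $\pmb{P}_{\geqslant 1}$ being closed in $\pmb{P}$ means closed subsets of $\pmb{P}_{\geqslant 1}$ remain closed in $\pmb{P}$, while non-isolatedness of a point of $S$ is intrinsic to the subspace topology on $S$ — is exactly the point that makes the restriction argument work.
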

\begin{proof}
    As finite localizations may be performed at the level of symmetric monoidal $\infty$-categories, our \cref{con:rcgentt} again holds for $\sfT_{\geqslant 1}$. By \cref{prop:dispersionsbasicproperties}, $\pmb{P}_{\geqslant 1} = \chi^{-1}(\{k \in [n]\mid k \geqslant 1\})$ is a closed up-set of $\pmb{P}$, and it is then straightforward to verify that $\chi_{\geqslant 1}\colon \pmb{P}_{\geqslant 1} = \Prism(\sfT_{\geqslant 1}^{\omega}) \to [n]$ satisfies the conditions of a dispersion (\cref{defn:dispersion}). Appealing to \cref{prop:base_change}, we see that
        \[
            (\Prism(\sfT_{\geqslant 1}^{\omega}))_{\geqslant k} = (\pmb{P}_{\geqslant k+1})
        \]
    for all $k\in[n]$. It follows that $(\sfT_{\geqslant 1}, \chi_{\geqslant 1})$ is dispersible of height $n-1$.
\end{proof}

Since $(\sfT, \chi)$ is of height $n$, this process has to stop after finitely many steps, yielding a decomposition of $\sfT$ into its strata $\sfT_{k}$. Here, we make use of the observation of \cref{prop:base_change} which tells us the $k$-th stratum of $\sfT_{\geqslant1}$ coincides with that of $\sfT$ for all $k\geqslant 1$, and similarly for later filtration steps. 

While fairly transparent, the key drawback of this iterative algorithm is that it is difficult to extract explicit formulas for splicing data required to reconstruct $\sfT$ from its strata. We will therefore formulate a different approach based on lax limits, that encodes the entire decomposition in a single lax diagram of $\infty$-categories.

\subsection{Lax diagrams and limits}\label{ssec:laxdiagrams+limits}

We begin by reviewing some material about lax diagrams of $\infty$-categories and their (lax) limits. 
Full arguments for the theory that we use in this section, in particular pertaining to the $(\infty,2)$-categorical notions, can be found in Appendix A and Appendix B of \cite{AMGR} which guides our exposition here.
The first ingredient will be that of \emph{left-lax functors}  $\mathsf{D} \xrightarrow{F} \stable2Cat$ (see \cite[B.1.7]{AMGR}). As the name suggests, a lax functor is one that only respects composition of 1-morphisms in a lax fashion. That is, whenever there is a non-degenerate composite morphism $d_0 \to d_1 \to d_2$ in $\mathsf{D}$, we have a lax-commutative triangle
\begin{equation}\label{fig:laxtriangle}
\begin{gathered}
\xymatrix@C=0em{
& F(d_1) \ar[dr] \\F(d_0) \ar[ur] \ar[rr] & \ar@{}[u]|<<<<<<{\big\Uparrow} & F(d_2) 
}
\end{gathered}
\end{equation}
satisfying the relevant commutativity and higher coherence data. The \emph{left} in \emph{left-lax} refers to the direction of the natural transformation in \eqref{fig:laxtriangle}.

The second ingredient is that of \emph{right-lax limits}, which \cite[Proposition A.5.2]{AMGR} describes via a universal property. Suppose that $\mathsf{D} \xrightarrow{F} \stable2Cat$ is a left-lax functor of presentable stable $\infty$-categories as defined above. Then the \emph{right-lax limit} $\rlaxlim_{\sfD} F$ of this diagram is the $\infty$-category described by the data:
\begin{itemize}
    \item an object $e_d \in F(d)$ for each $d \in \mathsf{D}$,
    \item For each morphism $d_0 \xrightarrow{\phi} d_1$ a morphism $F(\phi)(e_{d_0}) \leftarrow e_{d_1}$,
\end{itemize}
 along with higher commutativity and coherence data (\cite[Example A.5.3]{AMGR}). If the diagram $\sfD$ is clear from context, we will omit it from the notation. The \emph{right} in \emph{right-lax} refers to the direction of the morphisms $F(\phi)(e_{d_0}) \leftarrow e_{d_1}$.

\begin{remark}\label{defn:rll}
Although we have defined dispersions more generally, allowing ordinal values, all categories that we encounter in this paper will be finitely dispersible, so working with $[n]$ is sufficient for our purposes. As such we will only be concerned with the case that $\mathsf{D} = [n]$. In this case, the above description of a right-lax limit of a left-lax functor $[n] \xrightarrow{F} \stable2Cat$ simplifies to:
\begin{itemize}
    \item an object $e_i \in F(i)$ for each $0 \leqslant i \leqslant n$,
    \item a morphism $F(\phi)(e_{i}) \leftarrow e_{j}$ whenever $0 \leqslant i < j \leqslant n$.
\end{itemize}
 along with higher commutativity and coherence data.
\end{remark}

\begin{example}
    To reconnect to \cref{ssec:iteratedrecoll}, we first need a concrete understanding of right-lax limits over the diagram $[1]$. Let $\sfT$ be as in \cref{con:rcgentt}, and $\pmb{U}$ an open down-set of $\Prism(\sfT^\omega)$ with complement $\pmb{U}^c = \pmb{Z}$. We shall write $\sfT(\pmb{U})$ for $\sfT(\pmb{U})_\mathrm{tors}$ as in \cref{prop:localduality} (which is equivalent to $\sfT(\pmb{U})_\mathrm{comp}$). Then we have a diagram
        \begin{equation}\label{eqn:functorforheight1}
            [1]  \xrightarrow[\qquad]{\cL} \stable2Cat \qquad \qquad (0 \to 1)  \xmapsto[\qquad]{} \left( t_{\pmb{U}} \colon \sfT(\pmb{U}) \to \sfT(\pmb{Z})\right),
        \end{equation}
    which is (vacuously) left-lax. Following \cref{defn:rll}, an object of the right-lax limit of this functor is the data of
        \begin{itemize}
            \item an object $X_0 \in \sfT(\pmb{U})$;
            \item an object $X_1 \in \sfT(\pmb{Z})$;
            \item a morphism $t_{\pmb{U}} (X_0) \leftarrow X_1$.
        \end{itemize}
    along with compatibility and higher homotopies. Through an application of \cref{cor:homotopy-cartesian-height1}, we are able to conclude that the right-lax limit of \eqref{eqn:functorforheight1} is equivalent to $\sfT$. For the details of this argument, see \cite[Section 1.1]{AMGR}.
\end{example}

Using the notation established in \cref{ssec:iteratedrecoll}, we summarize this discussion in the following:

\begin{lemma}\label{lem:homotopy-cartesian-height1-complete}
Let $(\sfT,\chi)$ be dispersible tt-category of height $n$ and write $\pmb{P}$ for the prism of $\sfT$. Then $\sfT$ is equivalent to the right-lax limit of
    \[
    [1]  \xrightarrow[\qquad]{\cL} \stable2Cat \qquad \qquad (0 \to 1)  \xmapsto[\qquad]{} t_{0} \colon \sfT_0 \to \sfT_{\geqslant 1},
    \]
where $t_0$ is the associated Tate construction.
\end{lemma}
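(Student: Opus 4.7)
The plan is to reduce this to an application of \cref{cor:homotopy-cartesian-height1} together with an identification of the resulting homotopy pullback with the right-lax limit of the stated diagram. First I would take the open down-set $\pmb{U} = \pmb{P}_{<1} = \pmb{P}_0$ in $\pmb{P} = \Prism(\sfT^{\omega})$, whose complement is $\pmb{Z} = \pmb{P}_{\geqslant 1}$. By \cref{prop:dispersionsbasicproperties} this is indeed an open down-set, and by the definitions in \cref{ssec:support+strata} we have $\sfT(\pmb{U})_{\tors} = \sfT_0$ and $\sfT(\pmb{Z}) = \sfT_{\geqslant 1}$. The associated Tate construction $t_{\pmb{U}}$ of \cref{defn:tateconstruction} is, under these identifications, precisely the functor $t_0 \colon \sfT_0 \to \sfT_{\geqslant 1}$ appearing in the statement.

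Applying \cref{cor:homotopy-cartesian-height1} then yields a natural equivalence between $\sfT$ and the homotopy pullback of the cospan
\[
\sfT_0 \xrightarrow{\;t_0\;} \sfT_{\geqslant 1} \xleftarrow{\;\pi_1\;} \Fun([1], \sfT_{\geqslant 1}),
\]
where $\pi_1$ is evaluation at the terminal object of $[1]$. The second step is to recognize this pullback as the right-lax limit $\rlaxlim_{[1]} \cL$. Using the universal property of right-lax limits over $[1]$ recalled in \cref{defn:rll} and \cite[Proposition A.5.2]{AMGR}, an object of $\rlaxlim_{[1]} \cL$ is a triple $(X_0 \in \sfT_0,\, X_1 \in \sfT_{\geqslant 1},\, \alpha \colon X_1 \to t_0(X_0))$; but this is exactly the data of an object of the pullback above, since specifying a morphism in $\Fun([1], \sfT_{\geqslant 1})$ whose evaluation at $1$ equals $X_1$ and which maps to $t_0(X_0)$ under the cospan map amounts to a map $\alpha\colon X_1 \to t_0(X_0)$ in $\sfT_{\geqslant 1}$ (the source of the arrow being recorded freely). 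The same matching is compatible with morphisms and higher coherences, so the two $\infty$-categories are equivalent.

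The main obstacle, and really the only non-bookkeeping point, is verifying the identification of right-lax limits over $[1]$ with such strict pullbacks along $\pi_1$. This is a standard fact in the $(\infty,2)$-categorical framework set up in \cite[Appendix A, B]{AMGR}, but some care is needed because the orientation of the lax structure dictates the direction of $\alpha$ and hence which endpoint of $\Fun([1],\sfT_{\geqslant 1})$ is pulled back. With this identification in hand, the composition with the equivalence of \cref{cor:homotopy-cartesian-height1} gives the desired equivalence $\sfT \simeq \rlaxlim_{[1]} \cL$, and concludes the proof.
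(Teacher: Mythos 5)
Your proposal is correct and takes the same route as the paper: reduce to the pullback square of \cref{cor:homotopy-cartesian-height1} with $\pmb{U} = \pmb{P}_0$ and $\pmb{Z} = \pmb{P}_{\geqslant 1}$, then match this against the data description of a right-lax limit over $[1]$ (as in the example preceding the lemma and \cite[Section 1.1]{AMGR}). One small indexing slip: since $\pi_1$ is evaluation at the \emph{terminal} vertex of $[1]$, the pullback condition pins the \emph{target} of the arrow in $\Fun([1],\sfT_{\geqslant 1})$ to $t_0(X_0)$, so the free datum $X_1$ is the evaluation at $0$, not at $1$ as you wrote; your parenthetical ``source recorded freely'' shows you had the right picture despite the typo.
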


The next result describes how to unpack an iteration of left-lax functors, indexed over linear posets $[n]$ where $n \in \N$, into a single linear left-lax diagram. This result will form the backbone for extracting a categorical decomposition from a dispersion. 

\begin{proposition}\label{lemma:iteraterll}
    Let $[1] \xrightarrow{F} \stable2Cat$ and $[n] \xrightarrow{G} \stable2Cat$ be left-lax diagrams of presentable stable $\infty$-categories such that $F(1)$ is equivalent to the right-lax limit of the diagram $G$. Then there exists a left-lax diagram $H\colon [1+n] \xrightarrow{H} \stable2Cat$ obtained by pasting $G$ into $F(1)$, such that
        \[
            \rlaxlim H \simeq \rlaxlim F,
        \]
    i.e., the right-lax limit of $H$ is equivalent to the right-lax limit of $F$.
\end{proposition}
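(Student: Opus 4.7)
The plan is to construct $H$ by literally pasting the linear diagram $G$ into the second slot of $F$, and then show that the universal property of right-lax limits makes the two descriptions match.

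Concretely, I would define $H\colon [1+n] \to \stable2Cat$ on objects by $H(0) \coloneqq F(0)$ and $H(i) \coloneqq G(i-1)$ for $1 \leqslant i \leqslant 1+n$. The 1-morphisms are defined by cases: for $1 \leqslant i < j$ we set $H(i \to j) \coloneqq G((i-1) \to (j-1))$, while for $0 < j$ the map $H(0) \to H(j)$ is the composite
\[
F(0) \xrightarrow{\;F(0\to 1)\;} F(1) \xrightarrow{\;\sim\;} \rlaxlim G \xrightarrow{\;\mathrm{pr}_{j-1}\;} G(j-1),
\]
where $\mathrm{pr}_{j-1}$ is the canonical projection associated to the right-lax limit. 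The left-lax structure (the non-invertible 2-cells filling the composition triangles) is inherited: for $1 \leqslant i < j < k$ the 2-cell is the one coming from $G$; for $0 = i < j < k$ it is built from the structural 2-cell in $\rlaxlim G$ which encodes the morphisms $G((j-1)\to (k-1))(g_{j-1}) \leftarrow g_{k-1}$ supplied by any object of $\rlaxlim G$, promoted along the equivalence $F(1) \simeq \rlaxlim G$ and pre-composed with $F(0\to 1)$; in the latter case one also uses the left-lax structure 2-cell of $F$ itself.

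Next I would verify $H$ is genuinely a left-lax functor. The associativity/coherence data for triples $1 \leqslant i < j < k$ is that of $G$, and for triples $0 = i < j < k$ it is extracted from the right-lax-limit coherences of $G$ together with the (trivial) lax structure of $F|_{0\to 1}$. The key input is Appendix B of \cite{AMGR}, which packages the universal property of $\rlaxlim G$ as a 1-morphism $F(1)\simeq \rlaxlim G \to G(i)$ together with natural 2-cells for each $i<j$; these are exactly the 2-cells $H$ needs.

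Finally I would compare the right-lax limits. Using the explicit description recalled in \cref{defn:rll}, an object of $\rlaxlim F$ is a pair $(e_0,e_1)$ with a morphism $F(0\to 1)(e_0) \leftarrow e_1$; via $F(1) \simeq \rlaxlim G$, $e_1$ unpacks as a family $(g_0,\ldots,g_n)$ with structural morphisms $G(i\to j)(g_i) \leftarrow g_j$ for $i<j$. This data is exactly an object of $\rlaxlim H$ under the identification $h_0 = e_0$ and $h_{i+1} = g_i$: the map $H(0\to j+1)(h_0) \leftarrow h_{j+1}$ decomposes as $\mathrm{pr}_j \bigl(F(0\to 1)(e_0)\bigr) \leftarrow \mathrm{pr}_j(e_1) = g_j$, which is precisely the data provided. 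The same identification works on morphisms and higher coherences, yielding an equivalence $\rlaxlim H \simeq \rlaxlim F$.

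The main obstacle is the bookkeeping of $(\infty,2)$-categorical coherences — one needs to check that the 2-cells of $H$ built from the structure of $\rlaxlim G$ satisfy the cocycle conditions making $H$ a left-lax functor, and that the induced map between right-lax limits is an equivalence of $\infty$-categories rather than just a bijection on object-level data. The cleanest way to avoid grinding through this by hand is to appeal to the functoriality of the right-lax limit construction in \cite[Appendix A]{AMGR} and the associated universal property, which turns the comparison into a tautology once the pasted diagram $H$ has been written down.
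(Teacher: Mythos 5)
Your construction of $H$ is essentially the one in the paper: identify $F(1)$ with $\rlaxlim G$, set $H(0)=F(0)$, $H(i)=G(i-1)$ for $i\geqslant 1$, define the morphisms out of $0$ by precomposing the projections of the right-lax limit with $F(0\to 1)$, and inherit the remaining 1- and 2-cells from $G$. The paper packages this the same way, phrased as extending $G$ by its limit cone to a diagram $G'$ on $[n+1]$, identifying $G'(0)\simeq F(1)$, precomposing with $F(0\to 1)$ and contracting.

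Where you differ is in the verification that $\rlaxlim H\simeq \rlaxlim F$. You unfold objects of the two right-lax limits by hand, match the data, and then acknowledge that the full $(\infty,2)$-coherence bookkeeping is the real work — proposing to defer to the functoriality of $\rlaxlim$ in \cite[Appendix A]{AMGR}. The paper instead identifies $\rlaxlim_{[n+1]}H$ directly as the pullback
\[
\rlaxlim_{[n+1]} H \simeq F(0) \times_{\rlaxlim_{[1,n+1]}G} \Fun\bigl([1],\rlaxlim_{[1,n+1]}G\bigr),
\]
citing \cite[Lemma 6.1.12]{AMGR}, and then recognizes this pullback as the right-lax limit of $F$ via \cref{cor:homotopy-cartesian-height1} and \cref{lem:homotopy-cartesian-height1-complete}. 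That route avoids any objectwise unfolding and handles the coherences all at once via the pullback characterization — it is exactly the "cleanest way" you gesture at in your final paragraph, made precise. So your proof is correct in strategy; the paper simply makes the appeal to the AMGR universal property concrete by naming the specific lemma that yields the pullback square.
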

\begin{proof}
    We begin by constructing the left-lax diagram $H$. For the purposes of this proof, let us reindex the left-lax diagram $G$ on $[1,n+1] = \{1,\ldots,n+1\}$. Taking the right-lax limit of $G$ as the cone point, we obtain a canonical extension
        \[
            \xymatrix{[1,n+1] \ar[r]^-G \ar[d] & \stable2Cat \\
            [n+1]\simeq {}^{\triangleleft}[1,n]. \ar@{-->}[ru]_{G'}}
        \]
    By assumption, $G'(0) \simeq F(1)$, so precomposing with $F(0\to 1)$ and contracting the resulting left-lax diagram at $[0]$ yields a left-lax diagram $H\colon [n+1] \to [1] \vee [n+1] \to \stable2Cat$ with
        \[
            H(i) = 
            \begin{cases}
                F(0) & i= 0 \\
                G(i-1) & i>0.
            \end{cases}
        \]
    It thus remains to determine the right-lax limit of $H$. To the end, as in the proof of \cite[Lemma 6.1.12]{AMGR}, we have a pullback diagram in $\stable2Cat$:
        \[
            \xymatrix{\rlaxlim_{[n+1]} H \ar[r] \ar[d] & \rlaxlim_{[0]} H \simeq  F(0) \ar[d] \\
            \Fun([1],\rlaxlim_{[1,n+1]} G) \ar[r] & \rlaxlim_{[1,n+1]} G.}
        \]
    Combining \cref{cor:homotopy-cartesian-height1} and \cref{lem:homotopy-cartesian-height1-complete}, we deduce that 
        \[
            \rlaxlim_{[n+1]} H \simeq \rlaxlim_{[1]}(F(0) \to \rlaxlim_{[1,n+1]} G) \simeq \rlaxlim_{[1]}(F),
        \]
    as desired. 
\end{proof}

\begin{example}
    Let us make the construction of $H$ explicit in the case that $n=1$. We have diagrams $F(0) \xrightarrow{f} F(1)$ and $G(0) \xrightarrow{g} G(1)$. By assumption, $F(1)$ is equivalent to the right-lax limit of $G$, and as such we have the following diagram by universal property:
    \[
    \xymatrix@C=1em{&F(1) \ar[dr]|{\mathsf{pr}_2}\ar[dl]|{\mathsf{pr}_1} & \\ G(0) \ar[rr]_{g} &\ar@{}[u]|<<<<<<{\Leftarrow} \ar@{}[u]|<<<<<<<<{\alpha} & G(1).}
    \]
    Whiskering this with the data of $F$ we obtain the extended diagram
    \begin{equation}\label{eq:whiskered}
    \begin{gathered}
    \xymatrix{&F(0) \ar[d]_{f} \ar@/^/[ddr] \ar@/_/[ddl]\\&F(1) \ar[dr]|{\mathsf{pr}_2}\ar[dl]|{\mathsf{pr}_1} & \\ G(0) \ar[rr]_{g} &\ar@{}[u]|<<<<<<{\Leftarrow} \ar@{}[u]|<<<<<<<<{\alpha} & G(1).}
    \end{gathered}
    \end{equation}
    From this we read off the desired left-lax functor $H \colon [2] \to \stable2Cat$ as
    \begin{equation}\label{eq:Hconstruction}
    \begin{gathered}
    \xymatrix@C=1em{
    & G(0) \ar[dr]^{g} \\ F(0) \ar[ur]^{\mathrm{pr}_1 \circ f} \ar[rr]_{\mathrm{pr}_2 \circ f} & \ar@{}[u]|<<<<<<{\big\Uparrow \eta} & G(1) \rlap{,}
    }
\end{gathered}
\end{equation}
where $\eta$ is the natural transformation obtained from whiskering $\alpha$ with $F$. 
\end{example}

\subsection{Left-lax diagrams from dispersions}\label{ssec:laxdiagram_reconstruction}

Equipped with \cref{lemma:iteraterll} and \cref{prop:base_change} we are now in a position to prove the following result, which forms half the proof of \cref{thm:catl2g}. 

\begin{proposition}\label{prop:iterated}
    Let $(\sfT,\chi)$ be dispersible tt-category of height $n$ and with prism $\pmb{P} = \Prism(\sfT^{\omega})$. Then there is a left-lax diagram $\cL_{(\sfT,\chi)} \colon [n] \to \stable2Cat$ with
    \[
    i  \xmapsto[\qquad]{} \sfT_i, \qquad \qquad (i \to i+1) \xmapsto[\qquad]{\qquad} \left(\Gamma_{\pmb{P}_{i+1}}\circ t_i \colon \sfT_i \to \sfT_{i+1}\right).
    \]
    whose right-lax limit is equivalent to $\sfT$.
\end{proposition}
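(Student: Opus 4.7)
The plan is to prove this by induction on the height $n$ of the dispersion, combining \cref{lem:iterative_step}, \cref{lem:homotopy-cartesian-height1-complete} and the pasting construction of \cref{lemma:iteraterll}. The base case $n=0$ is trivial: there is only one stratum, and the finite localization $\sfT \to \sfT_{\geqslant 1} = 0$ shows that $\sfT \simeq \sfT_0$, so the (constant) diagram $[0] \to \stable2Cat$ sending $0 \mapsto \sfT_0$ has right-lax limit equal to $\sfT$.

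For the inductive step, assume the claim for dispersible tt-categories of height $\leqslant n-1$. Given $(\sfT,\chi)$ of height $n$, \cref{lem:iterative_step} shows that $(\sfT_{\geqslant 1}, \chi_{\geqslant 1})$ is dispersible of height $n-1$; by induction, there is a left-lax diagram $\cL' \colon [n-1] \to \stable2Cat$ with $j \mapsto \sfT_{\geqslant 1,j}$ and $(j \to j+1) \mapsto \Gamma^{\sfT_{\geqslant 1}}_{\pmb{P}_{\geqslant 1, j+1}} \circ t^{\sfT_{\geqslant 1}}_{j}$, whose right-lax limit is $\sfT_{\geqslant 1}$. Under the identification $\Prism(\sfT_{\geqslant 1}^\omega)_{\geqslant k} \cong \pmb{P}_{\geqslant k+1}$ from the proof of \cref{lem:iterative_step}, together with the base-change equivalence of \cref{prop:base_change} applied to $\sfT_{\geqslant 1} \subseteq \sfT$, these structure functors are identified with $\Gamma_{\pmb{P}_{j+2}} \circ t_{j+1}\colon \sfT_{j+1} \to \sfT_{j+2}$. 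Separately, \cref{lem:homotopy-cartesian-height1-complete} supplies a (strict) diagram $F\colon [1] \to \stable2Cat$ given by $F(0\to 1) = t_0\colon \sfT_0 \to \sfT_{\geqslant 1}$, whose right-lax limit is $\sfT$.

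Now apply \cref{lemma:iteraterll}: since $F(1) = \sfT_{\geqslant 1} \simeq \rlaxlim \cL'$, pasting $\cL'$ into $F$ produces a left-lax diagram $\cL_{(\sfT,\chi)}\colon [n] \to \stable2Cat$ with $\rlaxlim \cL_{(\sfT,\chi)} \simeq \rlaxlim F \simeq \sfT$. The indexing by objects is immediate: $0 \mapsto F(0) = \sfT_0$ and $i \mapsto \cL'(i-1) = \sfT_i$ for $i \geqslant 1$. For $i \geqslant 1$, the morphism $(i \to i+1)$ of $\cL_{(\sfT,\chi)}$ is inherited from $\cL'$ and has already been identified with $\Gamma_{\pmb{P}_{i+1}} \circ t_i$. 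It remains to identify the newly created morphism $(0 \to 1)$; by the recipe underlying \cref{lemma:iteraterll} (see the model diagram in \eqref{eq:Hconstruction}), this morphism is the composite $\mathrm{pr}_1 \circ F(0 \to 1) = \mathrm{pr}_1 \circ t_0$, where $\mathrm{pr}_1 \colon \rlaxlim \cL' \simeq \sfT_{\geqslant 1} \to \sfT_1$ is the projection onto the zeroth stratum. This projection is precisely the colocalization onto objects supported on $\pmb{P}_{\geqslant 1, 0} = \pmb{P}_1$, which by \cref{prop:base_change} coincides with $\Gamma_{\pmb{P}_1}$, yielding $\Gamma_{\pmb{P}_1} \circ t_0\colon \sfT_0 \to \sfT_1$ as required.

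The main obstacle is not the inductive scaffolding itself but rather the careful bookkeeping needed to identify the structure morphisms of the pasted left-lax diagram with the intended functors $\Gamma_{\pmb{P}_{i+1}} \circ t_i$. This identification relies crucially on \cref{prop:base_change}, which allows one to compute the torsion/colocalization functors either intrinsically inside the restricted category $\sfT_{\geqslant k}$ or ambiently inside $\sfT$; without this independence of ambient category, the functors appearing on the nose in $\cL'$ would not visibly match those demanded by the statement, and the inductive pasting would not produce a single coherent left-lax diagram on $[n]$.
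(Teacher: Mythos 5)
Your proof is correct and follows the same inductive strategy as the paper's argument: split off the bottom stratum via \cref{lem:homotopy-cartesian-height1-complete}, invoke \cref{lem:iterative_step} together with the inductive hypothesis to handle $\sfT_{\geqslant 1}$, and paste via \cref{lemma:iteraterll}, using \cref{prop:base_change} to identify the strata and transition functors intrinsically versus ambiently. The only real difference is that you spell out the identification of the structure morphisms (in particular the new edge $0\to 1$ as $\mathrm{pr}_1\circ t_0 \simeq \Gamma_{\pmb{P}_1}\circ t_0$, and the shift of indices for $i\geqslant 1$) where the paper contents itself with ``unwinding the construction''; you also take $n=0$ rather than $n=1$ as the base case, which is immaterial. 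Your more careful bookkeeping is, if anything, a welcome clarification.
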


\begin{proof}
    We argue by induction on the height $n$, the base case $n=1$ holding trivially. For the inductive step with $n\geqslant 1$, we begin by describing the construction of the claimed left-lax diagram. This is done iteratively using the construction outlined in \cref{lemma:iteraterll}. 

    By \cref{lem:iterative_step} and the inductive hypothesis, there exists a left-lax diagram
        \[
            \cL_{(\sfT_{\geqslant 1},\chi_{\geqslant 1})}\colon \pmb{P}_{\geqslant 1} \to \{1,2,\ldots,n\}
        \]
    with $\cL_{(\sfT_{\geqslant 1},\chi_{\geqslant 1})}(i) \simeq \sfT_i$ for all $i\geqslant 1$ and whose right-lax limit is equivalent to $\sfT_{\geqslant 1}$. Moreover, the transition functors from $i \to i+1$ are given by $\Gamma_{\pmb{P}_i}\circ t_i$. Here, we use that, although the strata $\sfT_i$ are formed within $\sfT
    _{\geqslant 1}$, by \cref{prop:base_change} they are equivalent to the strata formed in $\sfT$ itself. Likewise, \cref{lem:homotopy-cartesian-height1-complete} provides a diagram 
        \[
        [1]  \xrightarrow[\qquad]{\cL} \stable2Cat \qquad \qquad (0 \to 1)  \xmapsto[\qquad]{} (t_{0} \colon \sfT_0 \to \sfT_{\geqslant 1}),
        \]
    whose right-lax limit is equivalent to $\sfT$.  
    
    Applying \cref{lemma:iteraterll} with $F = \cL$ and $G = \cL_{(\sfT_{\geqslant 1},\chi_{\geqslant 1})}$ and unwinding the construction, we obtain a left-lax diagram $\cL_{\sfT,\chi} = H\colon [n] \to \stable2Cat$ with the desired properties. 
 \end{proof}

\begin{remark}\label{rem:lax_implicit}
    The construction of \cref{prop:iterated} encodes the approach via iterated recollements from \cref{ssec:iteratedrecoll} in a single left-lax diagram $\cL_{\sfT,\chi}$. However, the description of the transition functors and higher coherences in the diagram is still iterative and thus difficult to write down explicitly. We will return to this point in \cref{sec:catstrat_strict}, where we will unfold this data as a punctured cubical diagram.  
\end{remark}

\subsection{Local factors}\label{sec:localfactors}

In \cref{prop:iterated} we saw that we can decompose a dispersible tensor-triangulated category as a right-lax limit into the strata $\sfT_i$. However, for the categorical local-to-global principle we require a decomposition involving the local factors $\Gamma_x \sfT$.  The goal of this section is to show that the category $\sfT_i$ is equivalent to $\prod_{x \in \pmb{P}_i} \Gamma_x \sfT$ using the properties of a dispersion. Combining this result with \cref{prop:iterated} we will obtain the proof of \cref{thm:catl2g}.

 \begin{lemma}\label{lem:disjointvisible}
 Let $x$ be a minimal isolated point of $\Prism(\sfT^\omega)$. Then there is a compact object $\kappa(x) \in \sfT^\omega$ such that $\supp(\kappa(x)) = \{x\}$. Moreover, if $x$ and $y$ are two such minimal isolated points such that $x \neq y$ then $\kappa(x) \otimes \kappa(y) = 0$.
 \end{lemma}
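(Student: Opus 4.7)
The plan is to first produce $\kappa(x)$ by combining the Priestley--spectral dictionary with Balmer's construction of $\Spc(\sfT^\omega)$, and then to deduce the tensor-orthogonality by computing supports and invoking the fact that thick ideals in a rigid tt-category are radical.

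For the existence of $\kappa(x)$, I would translate the hypothesis on $x$: being minimal and isolated in $\Prism(\sfT^\omega)$ says exactly that $\{x\}$ is a clopen down-set of the Priestley space, since isolated points in the Stone topology are clopen and minimality of $x$ forces $\{x\}$ to be a down-set. By \cref{cor:topologydesc} (or equivalently \cref{tab:dictionary}), this means that $\{x\}$ is a closed subset of the spectral space $\Spc(\sfT^\omega)$ whose complement is quasi-compact open. It then remains to show that any quasi-compact open $U \subseteq \Spc(\sfT^\omega)$ has the form $U = \Spc(\sfT^\omega) \setminus \supp(a)$ for a single compact object $a$. This is a direct consequence of Balmer's construction: the basic open subsets $U(a) \coloneqq \Spc(\sfT^\omega) \setminus \supp(a)$ form a basis of quasi-compact opens, and the formula $\supp(a \otimes b) = \supp(a) \cap \supp(b)$ yields $U(a) \cup U(b) = U(a \otimes b)$, so that finite unions of basic opens remain basic. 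Since any quasi-compact open is a finite union of basic opens, applying this to $\{x\}^c$ produces the desired $\kappa(x)$ with $\supp(\kappa(x)) = \{x\}$.

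For the orthogonality statement, given distinct minimal isolated points $x \neq y$, the first part furnishes compact objects $\kappa(x), \kappa(y)$, and I compute
\[
    \supp(\kappa(x) \otimes \kappa(y)) = \supp(\kappa(x)) \cap \supp(\kappa(y)) = \{x\} \cap \{y\} = \varnothing,
\]
so $\kappa(x) \otimes \kappa(y)$ is a compact object with empty support. To conclude that it actually vanishes, I would invoke Balmer's theorem that in any rigid tt-category every thick tensor-ideal is radical. Since $\sfT$ is rigidly-compactly generated, the full subcategory $\sfT^\omega$ is rigid, and hence a compact object with empty support lies in the radical of the zero ideal, which coincides with the zero ideal itself.

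The main step requiring care is precisely this appeal to rigidity in the second part: without it one could only conclude that $\kappa(x) \otimes \kappa(y)$ is $\otimes$-nilpotent rather than zero. Beyond this, the argument is a direct application of the Priestley dictionary together with Balmer's foundational results, so I do not anticipate a genuine obstacle.
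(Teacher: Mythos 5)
Your proof is correct and takes essentially the same approach as the paper: for the existence of $\kappa(x)$, you spell out the argument that the paper compresses into a citation of \cite[Lemma 7.8]{balmerfavi_idempotents} (namely, translating the Priestley hypotheses into the statement that $\{x\}^c$ is quasi-compact open and then using $U(a)\cup U(b)=U(a\otimes b)$ to write it as $U(\kappa(x))$), and for the orthogonality you give exactly the paper's support computation followed by the appeal to rigidity that the paper phrases as ``support detects triviality of compact objects.''
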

 
 \begin{proof}
The existence of $\kappa(x)$ follows from \cite[Lemma 7.8]{balmerfavi_idempotents} bearing in mind \cref{tab:dictionary} Now assume that we have points $x \neq y$ . Then
 \[
 \supp(\kappa(x) \otimes \kappa(y)) = \supp (\kappa(x)) \cap \supp (\kappa(y)) = \{x\} \cap \{y\} = \varnothing.
 \]
 As support detects triviality of compact objects, the result follows.
 \end{proof}
 
\begin{proposition}\label{prop:pointwisedecomp}
Let $\sfT$ be a rigidly-compactly generated tensor-triangulated category with prism $\Prism(\sfT^\omega)$. Let $\pmb{U} \subseteq \Prism(\sfT^\omega)$ be a subset consisting of isolated minimal points. Then there is a canonical equivalence
\[
\sfT(\pmb{U}) \simeq   \prod_{x \in \pmb{U}} \Gamma_x \sfT .
\]
\end{proposition}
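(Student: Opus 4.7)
The plan is to exhibit mutually inverse equivalences
\[
\Psi \colon \sfT(\pmb{U}) \longrightarrow \prod_{x \in \pmb{U}} \Gamma_x \sfT, \quad Y \mapsto (\Gamma_x Y)_{x}
\qquad \text{and} \qquad
\Phi \colon \prod_{x \in \pmb{U}} \Gamma_x \sfT \longrightarrow \sfT(\pmb{U}), \quad (X_x)_{x} \mapsto \bigoplus_{x} X_x.
\]
By \cref{tab:dictionary} each $x \in \pmb{U}$ is a Thomason point, so $\Gamma_x = \Gamma_{\{x\}}$ is a smashing colocalization onto $\sfT(\{x\})_{\tors} = \Gamma_x \sfT$ contained in $\sfT(\pmb{U})$; in particular $\Phi$ lands in $\sfT(\pmb{U})$.

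I would first check $\Psi \Phi \simeq \id$. Because $\Gamma_y$ is smashing (\cref{prop:localduality}) it commutes with direct sums, and the idempotent identity $\Gamma_x \unit \otimes \Gamma_y \unit \simeq \Gamma_{\{x\} \cap \{y\}} \unit = 0$ for distinct $x,y \in \pmb{U}$ yields $\Gamma_y X_x \simeq X_x \otimes \Gamma_x \unit \otimes \Gamma_y \unit \simeq 0$. Together with $\Gamma_y X_y \simeq X_y$ (as $X_y$ is $\Gamma_y$-colocal), this shows $\Psi \Phi((X_x)_x) \simeq (X_y)_y$.

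For $\Phi \Psi \simeq \id$, consider the natural counit $\alpha_Y \colon \bigoplus_{x} \Gamma_x Y \to Y$ for $Y \in \sfT(\pmb{U})$; the same orthogonality computation shows $\Gamma_y(\alpha_Y)$ is an equivalence for every $y \in \pmb{U}$, so it suffices to prove that the family $(\Gamma_y)_{y \in \pmb{U}}$ is jointly conservative on $\sfT(\pmb{U})$. The crucial topological input is that each $x \in \pmb{U}$ is isolated in the constructible topology on $\Prism(\sfT^\omega)$, so $\pmb{U}$ is a discrete subspace; whenever $a \in \sfT^\omega$ has $\supp(a) \subseteq \pmb{U}$, then $\supp(a)$ is Zariski-closed, hence closed in the compact Hausdorff constructible topology, and discrete as a subspace of $\pmb{U}$, therefore finite.

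Writing $\supp(a) = \{x_1, \ldots, x_n\}$ and using that for a finite disjoint union of Thomason points $\Gamma_{\bigsqcup \{x_i\}} \unit \simeq \bigoplus_{i} \Gamma_{x_i} \unit$ (Mayer--Vietoris, since $\Gamma_{x_i} \unit \otimes \Gamma_{x_j} \unit = 0$ for $i \neq j$), we obtain $a \simeq \bigoplus_i \Gamma_{x_i} a$ with each summand in $\Gamma_{x_i} \sfT$. Since $\sfT(\pmb{U})$ is generated as a localizing ideal by such compacts and $\bigcup_{x \in \pmb{U}} \Gamma_x \sfT$ is already closed under tensoring with arbitrary objects of $\sfT$, we conclude that $\sfT(\pmb{U})$ is the localizing subcategory generated by $\bigcup_{x \in \pmb{U}} \Gamma_x \sfT$. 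If $C \in \sfT(\pmb{U})$ satisfies $\Gamma_y C \simeq 0$ for every $y \in \pmb{U}$, then for any $Z \in \Gamma_y \sfT$ the colocalization adjunction gives $\Map(Z, C) \simeq \Map(Z, \Gamma_y C) \simeq 0$, so $C \simeq 0$. Applying this to the cofiber of $\alpha_Y$ finishes the proof. The main obstacle is the topological finiteness of $\supp(a)$, as this is what reduces compacts in $\sfT(\pmb{U})$ to finite orthogonal decompositions and thereby unlocks the local-to-global step; once it is secured, the rest is formal manipulation with idempotent functors.
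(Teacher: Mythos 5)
Your proof is correct and follows the same overall strategy as the paper: exhibit the pair $\Delta_{\pmb{U}}(M) = (\Gamma_x M)_x$ and $\bigoplus_{\pmb{U}}$ and check that both composites are equivalences, using orthogonality of the idempotents on distinct $x,y$. The one genuine difference is in the detection step for $\Phi\Psi \simeq \id$: the paper tensors the counit with the compact objects $\kappa(y)$ of \cref{lem:disjointvisible} and declares (without elaboration) that this detects equivalences on $\sfT(\pmb{U})$, whereas you apply the smashing idempotents $\Gamma_y$ and then explicitly establish joint conservativity by observing that any compact $a$ with $\supp(a)\subseteq \pmb{U}$ has finite support (since $\supp(a)$ is constructible-closed and $\pmb{U}$ is discrete in the patch topology), splitting $a \simeq \bigoplus_i \Gamma_{x_i}a$ and reducing to compact generators. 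Both routes ultimately hinge on the same topological finiteness; your version spells out the argument that the paper leaves implicit, at the mild cost that the final inference ``$\Map(Z,C)=0$ for all $Z\in\Gamma_y\sfT$ implies $C=0$'' really uses that the \emph{compact} objects $\Gamma_{x_i}a$ (summands of compacts) suffice as generators, a point worth making explicit rather than quantifying over all $Z$.
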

\begin{proof}
For every $x \in \pmb{U}$ there is a natural factorization of the counit map $\Gamma_x \to \id$ through the counit map of $\Gamma_{\pmb{U}}$. As such, there is a commutative diagram
\[
\xymatrix@C=50pt{\sfT \ar@{->}[r]^-{(\Gamma_x)_{x \in \pmb{U}}} \ar@{<-}[dr]_{\Gamma_{\pmb{U}}} & \displaystyle{\prod_{x \in \pmb{U}}} \Gamma_x \sfT \\ & \sfT(\pmb{U}), \ar[u]_-{\Delta_{\pmb{U}}}}
\]
where the functor $\Delta_{\pmb{U}}$ is the diagonal given by $\Delta_{\pmb{U}}(M) = (\Gamma_x M)_{x \in \pmb{U}}$. 

We wish to show that $\Delta_{\pmb{U}}$ is an equivalence. We shall do so by constructing an explicit inverse. Let $\bigoplus_{\pmb{U}} \colon \prod_{x \in \pmb{U}} \Gamma_x \sfT \to \sfT(\pmb{U})$  be the functor that sends $(M_x)_{x \in \pmb{U}}$ to $\bigoplus_{x \in \pmb{U}} M_x$. Note that this is indeed an object of $\sfT(\pmb{U})$ as torsion objects are closed under direct sums. By construction we have $\Delta_{\pmb{U}}$ is the left adjoint to $\bigoplus_{\pmb{U}}$. We now show that the unit $\eta$ and counit $\mu$ are equivalences.

For $M \in \sfT(\pmb{U})$ the unit map $\eta_M$ is the canonical map
\[
\bigoplus_{\pmb{U}} \Delta_{\pmb{U}}(M) \simeq \bigoplus_{x \in \pmb{U}} \Gamma_x M \to M. 
\]
This map is an equivalence if and only if it is an equivalence after tensoring with $\kappa(y)$ for all $y \in \pmb{U}$. Using \cref{lem:disjointvisible}, and the fact that $\kappa(y)$ is compact  we have
\[
\kappa(y) \otimes \bigoplus_{x \in {\pmb{U}}} \Gamma_x M \simeq \bigoplus_{x \in \pmb{U}} \kappa(y) \otimes \Gamma_x M \simeq \kappa(y) \otimes \Gamma_y M.
\]
As such, $\eta$ is an equivalence. 

Now consider $(M_x)_{x \in \pmb{U}} \in \prod_{x \in \pmb{U}} \Gamma_x \sfT$. For a given $y \in \pmb{U}$, write $\pi_y \colon \prod_{x \in \pmb{U}} \Gamma_x \sfT \to \Gamma_y \sfT$ for the canonical projection functor. It suffices to show that $\pi_y \mu_{(M_x)_{x \in \pmb{U}}}$ is an equivalence for all $y \in \pmb{U}$. By construction, the domain of this map is
\[
\pi_y \Delta_{\pmb{U}} \prod_{x \in \pmb{U}} (M_x)_{x \in \pmb{U}} \simeq \pi_y \prod_{x \in \pmb{U}} M_x \simeq M_y.
\]
The claim then follows.
\end{proof}

We are now in a position to prove \cref{thm:catl2g}.

\begin{proof}[Proof of \cref{thm:catl2g}]
    Let $\sfT$  be a dispersible rigidly-compactly generated tensor-triangulated category with prism $\pmb{P}$. By \cref{prop:iterated} we can write $\sfT$ as a right-lax limit over the left-lax diagram $\cL_{(\sfT,\chi)}$ with $\cL_{(\sfT,\chi)}(i) = \sfT_i$, so it remains to identify the contributions of the strata. To this end, as each $\pmb{P}_i$ consists of isolated minimal points in $\pmb{P}_{\geqslant i}$ (\cref{prop:dispersionsbasicproperties}), we can apply \cref{prop:pointwisedecomp} to decompose $\sfT_i$ as product $\prod_{x \in \pmb{P}_i} \Gamma_x \sfT_{\geqslant i}$ in the category $\sfT_{\geqslant i}$. Finally, since the singletons $\{x\} \subseteq \sfT_{\geqslant i}$ are open and down-closed for all $x \in \sfT_i$, we may apply \cref{prop:base_change} to see that 
        \[
            \sfT_i \simeq \prod_{x \in \pmb{P}_i} \Gamma_x \sfT_{\geqslant i} \simeq \prod_{x \in \pmb{P}_i} \Gamma_x \sfT,
        \]
    as required.
\end{proof}

\section{The categorical local-to-global principle II: strict limits}\label{sec:catstrat_strict}

Let $(\sfT,\chi)$ be a dispersible tt-category of height $n$; we continue to assume \cref{con:rcgentt}. We now shift our focus to the rewriting of the right-lax diagram $\cL_{(\sfT,\chi)}$ appearing in \cref{thm:catl2g} as a (strict) homotopy limit of a punctured cube $\cD_{(\sfT,\chi)}$. This makes the reconstruction process more concrete; from a more abstract perspective, it has the distinct advantage of avoiding $(\infty,2)$-categorical constructions. The contents of this section and the relevant combinatorics can be found in~\cite[Appendix B]{AMGR} for diagrams over general posets. The combinatorics in the particular case of diagrams over $[n]$ (i.e., finite total orders) have previously appeared in work of the second author with Antol\'{i}n-Camarena \cite{fracture_cube}. The idea is that the lax diagram can be rewritten in a strict manner by recording the lax compositions via diagrams in a way that we will make explicit.  Before turning to the abstract result, we demonstrate how the combinatorics look in the height 1 and the height 2 case.

\subsection{The one-dimensional case}\label{ssec:catstrat_1d}

This case is a reformulation of the discussions appearing in \cref{ssec:localization} and \cref{ssec:iteratedrecoll}. A dispersion $\chi\colon \pmb{P}\coloneqq \Prism(\sfT^{\omega}) \to [1]$ of $\sfT$ of height $1$ gives rise a to a left-lax diagram
    \[
        \cL_{(\sfT,\chi)}\colon [1] \to \stable2Cat
    \]
with right-lax limit $\rlaxlim \cL_{(\sfT,\chi)} \simeq \sfT$, see \cref{lem:homotopy-cartesian-height1-complete}. By construction, $\cL_{(\sfT,\chi)}(0) = \sfT_0$ and $\cL_{(\sfT,\chi)}(1) = \sfT_1$, so the diagram $\cL_{(\sfT,\chi)}$ is given by the exact functor
    \[
        t_0\coloneqq \cL_{(\sfT,\chi)}(0 \to 1) \colon \sfT_0 \to \sfT_1. 
    \]
Viewing this as a Tate construction, we may then reformulate, reconstructing $\sfT$ from $t_0$ as a (strict) homotopy limit over a larger diagram. Indeed, by virtue of \cref{cor:homotopy-cartesian-height1}, $\sfT$ is equivalent to the homotopy limit of the following 2-dimensional punctured cube $\cD_{(\sfT,\chi)}$:
    \[
        \begin{gathered}
            \xymatrix{
            & \sfT_0 \ar[d]^{t_0} \\ \Fun([1] , \sfT_1) \ar[r]_-{\pi_1} & \sfT_1 \,.
            }
        \end{gathered}
    \]
Here, $\pi_1$ denotes evaluation at $1$.\footnote{The intuitive picture the reader might wish to keep in mind is that of the computation of homotopy pullbacks in topological spaces via the path-space fibration.} Consequently, the right-lax limit over the left-lax diagram $\cL_{(\sfT,\chi)}$ is equivalent to the  limit of the punctured cubical diagram $\cD_{(\sfT,\chi)}$. We can then go further and decompose the strata $\sfT_i$ according to \cref{prop:pointwisedecomp} into a product of local factors.

\subsection{The two-dimensional case}\label{ssec:catstrat_2d}

We now consider the two-dimensional case in more detail. 

\begin{example}\label{ex:2dimex}
Let $\sfT$ be a rigidly-compactly generated tensor-triangulated category with prism $\pmb{P} = \Prism(\sfT^\omega)$ which admits a dispersion of height two. 
As in \cref{ssec:catstrat_1d}, we see that the first stage of the dispersion gives us a homotopy pullback square
\[
\xymatrix{
\sfT \ar[r] \ar[d] & \sfT_0 \ar[d]^{{t_0}} \\ \Fun([1] , \sfT_{\geqslant 1}) \ar[r]_-{\pi_1} & \sfT_{\geqslant 1} \,.
}
\]
We then observe that we can decompose $\sfT_{\geqslant 1}$ in a similar fashion where we use the fact that $\pmb{P}_{\geqslant 2} = \pmb{P}_2$ by virtue of having a height two dispersion:
\[
\xymatrix{
\sfT_{\geqslant 1} \ar[r] \ar[d] & \sfT_{1} \ar[d]^{t_{1}} \\ \Fun([1] , \sfT_{2}) \ar[r]_-{\pi_1} & \sfT_{2} \,.
}
\]
Now, using \cref{prop:base_change} we are able to replace the instances of  $\sfT_{\geqslant 1}$ in the first diagram by the second diagram. 
The resulting diagram is a three-dimensional punctured cubical diagram $\cD_{\sfT,\chi}$, where the two blue cospans (in the front and the back face) are exactly the pasted in diagrams for $\sfT_{\geqslant 1}$ and $\eta$ is the transformation as constructed in \eqref{eq:Hconstruction}:
\[
\xymatrix{
 && \textcolor{darkBlue}{\Fun([1] {\times} [1] , \sfT_2 )} \ar[dr]^{\pi_1} \ar@[darkBlue][dd]|\hole^<<<<<<<{\textcolor{darkBlue}{\pi_1}} & \\
& \sfT_0 \ar[rr]^<<<<<<<<<<<{\eta} \ar[dd]_<<<<<<<{t_{0}} && \textcolor{darkBlue}{\Fun([1], \sfT_2)} \ar@[darkBlue][dd]^{\textcolor{darkBlue}{\pi_1}} \\
\textcolor{darkBlue}{\textcolor{darkBlue}{\Fun([1], \sfT_1)}}\ar@[darkBlue][rr]|<<<<<<<<<<<<\hole_<<<<<<<<<<<<<<<<<{\textcolor{darkBlue}{t_{1}}} \ar[dr]_{\pi_1} &&  \textcolor{darkBlue}{\Fun([1]  , \sfT_2)} \ar[dr]^{\pi_1} & \\
& \textcolor{darkBlue}{\sfT_1} \ar@[darkBlue][rr]_{\textcolor{darkBlue}{t_{1}}} && \textcolor{darkBlue}{\sfT_2}
}
\]
whose homotopy limit is equivalent to $\sfT$. By \cref{prop:pointwisedecomp}, the strata $\sfT_i$ decompose again as a product of local factors.
\end{example}

\begin{remark}
It is natural to ask for an explicit description of the image of an object $X \in \sfT$ under the equivalence in \cref{ex:2dimex}, i.e., as a homotopy limit of a punctured cubical diagram of local factors. In general, such a description involves restricted products of alternating completions and localizations akin to the construction of the Beilinson--Parshin adeles, see \cite{Beilinson_adeles} or \cite{huber}. This is also analogous to the adelic decomposition of objects given in \cite{adelicm}.

However, in particularly simple cases we can give a concise formula of the image of an object. To this end, suppose that $\pmb{P} = \Prism(\sfT^\omega)$ is the Priestley space $([2], \leqslant)$. That is, we have three primes $0,1,$ and $2$, with $0$ being the minimal element, and we equip it with the obvious dispersion.

We write $\Lambda_{{0}}$ for the completion of $\sfT$ at the open down-set $\pmb{P}_{0}$, and $L_{0}$ for the corresponding localization away from it. Similarly we write $\Lambda_{{1}}$ for the completion of $\sfT_{\geqslant 1}$ at the open down-set $\pmb{P}_{1} \subseteq \pmb{P}_{\geqslant 1}$ and $L_{1}$ for localization away from it.  With this notation in hand, and unravelling definitions, we obtain the following decomposition of $X \in \sfT$:
\[
\xymatrix@C=-1.5em@R=2em{
    & & \mathbb{D}  \ar[dr] \ar[dd]|\hole \\
    & \Lambda_0 X \ar[rr] \ar[dd] && (L_1 \Lambda_0 X \to L_1 \Lambda_1 L_0 \Lambda_0 X) \ar[dd] \\
    (\Lambda_1 L_0 X \to \Lambda_1 L_0 \Lambda_0 X) \ar[rr]|-\hole \ar[dr] && (L_1 \Lambda_1 L_0 X \to L_1 \Lambda_1 L_0 \Lambda_0 X) \ar[dr]  \\
    & \Lambda_1 L_0 \Lambda_0 X  \ar[rr] && L_1 \Lambda_1 L_0 \Lambda_0 X,
}
\]
where $\mathbb{D}$ is the following diagram:
\[
\mathbb{D} \coloneqq \left(\begin{gathered}\xymatrix{
L_1 X \ar[r] \ar[d] & L_1 \Lambda_1 L_0 X \ar[d] \\
L_1 \Lambda_0 X \ar[r] & L_1 \Lambda_1 L_0 \Lambda_0 X \rlap{ { }{ }.}
}\end{gathered} \right) 
\]
\end{remark}

\subsection{The general case}\label{sec:adelicification}

The goal of the following discussion is to provide the general combinatorics for a dispersion of height $n$ that follows from the iterative approach alluded to above to rewrite right-lax limits as homotopy limits over punctured cubes. For a poset $X$, we write $\mathcal{P}(X)$ for the powerset of $X$, and $\mathcal{P}_{\neq \varnothing} (X)$ for the collection of non-empty subsets. When $X$ is the poset $[n]$ then we have that $\mathcal{P}_{\neq \varnothing} ([n])$ is a punctured cube. It is equivalent to the subdivision of $[n]$, denoted $\mathsf{sd}([n])$.

The main ingredient we will need is that of the \emph{isomax undercategory} of some $\varphi \in \mathcal{P}_{\neq \varnothing} ([n])$. These objects are introduced more generally in \cite[Definition A.6.2]{AMGR} for any poset $X$, where they are denoted $\mathsf{sd}(X)_{\varphi / \mathrm{isomax}}$, but we will focus our attention to $X = [n]$ for our required application.

\begin{definition}
Let $\varphi \in \mathcal{P}_{\neq \varnothing} ([n])$. The \emph{isomax undercategory}, denoted $\mathcal{P}_{\neq \varnothing} ([n])_{\varphi}$, is the poset whose elements are the morphisms $\varphi \to \psi$ in $\mathcal{P}_{\neq \varnothing} ([n])$ such that $\mathrm{max}(\varphi) = \mathrm{max}(\psi)$ with the poset structure induced from that of $\mathcal{P}_{\neq \varnothing} ([n])$.
\end{definition}

\begin{example}\label{ex:diagramcats}
Let $n = 2$. Then $\mathcal{P}_{\neq \varnothing} ([2])$ is the punctured 2-cube:
\[
\xymatrix@!0{
&
& & 2 \ar[dd] \ar[dr]
\\   
&&0 \ar[dd]\ar[rr] & & 02 \ar[dd]
\\
&1 \ar[rr]|\hole \ar[dr]
& & 12
\\
&&01 \ar[rr]
& & 012 \ar@{<-}[ul] \rlap{.}
}
\]

Let us record the values of $\mathcal{P}_{\neq \varnothing} ([2])_{\varphi}$ for all possible $\varphi$.
\begin{itemize}
\item $\mathcal{P}_{\neq \varnothing} ([2])_{0 } = \{0\} \cong [0]$.
\item $\mathcal{P}_{\neq \varnothing} ([2])_{1 } = \{1, 01\} \cong [1]$.
\item $\mathcal{P}_{\neq \varnothing} ([2])_{2 } = \{2, 02, 12, 012\} \cong [1] \times [1]$.
\item $\mathcal{P}_{\neq \varnothing} ([2])_{01 } = \{01\} \cong [0]$.
\item $\mathcal{P}_{\neq \varnothing} ([2])_{02 } = \{02, 012\} \cong [1]$.
\item $\mathcal{P}_{\neq \varnothing} ([2])_{12 } =  \{12, 012\} \cong [1]$.
\item $\mathcal{P}_{\neq \varnothing} ([2])_{012 } =  \{012\} \cong [0]$.
\end{itemize}
\end{example}

The following lemma provides a description of the explicit combinatorics of the isomax undercategory in the case that  the poset is a finite total order.

\begin{lemma}\label{lem:isomax_cat}
Let $\varphi = (i_1, \dots, i_k) \in \mathcal{P}_{\neq \varnothing} ([n])$. Then $\mathcal{P}_{\neq \varnothing} ([n])_{\varphi } \cong [1]^\ell$ where $\ell = i_k-k+1$.
\end{lemma}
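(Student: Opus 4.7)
The plan is to identify the isomax undercategory explicitly as the Boolean lattice of subsets of a set of size $\ell$. First, unravel the definition: an element of $\mathcal{P}_{\neq\varnothing}([n])_\varphi$ is a non-empty $\psi \subseteq [n]$ with $\varphi \subseteq \psi$ and $\max(\psi) = \max(\varphi) = i_k$. The second condition is equivalent to $\psi \subseteq \{0, 1, \ldots, i_k\}$, so such $\psi$ are precisely the subsets of $\{0, 1, \ldots, i_k\}$ that contain $\varphi$.

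Next, I would set $C_\varphi \coloneqq \{0, 1, \ldots, i_k\} \setminus \varphi$, so that $|C_\varphi| = (i_k + 1) - k = \ell$, and consider the assignment
\[
 \Phi \colon \mathcal{P}(C_\varphi) \longrightarrow \mathcal{P}_{\neq\varnothing}([n])_\varphi, \qquad S \longmapsto \varphi \cup S.
\]
This is well-defined since $\varphi \cup S$ is non-empty and satisfies both $\varphi \subseteq \varphi \cup S \subseteq \{0, \ldots, i_k\}$. The inverse is given by $\psi \mapsto \psi \setminus \varphi$, and both maps are monotone with respect to inclusion, so $\Phi$ is an isomorphism of posets.

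Finally, since the power set of an $\ell$-element set, ordered by inclusion, is isomorphic to $[1]^\ell$, we conclude that $\mathcal{P}_{\neq\varnothing}([n])_\varphi \cong [1]^\ell$. There is no real obstacle here; the whole argument is a short combinatorial identification, and one can sanity-check it against the cases enumerated in \cref{ex:diagramcats} (for instance, $\varphi = 2$ gives $i_k = 2$, $k=1$, hence $\ell = 2$ and indeed $[1]\times[1]$; $\varphi = 02$ gives $i_k = 2$, $k=2$, hence $\ell = 1$ and indeed $[1]$).
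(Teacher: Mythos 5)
Your proof is correct and follows the same basic combinatorial route as the paper's, but is more careful: rather than just counting the number of admissible $\psi$ (which the paper does in a one-line proof, and which by itself only gives cardinality rather than poset structure), you exhibit the explicit poset isomorphism $S \mapsto \varphi \cup S$ onto $\mathcal{P}(C_\varphi)$ with $C_\varphi = \{0,\dots,i_k\}\setminus\varphi$, which nails down the Boolean-lattice structure and not merely the element count. One small point worth flagging: the paper's proof text has the inclusion reversed (it writes $\psi \subseteq \varphi$), which is a typo — the definition of the isomax undercategory gives morphisms $\varphi \to \psi$, i.e.\ $\varphi \subseteq \psi$, exactly as you have it and as the enumerated cases in the paper's own example confirm. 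Your identification $|C_\varphi| = (i_k+1) - k = \ell$, the verification that $\varphi \cup S$ is nonempty and contained in $\{0,\dots,i_k\}$, and the check that the map and its inverse $\psi \mapsto \psi\setminus\varphi$ are monotone are all correct and complete.
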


\begin{proof}
This follows from noting that $[n]$ is totally ordered and counting the number of elements $\psi$ with $\psi \subseteq \varphi$ and $\max(\psi) = \max(\varphi)$.
\end{proof}

The following result gives the promised rewriting of a right-lax limit of a left-lax functor over $[n]$ as a limit over the diagram $\mathcal{P}_{\neq \varnothing}([n]) $. We will first discuss the maps that will appear. Let $\varphi = (i_1, \dots i_k) \in \mathcal{P}_{\neq \varnothing}([n])$ and consider $\psi = \varphi \cup \{j\}$ where $j \not\in \varphi$. Using \cref{lem:isomax_cat} we can see that there are only three possible options on what can happen to the isomax undercategory. Writing $\ell$ such that $\mathcal{P}_{\neq \varnothing} ([n])_{\varphi } \cong [1]^\ell$ we have
\[
\mathcal{P}_{\neq \varnothing} ([n])_{\psi } \cong \begin{cases}
			[1]^{\ell - 1} & \text{if $j < i_k$}\\
            [1]^\ell & \text{if $j = i_k+1$}\\
           [1]^{\ell+ \zeta} & \text{else}.
		 \end{cases}
\]
where $\zeta = (j-i_k - 1) > 1$.

We note that in the first case, that is, when $j < i_k$, there is a well defined projection
\begin{equation}\label{eq:isoprojection}
p_j \colon \mathcal{P}_{\neq \varnothing} ([n])_{\psi } \cong [1]^\ell \to [1]^{\ell -1}  \cong \mathcal{P}_{\neq \varnothing} ([n])_{\psi }
\end{equation}
which projects away from $j$. This can be seen explicitly in the case of $n=2$ via \cref{ex:diagramcats}. 

\begin{proposition}[{\cite[Lemma A.6.5]{AMGR}}]\label{prop:AMGR_adelic}
Let $F \colon [n] \to \stable2Cat$ be a left-lax functor. Then the right-lax limit of this functor is equivalent to the limit of the diagram
\begin{align*}
\mathcal{P}_{\neq \varnothing}([n]) &\to \stableCat \\
\varphi  &\mapsto \Fun(\mathcal{P}_{\neq \varnothing} ([n])_{\varphi }, F({\max(\varphi)}))\\
\end{align*}
where an edge $\varphi = (i_1, \dots i_k)$ in $\mathcal{P}_{\neq \varnothing}([n])$ gets mapped to
\[(\varphi \mapsto \varphi \cup \{j\}) \mapsto 
\begin{cases}
			\pi_j \colon \Fun([1]^{\ell}, F(i_k)) \to  \Fun([1]^{\ell-1}, F(i_k)) & \mathrm{if } \, j < i_k\\
            F(i \mapsto i+1)^\Delta \colon \Fun([1]^{\ell}, F(i_k)) \to  \Fun([1]^{\ell}, F(i_k+1)) & \mathrm{if } \, j = i_k+1\\
            \eta_{i_k, j} \colon \Fun([1]^{\ell}, F(i_k)) \to  \Fun([1]^{\ell+\zeta}, F(j)) & \mathrm{else}
\end{cases}
\]
for $\ell = i_k -k + 1$ and $\zeta = j -i_k - 1$. Here $\pi_j$ is the functor induced by the projection \eqref{eq:isoprojection}, $F(i \mapsto i+1)^\Delta$ is the functor that applies the functor $F(i \mapsto i+1)$ at each vertex, and $\eta_{i_k,j}$ is the functor that records the diagram controlling the laxness of the compositions $i_k \to j$ in $F$.
\end{proposition}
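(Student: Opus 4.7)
The plan is to proceed by induction on $n$, with the combinatorics of the isomax undercategories from \cref{lem:isomax_cat} providing the bridge between the iterative construction of left-lax diagrams (given by \cref{lemma:iteraterll}) and the desired strict cubical presentation.

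For the base case $n = 1$, the punctured cube $\mathcal{P}_{\neq \varnothing}([1])$ is a cospan whose isomax undercategories are $[0]$, $[1]$, and $[0]$ respectively; the resulting strict cospan is precisely $F(0) \to \Fun([1], F(1)) \leftarrow F(1)$, whose homotopy pullback was identified with $\rlaxlim_{[1]} F$ in \cref{lem:homotopy-cartesian-height1-complete} by appealing to \cref{cor:homotopy-cartesian-height1}.

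For the inductive step with $n \geqslant 2$, I would use \cref{lemma:iteraterll} to split $F$ as a pasting of $F|_{\{0 \to 1\}}$ with $F|_{[1,n]}$, expressing $\rlaxlim_{[n]} F$ as the right-lax limit of the one-step diagram $F(0) \to \rlaxlim_{[1,n]} F$. The inductive hypothesis rewrites the target as a strict limit over $\mathcal{P}_{\neq \varnothing}([1,n])$, and applying the base case then realises $\rlaxlim_{[n]} F$ as a strict limit over the punctured cube $\mathcal{P}_{\neq \varnothing}([n])$. Under the evident decomposition of $\mathcal{P}_{\neq \varnothing}([n])$ into those subsets containing $0$ and those not, one verifies via \cref{lem:isomax_cat} that the isomax undercategories on the $0$-containing side acquire exactly one additional cubical dimension, compatibly with the new edges introduced from $\{0\}$ by the base-case pasting.

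The main obstacle will be identifying the long-range lax composition functors $\eta_{i_k, j}$ with $j > i_k + 1$ appearing in the third case of the statement. These encode the $\zeta = j - i_k - 1$ intermediate strata traversed by the iterated Tate-like constructions in the inductive reassembly, and a direct verification that the proposed $\eta_{i_k, j}$ agree with those produced by the iteration, including the matching of higher coherences on the faces of the cubes $[1]^{\ell + \zeta}$, amounts to a careful bookkeeping exercise governed by the cofinality properties of the isomax undercategory recorded in \cref{lem:isomax_cat}.
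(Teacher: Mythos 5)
The paper does not supply a proof of this proposition: it is cited directly from \cite[Lemma~A.6.5]{AMGR}, and the only additional exposition the paper offers is \cref{rem:addinginlax}, which unpacks the $\eta_{i_k,j}$ in the case $n=3$. So there is no in-house argument against which to compare your sketch; you are effectively reconstructing the argument of \cite{AMGR} in the special case of a finite linear poset.

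Your base case contains an error in the placement of the categories: by the formula, $\varphi = \{0,1\}$ sits at the terminal vertex of the cospan and is sent to $\Fun([0],F(1)) = F(1)$, while $\varphi = \{1\}$ is sent to $\Fun([1],F(1))$; so the strict cospan is $F(0) \to F(1) \leftarrow \Fun([1],F(1))$, not $F(0) \to \Fun([1],F(1)) \leftarrow F(1)$ as you wrote. The homotopy limit agrees either way, but the assignment to vertices of $\mathcal{P}_{\neq\varnothing}([1])$ must match the statement if you want to carry the identification through the induction.

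There is a more substantive gap in the inductive step. \cref{lemma:iteraterll} runs in the \emph{opposite} direction from what you invoke: it takes a $[1]$-diagram $F'$ and an $[n]$-diagram $G$ with $F'(1) \simeq \rlaxlim G$ and \emph{builds} a $[1+n]$-diagram $H$. To use it your way you must first construct a functor $F(0) \to \rlaxlim_{[1,n]} F|_{[1,n]}$ from the structure maps $F(0\to i)$ and the laxness data $\eta_{0ij}$, check that this defines a $[1]$-diagram $F'$, and then verify that the $H$ produced by \cref{lemma:iteraterll} from $(F',F|_{[1,n]})$ agrees with the original $F$, including the higher coherence data — none of which is immediate from the statement of \cref{lemma:iteraterll}. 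Only after this does the base case plus inductive hypothesis yield a limit over a cospan of cubes, and you must then identify that iterated limit with a single limit over $\mathcal{P}_{\neq\varnothing}([n])$ via the splitting $\mathcal{P}([n]) \cong [1]\times\mathcal{P}([1,n])$. Finally, as you acknowledge, the matching of the resulting transition functors with the $\pi_j$, $F(i\mapsto i+1)^\Delta$, and $\eta_{i_k,j}$ in the statement (and the coherence on the faces of $[1]^{\ell+\zeta}$) is exactly where the content lies; deferring that to ``bookkeeping'' leaves the proof incomplete. Since the paper simply defers to \cite{AMGR}, the practical suggestion is to cite the reference rather than re-derive it, or else commit to working out the coherence bookkeeping in full.
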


\begin{remark}\label{rem:addinginlax}
    It is worth commenting further on the functors $\eta_{i_k,j}$ appearing in \cref{prop:AMGR_adelic}, and we do so via a small example. Suppose that we have $F \colon [3] \to \stable2Cat$ a left-lax functor, i.e., $F$ has the form of \cref{fig:tetra}.
    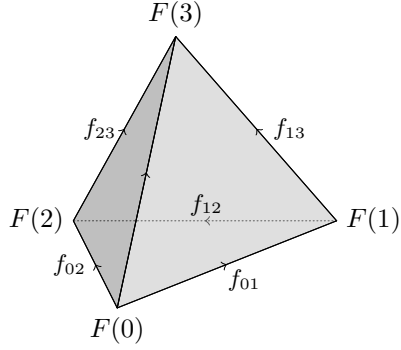
\begin{figure}[ht]
        \centering
        \begin{tikzpicture}[line join = round, line cap = round, scale=2]
\coordinate [label=above:$F(3)$] (3) at (0,{sqrt(2)},0);
\coordinate [label=left:$F(2)$] (2) at ({-.5*sqrt(3)},0,-.5);
\coordinate [label=below:$F(0)$] (1) at (0,0,1);
\coordinate [label=right:$F(1)$] (0) at ({.5*sqrt(3)},0,-.5);
\begin{scope}[decoration={markings,mark=at position 0.5 with {\arrow{to}}}]
\draw[densely dotted,postaction={decorate}] (0)--(2);
\draw[fill=lightgray,fill opacity=.5] (1)--(0)--(3)--cycle;
\draw[fill=gray,fill opacity=.5] (2)--(1)--(3)--cycle;
\draw[postaction={decorate}] (1)--(0);
\draw[postaction={decorate}] (1)--(2);
\draw[postaction={decorate}] (2)--(3);
\draw[postaction={decorate}] (1)--(3);
\draw[postaction={decorate}] (0)--(3);
\node at (0.45,-0.2) {\small$f_{01}$};
\node at (-0.7,-0.1) {\small$f_{02}$};
\node at (-0.5,0.8) {\small$f_{23}$};
\node at (0.75,0.8) {\small$f_{13}$};
\node at (0.2,0.3) {\small$f_{12}$};
\end{scope}
\end{tikzpicture}
        \caption{A left-lax functor $F \colon [3] \to \Cat_\infty$. For each face $0 \leqslant i < j < k \leqslant 3$ there is a natural transformation $\eta_{ijk} \colon f_{ik} \Rightarrow f_{jk}f_{ij}$.}
        \label{fig:tetra}
    \end{figure}
    
    Let $i_k = 0$ and $j = 3$. That is, we wish to understand the functor
    \[
    \eta_{0,2} \colon \Fun([0], F(0)) \to  \Fun([1]^{2}, F(3)).
    \]
    Starting at $F(0)$ there are four distinct ways to get to $F(3)$ in \cref{fig:tetra}, and due to the fact that the diagram is left-lax we can compare these as in \eqref{eq:storinglax} using the $\eta_{ijk}$, where we have written the isomax undercategory $\mathcal{P}([3])_{03}$ on the left-hand side as $[1]^2$ for comparison:
    \begin{equation}\label{eq:storinglax}
    \begin{gathered}
    \xymatrix@C=3em{
    013 \ar[r] & 0123 & & f_{13}f_{01} \ar@{=>}[r]^-{\eta_{123} f_{01}} & f_{23}f_{12}f_{01} \\
    03 \ar[u] \ar[r] & 023 \ar[u] & & f_{03} \ar@{=>}[r]_-{\eta_{023}} \ar@{=>}[u]^-{\eta_{013}}  & f_{23}f_{02} \ar@{=>}[u]_-{f_{23}\eta_{012}} \rlap{.}
    }
    \end{gathered}
    \end{equation}
    It is exactly the right-hand square of \eqref{eq:storinglax} that the functor $\eta_{0,2}$ records.

    Explicit descriptions of the functors $\eta_{i_k,j}$ in general can be found at \cite[Lemma A.6.5]{AMGR}.
\end{remark}

With \cref{prop:AMGR_adelic} in hand, we can state and prove the main result of this section. 

\begin{theorem}\label{theorem:categoricalL2G}
Let $\sfT$ be a finitely dispersible rigidly-compactly generated tensor-triangulated category with prism $\pmb{P}$.  Then $\sfT$ is equivalent to the homotopy limit of the punctured cube
\begin{align*}
\cD_{(\sfT,\chi)}\colon\mathcal{P}_{\neq \varnothing}([n]) &\to \stableCat \\
\varphi  &\mapsto \Fun \left(\mathcal{P}_{\neq \varnothing} ([n])_{\varphi }, \prod_{x \in \pmb{P}_{\max(\varphi)}} \Gamma_x \sfT\right)
\end{align*}
with maps as in \cref{prop:AMGR_adelic}.
\end{theorem}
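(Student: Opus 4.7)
The plan is to assemble the theorem directly from the pieces that have already been developed in the preceding sections: the lax reconstruction from \cref{prop:iterated}, the $\infty$-categorical rewriting of right-lax limits from \cref{prop:AMGR_adelic}, and the decomposition of strata into local factors from \cref{prop:pointwisedecomp}. No genuinely new argument is required; the content of the statement is that these three ingredients fit together into a single strict homotopy limit formula.

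First I would invoke \cref{prop:iterated}: since $(\sfT,\chi)$ is finitely dispersible of height $n$, there is a left-lax diagram $\cL_{(\sfT,\chi)} \colon [n] \to \stable2Cat$ with $\cL_{(\sfT,\chi)}(i) = \sfT_i$ and transition functors given by composing a Tate construction with a torsion colocalization, such that
\[
\sfT \;\simeq\; \rlaxlim_{[n]} \cL_{(\sfT,\chi)}.
\]
Next I would apply \cref{prop:AMGR_adelic} to the functor $F = \cL_{(\sfT,\chi)}$. This produces a strict diagram on the punctured cube $\mathcal{P}_{\neq\varnothing}([n])$ whose value on $\varphi$ is the functor category $\Fun(\mathcal{P}_{\neq\varnothing}([n])_\varphi, \sfT_{\max(\varphi)})$, with transition maps of the three prescribed types ($\pi_j$ for projections, $(-)^{\Delta}$ for pointwise application of transition functors, and $\eta_{i_k,j}$ for recording the lax cells encoding the higher coherences of $\cL_{(\sfT,\chi)}$). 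Combining the two equivalences yields $\sfT$ as the homotopy limit of this punctured cube.

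The last step is to unfold the value at each vertex. By \cref{prop:pointwisedecomp}, applied inside the localization $\sfT_{\geqslant \max(\varphi)}$ (whose prism is $\pmb{P}_{\geqslant \max(\varphi)}$ by \eqref{eq:prism_identification}) to the subset $\pmb{P}_{\max(\varphi)}$ of isolated minimal points provided by \cref{prop:dispersionsbasicproperties}\ref{dispersionsbasicproperties:3}, there is a canonical equivalence
\[
\sfT_{\max(\varphi)} \;\simeq\; \prod_{x \in \pmb{P}_{\max(\varphi)}} \Gamma_x \sfT_{\geqslant \max(\varphi)}.
\]
An appeal to the independence-of-ambient-category statement \cref{prop:base_change} identifies each $\Gamma_x \sfT_{\geqslant \max(\varphi)}$ with $\Gamma_x \sfT$, giving the identification
\[
\Fun\!\left(\mathcal{P}_{\neq\varnothing}([n])_\varphi, \sfT_{\max(\varphi)}\right) \;\simeq\; \Fun\!\left(\mathcal{P}_{\neq\varnothing}([n])_\varphi, \prod_{x \in \pmb{P}_{\max(\varphi)}} \Gamma_x \sfT\right)
\]
compatibly with the transition maps described in \cref{prop:AMGR_adelic}. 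Taking the homotopy limit over the punctured cube yields the claimed description of $\sfT$ as the limit of $\cD_{(\sfT,\chi)}$.

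The only subtle point, and the step I would expect to require the most care, is the naturality needed to thread \cref{prop:pointwisedecomp} and \cref{prop:base_change} through \cref{prop:AMGR_adelic} so that the product decompositions of the strata are compatible with all three classes of transition functors; here one uses that the localization $\sfT \to \sfT_{\geqslant i}$ is symmetric monoidal and preserves the idempotents $\Gamma_x$ (\cref{prop:base_change}), so that the Tate functors $t_i$ and the lax structure maps $\eta_{i_k,j}$ respect the pointwise product decompositions. With this in place, the three equivalences paste together functorially in $\varphi$ and the theorem follows.
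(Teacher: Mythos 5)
Your proof is correct and assembles exactly the same lemmas as the paper (\cref{prop:iterated}, \cref{prop:pointwisedecomp}, \cref{prop:base_change}, \cref{prop:AMGR_adelic}), but in a different order, and the order is not entirely cosmetic. The paper's proof is two lines: invoke \cref{thm:catl2g}, which already exhibits $\sfT$ as the right-lax limit of a left-lax functor $[n]\to\stable2Cat$, $i\mapsto\prod_{x\in\pmb{P}_i}\Gamma_x\sfT$, and then apply \cref{prop:AMGR_adelic} to that functor. You instead start from the lax diagram of strata $\sfT_i$ from \cref{prop:iterated}, apply \cref{prop:AMGR_adelic} first to obtain a punctured cube with vertices $\Fun(\mathcal{P}_{\neq\varnothing}([n])_\varphi,\sfT_{\max(\varphi)})$, and only afterwards try to replace each $\sfT_{\max(\varphi)}$ by its product of local factors. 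That ordering is precisely what generates the ``subtle point'' you flag at the end: you must now check compatibility of the product decompositions against all three classes of transition map in the cube ($\pi_j$, $(-)^\Delta$, and the lax coherence functors $\eta_{i_k,j}$), which is awkward because by the time the cube has been formed the original lax structure has been dispersed across many edges. The paper's ordering sidesteps this: performing the identification $\sfT_i\simeq\prod_{x\in\pmb{P}_i}\Gamma_x\sfT$ at the level of the left-lax diagram amounts to replacing the value of the lax functor at each vertex by an equivalent category, which yields an equivalent lax functor and hence an equivalent right-lax limit before \cref{prop:AMGR_adelic} is ever invoked; the compatibility you worry about is then absorbed into that proposition as a black box. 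Both routes are valid, but invoking \cref{thm:catl2g} directly is shorter and renders your closing naturality paragraph unnecessary.
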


\begin{proof}
    By \cref{thm:catl2g}, $\sfT$ is equivalent to the right-lax limit of the left-lax functor which sends $i$ to $\prod_{x \in \pmb{P}_{i}} \Gamma_x\sfT$. The result then follows by combining this with \cref{prop:AMGR_adelic}.
\end{proof}

\subsection{Examples}\label{ssec:catstrat_examples}

We finish this section by providing two examples which appear in the gluing behaviour that will arise in our main example of interest in \cref{part:gspectra}.

\begin{example}\label{ex:dzexamplecont}
Let us continue the example of $\sfD(\Z)$ from \cref{ex:derivedabelianobj}. Recall from \cref{ex:running1} that this admits a height 1 dispersion where the open down-set $\pmb{U}$ consists of the collection of non-zero prime ideals, and $\pmb{Z} = \pmb{U}^c = (0)$. Applying \cref{lem:homotopy-cartesian-height1-complete} and identifying the relevant pieces (including identifying the category of derived $p$-torsion with that of derived $p$-completion) we obtain the following homotopy cartesian square
\[\xymatrix{
\sfD(\Z) \ar[r] \ar[d] & \displaystyle{\prod_{p \in \mathbb{P}}} \sfD(\Z)_p^\wedge \ar[d]^-{\Q \otimes \prod_\mathbb{P}} \\ \Fun([1] , \sfD(\Q)) \ar[r]_-{\pi_1} & \sfD(\Q) \rlap{.}
}\]
An object of this homotopy cartesian square is the data of:
\begin{itemize}
    \item modules $M_p \in \sfD(\Z)_p^\wedge$ for all $p \in \mathbb{P}$;
    \item a rational vector space $V \in \sfD(\Z)$;
    \item a map of rational vector spaces $V \to \Q \otimes \prod_\mathbb{P} M_p$.
\end{itemize}
\end{example}

\begin{example}\label{ex:derivedabsflatring}
We return to \cref{ex:running2}. That is, we let $R$ be the ring of continuous functions $C(\N^*,k)$ for $k$ a field equipped with the discrete topology, and $\sfT = \mathsf{D}(R)$. This again is a dispersible tensor-triangulated category of height 1 by \cref{ex:nstartisdisp}, and as such we can apply \cref{lem:homotopy-cartesian-height1-complete} and identify the relevant pieces: 
\[\xymatrix{
\sfD(R) \ar[r] \ar[d] & \displaystyle{\prod_{n \in \mathbb{N}}} \sfD(k) \ar[d]^-{c_\ast} \\ \Fun([1] , \sfD(k)) \ar[r]_-{\pi_1} & \sfD(k)
}\]
where it remains to describe the Tate construction which we have suggestively denoted $c_\ast$. This can be rectified by considering the behaviour of the point at $\{\infty\}$ via the pushforward of a sheaf on $\N$ under the Alexandroff extension $c \colon \N \to \N^*$. Spelling this out, suppose that we have a sheaf $\mathcal{F}$ on $\N$. Let $U \subseteq \N^*$ be an open subset. Then, by definition, we have $c_\ast \mathcal{F}(U) \coloneqq \mathcal{F}(c^{-1}(U))$. The stalk at $\infty$ is then
\[
(c_\ast \mathcal{F})_{\infty} \coloneqq \colim_{U \ni \infty} (c_\ast \mathcal{F})(U) = \colim_{U \ni \infty} \mathcal{F}(c^{-1} (U)).
\]
Recalling \cref{def:onepointcpt}, the open subsets $U \ni \infty$ are all of the form $(\N \setminus C) \cup \{\infty\}$ where $C$ is a finite subset in $\N$. In particular it follows that for such a $U$ the preimage $c^{-1}(U)$ is the complement of a finite subset of $\N$. By taking a cofinal system we may as well consider the system of opens $\{\N, \N\backslash \{0\}, \N \backslash \{0,1\}, \dots \}$. Finally, as $\N$ is discrete, the value of $\mathcal{F}$ on any open subset of $\N$ decomposes as a product over the stalks. Assembling all of this we have that
\[
(c_\ast \mathcal{F})_\infty \simeq \colim_i \prod_{n \geqslant i} \mathcal{F}_n,
\]
where the maps in the system are given by restrictions. Returning to the example at hand of $C(\N^*,k)$, this equivalence provides an explicit formula for the Tate construction $c_*$ in the above square. 
\end{example}

\begin{remark}
    We note the different character of the Tate splicing data in Examples \ref{ex:dzexamplecont} and \ref{ex:derivedabsflatring}. Giving a systematic account for all compact Lie groups of the splicing data in terms of intrinsic structure will be an essential ingredient in understanding the algebraic model.
\end{remark}

\newpage

\part{Dispersions for rational \texorpdfstring{$G$}{G}-spectra}\label{part:gspectra}

We now apply the theory developed in \cref{part:prisms} to the category $\SpG$ of $G$-spectra for a compact Lie group $G$.  It is the homotopy category of a stable monoidal $\infty$-category  and hence a  tensor-triangulated category with respect to the usual smash product. Here, we mean the category of `genuine' $G$-spectra, for which one can suspend and desuspend by an arbitrary real representation. We will focus particularly on the localization $\SpGQ$, whose objects are $G$-spectra with rational homotopy groups, 
which represent cohomology theories taking values in rational vector spaces.

The Balmer spectrum of $\SpGQo$ was determined in~\cite{greenlees_bs} and a partial extension to the integral case was given in \cite{BarthelGreenleesHausmann2020}. However the description of the topology in \cite{greenlees_bs} is difficult to work with in practice. Here we will give a much clearer description of $\SpGQ$ via its prism, and demonstrate the power of this approach by proving numerous general structural results. Indeed, both the constructible topology and the associated spectral ordering familiar structures. We will in particular construct a `prismatic' decomposition of $\SpGQ$: we show that it is dispersible, and hence it satisfies the categorical local-to-global principle. 

\renewcommand{\SpGQ}{\Sp_{G}}
\renewcommand{\SpGQo}{\Sp_{G}^{\omega}}

\begin{convention*}\label{conv:gspectra_notation}
    Throughout the rest of Part 2, we are dealing exclusively with rational spectra so we drop the subscript $\Q$ from the notation and write $\SpG$ for the category of rational $G$-spectra by $\SpGQ$. Similarly, we will write $\Sp$ for the category of rational spectra.
\end{convention*}

\section{Spaces of subgroups}\label{sec:sub}

We begin with a recollection on various topologies on spaces of subgroups of a given compact Lie group $G$ and their relationship. Throughout, we consider only \emph{closed} subgroups of $G$, without always saying so explicitly. At the end of this section, we will give a few simple examples to show the richness of the structures defined.

\subsection{The $h$-topology}\label{ssec:htopology}

We write $\Sub(G)$ for the set of closed subgroups of $G$, and summarise results from  tom Dieck \cite{tomDieck79}. We choose a bi-invariant metric $d$ on $G$: this gives the Hausdorff metric on the space of
compact subspaces of $G$, and hence on the subspace $\Sub(G)$. The resulting \emph{$h$-topology} on $\Sub(G)$ is independent of the choice of the bi-invariant metric $d$.

The conjugation action is continuous for the Hausdorff metric and we denote by $\Sub(G)/G$ the corresponding quotient space with the induced topology, which we refer to as the \emph{$h$-topology}. We record some standard results. 

\begin{proposition}[{\cite[1.7.27]{palais_classification}}]\label{prop:subgmgcountable}
The underlying set of $\Sub(G)/G$ is countable.
\end{proposition}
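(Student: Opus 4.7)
The plan is to decompose each closed subgroup $H \leq G$ into its identity component $H_e$ and its component group $H/H_e$, and to count each piece separately. Observe that $H_e$ is a closed connected subgroup, $H \subseteq N_G(H_e)$, and $H/H_e$ is a finite subgroup of the compact Lie group $W_G(H_e) = N_G(H_e)/H_e$. Two closed subgroups are $G$-conjugate if and only if, after conjugating so their identity components coincide (say with a fixed $K$), the component groups $H/K$ and $H'/K$ are conjugate in $W_G(K)$. The problem therefore reduces to showing (i) that the set of $G$-conjugacy classes of closed connected subgroups of $G$ is countable, and (ii) that any compact Lie group has only countably many conjugacy classes of finite subgroups.

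For (i), I would argue by induction on $\dim G$. The base case $\dim G = 0$ is immediate. In the inductive step, a closed connected subgroup $K \subseteq G$ of maximal dimension $\dim G$ must equal $G_e$, contributing a single conjugacy class. Any other closed connected subgroup is contained in some maximal proper closed connected subgroup $M$ of $G$, and a classical structural result (Borel--de Siebenthal and the Dynkin classification in the compact semisimple case, together with the reductive structure of general compact Lie groups) shows that $G$ has only finitely many $G$-conjugacy classes of maximal proper closed connected subgroups. Each such $M$ has $\dim M < \dim G$, so by the inductive hypothesis it contains countably many $M$-conjugacy classes of closed connected subgroups; these may fuse under the $G$-action, but only by a countable amount, yielding countability of (i).

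For (ii), I would invoke a Jordan-type theorem (applied after a Peter--Weyl embedding $G \hookrightarrow U(n)$): there is a constant $c(G)$ such that every finite subgroup $F \leq G$ admits an abelian normal subgroup $A$ of index at most $c(G)$. Any such $A$ lies in some maximal torus of $G$; since maximal tori are all conjugate and contain only countably many finite subgroups, there are countably many choices for $A$ up to conjugacy. Because $|F/A| \leq c(G)$ and the quotient $F/A$ acts on $A$ in finitely many ways, $F$ is realised as a group extension of data drawn from a countable set, and the conjugacy count follows.

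The main obstacle is the structural input in (i), namely the finiteness (up to $G$-conjugacy) of maximal proper closed connected subgroups; everything else is bookkeeping about how the dimension stratification interacts with conjugation. An alternative, closer to Palais's original approach, bypasses (i) and (ii) by analyzing the $h$-topology on $\Sub(G)$ directly: Montgomery--Zippin stratifies $\Sub(G)$ by orbit type so that each subgroup admits a neighborhood whose members are subconjugate to it, and compactness of $\Sub(G)$ combined with an inductive peeling of top-dimensional strata yields a countable transversal to the $G$-action.
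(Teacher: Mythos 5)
Your opening reduction -- decompose $H$ into $H_e$ and $H/H_e$, then count (i) conjugacy classes of closed connected subgroups and (ii) conjugacy classes of finite subgroups of compact Lie groups -- is sound, but both branches lean on statements that are false as written. In (i), the assertion that $G$ has only \emph{finitely} many conjugacy classes of maximal proper closed connected subgroups already fails for $G = T^2$: the circles $\{(z^p,z^q)\}$ with $\gcd(p,q)=1$ are pairwise non-conjugate (the group is abelian), pairwise incomparable, and each is maximal among proper connected subgroups, giving countably infinitely many classes. Dynkin and Borel--de Siebenthal control the semisimple case, but once $G_e$ has positive-dimensional centre the finiteness disappears, and weakening to ``countably many'' does not rescue the induction, because that weaker claim is itself an instance of the statement (i) you are trying to establish. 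At minimum you would need an independent count for the toral part (sublattices of $\Z^r$) and an argument gluing it to the semisimple part.

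In (ii), the step ``any such $A$ lies in some maximal torus of $G$'' is false for non-simply-connected $G$: the Klein four-group of diagonal sign matrices inside $SO(3)$ is not contained in the maximal torus $SO(2)$, whose finite subgroups are all cyclic. Moreover, even granting Jordan's theorem, the bookkeeping you describe bounds abstract isomorphism types of extensions $1 \to A \to F \to F/A \to 1$, whereas what is needed is a bound on $G$-conjugacy classes of embedded subgroups; the standard remedy is the rigidity theorem (finiteness of $\Hom(F,G)/G$, via $H^1(F;\mathfrak{g})=0$ and compactness of the representation variety), which you do not invoke. For what it is worth, the paper gives no proof of its own, only the citation to Palais; the alternative you gesture at in your last sentence -- use the Montgomery--Zippin neighbourhood theorem to stratify $\Sub(G)$, then peel top-dimensional strata using compactness and induction on dimension -- is the shape of the argument in Palais and tom Dieck and is the more reliable route.
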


\begin{proposition}[{\cite[Propositions 5.6.1 and 5.6.2]{tomDieck79}}]\label{prop:subgisstone}
The space $\Sub(G)/G$ equipped with the $h$-topology is compact, Hausdorff and totally disconnected:  it is a Stone space.
\end{proposition}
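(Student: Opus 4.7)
The plan is to verify the three properties separately, combining standard facts about hyperspaces under the Hausdorff metric, compact group actions, and cardinality constraints, and leaning on \cref{prop:subgmgcountable} for the final step, which I expect to do the bulk of the work.

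First I would establish compactness of $\Sub(G)$ itself. The set of non-empty closed subsets of the compact metric space $G$ is compact in the Hausdorff metric (this is a classical result; the hyperspace of a compact metric space is compact metric). The subset $\Sub(G)$ is closed in this hyperspace because the defining conditions of a subgroup ($e \in H$, closure under the continuous operations $(x,y) \mapsto xy^{-1}$) pass to Hausdorff limits of subsets. Thus $\Sub(G)$ is a compact metric space, hence compact Hausdorff. The conjugation action of $G$ on $\Sub(G)$ is continuous (since it is a restriction of a continuous action on the hyperspace induced by conjugation on $G$), so the quotient $\Sub(G)/G$ inherits compactness.

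Next I would check Hausdorff-ness of the quotient. In general, if a compact Hausdorff group acts continuously on a Hausdorff space, the orbit equivalence relation is closed: the map $G \times \Sub(G) \to \Sub(G) \times \Sub(G)$, $(g,H) \mapsto (H, gHg^{-1})$, is continuous with compact domain, so its image (the graph of the equivalence relation) is closed. This is equivalent, for a quotient by a compact group action, to the quotient being Hausdorff. Hence $\Sub(G)/G$ is compact Hausdorff.

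The key point is total disconnectedness, and this is where I would invoke \cref{prop:subgmgcountable}: the quotient $\Sub(G)/G$ is countable. The main obstacle is to deduce total disconnectedness from countability in a compact Hausdorff space, but this follows cleanly from a cardinality argument. If $C$ were a connected component containing two distinct points $x \neq y$, then by Urysohn's lemma applied in the (normal) compact Hausdorff space $C$, there is a continuous function $f\colon C \to [0,1]$ with $f(x)=0$ and $f(y)=1$; by connectedness $f(C) = [0,1]$, forcing $|C| \geqslant \mathfrak{c}$, contradicting countability. Thus every connected component is a singleton, and $\Sub(G)/G$ is totally disconnected. Combined with the first two steps, this identifies $\Sub(G)/G$ as a Stone space. (Alternatively, one could cite the Mazurkiewicz--Sierpi\'nski theorem that every countable compact Hausdorff space is scattered and zero-dimensional.)
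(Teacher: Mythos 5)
The paper offers no proof of its own: \cref{prop:subgisstone} is simply cited to tom Dieck \cite[Propositions 5.6.1 and 5.6.2]{tomDieck79}. Your argument is correct and complete, so the relevant comparison is with tom Dieck's, not the paper's.

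Your treatment of compactness and Hausdorffness follows the standard route (the hyperspace of a compact metric space is compact, $\Sub(G)$ is Hausdorff-closed in it since the subgroup axioms pass to Hausdorff limits, and the quotient by a continuous compact group action is compact Hausdorff because the orbit relation is closed). This is essentially what tom Dieck's Proposition 5.6.1 does. Where you diverge is total disconnectedness. You derive it from the countability of $\Sub(G)/G$ (\cref{prop:subgmgcountable}) together with the fact that a connected compact Hausdorff space with more than one point surjects onto $[0,1]$ by Urysohn and hence has cardinality at least $\mathfrak{c}$; alternatively, the Mazurkiewicz--Sierpi\'nski classification of countable compact Hausdorff spaces as countable successor ordinals gives zero-dimensionality immediately. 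This is a clean cardinality argument, but note that it shifts the real weight onto Palais's countability theorem, which is itself a nontrivial structural input. tom Dieck's Proposition 5.6.2, by contrast, proceeds more directly: he uses the Montgomery--Zippin theorem (subgroups near $H$ are subconjugate to $H$, cf.\ \cref{ssec:mztheorem}) to exhibit a base of clopen sets around each conjugacy class, giving zero-dimensionality without invoking countability. Both routes work; yours is shorter if one takes \cref{prop:subgmgcountable} as given, while tom Dieck's is more self-contained and produces explicit clopen neighbourhoods, which is closer in spirit to the rest of the paper's analysis of $\Sub(G)/G$.
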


Next, the collection of conjugacy of subgroups of $G$ admits a poset structure under the cotoral order. This order will play a central role below.

\begin{definition}\label{def:cotoral}
For subgroups $H, K$ of $G$ we say that $K$ is \emph{cotoral} in $H$, denoted $K\cotoral H $, if $K$ is normal in $H$  and $H/K$ is a torus. For conjugacy classes, we say $(K)\cotoral (H)$ if there are representative subgroups with $K\cotoral H$. Two subgroups $H$ and $K$ are said to be \emph{cotorally unrelated} if neither $K \cotoral H$ nor $H \cotoral K$.

For any set $M \subseteq \Sub(G)/G$ of conjugacy classes, we write $\Lct(M)$ for its cotoral down-closure. 
\end{definition}

We soon return to examples of these structures after we have introduced two more topologies on the set $\Sub(G)/G$ and recorded some facts about them.

\subsection{Weyl groups and the $f$-topology}
Before giving any examples, we need to say a bit more about the
Burnside ring.  To any subgroup $H \subseteq G$ we associate its 
\emph{Weyl group} $W_G(H) = N_G(H)/H$ where $N_G(H)$ is the normalizer
of $H$ in $G$. We write  $\cF(G) \subseteq \Sub(G)$ for the set of  subgroups which are of finite index in their normalizer (i.e., those that have finite Weyl groups). This is a  
closed subspace of $\Sub(G)$ and we  equip it with the $h$-topology. 
The corresponding space of conjugacy classes is denoted $\Phi(G) \coloneqq
\cF(G)/G$. By \cite[Proposition 5.6.1]{tomDieck79}, $\Phi(G)$ is compact, Hausdorff and totally disconnected. The significance of this space is tom Dieck's theorem (\cite[Corollary 5.7.6 and Proposition 5.9.13]{tomDieck79}) that fixed
points give an isomorphism
\begin{equation}\label{eq:burnsidering}
[S^0,S^0]^G\tensor \Q \stackrel{\cong}\longrightarrow C(\Phi G,\Q),
\end{equation}
where the target denotes the ring of locally-constant functions on $\Phi(G)$ with values in the rationals. We will write $A_{\Q}(G) \coloneqq [S^0,S^0]^G\tensor \Q$ for the \emph{rational Burnside ring}. This feeds into our analysis through  a second topology on $\Sub(G)$
called the \emph{$f$-topology}. It again comes from the Hausdorff
metric, but now subgroups can only be close if their Weyl groups are
finite (hence the use of the letter `$f$').  We first give an explicit
definition, but for purposes of proof, we always use
the characterisation in \cref{lem:ftop} below. 

\begin{definition}\label{def:ftopology}
 For a closed subgroup $H \subseteq G$ and $\varepsilon > 0$ we define the ball
\[
O(H, \varepsilon) = \{K \in \cF(H) \mid d(H,K) < \varepsilon\}
\]
in $\Sub(G)$.  Given a neighbourhood $A$ of the identity in $G$ consider the following set
\[
O(H, \varepsilon, A) = \bigcup_{a \in A} O(H, \varepsilon)^a
\]
where $O(H, \varepsilon)^a$ is the set of $a$-conjugates of elements of $O(H, \varepsilon)$. The \emph{$f$-topology} on $\Sub(G)$  is the topology generated by the sets $O(H, \varepsilon, A) $ as $H,A,\varepsilon$ vary. We can then pass the $f$-topology to $\Sub(G)/G$. 
\end{definition}

\begin{remark}
For any closed subgroup $H$ in $G$, the maps $\Phi H\lra \sub(G)/G$ are continuous so every $h$-closed set is $f$-closed. The converse does not hold: for example the set of all finite subgroups of the circle group $S^1$ is $f$-closed but not $h$-closed as we will see in \cref{ex:s1topologies}. 
\end{remark}

The following is a useful characterization of the $f$-topology.

\begin{lemma}[{\cite[Lemma 6.2]{greenlees_bs}}]
\label{lem:ftop}
The $f$-topology on $\Sub(G)/G$ is the quotient topology for the maps $\Phi H \to \Sub(G)/G$ as $H$ runs through the closed subgroups of $G$. As such, a set is open in $\Sub(G)/G$ equipped with the $f$-topology if and only if its pullback to $\Phi H$ is open for all $H$.
\end{lemma}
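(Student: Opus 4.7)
Write $\tau_f$ for the $f$-topology and $\tau_q$ for the quotient topology induced by the family $\iota_H\colon \Phi H \to \Sub(G)/G$. The plan is to prove the two inclusions $\tau_f \subseteq \tau_q$ and $\tau_q \subseteq \tau_f$ separately, the second being the substantial direction.

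For $\tau_f \subseteq \tau_q$, it suffices to check that each generating open $O(H,\varepsilon,A)$ pulls back to an open subset of $\Phi K$ under $\iota_K$ for every closed subgroup $K \subseteq G$. A class $(L) \in \Phi K$ lies in $\iota_K^{-1}(O(H,\varepsilon,A))$ precisely when some $G$-conjugate $L'$ of $L$ satisfies $d(L', H^a) < \varepsilon$ for some $a \in A$. Since conjugation by elements in a neighbourhood of the identity is continuous with respect to the Hausdorff metric, and since the quotient map $\cF(K) \to \Phi K$ is an open map, this is an $h$-open condition on $\Phi K$, giving the claimed inclusion.

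The more substantial direction $\tau_q \subseteq \tau_f$ proceeds by producing, for every $U \in \tau_q$ and every $(K) \in U$, a basic $f$-open neighbourhood of $(K)$ contained in $U$. By hypothesis $\iota_K^{-1}(U)$ is an $h$-open subset of $\Phi K$ containing $(K)$, so there exists $\varepsilon > 0$ such that every $(K') \in \Phi K$ with $d(K,K') < \varepsilon$ lies in $\iota_K^{-1}(U)$, and in particular $(K') \in U$. The key step, and main obstacle, is to upgrade this control over subgroups of $K$ with finite Weyl group in $K$ to control over an entire set of the form $O(K,\varepsilon',A)$. This is where we invoke the Montgomery--Zippin theorem: shrinking $\varepsilon$ and choosing a small enough neighbourhood $A$ of the identity in $G$, any $L \in O(K, \varepsilon, A)$ is $a$-conjugate to some $K' \subseteq K$ which is automatically in $\cF(K)$ (by the assumption $L \in \cF(G)$ together with subconjugacy) and satisfies $d(K', K) < \varepsilon$. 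Hence $(L) = (K') \in U$, so $O(K,\varepsilon,A) \subseteq U$, proving $U \in \tau_f$ and completing the argument.
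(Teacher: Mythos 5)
The paper does not prove this lemma itself; it cites \cite[Lemma 6.2]{greenlees_bs}, so there is no in-paper argument to compare against. Assessing your proposal on its own merits, there are problems in both directions, and you have identified the hard direction backwards.

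Your direction $\tau_q \subseteq \tau_f$ arrives at the right conclusion, but the middle of the argument is not doing valid work and Montgomery--Zippin is not needed there at all. Given $(K)\in U$, openness of $\iota_K^{-1}(U)$ in $\Phi K$ gives an $\varepsilon$ with $O(K,\varepsilon)\subseteq\cF(K)$ mapping into $U$; then by the \emph{definition} of $O(K,\varepsilon,A)$, every $L\in O(K,\varepsilon,A)$ is of the form $L=(K')^a$ with $K'\in O(K,\varepsilon)$ and $a\in A$, so $(L)_G=(K')_G\in U$. Your parenthetical ``by the assumption $L\in\cF(G)$ together with subconjugacy'' is false: no such hypothesis is available (elements of $O(K,\varepsilon,A)$ need not lie in $\cF(G)$ at all), and the membership $K'\in\cF(K)$ is built into the definition of $O(K,\varepsilon)$ rather than something requiring proof. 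The conclusion is right, the stated justification is not.

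The genuine gap is in the direction $\tau_f\subseteq\tau_q$, which you dispatch in two sentences. Your description of $\iota_K^{-1}$ of a basic $f$-open omits the essential constraint: membership requires not merely $d(L',H^a)<\varepsilon$ for some conjugate $L'$, but that $L'^{a^{-1}}\in\cF(H)$, i.e.\ $L'^{a^{-1}}\subseteq H$ with \emph{finite Weyl group in $H$}. The subset $\cF(H)\subseteq\Sub(H)$ is closed but not open (e.g.\ $C_n\to SO(2)$ inside $O(2)$ with each $C_n\notin\cF(O(2))$), so the finite-Weyl-group condition is not automatically inherited by nearby subgroups, and ``strict inequality plus continuity of conjugation plus openness of the quotient map'' does not establish openness of the pullback. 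Showing that a nearby $L'\in\cF(K)$, once subconjugated into $H$ via Montgomery--Zippin, still lands in $\cF(H)$ and within $\varepsilon$ of $H$ is exactly where the substance of the lemma and the role of the finite-Weyl-group hypothesis sit. As written, your proposal supplies no argument for this step, so this direction remains unproven.
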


\begin{lemma}\label{lem:fkhc}
If a subset is $f$-compact, then it is also $h$-closed.
\end{lemma}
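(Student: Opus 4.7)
The plan is to deduce this from two basic topological facts that are essentially already in the paper: (1) the $f$-topology refines the $h$-topology, so the identity map $(\Sub(G)/G, f)\to(\Sub(G)/G, h)$ is continuous, and (2) the $h$-topology is Hausdorff by \cref{prop:subgisstone}. Once both are in hand, the statement is immediate: if $C\subseteq\Sub(G)/G$ is $f$-compact, then its image under the continuous identity is $h$-compact, hence $h$-closed because compact subsets of Hausdorff spaces are closed.

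First I would verify (1) cleanly. The remark preceding \cref{lem:ftop} notes that each composite $\Phi H\to \Sub(G)/G$ is continuous for the $h$-topology. By \cref{lem:ftop} the $f$-topology is exactly the quotient (equivalently, final) topology induced by these maps, so it is the finest topology on $\Sub(G)/G$ making each $\Phi H\to\Sub(G)/G$ continuous. In particular, the $h$-topology is coarser than the $f$-topology, which is precisely the statement that the identity map
\[
\mathrm{id}\colon (\Sub(G)/G, f)\longrightarrow (\Sub(G)/G, h)
\]
is continuous.

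Then (2) is given to us directly by \cref{prop:subgisstone}, which states that $(\Sub(G)/G, h)$ is a Stone space and in particular Hausdorff. The conclusion follows in one line: an $f$-compact subset $C\subseteq\Sub(G)/G$ maps to a compact, hence closed, subset of $(\Sub(G)/G, h)$, and since the underlying set is unchanged this says exactly that $C$ is $h$-closed. There is no real obstacle here; the only thing to be careful about is recording explicitly that the $f$-topology is the final topology for the maps $\Phi H\to \Sub(G)/G$, so that continuity of each of these maps into $(\Sub(G)/G, h)$ upgrades to continuity of the identity. Everything else is a formal consequence of Hausdorffness.
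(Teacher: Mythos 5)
Your proof is correct and follows essentially the same approach as the paper's: both reduce the claim to continuity of the identity map $(\Sub(G)/G)_f \to (\Sub(G)/G)_h$ together with Hausdorffness of the target. The only difference is that the paper cites an external reference for the continuity, whereas you derive it from \cref{lem:ftop} and the preceding remark; that derivation is valid and arguably more self-contained.
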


\begin{proof}
The identity map $(\Sub(G)/G)_f \to (\Sub(G)/G)_h$ is continuous by \cite[Lemma 8.6]{greenlees_rationalmackey}, so it preserves compact subsets. But the target is Hausdorff, so compact implies closed. 
\end{proof}

\subsection{The $zf$-topology}\label{subsec:zftop}

Let us define yet another topology on $\Sub(G)/G$ which will be of great importance. We summarize relevant results from \cite{greenlees_bs}.

\begin{definition}
The $zf$-topology on $\sub(G)/G$ has a basis of closed sets given by the cotoral 
specializations $\Lct (V)$ where $V \subseteq \Sub(G)/G$ is $f$-compact and $f$-open.
\end{definition}

 \begin{remark}
\label{rem:zfmaxtopiszftop}
It is also proved as part of  \cite[Theorem 8.4]{greenlees_bs} that the $zf$-topology 
is generated by the down-closures of the $f$-compact, $f$-open and 
cotorally unrelated sets. In other words the two collections 
\begin{multline*}
\{ \Lct (M)\st M \mbox{ is $f$-compact and $f$-closed and } M=\mct M 
\}\\
\subseteq \{ \Lct (M)\st M \mbox{ is $f$-compact and $f$-closed}\}
\end{multline*}
of closed sets generate the same topology. 
\end{remark}

\begin{remark}
Subsection 9.B of the paper \cite{greenlees_bs} gives a different definition of the $zf$-topology, which we rename here as the $zfk$-topology for clarity. The closed sets in the $zfk$-topology are generated by the cotoral specializations $\Lct(V)$ of the $f$-compact sets $V$. However, the Zariski topology used in \cite[Theorem 9.3]{greenlees_bs} refers back to \cite[Theorem 8.4]{greenlees_bs}, where the topology is the $zf$-topology as defined here. Thus mention of the $zfk$ was a mistake, and there is no reason to discuss it further.
\end{remark}

\subsection{Examples} 

In summary, we have defined three different topologies on $\Sub(G)/G$:
\begin{enumerate}
    \item The $h$-topology: $(\Sub(G)/G)_h$;
    \item the $f$-topology: $(\Sub(G)/G)_f$;
    \item the $zf$-topology: $(\Sub(G)/G)_{zf}$.
\end{enumerate}
Similarly, we will occasionally indicate the topology in use on $\Sub(G)$ by a subscript. A few simple examples will provide an anchor in reality.

\begin{example}
If $G$ is finite, then the $h$-topology, the $f$-topology, and the $zf$-topology are all discrete on $\Sub(G)/G$.
\end{example}

\begin{example}\label{ex:s1topologies}
Let $G=S^1$, then the set of closed subgroups of $S^1$ consists of $S^1$ together with the set of finite cyclic groups $C_n$ for $n\geqslant 1$. The group is abelian, so conjugacy classes are singletons. We have: 
    \begin{itemize}
        \item $\Sub(S^1)_h$ is homeomorphic to the one-point compactification $(\Nvee)^*$ of the positive integers, where $S^1$ corresponds to the compactifying point $\infty$. Indeed, every neighbourhood of $S^1$ in the $h$-topology contains infinitely many finite cyclic subgroups. 
    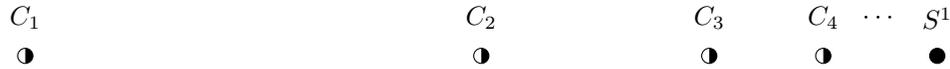
\begin{figure}[ht!]
        \centering
            \begin{tikzpicture}[xscale=-1]
                \node at (12,1.5) {$C_1$};
                \node at (6,1.5) {$C_2$};
                \node at (3,1.5) {$C_3$};
                \node at (1.5,1.5) {$C_4$};
                \node at (0.75,1.5) {$\cdots$};
                \node at (0,1.5) {$S^1$};
                \draw[fill=white] (12,1) circle (.1cm);
                \fill[black] (12,0.9) arc (270:90:.1cm);
                \draw[fill=white] (6,1) circle (.1cm);
                \fill[black] (6,0.9) arc (270:90:.1cm);
                \draw[fill=white] (3,1) circle (.1cm);
                \fill[black] (3,0.9) arc (270:90:.1cm);
                \draw[fill=white] (1.5,1) circle (.1cm);
                \fill[black] (1.5,0.9) arc (270:90:.1cm);
                \draw[fill=black] (0,1) circle (.1cm);
            \end{tikzpicture}\caption{The topological space $\Sub(S^1)_h$.}
    \end{figure}
        \item The definition of basic open sets for the $f$-topology includes a  finiteness condition on Weyl groups, so the singleton $\{S^1\}$ is $f$-open. It follows that $\Sub(S^1)_f$ is discrete, with underlying set $\Nvee \cup    \{\infty\}$, where $C_{n}$ corresponds to ${n} \in \Nvee$ and $S^1$ corresponds to the point $\infty$. In particular, the $f$-topology is in general not compact and hence not spectral. 
        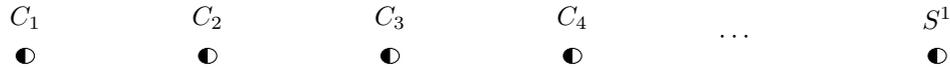
\begin{figure}[ht!]
            \centering
                \begin{tikzpicture}[xscale=1.2]
                \node at (0,1.5) {$C_1$};
                \draw[fill=white] (0,1) circle (.1cm);
                \fill[black] (0,0.9) arc (270:90:.1cm);
                
                \node at (2,1.5) {$C_2$};
                \draw[fill=white] (2,1) circle (.1cm);
                \fill[black] (2,0.9) arc (270:90:.1cm);
 
                \node at (4,1.5) {$C_3$};
                \draw[fill=white] (4,1) circle (.1cm);
                \fill[black] (4,0.9) arc (270:90:.1cm);
                
                \node at (6,1.5) {$C_4$};
                \draw[fill=white] (6,1) circle (.1cm);  
                \fill[black] (6,0.9) arc (270:90:.1cm);
 
                \node at (7.8,1.25) {$\cdots$};

                \node at (10,1.5) {$S^1$};
                \draw[fill=white] (10,1) circle (.1cm);
                \fill[black] (10,0.9) arc (270:90:.1cm);
            \end{tikzpicture}\caption{The topological space $\Sub(S^1)_f$.}
    \end{figure}     
        \item $\Sub(S^1)_{zf}$ is homeomorphic to $\Spec(\Z)$ with the Zariski topology, where the generic point corresponds to $S^1$ (cf. \cref{ex:specz}). 
        \begin{figure}[ht!]
            \begin{tikzpicture}[yscale=1]
            \draw [->,line join=round,decorate, decoration={zigzag, segment length=6, amplitude=.9,post=lineto, post length=2pt}]  (0,0) -- (-3.75,-1.2);
            \draw[white, fill=white] (0,0) circle (.1cm);
            \draw [->,line join=round,decorate, decoration={zigzag, segment length=6, amplitude=.9,post=lineto, post length=2pt}]  (0,0) -- (3.75,-1.2);
            \draw[white, fill=white] (0,0) circle (.3cm);
            \draw [->,line join=round,decorate, decoration={zigzag, segment length=6, amplitude=.9,post=lineto, post length=2pt}]  (0,0) -- (-1.8,-1.25);
            \draw [->,line join=round,decorate, decoration={zigzag, segment length=6, amplitude=.9,post=lineto, post length=2pt}]  (0,0) -- (1.8,-1.25);
            \draw [->,line join=round,decorate, decoration={zigzag, segment length=6, amplitude=.9,post=lineto, post length=2pt}]  (0,0) -- (0.0,-1.4);
            \draw[white, fill=white] (0,0) circle (.3cm);
            \draw[dotted, fill=black!10,thick] (0,0) circle (.15cm);
            \draw[fill = black] (-4,-1.25) circle (0.1cm);
            \draw[fill = black] (-2,-1.5) circle (0.1cm);
            \draw[fill = black] (0,-1.75) circle (0.1cm);
            \draw[fill = black] (2,-1.5) circle (0.1cm);
            \node at (0,.5) {$S^1$};
            \node at (-4,-1.75) {$C_1$};
            \node at (-2,-2.0) {$C_2$};
            \node at (0,-2.25) {$C_3$};
            \node at (2,-2.0) {$C_4$};
            \node at (4,-1.75) {$\cdots$};
            \end{tikzpicture}\caption{The topological space $\Sub(S^1)_{zf}$.}\label{fig:s1zf}
        \end{figure}
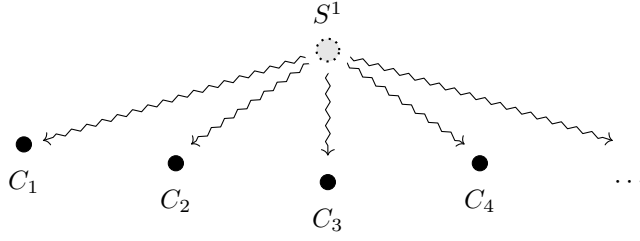
    \end{itemize}
\end{example}

\begin{example}\label{ex:dO2topologies}
Let $G=O(2)$ and consider the dihedral part $\cD(O(2))$ of $\Sub(O(2))/O(2)$, consisting of $O(2)$ itself and  one conjugacy class $(D_{2n})$ of finite dihedral groups for each $n\geqslant 1$. The Weyl group of $D_{2n}$ is a group of order 2.
    \begin{itemize}
        \item The $f$-topology and the $h$-topology agree on $\cD(O(2))$, and both are homeomorphic to the one-point compactification of $\Nvee$. 
        \item  There are no non-trivial cotoral inclusions among the elements of $\cD(O(2))$, so the $zf$-topology also coincides with the $h$-topology.
    \end{itemize}
In summary, we have homeomorphisms 
    \[
        \cD(O(2))_f = \cD(O(2))_h = \cD(O(2))_{zf} \cong (\Nvee)^*, 
    \]
where $(D_{2n}) \in \cD(O(2))$ corresponds to $n \in \Nvee$ and $O(2)$ corresponds to the point  $\infty$. We then additionally have the non-dihedral part contributing a disjoint copy of the corresponding space from \cref{ex:s1topologies}. 
\end{example}

\section{The prism of rational \texorpdfstring{$G$}{G}-spectra}\label{sec:prism}

The goal of this section is to identify the prism of the category of rational $G$-spectra and use this to give a description of the Zariski-closed subsets. Throughout, two topologies will be prominent: the $h$-topology on $\sub(G)/G$ and the $zf$-topology. 

We begin our description of the spectrum and its prism with the identification of the underlying set of $\Spc(\SpGQo)$. To this end, recall that for any closed subgroup $H \subseteq G$, we have geometric fixed point functors
\[
\xymatrix{\Phi^H\colon \SpGQ \ar[r] & \Sp.}
\]
Up to natural equivalence, $\Phi^H$ only depends on the conjugacy class of $H$ in $G$, and the collection $(\Phi^H)_{H \in \Sub(G)/G}$ forms a jointly conservative family of symmetric monoidal and exact functors. In particular, we can pull back the unique prime ideal of $\Spc(\Sp^{\omega})$ along $\Phi^H$, leading to the following.

\begin{definition}
For $H \in \Sub(G)/G$, set $\cP_G(H) \coloneqq \{X \in \SpGQo \mid \Phi^HX = 0\} \in \Spc(\SpGQo)$.
\end{definition}

By \cite[Theorem 7.4]{greenlees_bs}, the resulting map 
\begin{equation}\label{eq:spectrumbijection}
    \xymatrix{\Sub(G)/G \ar[r] & \Spc(\SpGQo), \quad H \mapsto \cP_G(H).}
\end{equation}
is a bijection, see also \cite[Theorem 3.14]{BarthelGreenleesHausmann2020} for an integral version of this result.

The spectral order on the spectrum corresponds to the cotoral order on $\Sub(G)/G$; in other words, it determines the poset $\Spc(\SpGQo)$. This was proved in \cite{greenlees_bs}, see also \cite[Theorem 6.2(ii),(iii)]{BarthelGreenleesHausmann2020}, and the main ingredient is the Localization Theorem. 

\begin{lemma}\label{lem:cotoralorder}
    For closed subgroups $H,K \subseteq G$, the following conditions are equivalent:
        \begin{enumerate}
            \item $\cP_G(K) \subseteq \cP_G(H)$;
            \item $(K) \cotoral (H)$. 
        \end{enumerate}
\end{lemma}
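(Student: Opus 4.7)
The plan is to prove the two implications separately. The forward direction (2) $\Rightarrow$ (1) will be the substantive part, relying on the Borel--Hsiang--Quillen Localization Theorem, while the reverse (1) $\Rightarrow$ (2) I will handle by constructing explicit compact spectra that separate the primes whenever the cotoral relation fails.

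For (2) $\Rightarrow$ (1), I will first reduce to the case where $K \trianglelefteq H$ with $H/K$ a torus (permissible since both sides of the equivalence depend only on conjugacy classes). The key input is the composition formula $\Phi^H X \simeq \Phi^{H/K}(\Phi^K X)$ valid for normal inclusions, in which $\Phi^K X$ is naturally a $W_G(K)$-spectrum and one restricts along $H/K \hookrightarrow W_G(K)$ before applying $\Phi^{H/K}$. Given compact $X$ with $\Phi^K X = 0$ as a non-equivariant spectrum, the goal is to conclude $\Phi^{H/K}(\Phi^K X) = 0$. This is precisely where the Localization Theorem enters: for a torus $T = H/K$ acting on a compact rational $T$-spectrum $Y$, the rationalized Borel--Hsiang--Quillen theorem forces $\Phi^T Y = 0$ whenever the underlying spectrum of $Y$ vanishes. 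Applied to $Y = \iota^* \Phi^K X$, this yields the desired conclusion. Note that compactness of $Y$ is preserved because $\Phi^K$ sends compacts to compacts and restriction along a subgroup inclusion preserves compactness.

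For (1) $\Rightarrow$ (2), I will argue by contrapositive: assuming $(K) \not\cotoral (H)$, I need to produce a compact rational $G$-spectrum $X$ with $\Phi^K X = 0$ but $\Phi^H X \neq 0$. The strategy is isotropy separation, building $X$ whose geometric isotropy (which agrees with the tt-support via the bijection \eqref{eq:spectrumbijection}) contains $(H)$ but excludes $(K)$. Concretely, one can cook up such $X$ from cofibers of projection maps $E\mathcal{F}_+ \to S^0$ for suitable families $\mathcal{F}$ of subgroups, or from Euler classes of representations vanishing on $H$ but not on $K$; the failure of the cotoral relation between $(K)$ and $(H)$ is exactly what guarantees that an appropriate separating family (or representation) exists. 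Since $H$ is only finitely-many-conjugacy-classes away from the relevant boundary in the $h$-topology, compactness of the constructed spectrum can be arranged.

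The main obstacle I anticipate is the careful handling of the rationalized Localization Theorem in the forward direction: tracking the $W_G(K)$-action on $\Phi^K X$, verifying that restriction to $H/K$ behaves well, and invoking the precise rational statement that a compact torus-spectrum with vanishing underlying spectrum has vanishing geometric fixed points. The reverse direction is more elementary in spirit but requires some care to ensure the separating spectrum is genuinely compact rather than merely small.
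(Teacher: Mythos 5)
Your proof matches the paper's approach: the forward direction uses the factorization of geometric $H$-fixed points through geometric $K$-fixed points and the rationalized Borel--Hsiang--Quillen Localization Theorem applied to the torus $H/K$, while the converse proceeds by building compact separating spectra via isotropy separation. The paper itself defers the details to \cite{greenlees_bs} and \cite[Theorem~6.2]{BarthelGreenleesHausmann2020}, but your outline, including the careful attention to compactness (which is exactly what rules out counterexamples such as $\tilde{E}\mathcal{F}$), accords with what those sources do and with the sketch given in the paper's introduction.
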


Our next aim is to describe a basis of closed subsets for the spectrum $\Spc(\SpGQo)$ in terms of group-theoretic data. In preparation for the proof, we introduce an auxiliary notion of type function.

\begin{definition}
    The (rational) type function $\type_X\colon \Sub(G)/G \to \{0,1\}$ of a finite rational $G$-spectrum $X$ is the characteristic function of the complement of $\supp (X)$:  
        \[
        \type_X(H) = 
            \begin{cases}
                0 & \text{if } \Phi^H(X) \not \simeq *; \\
                1 & \text{if } \Phi^H(X) \simeq *.
            \end{cases}
        \]
\end{definition}
We note that rational type functions for finite rational $G$-spectra are truncations of the integral type functions considered in \cite{BarthelGreenleesHausmann2020}, collapsing the chromatic direction $[1,\infty]$ down to the single point $1$. For the purposes of the next statement, we also recall that, by construction, a basis of quasi-compact closed subsets of the Balmer spectrum is given by the supports of compact objects (\cite{balmer_spectrum}); we also refer to these subsets as \emph{basic Zariski-closed} subsets.

\begin{proposition}\label{prop:basicclosed}
    For a subset $Z \subseteq \Spc(\SpGQo)$, the following are equivalent:
        \begin{enumerate}
            \item $Z$ is basic Zariski-closed, i.e., there exists some $X \in \SpGQo$ with $Z = \supp(X)$;
            \item $Z$ is basic $zf$-closed, i.e., there exists an $f$-compact, $f$-open, cotorally unrelated $M$ with $Z = \Lct (M)$;
            \item $Z$ is $h$-clopen and closed under passage to cotoral subgroups.
        \end{enumerate}
\end{proposition}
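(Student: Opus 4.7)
The plan is to establish the equivalences by way of the bijection \eqref{eq:spectrumbijection}, which combined with \cref{lem:cotoralorder} identifies $(\Spc(\SpGQo), \subseteq)$ with $(\Sub(G)/G, \cotoral)$ as posets. Under this identification, all three conditions become statements about distinguished subsets of $\Sub(G)/G$, and the task is to match up three different group-theoretic descriptions.

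First I would prove $(1) \Leftrightarrow (2)$ by appealing to the identification of the Balmer spectrum in \cite{greenlees_bs}. Basic Zariski-closed sets are by definition the supports of compact objects, whereas the basic $zf$-closed sets, by \cref{subsec:zftop} together with \cref{rem:zfmaxtopiszftop}, are precisely the cotoral down-closures $\Lct(M)$ of $f$-compact, $f$-open, cotorally unrelated $M$. The content is then to verify that every such $\Lct(M)$ is realized as the support of some compact $G$-spectrum (constructed via isotropy separation and Euler classes in \cite{greenlees_bs}) and conversely that every support $\supp(X)$ of a compact $X$ takes this form.

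For $(2) \Rightarrow (3)$, given $Z = \Lct(M)$ with $M$ as in (2), closure under cotoral specialization is immediate from the definition of $\Lct$. Moreover, $f$-compactness of $M$ yields $h$-closedness by \cref{lem:fkhc}, and I would then bootstrap this to $h$-clopenness of $\Lct(M)$ itself. Conversely, for $(3) \Rightarrow (2)$, let $Z$ be $h$-clopen and cotorally closed, and set $M := \mct Z$, the set of cotorally maximal elements of $Z$. Cotoral unrelatedness of $M$ is immediate, and since cotoral chains in $\Sub(G)/G$ have length bounded by $\rank(G)$ (as recalled in the introduction), every element of $Z$ lies below some element of $M$, so $Z = \Lct(M)$. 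It then remains to extract from the $h$-clopen property of $Z$ the $f$-compactness and $f$-openness of $M$.

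The main obstacle in both directions of $(2) \Leftrightarrow (3)$ is the interplay between the $h$-topology and the cotoral order: the $h$-topology is metric, while the cotoral order is algebraic, so controlling how cotoral down-closures respect $h$-clopen subsets demands structural input. The decisive ingredients will be the Montgomery--Zippin theorem (subgroups close enough to $H$ are subconjugate to $H$), together with the reduction provided by the structure of compact Lie groups to the case of finite groups acting on tori. This is the technology developed in \cref{sec:subgroups}, and I would rely heavily on that section to supply the required topological lemmas linking the $h$- and $f$-topologies with cotoral specialization.
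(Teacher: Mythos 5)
Your treatment of $(1)\Leftrightarrow(2)$ matches the paper: both cite \cite[Theorem 8.4]{greenlees_bs}. For the remaining equivalence, however, you take a genuinely different route from the paper, and that route has unaddressed gaps.

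The paper proves $(1)\Leftrightarrow(3)$ directly, and does so cleanly by importing the characterization of type functions of finite rational $G$-spectra from \cite[Corollary 5.8]{BarthelGreenleesHausmann2020}: the type functions are precisely the locally constant admissible functions $(\Sub(G)/G)_h \to \{0,1\}$. Since $\supp(X) = \type_X^{-1}(0)$, ``locally constant'' translates to ``$\supp(X)$ is $h$-clopen'' and, via \cref{lem:cotoralorder}, ``admissible'' translates to ``$\supp(X)$ is cotorally down-closed''. No analysis of the $f$-topology is needed at all. Your route instead attempts $(2)\Leftrightarrow(3)$ directly, which forces you to relate the $f$-topology, the $h$-topology, and the cotoral order by hand.

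The proposal leaves the two decisive steps of this comparison as ``I would bootstrap'' and ``it remains to extract'', and neither is routine. For $(2)\Rightarrow(3)$, \cref{lem:fkhc} gives that $M$ is $h$-closed, and the remark after \cref{lem:ftop} gives that $f$-open implies $h$-open, so $M$ is indeed $h$-clopen --- but you then need $h$-clopenness of $\Lct(M)$, not of $M$. The natural tool is the map $\omega$ of \cref{lem:omegacts} via the identity \eqref{eq:preomega=lct}, but this identity applies only to $M\subseteq \Phi G$, and the $f$-compact $f$-open sets $M$ appearing in (2) need not lie in $\Phi G$ (e.g., $M=\{C_1\}\subseteq\Sub(SO(2))/SO(2)$ is $f$-clopen but $C_1\notin\Phi(SO(2))$). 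So passing from $M$ to $\Lct(M)$ requires a case analysis that is not in the cited section. For $(3)\Rightarrow(2)$, showing that $M = \mct Z$ is $f$-open and $f$-compact when $Z$ is $h$-clopen is precisely the hard direction of \cite[Theorem 8.4]{greenlees_bs}; \cref{sec:subgroups} in this paper does not supply lemmas directly usable for that purpose --- it is developed for the dispersion-theoretic results of \cref{sec:dispersionsforGspec}, not for \cref{prop:basicclosed}. In effect, your route amounts to reproving a substantial piece of \cite{greenlees_bs} and \cite{BarthelGreenleesHausmann2020}, whereas the paper's route uses the type function classification as a black box and finishes in a few lines.
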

\begin{proof}
The equivalence of (1) and (2) is established in \cite[Theorem 8.4]{greenlees_bs}.

Our proof of the equivalence between (1) and (3) essentially follows from the results of \cite{BarthelGreenleesHausmann2020}. Under the identification of \eqref{eq:spectrumbijection} and definition of the prime ideals $\cP_G(H)$, we have
\begin{equation}\label{eq:typesupport}
\supp(X) = \{\cP_G(H)\mid X \notin \cP_G(H)\} = \type_X^{-1}(0). 
\end{equation}
Upon rationalization, Corollary 5.8 in \cite{BarthelGreenleesHausmann2020} characterizes type functions of finite rational $G$-spectra as those functions
\[
\xymatrix{f\colon (\Sub(G)/G)_h \to \{0,1\}}
\]
which are locally constant and admissible. Unwinding \cite[Definition 5.1]{BarthelGreenleesHausmann2020} in the rational context, $f$ being admissible means that
\begin{equation}\label{eq:admissible}
\cP_G(H) \not\subseteq \cP_G(K) \text{ whenever } f(K) = 1 \text{ and } f(H)=0
\end{equation}
for all closed subgroups $H,K \subseteq G$. By virtue of \eqref{eq:typesupport}, the basic Zariski-closed subsets of $\Spc(\SpGQo)$ are thus given by $Z=f^{-1}(0)$, where $f$ runs through the locally constant admissible functions. 

Note that a function $f\colon (\Sub(G)/G)_h \to \{0,1\}$ is locally constant if and only if $f^{-1}(0)$ is $h$-clopen. Via \cref{lem:cotoralorder}, the admissibility condition \eqref{eq:admissible} in turn translates into the statement that $K \not\cotoral H$ whenever $H \in f^{-1}(0)$ and $K \notin f^{-1}(0)$; equivalently, if $H \in f^{-1}(0)$ and $K \cotoral H$, then $K \in f^{-1}(0)$. In plain words, $f$ is admissible if and only if $f^{-1}(0)$ is cotorally down-closed. In summary, the basic Zariski-closed subsets correspond precisely to the $h$-clopen and cotorally down-closed subset of $\Sub(G)/G$, as claimed.
\end{proof}

\begin{theorem}\label{thm:prism}
For any compact Lie group $G$, the Balmer spectrum of finite rational $G$-spectra is described as
\[
\Prism(\SpGQo) = ((\Sub(G)/G)_h, \cotoral).
\]
\end{theorem}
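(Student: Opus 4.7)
The strategy is to unwind the definition of the prism and reduce the statement to the combined content of the bijection \eqref{eq:spectrumbijection}, \cref{lem:cotoralorder}, and \cref{prop:basicclosed}. Recall that the prism of a spectral space $X$ is defined as $\Pries(X)=(X_{\cons},\leftsquigarrow)$, so I need to verify two things simultaneously: that the constructible topology on $\Spc(\SpGQo)$ coincides with the $h$-topology on $\Sub(G)/G$ under the bijection $H\mapsto\cP_G(H)$, and that the specialization order agrees with the cotoral order (after taking into account \cref{conv:ordering}).

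The order comparison is immediate from \cref{lem:cotoralorder}, since by \cref{conv:ordering} we have $\cP_G(K)\leqslant\cP_G(H)$ in $\Spc(\SpGQo)$ exactly when $\cP_G(K)\subseteq\cP_G(H)$, which by that lemma is equivalent to $(K)\cotoral(H)$. Thus the bijection is order-preserving between the posets $((\Sub(G)/G),\cotoral)$ and $\Spc(\SpGQo)$ with its specialization order.

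For the topological comparison, I would argue that the identity map of underlying sets gives a continuous bijection $(\Sub(G)/G)_h \to \Spc(\SpGQo)_{\cons}$. Continuity amounts to checking that the preimage of each generator of the constructible topology is $h$-open. By \cref{defn:constructibletopology}, the constructible topology is generated as a subbasis by the basic Zariski-closed sets and their complements, i.e., by the supports of compact objects and their complements. By \cref{prop:basicclosed}, these basic Zariski-closed sets are precisely the $h$-clopen, cotorally down-closed subsets of $\Sub(G)/G$; in particular they are $h$-clopen, so the identity map is continuous as required.

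To conclude, I would invoke the classical fact that a continuous bijection from a compact space to a Hausdorff space is a homeomorphism. The source $(\Sub(G)/G)_h$ is compact by \cref{prop:subgisstone}, and the target $\Spc(\SpGQo)_{\cons}$ is Hausdorff (being a Stone space, by \cref{prop:constructibletopology}), so the continuous bijection is a homeomorphism. Combined with the order identification above, this shows that $\Prism(\SpGQo)=\Pries(\Spc(\SpGQo))$ is naturally isomorphic as a Priestley space to $((\Sub(G)/G)_h,\cotoral)$. The main step is really the group-theoretic description of the basic Zariski-closed sets encoded in \cref{prop:basicclosed}; once that is in hand the comparison of topologies is a formal compactness/Hausdorff argument.
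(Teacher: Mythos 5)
Your proof is correct, and both your argument and the paper's hinge on \cref{prop:basicclosed}, which is the real content here. However, you take a mildly different formal route. The paper works on the spectral-space side: it first verifies the Priestley separation axiom for $((\Sub(G)/G)_h, \cotoral)$ (the separating clopen down-set being produced from tt-geometry as the support of a compact object), then identifies $\tau_u((\Sub(G)/G)_h, \cotoral)$ with $\Spc(\SpGQo)$ by comparing basic closed sets, and finally applies the inverse equivalence $\Pries$. You instead work directly on the ordered-Stone-space side, checking that the bijection $H \mapsto \cP_G(H)$ is an order isomorphism (via \cref{lem:cotoralorder}) and a homeomorphism $(\Sub(G)/G)_h \to \Spc(\SpGQo)_\cons$, the latter via the ``continuous bijection from a compact space to a Hausdorff space is a homeomorphism'' trick. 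This avoids having to verify the Priestley separation axiom explicitly (it comes for free once you know $\Spc(\SpGQo)_\cons$ is the patch space of a spectral space) and only requires checking continuity in one direction; the cost is that it makes slightly less visible that the separation axiom is itself an equivalent repackaging of the tt-geometric fact that primes are separated by supports. Both are clean proofs of the same length.
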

\begin{proof}
We first verify that $((\Sub(G)/G)_h, \cotoral)$ is a Priestley space. The topological space $(\Sub(G)/G)_h$ is a Stone space by \cref{prop:subgisstone}, so it remains the show that $\cotoral$ satisfies the Priestley separation axiom (\cref{def:spectralorder}) Consider $H,K \in (\Sub(G)/G)_h$ with $K \not\cotoral H$. Then there exists a basic Zariski-closed subset $Z$ with $H \in Z$ and $K \notin Z$. By \cref{prop:basicclosed}, $Z$ is $h$-clopen and down-closed with respect to $\cotoral$, thus separating $H$ from $K$.

The spectral space associated to $((\Sub(G)/G)_h, \cotoral)$ is determined by the collection of basic closed subsets, which under the Priestley correspondence \cref{cor:topologydesc}\ref{item:basicclosed} is given by the collection of $h$-clopen and cotorally closed subsets of $(\Sub(G)/G)$. In light of \cref{prop:basicclosed} again, these coincide with the basic closed sets of the Zariski topology under the identification \eqref{eq:spectrumbijection}. Therefore, $\tau_u((\Sub(G)/G)_h, \cotoral) = \Spc(\SpGQo)$ and, by applying $\Pries$, we get
\[
((\Sub(G)/G)_h, \cotoral) = \Pries \tau_u((\Sub(G)/G)_h, \cotoral) = \Pries \Spc(\SpGQo),
\]
as desired. 
\end{proof}

\begin{corollary}\label{cor:zfclosed}
    For a subset $C \subseteq \Sub(G)/G$, the following conditions are equivalent: 
        \begin{enumerate}
            \item $C$ is Zariski-closed; 
            \item $C$ is $zf$-closed;
            \item $C$ is $h$-closed and closed under cotoral specialization.
        \end{enumerate}
\end{corollary}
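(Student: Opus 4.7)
The plan is to deduce this corollary directly from \cref{thm:prism} together with \cref{prop:basicclosed} and the generalities on Priestley spaces from \cref{sec:priestley}, so no new group-theoretic input is required.

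First I would establish $(1) \Leftrightarrow (3)$ via the Priestley description of the spectrum. By \cref{thm:prism}, the prism of $\SpGQo$ is the Priestley space $((\Sub(G)/G)_h, \cotoral)$, so \cref{cor:topologydesc} identifies the closed subsets of $\Spc(\SpGQo)$ with the closed down-sets of this Priestley space. Unwinding the definitions (and remembering \cref{conv:ordering}, which places the cotoral order so that $K \cotoral H$ corresponds to $\cP_G(K) \leqslant \cP_G(H)$ by \cref{lem:cotoralorder}), a closed down-set is precisely a subset $C \subseteq \Sub(G)/G$ that is closed in the $h$-topology and satisfies: $H \in C$ and $K \cotoral H$ implies $K \in C$. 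This is exactly condition $(3)$.

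Next I would establish $(1) \Leftrightarrow (2)$ by comparing bases of closed sets. The Zariski topology on $\Spc(\SpGQo)$ has as a basis of closed sets the supports of compact objects, while the $zf$-topology has by definition as a basis of closed sets the sets $\Lct(M)$ with $M$ $f$-compact, $f$-open, and cotorally unrelated. These two families of basic closed sets coincide by \cref{prop:basicclosed}, so the two topologies must agree, whence $(1) \Leftrightarrow (2)$.

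No step is really an obstacle: the heavy lifting has been done in establishing \cref{prop:basicclosed} and \cref{thm:prism}; the corollary is then a formal consequence, with the only subtle point being the careful bookkeeping of the order-theoretic conventions between the cotoral order on subgroups, the inclusion order on prime ideals, and the Priestley convention $\leqslant \,=\, \leftsquigarrow$.
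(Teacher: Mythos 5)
Your proof is correct and follows essentially the same route as the paper's: both deduce $(1)\Leftrightarrow(3)$ from \cref{thm:prism} together with the Priestley correspondence between closed sets and closed down-sets (\cref{cor:topologydesc}), and both reduce $(1)\Leftrightarrow(2)$ to the identification of Zariski and $zf$ topologies (the paper cites \cite{greenlees_bs} directly, while you route through \cref{prop:basicclosed}, which itself cites the same source — a cosmetic difference).
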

\begin{proof}
The Zariski topology and the $zf$-topology have been identified in \cite{greenlees_bs}, so it remains to describe their closed sets in terms of the $h$-topology and the specialization order. This follows from Priestley theory: by \cite[1.5.11(ii)]{book_spectralspaces}, the closed subsets of the spectral space attached to a Priestley space are given by the constructible-closed and down-closed sets. By \cref{thm:prism}, in the case at hand these are given precisely by the $h$-closed cotorally down-closed sets. 
\end{proof}

\begin{remark}\label{rem:prism_alternativeproof}
It is also possible to directly identify the $zf$-closed subsets with the cotorally closed subsets, appealing to the techniques of \cite{greenlees_bs} more explicitly. This provides an alternative approach to \cref{thm:prism}.
\end{remark}

\section{Subgroups of compact Lie groups}\label{sec:subgroups}

In the previous section, we have explained how the prism of $\SpGQ$ is determined by the space of closed subgroups of $G$ together with its cotoral ordering. The arguments regarding these spaces involve a number of different topologies, finiteness of Weyl groups, cotoral inclusions, and the interactions between these structures. In order to extract geometric information about the category of a $G$-spectra from its spectrum, we first need to collect some auxiliary facts about the structure theory of subgroups and the finer structure of the space of closed subgroups of a compact Lie group. 

\subsection{The Montgomery--Zippin theorem}\label{ssec:mztheorem}

Of fundamental importance in the study of subgroups of compact Lie groups is the Montgomery--Zippin theorem \cite{MZ}. Here, we recall the statement, state its consequences for the space of subgroups equipped with the $h$-topology, and establish an elementary counterpart for quotient groups.

    \begin{enumerate}[label=(\arabic*)] 
        \item[(MZ)]\label{item:mztheorem} The Montgomery--Zippin Theorem states that a convergent sequence $H_i \lra H_*$ of subgroups eventually consists of conjugates of subgroups of $H_*$. This means we can understand a neighbourhood of a subgroup $H$ in $\sub(G)/G$ by looking inside $\sub(H)/H$. Likewise, there are representatives in $\Sub(G)$ of a convergent sequence of conjugacy classes of subgroups which then also converge, see for example \cite[Proposition 5.6.2]{tomDieck79}. 
        Explicitly, if $(K_i)_G \to (K)_G$ is a convergent sequence with $K$ a subgroup of $H \subseteq G$ then, up to discarding finitely many terms if necessary, we can find representatives $\tilde{K}_i$ for $(K)_G$ such that $\tilde{K}_i \subseteq H$ and $\tilde{K}_i \to K$ converges in $\Sub(H)$. 

        Phrased differently yet again, there is a commutative square of compact Hausdorff spaces 
            \[
                \xymatrix{\Sub(H) \ar[r] \ar[d] & \Sub(G) \ar[d] \\
                \Sub(H)/H \ar[r] & \Sub(G)/G}
            \]
        in which all maps are both open and closed as well as continuous. Here, the vertical maps are the quotient maps, while the horizontal maps are induced by the inclusion of groups $H \hookrightarrow G$.        
    \end{enumerate}

    We also need a way to understand how convergence behaves under quotients of compact Lie groups. 

\begin{lemma}
\label{lem:pstarco}
If  $K$ is a normal subgroup of a compact Lie group $G$ with quotient $Q$ and $p\colon G\lra Q $ is the projection then $p_*\colon \sub(G)\lra \sub (Q)$ is continuous and closed. The image of a neighbourhood of a subgroup $H$ containing $K$ contains a neighbourhood of $H/K$.
\end{lemma}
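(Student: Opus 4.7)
The plan is to establish each of the three assertions in turn, using that both $\Sub(G)$ and $\Sub(Q)$ are compact Hausdorff spaces in the Hausdorff metric topology.

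First, for continuity of $p_*$, I would choose any bi-invariant metrics $d_G$ on $G$ and $d_Q$ on $Q$. Since $p\colon G\to Q$ is continuous on a compact space, it is uniformly continuous: for every $\varepsilon>0$ there is $\delta>0$ with $d_G(g_1,g_2)<\delta$ implying $d_Q(p(g_1),p(g_2))<\varepsilon$. Given two closed subgroups $L_1,L_2\in\Sub(G)$ with Hausdorff distance less than $\delta$, every point of $p(L_1)$ is within $\varepsilon$ of some point of $p(L_2)$ and vice versa, so $p_*(L_1)$ and $p_*(L_2)$ are within $\varepsilon$ in the Hausdorff metric on $\Sub(Q)$. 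This gives continuity of $p_*$.

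Second, closedness is automatic: since $\Sub(G)$ is compact (as a closed subspace of the space of nonempty closed subsets of the compact metric space $G$, with the Hausdorff topology) and $\Sub(Q)$ is Hausdorff, any continuous map $\Sub(G)\to\Sub(Q)$ is closed.

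For the third claim, let $\Sub_K(G)\subseteq\Sub(G)$ denote the set of subgroups containing $K$. This is closed in $\Sub(G)$ (pointwise inclusion of $K$ is preserved under Hausdorff limits of subgroups, by the Montgomery--Zippin observation of \S\ref{ssec:mztheorem}\,(MZ) applied inside an arbitrary convergent sequence), hence compact. The correspondence theorem for group quotients gives a set-theoretic bijection between $\Sub_K(G)$ and $\Sub(Q)$, and under this bijection the restriction $p_*|_{\Sub_K(G)}$ is exactly the map $L\mapsto L/K$, with inverse $L'\mapsto p^{-1}(L')$. Being a continuous bijection from a compact space to a Hausdorff space, this restriction is a homeomorphism. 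Therefore, if $V$ is any neighbourhood of $H$ in $\Sub(G)$, the intersection $V\cap\Sub_K(G)$ is open in $\Sub_K(G)$ containing $H$, and its image under this homeomorphism is an open neighbourhood of $H/K$ in $\Sub(Q)$ contained in $p_*(V)$.

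The only step requiring any care is the last: one needs the observation that $\Sub_K(G)$ is closed, together with the correspondence theorem identifying $p_*$ on this subspace with a bijection onto $\Sub(Q)$; once these are in hand, the compact-to-Hausdorff continuous bijection argument finishes the proof without further effort.
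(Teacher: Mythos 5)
Your argument is correct, and the first two parts (continuity via uniform continuity of $p$ on the compact group $G$, closedness from compact-to-Hausdorff) coincide with the paper's. The third part, however, is a genuinely different route. The paper proves the final statement by a direct metric estimate: since $p$ is a submersion one can choose the metrics so that $p$ is locally a product projection, which gives $d(H,H')\leqslant d(H/K,H'/K)$ for $H'\supseteq K$ with $H'/K$ near $H/K$. You instead note that the subspace $\Sub_K(G)\subseteq\Sub(G)$ of subgroups containing $K$ is compact, that by the correspondence theorem $p_*$ restricts to a continuous bijection $\Sub_K(G)\to\Sub(Q)$, and hence that this restriction is a homeomorphism; the image of $V\cap\Sub_K(G)$ is then open in $\Sub(Q)$ and contained in $p_*(V)$. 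Your route avoids all local analysis of the submersion and relies only on general topology, which is somewhat cleaner; the paper's route gives the sharper quantitative statement that $\delta=\epsilon$ works.

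One small inaccuracy worth flagging: you attribute the closedness of $\Sub_K(G)$ in $\Sub(G)$ to the Montgomery--Zippin theorem, but this is considerably more elementary and does not require \hyperref[item:mztheorem]{(MZ)}. If $L_i\to L$ in the Hausdorff metric with $K\subseteq L_i$ for all $i$, then for any $k\in K$ one has $d(k,L)\leqslant d_H(L_i,L)\to 0$, so $k\in L$ since $L$ is closed. Montgomery--Zippin concerns subconjugacy (subgroups sufficiently close to $H$ are subconjugate to $H$), which is a much stronger and more specific phenomenon, and is not what is being used here.
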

\begin{proof} 
The following notation will be useful: for any closed set $A\subseteq X$,  We write 
    \[
    A_r=\{ x \st d(x,a)<r \mbox{ for some } a \in A\} 
    \]
for the subset of $X$ which is $A$ with an $r$-fringe. 

First we prove $p_*$ is continuous at $A\in \sub(G)$.
By continuity of $p$ and compactness of $G$, given $\epsilon >0$ there is a single $\delta >0$ so that $d(a,a')<\delta $ implies $d(pa,pa')<\epsilon $. Now move to the level of subsets and suppose $d(A,A')<\delta$. This means $A_\delta \supseteq A'$ and $A'_\delta \supseteq A$. In the former case, for every $a'\in A'$ there is an $a\in A $ with $d(a,a')<\delta $ and hence $d(pa, pa')<\epsilon$ so that $(pA)_\epsilon \subseteq (pA')$. Similarly with $A$ and $A'$ exchanged, so that $d(pA',pA)<\epsilon$. 

Both $\sub(G)$ and $\sub (Q)$ are compact Hausdorff, so $p_*$ is also closed.

 For the last statement, we consider a closed subgroup  $H$ containing $K$, and claim that the image of a ball around $H$ contains a ball around $H/K$. 
 Every subgroup of $Q$ is of the form $H'/K$ for some subgroup of $H'$ of $G$ so it suffices to observe that for each $\epsilon >0$ there is an $\delta >0$ so that $d(H/K, H'/K)<\delta $ implies $d(H,H')<\epsilon$. 

 The quotient map $p$ is a submersion, and hence locally a projection, and we may suppose the metrics are locally products. Thus we may take $\delta=\epsilon $ and note $d(H,H')\leqslant d(H/K, H'/K)<\epsilon$. 
\end{proof}

\subsection{Three continuity lemmas}\label{ssec:subgroups_continuity}

This subsection collects certain standard constructions on subgroups of a compact Lie group $G$ and shows that these vary continuously with the subgroup; this will turn out to be useful later in this section.

For any  compact Lie group $G$, there is a short exact sequence 
    \[
    1\lra G_e\lra G \lra G_d\lra 1,
    \]
where $G_e$ is the identity component of $G$ and $G_d$ is the discrete (finite) group of components. We say that a subgroup $\Gamma$ of $G$ is {\em full} if it maps onto $G_d$, i.e., $\Gamma$ meets every component of $G$. We also write $\Gamma_\e=G_e\cap \Gamma$ for the part of $\Gamma$ inside the identity component of $G$. Of course the identity component of $\Gamma$ will lie inside $\Gamma_\e$. 

\begin{lemma}\label{lem:ects}
The map $(-)_\e\colon \Sub(G)_h \to \Sub(G_e)_h$ is continuous. 
\end{lemma}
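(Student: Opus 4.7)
The plan is to show that $(-)_\e$ is in fact $1$-Lipschitz continuous with respect to the Hausdorff metrics, exploiting the fact that $G_e$ is clopen in $G$. Since $G_e$ is by definition the connected component of the identity in the Lie group $G$, it is both open and closed in $G$. Assuming $G_e \neq G$ (otherwise $(-)_\e$ is the identity and the claim is trivial), the subsets $G_e$ and $G \setminus G_e$ are disjoint, non-empty, closed subsets of the compact metric space $(G,d)$, so
\[
\delta \coloneqq d(G_e, G \setminus G_e) > 0.
\]

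The key step is then a purely metric estimate. Fix $\Gamma, \Gamma' \in \Sub(G)$ with Hausdorff distance $d(\Gamma, \Gamma') = \epsilon < \delta$, and take any $\gamma \in \Gamma_\e = G_e \cap \Gamma$. By definition of the Hausdorff metric there exists $\gamma' \in \Gamma'$ with $d(\gamma, \gamma') \leqslant \epsilon < \delta$. Since $\gamma \in G_e$, the gap $\delta$ forces $\gamma' \in G_e$ as well, and hence $\gamma' \in G_e \cap \Gamma' = \Gamma'_\e$. Swapping the roles of $\Gamma$ and $\Gamma'$, one obtains
\[
d(\Gamma_\e, \Gamma'_\e) \leqslant d(\Gamma, \Gamma')
\]
whenever the right-hand side is smaller than $\delta$. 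This local $1$-Lipschitz bound is sufficient to deduce continuity of $(-)_\e$ at every point of $\Sub(G)_h$.

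There is no real obstacle in the argument: it is a direct consequence of $G_e$ being a connected component of $G$, and does not appeal to the deeper Montgomery--Zippin input. More generally, exactly the same reasoning shows that intersection with any clopen subgroup of $G$ defines a $1$-Lipschitz self-map of $\Sub(G)_h$.
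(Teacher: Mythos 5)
Your proof is correct and rests on exactly the same observation as the paper's: since $G_e$ is clopen in the compact metric space $G$, there is a uniform gap $\delta > 0$ between $G_e$ and $G \setminus G_e$, so points of $\Gamma$ in $G_e$ can only be Hausdorff-approximated by points of $\Gamma'$ that also lie in $G_e$. The paper phrases this via convergent sequences, while you package it as a local $1$-Lipschitz estimate; these are the same idea, and your formulation is clean and complete.
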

\begin{proof}
We need to check that if $H_i\lra H_*$ is a convergent sequence of closed subgroups, then this is also true of their parts inside $G_e$. Indeed, this holds since we may find a number $\varepsilon >0$ so that every point of $G$ outside $G_e$ has distance ${>}\varepsilon$ from $G_e$.
\end{proof}

By the structure theorem for compact connected Lie groups, there is a presentation
    \[
    G_e=\SSi \times_Z T
    \]
with $\SSi$ semisimple, $T$ a torus, and $Z$ a finite central subgroup. If $H\subseteq G_e=\SSi \times_ZT$ we may consider the inverse image $\widetilde{H}$ of $H$ under the quotient map
    \[
    p \colon \SSi \times T=\Gt \lra G=\SSi \times_ZT,
    \]
and one can check the following result.

\begin{lemma}\label{lem:tildects}
The map $\widetilde{(-)}$ on subgroups of $G_e$  is continuous. 
\end{lemma}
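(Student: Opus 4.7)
The plan is to exploit that the quotient map $p \colon \Gt = \SSi \times T \to G_e = \SSi \times_Z T$ is a finite covering with kernel the central subgroup $Z$, hence a local diffeomorphism. Since $Z$ is finite, I can equip $G_e$ with a bi-invariant metric and pull it back to $\Gt$; with this choice, there exists some $\delta_0 > 0$ such that on every open ball of radius $\delta_0$ in $\Gt$ the map $p$ restricts to an isometry onto its image, and the fiber $p^{-1}(y)$ of any $y\in G_e$ consists of points mutually at distance $> 2\delta_0$. This is the only piece of geometric input, and it is immediate from the fact that $Z$ is finite together with compactness of $\Gt$.

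Since $\Sub(G_e)_h$ and $\Sub(\Gt)_h$ are compact metric spaces, it suffices to verify continuity sequentially. Assume $H_i\to H_*$ in $\Sub(G_e)_h$, and fix $\varepsilon>0$. Choose $\delta<\min(\varepsilon,\delta_0)$ and $N$ so that $d_H(H_i,H_*)<\delta$ for all $i\geqslant N$. Given $\tilde{x}\in \widetilde{H_*}=p^{-1}(H_*)$, one has $p(\tilde{x})\in H_*$ and hence an element $h_i\in H_i$ with $d(p(\tilde{x}),h_i)<\delta$; by the local isometry property there is a unique lift $\tilde{h}_i \in \Gt$ of $h_i$ within the $\delta_0$-ball around $\tilde{x}$, and it satisfies $d(\tilde{x},\tilde{h}_i)<\delta<\varepsilon$ with $\tilde{h}_i\in \widetilde{H_i}$. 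Conversely, for $\tilde{y}\in \widetilde{H_i}$, $p(\tilde{y})\in H_i$ has an element $h_*\in H_*$ within distance $\delta$, which lifts uniquely to $\tilde{h}_*\in\widetilde{H_*}$ within the $\delta_0$-ball around $\tilde{y}$. Hence $d_H(\widetilde{H_i},\widetilde{H_*})<\varepsilon$, proving continuity.

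I expect no substantive obstacle, since this is essentially a formal consequence of $p$ being a finite cover: the only thing that requires care is verifying the local isometry/separation property of the kernel, after which the Hausdorff distance estimate is automatic. Note that the same argument shows that $p_*\colon \sub(\Gt)_h\to \sub(G_e)_h$ is continuous, and indeed that $\widetilde{(-)}$ and $p_*$ are mutually inverse on the subspaces of subgroups containing $Z$, so continuity of $\widetilde{(-)}$ could alternatively be deduced from \cref{lem:pstarco} by observing that $\widetilde{(-)}$ factors as the restriction of $p_*^{-1}$ to the $h$-closed subspace of $Z$-containing subgroups of $\Gt$.
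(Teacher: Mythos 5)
The paper does not actually supply a proof of this lemma; it follows \cref{lem:pstarco} with only the comment ``one can check the following result.'' Your two proposed arguments are both correct and both legitimate ways of performing that check.

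Your main, direct argument is a standard covering-space estimate and it works, with one small phrasing point: you say you ``pull back'' a bi-invariant metric from $G_e$ to $\Gt$, but the distance function on $G_e$ does not literally pull back along the non-injective map $p$ to a distance function. What you mean (and what makes the rest work) is pulling back the bi-invariant Riemannian metric tensor to $\Gt$, equivalently equipping $\Gt$ with a bi-invariant Riemannian metric so that $\Delta Z$ acts by isometries and $p$ becomes a Riemannian local isometry. Once that is spelled out, your $\delta_0$ (isometric on $\delta_0$-balls, fibers $2\delta_0$-separated) exists by finiteness of $Z$ and compactness, and the two inclusion estimates for the Hausdorff distance go through exactly as you wrote them. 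One implicit point worth flagging: to find the lift $\tilde h_i$ of $h_i$ near $\tilde x$, you are using not merely that $p$ is an isometry on $B(\tilde x,\delta_0)$ but that its image \emph{contains} $B(p(\tilde x),\delta_0)$, which is the path-lifting part of the covering-space fact; this is standard but is slightly more than the wording of your ``local isometry property'' literally states.

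Your alternative route is slicker and, given the placement of the lemma immediately after \cref{lem:pstarco}, likely the intended one: $\Delta Z$ is a finite normal subgroup of $\Gt$ with quotient $G_e$, so \cref{lem:pstarco} gives that $p_*\colon \Sub(\Gt)_h \to \Sub(G_e)_h$ is continuous; the subspace of $Z$-containing subgroups is $h$-closed (a limit of subgroups containing the finite set $Z$ still contains $Z$) hence compact, and $p_*$ restricts to a continuous bijection from that compact space onto the Hausdorff space $\Sub(G_e)_h$, hence a homeomorphism, whose inverse is $\widetilde{(-)}$. This is short, self-contained modulo \cref{lem:pstarco}, and avoids the Riemannian covering geometry entirely. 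Both proofs are valid; I would use the second for economy and keep the first as a comment on what is geometrically going on.
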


There is a well-defined map
    \[
    \omega\colon \sub(G)/G\lra \Phi G. 
    \]
which takes a conjugacy class $(H)$  to the conjugacy class of a maximal subgroup in which $H$ is cotoral; see \cite[Section 4]{greenlees_rationalmackey}, where the map is called $q$. It is shown in \cite[Section 4]{greenlees_rationalmackey} that $\omega$ is continuous (also proved by Fausk--Oliver~\cite{fauskoliver}). Since both $\sub(G)/G$ and $\Phi(G)$ are compact Hausdorff, it follows that  $\omega$ is also a closed map. To summarize:

\begin{lemma}
  \label{lem:omegacts}
The map $\omega \colon \sub (G)/G\lra \Phi G$ on subgroups is continuous and closed. 
\end{lemma}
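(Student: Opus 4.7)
The plan is to decompose the statement into its two assertions, continuity and closedness, and handle them separately. Continuity of $\omega$ is the substantive content, and I would invoke the existing literature rather than reprove it: it is established in \cite[Section 4]{greenlees_rationalmackey} and independently by Fausk--Oliver \cite{fauskoliver}. The underlying mechanism in those proofs is the Montgomery--Zippin theorem (MZ) recalled in \cref{ssec:mztheorem}: given a convergent sequence $(H_i) \to (H_*)$ in $\sub(G)/G$, one uses (MZ) to choose representatives eventually lying inside a small neighbourhood of $H_*$ in $\sub(G)$, and then one controls how the maximal cotoral supergroup varies by tracking the identity component of the normalizer and its torus action, using the structure theory of compact connected Lie groups.

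Closedness then reduces to a soft topological observation. The source $\sub(G)/G$ is a compact Hausdorff space by \cref{prop:subgisstone}, and the target $\Phi(G)$ is compact Hausdorff by tom Dieck's result \cite[Proposition 5.6.1]{tomDieck79} recalled in \cref{sec:sub}. Any continuous map from a compact space to a Hausdorff space is automatically closed: closed subsets of $\sub(G)/G$ are compact, their continuous images are compact in $\Phi(G)$, and compact subsets of a Hausdorff space are closed. This is the argument already indicated in the paragraph preceding the lemma.

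The main obstacle, if one insisted on a self-contained proof, would lie entirely in the continuity assertion. One must verify that the maximal cotoral supergroup of $H$, which is a specific subgroup containing $H$ and having finite Weyl group, depends continuously on $H$ in a way compatible with conjugation. This is delicate because the $f$-topology on $\Phi(G)$ is finer than the $h$-topology on $\sub(G)/G$, and it is not formal that a limit of maximal cotoral supergroups remains in $\cF(G)/G$; this is exactly where (MZ) together with the structure of compact Lie groups enters. Since the cited references handle this point carefully, I would simply rely on them and not reproduce the argument.
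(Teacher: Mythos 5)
Your proposal matches the paper's treatment exactly: continuity is cited from Greenlees (\cite[Section 4]{greenlees_rationalmackey}) and Fausk--Oliver \cite{fauskoliver}, and closedness follows formally because $\sub(G)/G$ is compact and $\Phi(G)$ is Hausdorff. Your additional remarks on the role of Montgomery--Zippin and the $f$-versus-$h$ topology subtlety are accurate background but not needed beyond the citations.
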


We note that for a set $M$ of subgroups with finite Weyl group, $\omega$ determines the cotoral specialization. That is, there is an identity
    \begin{equation}\label{eq:preomega=lct}
    \omega^{-1}(M)=\Lct (M),
    \end{equation}
for any $M \subseteq \Phi G$.

\subsection{Reduction to the case with identity component a torus}\label{ssec:subgroups_reduction}

The richness of the topology on spaces of subgroups arise from the case of finite groups acting on tori, with the first and simplest non-trivial example being $O(2)$. We explain here how this class of groups captures the topology of $\Sub(G)/G$ around $G$. 

Many results of this type are proved by tom Dieck \cite{tomDieck79}. For example, he states (Proposition 5.6.3) that $G$ is isolated in $\Sub(G)$ if and only if $G$ is semisimple. We will prove a more precise result; our proof is a little more elaborate, but tom Dieck's proof seems to be incomplete (since it does not discuss the fact that a torus may be the limit of subtori of lower dimension). This gap is filled by appealing to \cref{lem:torusnormalizer} below.

We continue with the notational conventions of the previous subsection. In particular, we are given a presentation of the identity component of $G$ as
    \[
    G_e=\SSi \times_Z T
    \]
with $\SSi$ semisimple, $T$ a torus, and $Z$ a finite central subgroup. The image of $T$ in $G_e$ is the identity component of the centre of $G_e$ and hence is characteristic and invariant under the action of $G_d$.
 More precisely, we suppose $Z$ is a finite abelian group and we have maps $i\colon Z\lra \SSi$ and $j\colon Z\lra T$. By passage to quotients if necessary, we may suppose both $i$ and $j$ are injective and we write $Z_{\SSi}$ and $Z_T$ for their respective images. We write $\Delta Z=\{(iz,jz)\st z\in Z\} \subseteq \Sigma \times T$. 

Similarly, we note that 
\[
\SSi \lra \SSi \times T \lra \SSi\times_Z T
\]
is injective since $\Delta Z\cap \Sigma =1$, and write $\SSi$ for the image of the composite. 

\begin{lemma}
\label{lem:semisimplesubgroups}
If $\SSi'$ is a connected semisimple subgroup of $G$ then $\SSi'\subseteq \SSi$.
Consequently, the specific subgroup $\Sigma$ is the unique subgroup of $G$ with the same local type as  $\SSi$, and in particular it is a characteristic subgroup.
\end{lemma}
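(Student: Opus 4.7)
The strategy is to exhibit $\Sigma$ as the kernel of a natural projection whose target is a torus, and then use the fact that connected semisimple groups admit no nontrivial homomorphisms to tori.

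First I would observe that since $\SSi'$ is connected it is contained in $G_e = \SSi \times_Z T$. The image of $\Sigma$ in $G_e$ is a closed normal subgroup (it is the image of the first factor of $\Sigma\times T$, and the image of $Z\hookrightarrow \Sigma$ in $G_e$ is trivial, so $\Sigma$ embeds as described in the paragraph preceding the lemma). Passage to the quotient gives
\[
G_e/\Sigma \;\cong\; T/Z_T,
\]
which is a torus. Composing the inclusion of $\Sigma'$ with this quotient yields a continuous homomorphism $\Sigma' \to T/Z_T$ from a connected semisimple Lie group to a torus. Since the Lie algebra of a semisimple group equals its own derived subalgebra while that of a torus is abelian, any such homomorphism is trivial. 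Hence $\Sigma' \subseteq \Sigma$, which proves the first assertion.

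For the uniqueness statement, suppose $\Sigma''$ is a subgroup of $G$ of the same local type as $\Sigma$. In particular $\Sigma''$ is connected and semisimple, with $\dim \Sigma'' = \dim \Sigma$. By the first part, $\Sigma'' \subseteq \Sigma$, and the equality of dimensions forces $\Sigma'' = \Sigma$. The characteristic property is then automatic: for any continuous automorphism $\varphi$ of $G$, the subgroup $\varphi(\Sigma)$ is connected, semisimple and of the same local type as $\Sigma$, so $\varphi(\Sigma) = \Sigma$.

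I do not anticipate a serious obstacle. The one point that requires a moment of care is the explicit identification $G_e/\Sigma \cong T/Z_T$, which uses that $\Sigma \cap T = 1$ inside $\Sigma\times_Z T$ (this is where the assumption that the map $i\colon Z\to \Sigma$ is injective, so $\Delta Z \cap \Sigma = 1$, enters). Everything else is formal once the semisimple-to-torus vanishing principle is invoked.
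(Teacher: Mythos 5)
Your argument is correct and follows the paper's proof: both present $\overline{T} = T/Z_T$ as a toral quotient of $G_e$ with kernel $\Sigma$ and then invoke that a connected semisimple Lie group admits no nontrivial homomorphism to a torus, forcing $\Sigma' \subseteq \Sigma$. One small correction to your parenthetical aside: inside $G_e = \Sigma\times_Z T$ the images of $\Sigma$ and $T$ meet in a copy of $Z$ (namely $Z_\Sigma$ is identified with $Z_T$), not trivially, and the fact that $\Delta Z \cap (\Sigma\times\{1\}) = 1$ follows from injectivity of $j\colon Z\to T$, not of $i\colon Z\to\Sigma$. Neither slip affects the conclusion, since
\[
G_e/\Sigma \;\cong\; (\Sigma\times T)\big/\bigl(\Delta Z\cdot(\Sigma\times\{1\})\bigr) \;=\; (\Sigma\times T)/(\Sigma\times Z_T) \;\cong\; T/Z_T
\]
holds regardless.
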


\begin{proof}
We have a diagram 
\[
\xymatrix{
1 \ar[r]&\SSi\ar[r]\ar@{=}[d] &\SSi\times T\ar[r]\ar[d]&T\ar[r]\ar[d]&1\\ 
1 \ar[r]&\SSi\ar[r] &\SSi\times_Z T\ar[r]&\Tbar \ar[r]&1 
}
\]
where $\Tbar=T/Z_T$. 
Since $\SSi'$ is semisimple, the composite $\SSi'\lra G_e\lra \Tbar$ is trivial and therefore $\SSi'\subseteq \SSi$.
\end{proof}

This will allow us to factor out semisimple subgroups in certain arguments, since the normalizer of a group normalizes any characteristic subgroup:

\begin{corollary}
\label{lem:semisimplenormalizers}
If $H$ is a subgroup of $G$ with $H_e=\SSi'\times_{Z'}T'$ then $N_G(H)\subseteq N_G(\SSi')$.
\end{corollary}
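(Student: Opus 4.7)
The plan is to apply \cref{lem:semisimplesubgroups} internally to the compact Lie group $H$, rather than to $G$. First I would observe that if $g \in N_G(H)$ then conjugation by $g$ restricts to an automorphism $c_g$ of $H$. Since the identity component is preserved by any continuous automorphism, $c_g$ also restricts to an automorphism of $H_e = \Sigma' \times_{Z'} T'$.

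Next I would invoke \cref{lem:semisimplesubgroups} with $H$ playing the role of $G$: writing $H_e$ in its structural form $\Sigma' \times_{Z'} T'$, the subgroup $\Sigma' \subseteq H$ is identified as the image of the semisimple factor, and the lemma says it is the unique subgroup of $H$ of its local type. In particular, $\Sigma'$ is characteristic in $H$, so $c_g(\Sigma') = \Sigma'$. Rewriting this as $g \Sigma' g^{-1} = \Sigma'$ gives exactly $g \in N_G(\Sigma')$, as required.

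There is essentially no obstacle beyond verifying that the hypotheses of \cref{lem:semisimplesubgroups} are indeed satisfied when we pass from $G$ to the subgroup $H$: the decomposition $H_e = \Sigma' \times_{Z'} T'$ supplied in the hypothesis of the corollary is precisely the structural presentation required to apply the lemma, so the argument is immediate once the reduction is made explicit. The only mild subtlety worth writing out carefully is the step that $c_g$ preserves the identity component of $H$, but this is routine since identity components are characteristic subgroups.
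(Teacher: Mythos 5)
Your proposal is correct and is essentially the argument the paper intends: the sentence immediately preceding the corollary ("the normalizer of a group normalizes any characteristic subgroup") together with \cref{lem:semisimplesubgroups} applied with $H$ in place of $G$ is exactly the reduction you describe. The only small remark is that the detour through $H_e$ is unnecessary --- once \cref{lem:semisimplesubgroups} shows $\Sigma'$ is characteristic in $H$, the fact that conjugation by $g \in N_G(H)$ is an automorphism of $H$ already gives $g\Sigma'g^{-1} = \Sigma'$ directly.
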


It will also be useful to have criteria for factoring out certain tori. 
\begin{lemma}
\label{lem:torusnormalizer}
Suppose $T\subseteq G$ is a torus and $T_i\subseteq T$ with $T_i\lra T$. If $i$ is sufficiently large we have $N_G(T_i)\subseteq N_G(T)$. 
\end{lemma}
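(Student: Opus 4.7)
The plan is to reduce the statement to a rigidity problem for a finite group acting on a torus.

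First I would stabilise the centraliser. Decompose $\mathfrak{g}$ under the adjoint action of $T$ as $\mathfrak{g} = \mathfrak{z}_{\mathfrak{g}}(T) \oplus \bigoplus_\alpha \mathfrak{g}_\alpha$, summing over the finite set of non-zero weights. Each $\ker(\alpha)$ is a proper closed subgroup of $T$, so since $T_i \to T$ in Hausdorff metric we eventually cannot have $T_i \subseteq \ker(\alpha)$ (otherwise the limit $T$ would also lie in this proper closed subgroup). There being only finitely many weights, for $i$ large $\mathfrak{z}_{\mathfrak{g}}(T_i) = \mathfrak{z}_{\mathfrak{g}}(T)$, so that $H \coloneqq Z_G(T)_e = Z_G(T_i)_e$. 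As identity components are characteristic, this forces $N_G(T_i), N_G(T) \subseteq N_G(H)$.

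Next I would pass to the centre of $H$. Since $T$ centralises $H$ and is connected, $T \subseteq Z(H)_e$, and likewise $T_i \subseteq Z(H)_e$. Conjugation defines a continuous homomorphism $N_G(H) \to \Aut(Z(H)_e)$; the target is discrete (as $Z(H)_e$ is a torus), so the image $W$ of the compact group $N_G(H)$ is finite. Its kernel consists of elements centralising $Z(H)_e$, hence lies in both $N_G(T)$ and $N_G(T_i)$. Let $F_T \subseteq W$ be the stabiliser of $T$ (the image of $N_G(T)$), and similarly $F_{T_i}$. Then the desired inclusion $N_G(T_i) \subseteq N_G(T)$ is equivalent to $F_{T_i} \subseteq F_T$.

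Finally I would verify this by an approximation argument. For any $w \in W \setminus F_T$ we have $w(T) \neq T$, so there exists $t \in T$ with $w(t) \notin T$ and $\delta \coloneqq d(w(t), T) > 0$. Picking $t_i \in T_i$ with $t_i \to t$, we get $w(t_i) \to w(t)$, so $d(w(t_i), T) \to \delta$, and thus $w(t_i) \notin T \supseteq T_i$ for $i$ large. Hence $w \notin F_{T_i}$ eventually, and since $W \setminus F_T$ is finite, a single large $i$ works uniformly, yielding $F_{T_i} \subseteq F_T$. The main obstacle will be identifying the correct finite rigidity group encoding $T$ among its approximations; once the finite Weyl-type group $W$ acting on $Z(H)_e$ is in hand, the separation of $T$ from its images under $W \setminus F_T$ is immediate from Hausdorff convergence.
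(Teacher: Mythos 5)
Your proof is correct, but it takes a genuinely different route from the paper. The paper argues directly with the exponential map: it picks basis vectors $v_1,\dots,v_s$ of $LT$ close to an orthonormal frame, locates points $x_j\in T_i$ exponentiating them, uses the lattice $\ker(\exp\colon LT\to T)$ to move between $v_j\in B_{LT}$ and $w_j\in LT_i$, and concludes that if $g$ normalizes $T_i$ then $\ad(g)$ carries each $v_j$ back into $B_{LT}$ because $\exp$ is a homeomorphism on a small ball. Your argument instead factors the problem through representation theory: you first stabilize the infinitesimal centralizer via the weight-space decomposition of $\mathfrak{g}$ (using that $T_i\to T$ forces $T_i\not\subseteq\ker\alpha$ eventually, for each of the finitely many weights $\alpha$), which places both $N_G(T)$ and $N_G(T_i)$ inside $N_G(H)$ with $H=Z_G(T)_e=Z_G(T_i)_e$; you then observe that $N_G(H)$ acts on the torus $Z(H)_e\supseteq T\supseteq T_i$ through a \emph{finite} quotient $W$ (since $\Aut$ of a torus is discrete), reducing the normalizer containment to an inclusion of stabilizers $F_{T_i}\subseteq F_T$; finally the Hausdorff convergence $T_i\to T$ separates $T$ from the finitely many distinct translates $w(T)$, $w\in W\setminus F_T$. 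The paper's proof is more elementary and self-contained (no weight theory, no appeal to discreteness of torus automorphisms), whereas yours is more structural and arguably illuminates \emph{why} the lemma is true: once the action is routed through a finite rigidity group, the separation is essentially automatic. Both are complete; your stabilization of $Z_G(T_i)_e$ and the identification of $F_T$ with both the image of $N_G(T)$ and the stabilizer of $T$ in $W$ check out.
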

\begin{proof}
It suffices to show that if $i$ is large enough and $g$ normalizes $T_i$ then it normalizes $T$. Since $T$ is connected, it is generated by a neighbourhood of its identity, and we can work in an open ball of radius larger than $\delta$ so that the exponential map is a homeomorphism $B_{LG}\cong B_G$ between the corresponding balls. Thus it suffices to show that the adjoint action $\ad(g)\colon LG \to LG$ takes the tangent space $LT$ to $LT$. If $T$ is of rank $s$ and we choose linearly independent elements $v_1, \ldots , v_s$ in $ LT$, since $\ad(g)$ is a linear isomorphism, their images will again be linearly independent in $LG$. It therefore suffices to show the $v_j$ can be chosen so that $\ad (g) (v_1), \ldots ,\ad(g) (v_s)$ lie inside $LT$.  

For this we will use the exponential map, and the fact that it is easy to understand within a torus. Since $\ad(g)$ is the derivative of the conjugation by $g$ map $c_g$ at the identity, and since every automorphism of a torus is linear, we have the following commutative diagram of abelian groups:
\[
\xymatrix{
&T^g&&LT^g\ar[ll]^{\exp}\\
T\ar[ur]^{c_g}&&LT\ar[ur]_{\ad(g)}\ar[ll]^<<<<<<<{\exp}&\\
&T_i^g\ar[uu]|-{\hole} &&LT_i^g\ar[ll]|-{\hole}^>>>>>>>{\exp}\ar[uu]\\
T_i\ar[ur]^{c_g}\ar[uu]&&LT_i.\ar[ur]_{\ad(g)}\ar[ll]^{\exp}\ar[uu]&\\
}
\]
We note that the exponential maps are local homeomorphisms within the balls $B_{V}=B_{LG}\cap V$ for vector subspaces $V$ of $LG$.

Choose coordinates and a metric on $LT$ so that it has orthonormal basis $e_1, \ldots , e_s$. By averaging we may assume $\ad(g)\colon LT \to LT^g$ is an isometry. We may find $\epsilon>0$ so that if $v_j\in LT$ is within $\epsilon $ of $\delta \cdot e_j\in B_{LT}$, then the $v_j$'s are linearly independent (any $\epsilon <\delta/2$ will do). Choose $N$ so that $d(T_i,T)<\epsilon$ for $i\geqslant N$. Thus there is a point of $T_i$ within $\epsilon$ of every point in $B_T$.

We may therefore choose points $x_j\in B_T\cap T_i$ so that they (or more properly their counterparts $v_j$ in $B_{LT}$) form a basis of $LT$. Since the exponential map is locally surjective, we may find $w_j\in LT_i$ so that $x_j=\exp(w_j)$. Note that $w_j$ will usually not be inside $B_{LT}$, but since $w_j$ and $v_j$ map to $x_j$ under the exponential map, they differ by the lattice $\ker (\exp \colon LT\lra T)$. 

Now if $g$ normalizes $T_i$ we have $T_i^g=T_i$ and so $\ad(g)(w_j)\in LT_i$ for all $j$, and hence 
\[
\exp (\ad(g)(v_j))=\exp (\ad(g)(w_j))\in \exp(LT_i)\subseteq \exp(LT)
\]
as required. A priori we only know $\ad(g)(v_j)\in B_{LG}$, but  $\exp$ is a homeomorphism on $B_{LG}$ and the images lie in $LT$, so it follows that $\ad(g)(v_j)\in B_{LG}\cap LT = B_{LT}$ as required. 
\end{proof}

\begin{proposition}\label{prop:topphi}\leavevmode
Consider the Hausdorff metric and the $h$-topology.
    \begin{enumerate} 
        \item If $G_e$ is semisimple then $G$ is isolated in $\sub(G)/G$.
        \item If $G_e=\SSi \times_Z T$ is connected with  $\SSi$ semisimple and $T$ a torus,  then any sequence of subgroups tending to $G$ in $\sub(G)/G$ consists of subgroups eventually containing $\SSi$ and their images converge to $G/\SSi$. 
    \end{enumerate}
\end{proposition}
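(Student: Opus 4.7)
The plan is to prove (2) first, derive (1) as the specialisation $T=1$, and reduce everything to an isolation statement for the semisimple factor.

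Fix a convergent sequence $(H_i)_G \to (G)_G$ in $\sub(G)/G$. Applying (MZ) to pick convergent representatives, we work with $H_i \to G$ in $\Sub(G)_h$. Finiteness of $G_d = G/G_e$ forces $H_i$ to be full for $i$ large, and \cref{lem:ects} gives $(H_i)_\e \to G_e$ in $\Sub(G_e)_h$. The proof of (2) thereby reduces to showing $(H_i)_\e \supseteq \Sigma$ eventually. Lift through the finite central cover $\Gt = \Sigma \times T \twoheadrightarrow G_e$: by \cref{lem:tildects} the preimages $\widetilde{(H_i)_\e}$ converge to $\Gt$, and \cref{lem:pstarco} applied to the projection $\pi_\Sigma \colon \Gt \to \Sigma$ yields $\pi_\Sigma \widetilde{(H_i)_\e} \to \Sigma$ in $\Sub(\Sigma)_h$. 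The key step is the isolation of $\Sigma$ in its own subgroup space (discussed below), whence $\pi_\Sigma \widetilde{(H_i)_\e} = \Sigma$ for $i$ large. Any subgroup $E \subseteq \Sigma \times T$ surjecting onto $\Sigma$ must coincide with $\Sigma \times A$, where $A = E \cap (\{1\}\times T)$: indeed $E/A$ sits inside $\Sigma \times (T/A)$ as the graph of a continuous homomorphism $\Sigma \to T/A$, which must be trivial by semisimplicity of $\Sigma$ and abelianness of $T/A$. Hence $\widetilde{(H_i)_\e} \supseteq \Sigma$, and since $\Sigma \cap \Delta Z = 1$ we descend through the quotient by $\Delta Z$ to obtain $\Sigma \subseteq (H_i)_\e \subseteq H_i$. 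The assertion that $H_i/\Sigma \to G/\Sigma$ in $\sub(G/\Sigma)/(G/\Sigma)$ follows from \cref{lem:pstarco} applied to the characteristic quotient $G \twoheadrightarrow G/\Sigma$, where normality of $\Sigma$ is guaranteed by \cref{lem:semisimplesubgroups}.

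Part (1) is then a direct consequence of (2) upon specialising $T=1$: the conclusion reads $H_i \supseteq G_e$ and $H_i/G_e \to G/G_e$ in $\sub(G_d)/G_d$, a space that is discrete because $G_d$ is finite. This forces $H_i/G_e = G_d$ for $i$ large, i.e.\ $H_i = G$ and $(H_i)_G = (G)_G$, which is precisely the isolation of $G$ in $\sub(G)/G$.

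The principal obstacle is the isolation of a connected compact semisimple group $\Sigma$ in $\Sub(\Sigma)_h$ — the very step at which tom Dieck's proof is incomplete, as flagged before the statement of the proposition. Our plan is a case analysis for a proper closed $K \subsetneq \Sigma$ according to its identity component $K_e$. If $K$ is finite, Jordan's theorem for compact Lie groups confines $K$ to a bounded union of translates of a maximal torus $T_0 \subseteq \Sigma$; since $\dim T_0 = \rank \Sigma < \dim \Sigma$, a codimension argument supplies a uniform lower bound $d_H(K, \Sigma) \geqslant \epsilon$. If $K_e$ is positive-dimensional, Dynkin's classification places $K_e$ in one of finitely many conjugacy classes of maximal proper closed connected subgroups, each Hausdorff-bounded away from $\Sigma$ uniformly by conjugation-invariance. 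The role of \cref{lem:torusnormalizer} is to rule out the only remaining degenerate possibility, in which subtori of a maximal torus inside $(K_i)_e$ approach a full maximal torus of $\Sigma$ and appear to circumvent the bounds above; it does so by pinning down such subtori inside a fixed normalizer $N_\Sigma(T_0)$ eventually, thereby closing the gap.
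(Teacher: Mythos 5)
Your reduction framework is sound and correctly organized, even though the logical order is inverted relative to the paper: the paper derives (2) from (1) (for lower-dimensional groups) and proves (1) by induction on $\dim G$, whereas you prove (2) directly, reducing it to the isolation of the connected semisimple group $\Sigma$ in $\Sub(\Sigma)_h$ (itself an instance of (1) for connected groups), and then derive (1) for general $G$ by specialising $T=1$ and using the discreteness of $\sub(G_d)/G_d$. This inversion is legitimate. The auxiliary steps — fullness of $H_i$ for $i$ large, continuity of $(-)_\e$, lifting through the finite central cover $\Gt$ via \cref{lem:tildects} and \cref{lem:pstarco}, the observation that a subgroup of $\SSi\times T$ surjecting onto $\SSi$ must be of the form $\SSi\times A$ by semisimplicity, the descent through $\Delta Z$ via $\SSi\cap\Delta Z=1$, and the use of \cref{lem:semisimplesubgroups} to get normality of $\SSi$ — are all correct.

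The gap is in your proof that a compact connected semisimple $\SSi$ is isolated in $\Sub(\SSi)_h$, which is where the real content lies. Your Jordan-theorem argument for finite $K$ is a genuinely different route from the paper, which instead reduces to the finite case and invokes Turing's theorem that a connected group is a limit of finite subgroups if and only if it is a torus. Your version gives a quantitative uniform lower bound and is plausible as stated, provided one checks the tube-formula estimate with constants independent of the position of the conjugated copy of $T_0$ (which a bi-invariant metric supplies). The problem is your second case. When $K_e$ is positive-dimensional and $K$ is disconnected, the Dynkin bound controls $K_e$ (or rather a maximal connected proper $M\supseteq K_e$), not $K$: you deduce $d_H(M,\SSi)\geqslant\delta$ and hence $d_H(K_e,\SSi)\geqslant\delta$, but $K$ contains components outside $K_e$ (indeed potentially unboundedly many, as $\lvert K/K_e\rvert$ has no a priori bound), and nothing prevents $d_H(K,\SSi)$ from being small while $K_e$ stays bounded away from $\SSi$. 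The relevant danger is a sequence $K_i\to\SSi$ with $K_i^e$ converging to a fixed proper connected $H$ while the extra components of $K_i$ fill up $\SSi$. This is precisely what the paper's inductive machinery addresses: since $K_i$ normalizes $K_i^e$, the paper uses $N_G(H_i^e)\subseteq N_G(\SSi')$ (\cref{lem:semisimplenormalizers}) and $N_G(T_i')\subseteq N_G(T')$ eventually (\cref{lem:torusnormalizer}) to trap $K_i$ in a proper normalizer and either conclude by the inductive hypothesis or factor out a normal subgroup and iterate until the finite case. Your closing remark that \cref{lem:torusnormalizer} "pins down such subtori inside a fixed normalizer" gestures at this, but without the normalizer-trapping and the reduction to lower-dimensional groups the argument does not close. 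You need either to build in the induction on dimension, or to directly supply the missing control on $K\subseteq N_\SSi(K_e)$ in terms of $K_e$.
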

\begin{proof} 
We argue by induction on the dimension of $G$. The statement is obvious if $G$ is of dimension 0. We suppose inductively that the result is proved for groups of lower dimension. 

First note that Statement (2) follows from Statement (1) for groups of lower dimension: the subgroup given by the image of $T$ in $G$ is normal so that if $H_i\lra G$ then $H_i/(H_i\cap T)\lra G/T$, and if $T\neq 1$ the group $G/T$ is of lower dimension with $(G/T)_e=\SSi/Z_{\SSi}$ semisimple so that by induction $H_i/(H_i\cap T)=G/T$ for $i\gg 0$ as required. 

Accordingly, it suffices to prove Statement (1): we assume that $G_e$ is semisimple and prove that it is isolated.

For the rest of this proof, we denote the identity component of a compact Lie group $K$ by $K^{e}$. Suppose $H_i\lra G$. By sequential compactness, we may pass to a subsequence and suppose $H_i^e\lra H$ for some connected subgroup $H$. Discarding a finite number of terms and conjugating we may suppose $H_i^e\subseteq H$ by \hyperref[item:mztheorem]{(MZ)}. Now suppose $H=\SSi'\times_{Z'}T'$ and write $\SSi'_i$ for the semisimple part of $H_i^e$. By \cref{lem:semisimplesubgroups} we have $\SSi'_i\subseteq \SSi'$. Since $\SSi'$ has only finitely many connected semisimple subgroups, each of which has finite centre we may discard terms and suppose there is a common value $\SSi''\subseteq \SSi'$ and a common central subgroup $Z''$ for each of the $H_i$'s. Considering the image modulo $T'$ we see $\SSi''=\SSi'$ and hence $H_i^e=\SSi' \times_{Z'}T'_i$. 

By \cref{lem:semisimplenormalizers}, $N_G(H_i^e)\subseteq N_G(\SSi'_i)=N_G(\SSi')$. The case that $N_G(\SSi')$ is smaller than $G$ is dealt with by induction, so we may assume $\SSi'$ is normal. Factoring out $\SSi'$ we may assume there is no semisimple component in the sequence so that  $H_i^e=T'_i$ and $H=T'$. 

Next, by \cref{lem:torusnormalizer} we may discard finitely many terms and assume $N_G(T'_i)\subseteq N_G(T)$. Again we may reduce the size of $G$ and assume $T$ is normal. Factoring out $T$ we may assume that $H_i^e$ and $H$ are the trivial group.  In other words we need only consider the special case of sequences of finite subgroups. Turing \cite{Turing38} proved that a connected group is a limit of finite subgroups if and only if it is a torus, so $G$ is not a limit of finite groups and it is isolated as required. 
\end{proof}

\begin{corollary}\label{cor:isolated_criterion}
    A compact Lie group $G$ is isolated in $\Sub(G)/G$ if and only if $G_e$ is semisimple.
\end{corollary}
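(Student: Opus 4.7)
The forward direction is immediate from \cref{prop:topphi}(1). For the converse I establish the contrapositive: if $G_e$ is not semisimple, then $G$ is not isolated in $\Sub(G)/G$. Write $G_e = \Sigma \times_Z T$ with $T$ a nontrivial torus. Since $\Sigma$ is characteristic in $G_e$ by \cref{lem:semisimplesubgroups}, it is normal in $G$; set $\bar G := G/\Sigma$, whose identity component $\bar G_e = T/Z_T$ is again a nontrivial torus. By \cref{lem:pstarco}, the projection $p\colon G \to \bar G$ induces a continuous closed map on subgroup spaces, whose restriction to the closed subspace of subgroups containing $\Sigma$ is a continuous bijection onto $\Sub(\bar G)$, hence a homeomorphism by compactness. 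Combined with \cref{prop:topphi}(2), which ensures that any sequence of subgroups of $G$ converging to $G$ eventually contains $\Sigma$, this reduces the problem to the case where $G_e = T$ is itself a nontrivial torus.

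\textbf{Building a finite subgroup meeting every component.} In this setting, the aim is to exhibit proper finite subgroups $H_n \subsetneq G$ converging to $G$. A cohomological observation supplies the needed ingredient: the extension class $[c] \in H^2(G_d; T)$ of
\begin{equation*}
1 \to T \to G \to G_d \to 1
\end{equation*}
is annihilated by $|G_d|$, so for any positive integer $N$ divisible by $|G_d|$ the long exact sequence attached to $0 \to T[N] \to T \xrightarrow{N} T \to 0$ realizes $[c]$ as the image of a class in $H^2(G_d; T[N])$. Concretely, this allows one to choose set-theoretic lifts $\tilde g_1, \dots, \tilde g_k$ of $G_d$ in $G$ whose cocycle $c(g_i,g_j) = \tilde g_i \tilde g_j \tilde g_{ij}^{-1}$ takes values in $T[N]$, and so $F := T[N] \cdot \{\tilde g_1, \dots, \tilde g_k\}$ is a finite subgroup of $G$ with $F \cap T = T[N]$ and $F \cdot T = G$.

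\textbf{Convergence.} For $n$ with $N \mid n!$, set $H_n := F \cdot T[n!]$. Since $T[n!]$ is characteristic in $T$, it is normalized by $G$, so $H_n$ is a subgroup of $G$; being the product of two finite groups it is finite, hence a proper subgroup of the infinite group $G$. Decomposing $G = \bigsqcup_i \tilde g_i T$ and $H_n = \bigsqcup_i \tilde g_i T[n!]$ as disjoint unions of cosets and using bi-invariance of the metric, for $n$ large enough that $T[n!]$ approximates $T$ within the spacing between components, one obtains
\begin{equation*}
    d_H(H_n, G) = d_H(T[n!], T) \xrightarrow{n \to \infty} 0,
\end{equation*}
since the torsion of a torus is dense. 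Hence $H_n \to G$ in $\Sub(G)_h$, completing the proof. The main obstacle is producing the finite subgroup $F$ meeting every component: the extension $1 \to T \to G \to G_d \to 1$ need not split, as illustrated by $G = \mathrm{Pin}(2)$, and it is precisely the cohomological input that allows one to promote the approximation of $T$ by its torsion to an approximation of all of $G$.
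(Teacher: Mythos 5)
Your proof is correct and follows essentially the same route as the paper: the forward implication is \cref{prop:topphi}(1), and the converse rests on the same cohomological observation that the extension class in $H^2(G_d;T)$ is torsion, hence carried on some finite torsion subgroup of the torus, giving a finite subgroup of $G$ meeting every component from which one builds an approximating sequence. The paper works directly with $G_e = \Sigma\times_Z T$ and takes $G_r = \langle \Sigma\times_Z T[nr], G_F\rangle \to G$, while you first quotient by the characteristic semisimple part $\Sigma$ (justified via \cref{lem:pstarco}) and produce genuinely finite subgroups $F\cdot T[n!]$; this is a cosmetic variation, not a different argument.
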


\begin{proof}
\cref{prop:topphi} shows that if $G_e$ is semisimple then $G$ is isolated. Conversely if $G_e=\Sigma\times_Z T$ with $T$ non-trivial we note that $G_d$ acts on the characteristic subgroup $T$. The extension of $T\cong\R^s/\Z^s$ by $G_d$ is described by an element of  $H^2(G_d;T)\cong H^3(G_d;\Z^s)$, which is a torsion class as $G_d$ is a finite group. If it is of order $n$, the extension is supported on the finite subgroup $T[n]$ of $n$-torsion points on $T$, and we may suppose $G_F$ is the corresponding finite subgroup of $G$. Taking $r$ so that $Z$ is contained in $T[nr]$, then $G$ is the limit of the subgroups $G_r=\langle \Sigma \times_{Z} T[nr], G_F \rangle$ and hence not isolated.
\end{proof}

\begin{remark}
    There is recent independent work by Csik\'os, K\'atay, Kocsis, and P\'alfy \cite{CKKP2022pp} which contains a version of  \cref{cor:isolated_criterion}.
\end{remark}

\subsection{Finite groups acting on tori and full subgroups}\label{ssec:subgroups_finiteontorus}

We now suppose the identity component of $G$ is a torus so there is a short exact sequence 
    \[
    1\lra T \lra G\stackrel{\pi}\lra W\lra 1. 
    \]
\Cref{prop:topphi} has explained why this special case is significant.  The finite group $W$ acts on $T$ via a group homomorphism $W\lra \aut(T)$. We may describe the automorphism group by $\aut (T)=\aut (T^*) \cong GL_r(\Z )$, where $T^*$ denotes the Pontryagin dual of $T$, but it is more helpful to do this covariantly. Indeed, we may write $T=LT/\Lambda $ for a lattice $\Lambda$. By differentiation we have a representation of $W$ on $LT$, which gives the original action via the exponential map. Hence the action of $W$ is a representation of $W$ on $LT$ which restricts to a representation on $\Lambda $, so that $LT=\R \tensor \Lambda$. In particular the representation $LT$ is extended from the rationals. 

Since this is a rational representation,  by Maschke's Theorem we may write 
    \begin{equation}\label{eq:isotypical}
    \Q\tensor \Lambda =\bigoplus_{\alpha}V^{\Q}_{\alpha},
    \end{equation}
where $\alpha $ runs through simple rational representations and $V_{\alpha}^{\Q}$ is the $\alpha$-isotypical part and we suppose $\alpha=1$ gives the $W$-fixed  summand. We then write $V_{\alpha}=\R \tensor V_{\alpha}^{\Q}$. 

\begin{definition}\label{def:multiplicities}
    The (rational) multiplicities of $T$ are defined as the non-negative integers $m_{\alpha}$ such that $V^{\Q}_{\alpha} \cong \alpha^{\oplus m_{\alpha}}$, for $\alpha$ ranging through the simple $W$-representations.
\end{definition}

\begin{remark}\label{rem:integralwarning}
We can choose a basis of $V_{\alpha}^{\Q}$ consisting of elements of $\Lambda$, but we warn that $\Lambda $ itself may not decompose as a direct sum. For example if $W=\langle w\rangle$ is of order 2 then the short exact sequence
    \[
    0\lra J \lra \Z W\lra \Z \lra 0
    \]
does not split, although $1-w$ gives a basis of $\Q J$ in $\Q W$ and $1+w$ gives a basis of the $\Q$-orthogonal complement. Indeed, $\langle 1+w, 1-w\rangle$ spans an index 2 sublattice of $\Z W$.
\end{remark}

We continue to suppose that $G$ is a group with identity component the torus $T$ and component group $W$. The goal of the remainder of this section is to describe the topology on neighbourhoods of $G$ in $\sub(G)/G$ in terms of $LT$ as a representation of $W$. While they form an integral part of our understanding of the finer structure of $\Sub(G)/G$, this material is not strictly necessary for the applications to the category of $G$-spectra given in this paper. 

Let $H$ be a closed subgroup of $G$. First note that $S=H\cap T$ is normal in $H$, and hence we have an extension of the same form
    \[
    1\lra S \lra H\stackrel{\pi}\lra W\lra 1. 
    \]
Even the case when $G=O(2)$ shows that $S$ does not determine the  subgroup $H$: indeed, the space of subgroups isomorphic to a dihedral group $D_{2n}$ is homeomorphic to a circle. However if we also choose a function $\sigma\colon W\lra H$ splitting $\pi$,  then $S$ and $\sigma$ together specify $H$. Given $G$ and $S$ the condition that a section $\sigma\colon W \to G$ of $\pi\colon G \to W$ determines a subgroup $H$ is $\sigma(w)\sigma(w^{-1})\in S$ and $\sigma (v)\sigma(w)\sigma (vw)^{-1}\in S$.  In summary,  full subgroups of $G$ are given by a subgroup $S$ of $T$ invariant under $W$ and a section $\sigma$ to $\pi$ and we write $H(S, \sigma )$ for the corresponding subgroup of $G$. The identity component of $H(S, \sigma )$ is the identity component of $S$. 

Given a section $\sigma$ to $\pi$, an element $g \in G$ and an element $w \in W$, we write $g^w = \sigma(w)^{-1}g\sigma(w)$.

\begin{lemma}\label{lem:HSnorm}
The normalizer of a full subgroup $H(S, \sigma)$ is given by 
    \[
    N_G(H(S, \sigma))= H(S^+, \sigma ) \mbox{ where } S^+=\{t \st t^wt^{-1}\in S \; \forall \; w \in W\}.  
    \]
The subset $S^+$ of $T$ is a subgroup. 
\end{lemma}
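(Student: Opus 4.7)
The key observation is that since $H(S,\sigma)$ is full, i.e., $\sigma(W) \subseteq H$, and conjugation by $\sigma(v)$ certainly normalizes $H$, an element $g = t\sigma(v) \in G$ normalizes $H$ if and only if $t = g\sigma(v)^{-1}$ does. Hence the plan reduces to computing $N_G(H) \cap T$ and then writing $N_G(H)$ in the form $(N_G(H) \cap T) \cdot \sigma(W)$. Once we identify $N_G(H) \cap T = S^+$, the identification $N_G(H) = H(S^+,\sigma)$ is immediate.

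First I would verify that $S^+$ is a subgroup of $T$. Since $T$ is abelian and the map $t \mapsto t^w$ is a group homomorphism, for $t_1, t_2 \in S^+$ we obtain $(t_1 t_2)^w (t_1 t_2)^{-1} = (t_1^w t_1^{-1})(t_2^w t_2^{-1})$, which lies in $S$ because $S$ is closed under multiplication. The analogous calculation handles inverses. Note that $W$-invariance of $S$ is implicit here (and is automatic since $S \trianglelefteq H$ and $H$ projects onto $W$).

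The main calculation is to show that for $t \in T$, membership in $N_G(H)$ is equivalent to $t \in S^+$. Since $T$ is normal in $G$ and abelian, $t$ automatically centralizes $S \subseteq T$, so it suffices to check when $t\sigma(w)t^{-1} \in H$ for every $w \in W$. A direct computation gives
\[
t\sigma(w)t^{-1} \;=\; t \cdot \bigl(\sigma(w) t^{-1} \sigma(w)^{-1}\bigr) \cdot \sigma(w),
\]
and since $T$ is normal the conjugation $\sigma(w) t^{-1} \sigma(w)^{-1}$ is the well-defined $W$-action on $T$, agreeing in the paper's notation with $(t^{w^{-1}})^{-1}$. Thus $t\sigma(w)t^{-1} \in H$ if and only if $t \cdot (t^{w^{-1}})^{-1} \in S$, and as $w$ ranges over all of $W$ this is precisely the condition $t \in S^+$.

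The main obstacle is purely notational: since $\sigma$ is merely a set-theoretic section and not a homomorphism, one must be careful not to silently use $\sigma(w_1)\sigma(w_2) = \sigma(w_1 w_2)$, and one must be consistent with the convention $g^w = \sigma(w)^{-1} g \sigma(w)$ versus the ``other side'' conjugation $\sigma(w)g\sigma(w)^{-1}$. Apart from that, the argument is a short direct verification.
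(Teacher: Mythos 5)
Your proposal is correct and follows essentially the same route as the paper: decompose an arbitrary $g \in G$ using the section $\sigma$ and an element of $T$, reduce to the case $g = t \in T$ via $H \subseteq N_G(H)$, and then compute $t\sigma(w)t^{-1}$ to extract the membership condition. The only cosmetic difference is that you conjugate $t$ by $\sigma(w)$ rather than $\sigma(w)^{-1}$, producing $t(t^{w^{-1}})^{-1} \in S$ instead of $t^w t^{-1} \in S$; these are equivalent after relabeling $w \mapsto w^{-1}$ and inverting, so you land on $S^+$ just as the paper does.
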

\begin{proof}
Let $H=H(S, \sigma)$ where $\sigma\colon W\lra H$ splits $\pi$. Any group element $g\in G$ can be written $g=\sigma (w)t$ for $t\in T$ where $w=\pi (g)$. Evidently $H\subseteq N_G(H)$, so $g\in N_G(H)$ if and only if $t\in N_G(H)$. Now any $h\in H$ can be written $h=\sigma (v)s$ for $s \in S$, and we wish to find a necessary and sufficient condition on $t$ such that $t[\sigma (v)s]t^{-1} \in H$. Since conjugation by an element of $T$ does not change the image in $W$,  we have $\pi(t h t^{-1}) = \pi(h)=v$. In other words, there exists some $t' \in T$ with
    \[
    \sigma (v)t'=t[\sigma (v)s]t^{-1} =t[\sigma (v)]t^{-1}\cdot tst^{-1}.
    \]
Now $t \in N_G(H)$ if and only if $\sigma (v)t' \in H$ if and only if $t' \in S$. Since $tst^{-1}=s$,  the condition that $t$ lies in the normalizer of $H$ is thus:
    \[
    t^vt^{-1} =\sigma (v)^{-1}t\sigma (v)t^{-1} \in S. 
    \]
The statement that $S^+$ is a group follows from the fact  that $T$ is abelian. 
\end{proof}

\begin{remark}
For a subgroup $H=H(S,\sigma)$, $S=H\cap T$ is determined, but  $\sigma$ is only determined up to multiplication by elements of $S$: if $\tau\colon W\lra S$ then $\sigma $ and $\sigma \tau$ determine the same subgroup. 

The lemma actually shows that conjugation does not change $S$, and 
    \[
    tH(S,\sigma)t^{-1}=H(S, \sigma \tau_t)
    \]
where $\tau_t\colon W\lra T$ is defined by  $\tau_t (v)=t^vt^{-1}$. 
\end{remark}

We are particularly interested in subgroups with finite index in their normalizer. We note that if we have a containment $H\subseteq H'$ of full subgroups, then $S=H\cap T\subseteq H'\cap T=S'$ and we may choose the same section $\sigma$ so the containment is of the form $H=H(S, \sigma)\subseteq H(S', \sigma)=H'$. Such a containment occurs if and only if $S\subseteq S'$ and that in that case 
    \begin{equation}\label{eq:HSnorm}
    \begin{gathered}
        H(S, \sigma )\normal H(S', \sigma ) \mbox{ if $S'\subseteq S^+$ and then } H(S', \sigma)/H(S,   \sigma)\cong S'/S. 
    \end{gathered}
    \end{equation}
Indeed, $H(S, \sigma )$ is normal in $H(S', \sigma )$ if and only if $H(S', \sigma ) \subseteq H(S^+, \sigma )$ by \cref{lem:HSnorm}, which in turn is equivalent to $S' \subseteq S^+$ as just observed.

\begin{proposition}\label{prop:finweyl}
The subgroup $H(S, \sigma )$ is of finite index in its normalizer if and only if $\dim(S^W)=\dim( T^W)$. 
\end{proposition}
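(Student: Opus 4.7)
The natural strategy is to translate everything to the Lie algebra $LT$, regarded as the rational $W$-representation discussed in~\eqref{eq:isotypical}. By Lemma~\ref{lem:HSnorm}, the normalizer of $H(S,\sigma)$ is $H(S^+,\sigma)$, and by~\eqref{eq:HSnorm} the quotient is isomorphic to $S^+/S$. Since $S\subseteq S^+$, the quotient $S^+/S$ is finite if and only if $S$ and $S^+$ have the same identity component, i.e.\ $LS=LS^+$. So the problem becomes: show that $LS=LS^+$ if and only if $\dim(LS^W)=\dim(LT^W)$ (equivalently, $LT^W\subseteq LS$).

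I would first describe $LS^+$ concretely. Using $T$ abelian, conjugation by $\sigma(w)$ acts on $T$ via the $W$-action on $T$, so on Lie algebras
\[
LS^+ \;=\; \{X\in LT \mid (w-1)X\in LS \text{ for all } w\in W\}.
\]
Pick a $W$-stable decomposition $LT = LT^W\oplus V$ with $V=\bigoplus_{\alpha\ne 1} V_\alpha$ the sum of the non-trivial isotypical components from~\eqref{eq:isotypical}, and note $(w-1)$ vanishes on $LT^W$. Since $S\subseteq T$ is $W$-invariant, $LS$ is a $W$-subrepresentation, so it splits compatibly: $LS=LS^W\oplus\bigoplus_{\alpha\ne 1}(LS\cap V_\alpha)$.

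The easy direction: if $LT^W\not\subseteq LS$, pick $X\in LT^W\setminus LS$; then $(w-1)X=0\in LS$ for all $w$, so $X\in LS^+\setminus LS$, whence $\dim LS^+>\dim LS$. For the converse, assume $LT^W\subseteq LS$ and take $X\in LS^+$. Write $X=X_0+\sum_{\alpha\ne 1}X_\alpha$ with $X_0\in LT^W$ and $X_\alpha\in V_\alpha$. Since $X_0\in LS$ by assumption, it suffices to show each $X_\alpha\in LS\cap V_\alpha$. For this, project the relation $(w-1)X_\alpha\in LS\cap V_\alpha$ (which follows from $(w-1)X\in LS$ by isotypical decomposition) to the quotient $V_\alpha/(LS\cap V_\alpha)$: the image $\bar X_\alpha$ satisfies $(w-1)\bar X_\alpha=0$ for all $w\in W$, so lies in $(V_\alpha/(LS\cap V_\alpha))^W$. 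Since $V_\alpha$ is $\alpha$-isotypical with $\alpha\ne 1$, so is the quotient, and hence has no non-zero $W$-fixed vector. Therefore $\bar X_\alpha=0$, i.e.\ $X_\alpha\in LS$.

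Finally, $LS\cap LT^W = LS^W$, so $LT^W\subseteq LS$ is equivalent to $LS^W=LT^W$, which is equivalent to the dimension equality $\dim S^W=\dim T^W$. Combining these steps yields the claim. The main subtle point is the ``if'' direction above; it rests on the standard fact that the non-trivial isotypical components of a rational representation have vanishing $W$-invariants, applied not to $V$ itself but to the quotients $V_\alpha/(LS\cap V_\alpha)$ — this is precisely where Maschke's theorem (and hence the decomposition~\eqref{eq:isotypical}) is essential.
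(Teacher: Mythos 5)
Your proposal is correct and takes essentially the same approach as the paper: pass to Lie algebras, describe $LS^+$ as $\{X\in LT \mid (w-1)X\in LS\ \forall w\}$, and use the isotypical decomposition coming from Maschke's theorem to show that the non-trivial isotypical components contribute nothing to $LS^+/LS$, so the quotient is governed entirely by the fixed summand. Your use of the quotients $V_\alpha/(LS\cap V_\alpha)$ is a cosmetic variant of the paper's choice of Maschke complements $U'_\alpha$ with $V_\alpha=U_\alpha\oplus U'_\alpha$, and your explicit split into two implications is a presentational rephrasing of the paper's direct computation $LS^+/LS\cong V_1/U_1$.
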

\begin{proof}
By \cref{lem:HSnorm}, we have $N_G(H(S, \sigma))= H(S^+, \sigma )$, so taking $S' = S^+$ in \eqref{eq:HSnorm}, we see that $H(S, \sigma )$ is of finite index in its normalizer if and only if $S^+/S$ is finite, or equivalently if $LS^+=LS$. From the description of $S^+$ in \cref{lem:HSnorm}, we get
    \[
    LS^+=\{ x \in LT \st x-wx\in LS \mbox{ for all } w\in W\}. 
    \]
Since $S$ is a $W$-invariant subgroup of $T$ we see
    \[
    LS=\bigoplus_{\alpha}U_{\alpha} \mbox{ with } U_{\alpha}\leqslant  V_{\alpha}.  
    \]
Picking a submodule $U_{\alpha}' \leqslant V_{\alpha}$  so that $V_{\alpha}=U_{\alpha}\oplus U'_{\alpha}$, this implies that if $x\in LS^+$ has components 
    \[
    (x_{\alpha}, x_{\alpha}')\in U_{\alpha}\oplus U_{\alpha}'=V_{\alpha}
    \]
then the requirement that $LS^+=LS$ gives $x_{\alpha}'-w x_{\alpha}'=0$ for all $w\in W$ and all $\alpha$. Thus $x_{\alpha}'=0$ for $\alpha \neq 1$ and 
    \[
    LS^+/LS=V_1/U_1. 
    \]
This in turn is the case if and only if $\dim(S^W)=\dim( T^W)$. 
\end{proof}

\section{Finiteness properties of the prism}\label{sec:finitenessproperties}

The identification of the prism in \cref{thm:prism} affords us a greater understanding of the topology of the Balmer spectrum for rational $G$-spectra. In particular, in \cref{prop:noetherian} we show that these tools fully resolve when the Balmer spectrum of $\SpGQo$ is a Noetherian topological space. This is valuable since models for tensor-triangulated categories are significantly simpler when the Balmer spectrum is Noetherian (see for example~\cite{adelicm,adelic1}). Moreover, we prove that the Balmer spectrum always satisfies the weaker finiteness condition of being generically Noetherian. In combination with results from \cite{greenlees_bs} and \cite{BHS2023}, this allows us to conclude that the category of rational $G$-spectra satisfies the telescope conjecture for any compact Lie group $G$.

\subsection{Noetherian spectra}\label{ssec:noetherianprism}

In this subsection, we give three characterizations of when the spectrum of rational $G$-spectra is a Noetherian space: in terms of the group structure of $G$, the spaces of subgroups of $G$, and the rational Burnside ring of $G$. An important precursor is the next result due to tom Dieck \cite[Proposition 5.8.10]{tomDieck79} which classifies the compact Lie groups that have only a finite number of conjugacy classes of subgroups with finite Weyl group. 

\begin{proposition}[tom Dieck]\label{prop:phifinite_tomdieck}
    For a compact Lie group $G$, the space $\Phi(G)$ is finite discrete if and only if the Weyl group action on the maximal torus of $G$ is trivial. 
\end{proposition}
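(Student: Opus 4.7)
The plan is to first translate the condition ``the Weyl group acts trivially on the maximal torus of $G$'' into the concrete structural statement that $G$ is a finite central extension of a torus, i.e., $G_e = T$ is a torus and $T$ is central in $G$. With the structure theorem $G_e = \SSi \times_Z T$ from \cref{ssec:subgroups_reduction}, a maximal torus of $G_e$ has the form $T_\SSi \times_Z T$, and triviality of the Weyl group action forces $N_{G_e}(T_0) = C_{G_e}(T_0) = T_0$ (using that maximal tori are self-centralizing in connected compact Lie groups); hence the Weyl group of $G_e$ itself is trivial, so $\SSi = 1$ and $G_e = T$. The remaining Weyl group is then $G/T = G_d$, and triviality of its action on $T$ is precisely centrality of $T$ in $G$.

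For the direction ($\Leftarrow$), I would show that if $1 \to T \to G \to F \to 1$ is a finite central extension, then any $H \in \cF(G)$ automatically contains $T$. Indeed, centrality gives $TH \subseteq N_G(H)$, whence $T/(T \cap H) \cong TH/H$ embeds into the finite group $W_G(H)$; since $T$ is connected, this forces $T \subseteq H$. Subgroups containing $T$ are in bijection with subgroups of $F$, and centrality of $T$ ensures conjugacy in $G$ is detected on $F$, so $\Phi(G)$ injects into the finite set of conjugacy classes of subgroups of $F$.

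The converse direction I would prove by contrapositive, splitting into two cases. Case (a): $G_e$ has a non-trivial semisimple part $\SSi$. Every such $\SSi$ contains a closed rank-one simple subgroup locally isomorphic to $SU(2)$ or $SO(3)$; inside this, the families of (binary) dihedral subgroups of varying order furnish an infinite family of finite subgroups, pairwise non-conjugate by order, and I would check using \cref{lem:semisimplenormalizers} and a direct centralizer argument that they have finite Weyl group in the ambient $G$. Case (b): $G_e = T$ is a torus but some $w \in G_d$ acts non-trivially on $T$. Choose a lift $\tilde w \in G$ with $\tilde w^k \in T$, and for each $\langle w \rangle$-invariant finite subgroup $S \subseteq T$ containing $\tilde w^k$ form $H_S = \langle S, \tilde w \rangle$. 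By \cref{prop:finweyl}, $H_S \in \cF(G)$ as soon as $S$ is chosen so that $\dim S^{\langle w \rangle} = \dim T^{\langle w \rangle}$; letting the order of $S$ grow produces infinitely many such $H_S$ with pairwise distinct intersections with $T$, so pairwise non-conjugate in $G$.

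The main obstacle will be in Case (a): binary polyhedral subgroups of a fixed $SU(2) \subseteq G$ have finite normalizer within $SU(2)$, but the normalizer in the larger group $G$ can a priori be much bigger and acquire positive-dimensional components. Controlling this will require reducing to the identity component of the normalizer and showing that it centralizes $H$, which I would do by using that the non-abelian finite group $H$ has only the trivial continuous family of conjugations compatible with its structure inside the rank-one simple subgroup, combined with \cref{lem:semisimplenormalizers} to confine the normalizer to a tractable characteristic subgroup.
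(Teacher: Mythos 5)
The paper does not prove this statement: it is quoted directly from tom Dieck (\cite[Prop.~5.8.10]{tomDieck79}), so there is no argument in the text to compare against, and your proposal must be judged on its own. Your translation of ``trivial Weyl-group action'' into ``$G$ is a finite central extension of a torus'' is correct, and the $(\Leftarrow)$ direction (every $H\in\cF(G)$ contains the central $T$ since the connected group $T/(T\cap H)$ embeds in the finite $W_G(H)$, so $\Phi(G)$ injects into conjugacy classes of subgroups of $G/T$) is a complete, clean argument.

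The converse has a genuine gap in each of your two cases. In Case~(b) you take $S\subseteq T$ \emph{finite} and invoke \cref{prop:finweyl}, whose hypothesis is $\dim S^{\langle w\rangle}=\dim T^{\langle w\rangle}$; for finite $S$ the left side is $0$, so the hypothesis is only satisfiable when $T^{\langle w\rangle}$ is finite. If $w$ fixes a positive-dimensional subtorus (e.g.\ $G=\T^2\rtimes C_2$ with $w\colon(a,b)\mapsto(a,b^{-1})$, so $T^{\langle w\rangle}$ is $1$-dimensional) your construction produces no element of $\cF(G)$ at all. The repair is to take $S$ containing $(T^{\langle w\rangle})_e$, for instance $S_n=(T^{\langle w\rangle})_e\cdot T[mn]$, and one also needs a base-change step, since \cref{prop:finweyl} is stated for full subgroups and thus controls the Weyl group of $H_S$ inside $\pi^{-1}(\langle w\rangle)$ rather than inside $G$. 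In Case~(a), your proposed patch---``show $N_G(H)_e$ centralizes $H$''---is true but does not help: since $H$ is finite, $N_G(H)_e$ \emph{always} centralizes $H$, yet this identity component can be positive-dimensional. For $G=SU(2)\times\T$ and $H$ a binary dihedral subgroup of the $SU(2)$ factor, $N_G(H)_e=\T$, so $W_G(H)$ is infinite and your family is not in $\Phi(G)$; and \cref{lem:semisimplenormalizers} cannot ``confine the normalizer'' here because the semisimple part of $H_e$ is trivial for finite $H$. A working fix is to enlarge each $H$ by the torus factor $T$ of $G_e=\SSi\times_ZT$, or more systematically to reduce Case~(a) to the corrected Case~(b) via $N_G(T_{G_e})$, again with the base-change point addressed.
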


As explained by tom Dieck, the equivalent conditions of this proposition guarantee that $G_e$ has no semisimple component. It follows that $G$ has finite $\Phi(G)$ if and only if it can be written as a central extension
    \begin{equation}\label{eq:centralextension}
        \xymatrix{1 \ar[r] & T \ar[r] & G \ar[r] & F \ar[r] & 1}
    \end{equation}
of a torus $T$ by a finite group $F$, where $F$ acts trivially on $T$: $G$ is a central extension of $T$ by $F$. We pause to consider the structure of such groups. We note that the extension is classified by an element $\epsilon$ of $H^2(F;T)\cong H^3(F; \Z^r)$. This is a finite abelian group so it is annihilated by some number $n$. Considering the short exact sequence
    \[
    \xymatrix{1 \ar[r] & T[n] \ar[r] & T \ar[r] & \Tbar \ar[r] & 1,} 
    \]
of coefficients, we see $\epsilon$ maps to zero in $H^2(F; \Tbar)$. Thus $G/T[n]=\Tbar \times F$ and $\epsilon$ comes from $H^2(F; T[n])$, and the non-trivial extension is carried by a finite subgroup. 

\begin{example}\label{ex:centralextension}
   If we take $F$ to be the Klein 4-group $V\cong C_2\times C_2$, and $T$ to be the circle group we see $H^2(V;T)=H^3(V;\Z)\cong \Z/2$. The non-zero element is of order 2, so the corresponding extension $\epsilon$
    \[
    \xymatrix{1 \ar[r] & T \ar[r] & G \ar[r] & V \ar[r] & 1,}
    \]
is prolonged from a central extension $\epsilon'$
    \[
    \xymatrix{1 \ar[r] & C_2 \ar[r] & H \ar[r] & V\ar[r] & 1,}
    \]
so that $G=H\times_{C_2}T$ for a central subgroup $C_2$.
Of course, $H^2(V; C_2)\cong \Z/2^3$, and its non-zero elements correspond to $C_4\times C_2$ (3 extensions), $D_8$ (2 extensions) and $Q_8$ (2 extensions). 
The three abelian ones become split on prolongation to $T$, and the remaining 4 become equivalent. 

Choosing the more symmetric description, $G = \{a + bi + cj + dk \st  a, b, c, d \in T \}$. The resulting group $G$ visibly has $Q_8$ as the subgroup with $a, b, c, d \in \{+1, -1\}$, which shows in particular that $G$ does not split as a direct product of $T$ and $V$.
\end{example}

\begin{lemma}\label{lem:phifinite_permanence}
    If $\Phi(G)$ is finite discrete, then so is $\Phi(H)$ for every closed subgroup $H$ of $G$.
\end{lemma}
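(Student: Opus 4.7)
The plan is to reduce the statement to a direct application of tom Dieck's characterization from \cref{prop:phifinite_tomdieck}, using the structural description of $G$ recalled in the paragraph just before this lemma.

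First, I would unpack the hypothesis. By \cref{prop:phifinite_tomdieck}, the condition that $\Phi(G)$ is finite discrete is equivalent to the Weyl group of $G$ acting trivially on its maximal torus, and the remark following the theorem records that this forces $G_e$ to be a torus $T$, with $G$ fitting into a central extension
\[
1 \longrightarrow T \longrightarrow G \longrightarrow F \longrightarrow 1
\]
where $F$ is finite and $T$ is central in $G$.

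Now let $H \subseteq G$ be an arbitrary closed subgroup. Its identity component $H_e$ is a closed, connected subgroup of $G_e = T$, and hence is itself a subtorus $T_H \subseteq T$. Because $T$ is central in $G$, the subgroup $T_H$ is in particular central in $H$. The maximal torus of $H$ coincides with the maximal torus of the identity component $H_e$, but $H_e = T_H$ is already a torus, so it is its own maximal torus. Thus the maximal torus of $H$ is $T_H$ itself.

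It remains to verify that the Weyl group of $H$ with respect to $T_H$ acts trivially on $T_H$. Centrality of $T_H$ in $H$ gives $N_H(T_H) = H$, and the Weyl group is therefore $H/T_H = H/H_e$, the component group, which is finite. Since conjugation in $H$ restricts to the trivial action on the central subgroup $T_H$, this Weyl action is trivial. Applying \cref{prop:phifinite_tomdieck} a second time, now to $H$, yields that $\Phi(H)$ is finite discrete, as required. The argument is essentially formal once tom Dieck's theorem is in hand; the only point requiring a moment of care is the identification of the maximal torus of $H$ with $H_e$, which uses that $G_e$ itself is abelian.
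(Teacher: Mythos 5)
Your proof is correct and follows exactly the route the paper intends: it invokes tom Dieck's characterization (\cref{prop:phifinite_tomdieck}) in both directions and observes that a closed subgroup of a central extension of a torus by a finite group is again of this form. The paper states this as a one-line observation, while you fill in the details — identifying $H_e$ as a subtorus of $T = G_e$, noting it is central in $H$ and hence equals the maximal torus, and concluding that the Weyl group of $H$ acts trivially — but the underlying argument is the same.
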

\begin{proof}
    In light of tom Dieck's characterization \cref{prop:phifinite_tomdieck}, is suffices to observe that a closed subgroup of a central extension of a torus by a finite group is again of this form. 
\end{proof}

\begin{proposition}\label{prop:noetherian}
    The following are equivalent for a compact Lie group $G$:
        \begin{enumerate}
            \item $\Phi(G)$ is finite discrete;
            \item $(\Sub(G)/G)_f$ is discrete;
            \item $((\Sub(G)/G)_h, \cotoral)$ is a Noetherian Priestley space.
        \end{enumerate}
\end{proposition}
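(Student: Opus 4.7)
My strategy is to prove the equivalences $(1) \Leftrightarrow (2)$ and $(1) \Leftrightarrow (3)$ separately, treating the $f$-topology and the Noetherian property of the prism as independent aspects.

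For $(1) \Rightarrow (2)$, \cref{lem:phifinite_permanence} gives that $\Phi(H)$ is finite (hence discrete) for every closed subgroup $H \subseteq G$. Combined with the characterization of the $f$-topology as the quotient topology for the maps $\Phi(H) \to \Sub(G)/G$ from \cref{lem:ftop}, any subset of $\Sub(G)/G$ has $h$-open preimage in each discrete $\Phi(H)$, so every subset is $f$-open, whence $(\Sub(G)/G)_f$ is discrete. For the converse $(2) \Rightarrow (1)$, every singleton $\{(K)\}$ with $(K) \in \Phi(G)$ is $f$-open, so applying \cref{lem:ftop} with $H = G$ the preimage $\{(K)\} \subseteq \Phi(G)$ is $h$-open. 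Hence $\Phi(G)$ is $h$-discrete, and since $\Phi(G)$ is compact Hausdorff, it must be finite.

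For $(3) \Rightarrow (1)$, I would use the standard fact that a Noetherian spectral space has finitely many irreducible components and therefore finitely many generic points. Under \cref{lem:cotoralorder} the specialization order on $\Spc(\SpGQo)$ matches the cotoral order on $\Sub(G)/G$, so the generic points correspond to cotorally maximal conjugacy classes. I claim that these are precisely the elements of $\Phi(G)$: for $(H) \in \Phi(G)$, any proper cotoral extension $H \normal H'$ would embed the positive-dimensional torus $H'/H$ into the finite group $W_G(H)$, a contradiction; and if $W_G(H)$ is infinite, then $(N_G(H)/H)_e$ is a positive-dimensional compact Lie group and contains a non-trivial torus, whose pullback to $N_G(H)$ yields a proper cotoral extension of $H$. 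Hence $\Phi(G)$ is finite.

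The main obstacle lies in $(1) \Rightarrow (3)$. Continuity and closedness of $\omega\colon (\Sub(G)/G)_h \to \Phi(G)$ from \cref{lem:omegacts}, combined with the finiteness and discreteness of $\Phi(G)$, produce a finite decomposition of $\Sub(G)/G$ into the $h$-clopen, cotorally down-closed subsets $\omega^{-1}(\{K\}) = \Lct(\{K\})$ for $K \in \Phi(G)$; these are Zariski clopen by \cref{cor:zfclosed}. Since a disjoint union of finitely many Noetherian spectral spaces is Noetherian, it suffices to establish Noetherianness of each piece $\Lct(\{K\})$. Invoking \cref{prop:phifinite_tomdieck}, $G_e$ is a torus, so each $K \in \Phi(G)$ has torus identity component sitting inside $G_e$, and the one-step cotoral subgroups of $K$ correspond to closed subgroups of the maximal torus quotient of $K$; transitivity of cotoral containment then extends this to a combinatorial description of all of $\Lct(\{K\})$. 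The hardest part will be to derive the descending chain condition on Zariski-closed subsets of $\Lct(\{K\})$ from the Noetherian structure of closed subgroups of this torus quotient, ultimately traceable to Noetherianness of the cocharacter lattice, while also correctly tracking the finite conjugation action of the component group $N_G(K)/K$.
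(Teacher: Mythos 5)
Your arguments for $(1) \Leftrightarrow (2)$ and for $(3) \Rightarrow (1)$ are both correct. For $(3) \Rightarrow (1)$, your route --- a Noetherian sober space has finitely many irreducible components, hence finitely many specialization-maximal points, and these are exactly the elements of $\Phi(G)$ --- is a clean alternative to what the paper does: the paper argues the contrapositive, producing a strictly descending chain of $zf$-closed sets by pulling back a strictly descending chain of closed sets in the (infinite, hence non-Noetherian) Stone space $\Phi(G)$ along $\omega$ via \eqref{eq:preomega=lct}. Your version is a legitimate and arguably more transparent alternative.

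The genuine gap is in $(1) \Rightarrow (3)$, which you acknowledge, but the direction you gesture at is misleading. The reduction to the finitely many Zariski-clopen pieces $\Lct(\{K\})$, $K \in \Phi(G)$, is sound, but "Noetherianness of the cocharacter lattice" yields the \emph{ascending} chain condition on subtori, whereas what is needed is a \emph{descending} chain condition on Zariski-closed subsets of $\Lct(\{K\})$ --- and these are arbitrary $h$-closed, cotorally down-closed subsets, not parametrized by subgroups of a single torus. The ingredient you are missing is precisely condition $(2)$, which you already showed equivalent to $(1)$ but do not invoke here. The paper proves $(2) \Rightarrow (3)$: by \cite[Theorem 8.1.11]{book_spectralspaces} it suffices to check the DCC on \emph{basic} closed sets; discreteness of the $f$-topology forces every basic closed set to have the form $Z = \Lct(M)$ with $M$ \emph{finite}; and then the "multirank" $\rho(Z) = (m_r,\dots,m_0) \in \N^{r+1}$, recording the number of cotorally maximal elements of $Z$ of each rank, strictly decreases in the lexicographic order along any proper inclusion $Z' \subsetneq Z$. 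Well-foundedness of this order is what terminates descending chains. Some such well-founded invariant, anchored in the finiteness that the discrete $f$-topology provides, is essential; nothing in the combinatorics of a single torus's subgroup lattice supplies it.
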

\begin{proof}\leavevmode

    $(1) \implies (2)$: The assumption combined with \cref{lem:phifinite_permanence} imply that $\Phi(H)$ is finite discrete for any closed subgroup $H$ in $G$. It then follows from \cref{lem:ftop} that $(\Sub(G)/G)_f$ is discrete as well. 

    $(2) \implies (3)$: By \cite[Theorem 8.1.11]{book_spectralspaces}, it is enough to check the descending chain condition on basic closed sets: a spectral space is Noetherian if and only if its collection of basic closed (i.e., complements of quasi-compact  opens) satisfies the descending chain condition with respect to inclusion. Consider such a sequence
    \[
    \cdots \subset Z_i \subset Z_{i-1} \subset \cdots \subset Z_1 \subset Z_0.
    \]
    For any basic closed subset $Z$, there then exists an $f$-open, $f$-compact subset $M$ with $Z = \Lct (M)$, see \cref{prop:basicclosed}. Since the $f$-topology is assumed to be discrete, this means that $M$ is in fact finite. Let $r$ be the rank of $G$. For the purposes of this proof, we then define the \emph{multirank} of $Z$ as the ordered $r$-tuple $\rho(Z) \coloneqq (m_r,m_{r-1},\ldots,m_0) \in \N^{\times (r+1)}$, where
    \[
    m_i \coloneqq \{K \in \max(Z)\colon \rank(K) = i\},
    \]
    the number of cotorally maximal elements of $Z$ of rank $i$. If we equip $\N^{\times (r+1)}$ with the lexicographical order, denoted $\prec$, then we have $\rho(Z_{i+1}) \prec \rho(Z_i)$ for all $i\geqslant 0$. Since the multirank does not take the value $\infty$ because $\max(Z_i)$ is always finite, this implies that the sequence $(Z_i)_{i\geqslant 0}$ has to be eventually constant, as required.

    $(3) \implies (1)$: We prove the contrapositive, so assume that $\Phi(G)$ is infinite. Since $\Phi(G)$ is Stone, it then cannot be Noetherian. In particular, there exists a strictly decreasing sequence of closed subsets $\cdots \subset W_2 \subset W_1 \subset W_0$ in $\Phi(G)$. Set $V_i \coloneqq \Lct(W_i) \subseteq (\Sub(G)/G)_{zf}$. Since $W_i$ is $h$-closed, we deduce from \eqref{eq:preomega=lct} that $V_i$ is an $h$-closed and cotorally closed subset of $\Sub(G)/G$. This provides an infinite strictly descending sequence
    \[
    \cdots \subset V_2 \subset V_1 \subset V_0
    \]
    of closed subsets of $(\Sub(G)/G)_{zf}$. Therefore, the space $(\Sub(G)/G)_{zf}$ is not Noetherian.
\end{proof}

We are now ready to put the pieces together, allowing us to effortlessly translate this proposition into a result about the category of rational $G$-spectra:

\begin{theorem}\label{thm:noetherian}
    For a compact Lie group, the following statements are equivalent:
        \begin{enumerate}
            \item the rational Burnside ring $A_{\Q}(G)$ is Noetherian (and hence Artinian); 
            \item the Balmer spectrum $\Spc(\SpGQo)$ is Noetherian;
            \item $G$ is a central extension of a torus by a finite group. 
        \end{enumerate}
\end{theorem}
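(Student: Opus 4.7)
The plan is to route all three conditions through the characterization of Proposition \ref{prop:noetherian}, namely that $\Phi(G)$ is finite discrete. This reduces the theorem to two essentially independent steps: first, identifying (2) with that characterization via the prism; second, identifying (1) with that characterization via the tom Dieck isomorphism \eqref{eq:burnsidering}.

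For (2) $\iff$ (3), I would apply Theorem \ref{thm:prism}, which presents $\Spc(\SpGQo)$ as the spectral space associated with the Priestley space $((\Sub(G)/G)_h, \cotoral)$. Noetherianness of a spectral space translates to Noetherianness of its Priestley space (Remark \ref{rem:noethpriestley}), so by Proposition \ref{prop:noetherian} condition (2) is equivalent to $\Phi(G)$ being finite discrete. tom Dieck's Proposition \ref{prop:phifinite_tomdieck}, together with the structural discussion following it that identifies such $G$ with central extensions of tori by finite groups, then provides the equivalence with (3).

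For (1) $\iff$ (3), the strategy is to invoke the tom Dieck isomorphism
\[
A_{\Q}(G) \cong C(\Phi G, \Q)
\]
and reduce to the elementary claim that, for a Stone space $X$, the ring $C(X, \Q)$ is Noetherian if and only if $X$ is finite. The forward implication is clear: if $X$ is finite (and thus discrete, since Stone spaces are compact), then $C(X, \Q) \cong \Q^{|X|}$ is a finite product of fields, hence both Noetherian and Artinian (which also accounts for the parenthetical in (1)). For the converse, I would use that an infinite Stone space admits an infinite sequence of pairwise disjoint non-empty clopen subsets $U_1, U_2, \ldots$; the corresponding indicator functions $e_{U_n}$ are pairwise orthogonal non-zero idempotents in $C(X, \Q)$, and the chain of ideals $(e_{U_1}) \subsetneq (e_{U_1}, e_{U_2}) \subsetneq \cdots$ violates the ascending chain condition.

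The main obstacle is genuinely minor, since the heavy lifting has already been carried out in Proposition \ref{prop:noetherian} and in the structural work of \cref{sec:subgroups}; the only new input is the elementary commutative algebra statement about $C(X, \Q)$ for $X$ Stone, together with the bookkeeping to ensure Theorem \ref{thm:prism} and Proposition \ref{prop:phifinite_tomdieck} are correctly chained.
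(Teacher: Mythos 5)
Your proposal is correct and matches the paper's proof in all essentials: both route conditions (1), (2), and (3) through the single intermediate condition that $\Phi(G)$ is finite discrete, using tom Dieck's identification $A_{\Q}(G) \cong C(\Phi G,\Q)$, \cref{prop:noetherian}, and \cref{prop:phifinite_tomdieck}. The only genuine variation is on the ring-theoretic step: the paper observes that $A_{\Q}(G)$ is zero-dimensional (since $\Phi G$ is Stone) and invokes the fact that a zero-dimensional ring is Noetherian if and only if it is Artinian if and only if $\Phi(G)$ is finite, whereas you give a direct and slightly more elementary argument via a strictly increasing chain of ideals generated by pairwise orthogonal idempotents attached to disjoint non-empty clopen sets; both are valid, and your version is a bit more self-contained at the cost of being marginally longer.
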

\begin{proof}
    On the one hand, recall tom Dieck's identification of the rational Burnside ring as the ring of rational locally constant functions on $\Phi(G)$. Since $\Phi(G)$ is Stone, $A_{\Q}(G)$ is zero-dimensional and hence Noetherian if and only if it is Artinian, which in turn is the case if and only if $\Phi(G)$ is finite discrete. Therefore, Condition (1) is equivalent to \cref{prop:noetherian}$(1)$.

    On the other hand, \cite[Theorem 8.4]{greenlees_bs} provides a homeomorphism between $\Spc(\SpGQo)$ and $(\Sub(G)/G)_{zf}$. This shows the equivalence of $(1)$ and $(2)$ via \cref{prop:noetherian}. Finally, tom Dieck's characterization in \cref{prop:phifinite_tomdieck} combined with the discussion above establishes the equivalence $(2) \iff (3)$.
\end{proof}

\begin{corollary}\label{cor:noetherian}
The equivalent conditions of \cref{prop:noetherian} hold if and only if the set of maximal points of any closed subset $C \subseteq (\Sub(G)/G)_{zf}$ is finite. 
\end{corollary}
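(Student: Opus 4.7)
The plan is to prove both implications by invoking the equivalent characterizations supplied by \cref{prop:noetherian}, using the fact that closed subsets in $(\Sub(G)/G)_{zf}$ are down-closed under cotoral specialization (\cref{cor:zfclosed}), so ``maximal points'' is understood with respect to $\cotoral$.

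For the forward direction, assume the equivalent conditions hold, so $(\Sub(G)/G)_{zf}$ is Noetherian. Any closed subset $C$ of a Noetherian spectral space decomposes as a finite union of irreducible closed subsets $C = V_1 \cup \cdots \cup V_n$. Each $V_i$, being irreducible closed in a sober space, has a unique generic point $\eta_i$; by \cref{conv:ordering} this $\eta_i$ is the maximum of $V_i$ under $\leqslant$. Any maximal point of $C$ must be one of the $\eta_i$, so there are at most $n$ maximal points.

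For the backward direction, apply the hypothesis to $C = \Sub(G)/G$ itself. The maximal points of $(\Sub(G)/G, \cotoral)$ are precisely the conjugacy classes $(H)$ with $W_G(H)$ finite, i.e.\ the elements of $\Phi(G)$. Indeed, if $W_G(H)$ is finite and $K \cotoral H$ with $K \supseteq H$, then $K/H$ is simultaneously a torus and a subgroup of the finite group $W_G(H)$, forcing $K = H$; conversely, if $W_G(H)$ is infinite, the identity component $W_G(H)_e$ is a positive-dimensional compact Lie group containing a non-trivial torus $T'$, whose preimage $K$ in $N_G(H)$ provides a proper cotoral extension $H \cotoral K$. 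Hence by hypothesis $|\Phi(G)| < \infty$.

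It remains to promote finiteness to discreteness in condition \emph{(1)} of \cref{prop:noetherian}. The space $\Phi(G)$ carries the $h$-topology as a subspace of $(\Sub(G)/G)_h$, which is Hausdorff by \cref{prop:subgisstone}; a finite Hausdorff space is automatically discrete. Thus $\Phi(G)$ is finite discrete, and \cref{prop:noetherian} closes the loop. I do not anticipate a serious obstacle here: the main conceptual point is simply the identification of the cotoral maxima of $\Sub(G)/G$ with the finite-Weyl-group subgroups, which is a standard torus-preimage argument inside the normalizer.
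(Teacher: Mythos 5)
Your proof is correct. The forward direction agrees in essence with the paper's, which likewise rests on the decomposition of closed subsets of a Noetherian spectral space into finitely many irreducible components with unique generic points. The backward direction, however, takes a different route. The paper's proof is symmetric and abstract: it invokes the characterization from \cite[Theorem 8.1.11]{book_spectralspaces} that a spectral space is Noetherian if and only if every nonempty closed subset has finitely many generic points \emph{and} there are no infinite specialization chains, and then observes that the second condition is automatic because chains in $(\Sub(G)/G)_{zf}$ have length bounded by $\rank(G)$ --- so both directions of the equivalence drop out of the same citation. You instead specialize the hypothesis to $C = \Sub(G)/G$, identify the cotoral maxima of $\Sub(G)/G$ group-theoretically with the elements of $\Phi(G)$ (via the observation that $H$ has a proper cotoral extension if and only if $W_G(H)$ is infinite), and then feed $\Phi(G)$ finite discrete into condition (1) of \cref{prop:noetherian}. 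Both approaches are valid. The paper's is shorter and more uniform; yours makes the group theory explicit and reuses the equivalence in \cref{prop:noetherian} as a pivot, which is arguably more illuminating if one has not already internalized the spectral-space characterization. Your torus-preimage argument identifying cotoral maxima with $\Phi(G)$ is essentially the same mechanism underlying the map $\omega$ and \eqref{eq:preomega=lct} in Section \ref{ssec:subgroups_continuity}, so nothing here is outside the paper's toolkit.
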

\begin{proof}
This is a consequence of the previous proposition via another characterization of Noetherian spectral spaces (\cite[Theorem 8.1.11]{book_spectralspaces}): A spectral space $X$ is Noetherian if and only if every non-empty closed subset of $X$ has only finitely many generic points and there are no infinite specialization chain. Since the length of any specialization chain in $(\Sub(G)/G)_{zf}$ is bounded by the rank of $G$, the claim follows.
\end{proof}

\subsection{Generically noetherian spectra and the telescope conjecture}\label{ssec:gennoetherianprism}

In \cref{prop:noetherian} we have given a characterization of when the spectrum of $\SpGQo$ is Noetherian: This happens if and only if $G$ is a central extension of the form \eqref{eq:centralextension}. In contrast to this, we now show that $\Spc(\SpGQo)$ always satisfies the weaker condition of being generically Noetherian that was introduced in \cite[Definition 9.5]{BHS2023} and is recalled below. As a consequence, we will deduce that $\SpGQo$ satisfies the generalized telescope conjecture for any compact Lie group $G$.

\begin{definition}\label{def:gennoetherian}
Let $X$ be a spectral space. Then $X$ is \emph{generically Noetherian} if for each $x \in X$, the generalization closure $\gen(x)$ of $x$ is a Noetherian subspace of $X$.
\end{definition}

\begin{proposition}\label{prop:spgqgen}
For any compact Lie group $G$ and closed subgroup $L$, the generalization closure of $(L)_G$ in $\Sub(G)/G$ is isomorphic to the set of conjugacy classes of subtori of $W_G(L)$ with the topology generated by the specialization closures of points. Accordingly, $\Spc(\SpGQo)$ is generically Noetherian and all points are weakly visible. 
\end{proposition}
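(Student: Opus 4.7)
The plan is to establish the three claims of the proposition in sequence, with the identification of $\gen((L)_G)$ being the technical heart from which generic Noetherianness and weak visibility follow.

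By \cref{thm:prism}, the specialization order on $\Spc(\SpGQo)$ is the cotoral order, so $\gen((L)_G) = \{(K)_G : (L)_G \cotoral (K)_G\}$. Choosing a representative with $L \normal K$ and $K/L$ a torus, one has $K \subseteq N_G(L)$, whence the quotient map $\pi\colon N_G(L) \to W_G(L)$ identifies $K$ with a subtorus $K/L \subseteq W_G(L)$; conversely, $\pi^{-1}(S)$ is a cotoral extension of $L$ for any subtorus $S$ of $W_G(L)$. I would show that these constructions descend to mutually inverse bijections between $W_G(L)$-conjugacy classes of subtori of $W_G(L)$ and $\gen((L)_G)$. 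Surjectivity is immediate, while injectivity requires the following rigidity lemma: two subgroups $K_1, K_2 \subseteq N_G(L)$ containing $L$ cotorally which are $G$-conjugate are already $N_G(L)$-conjugate. I would prove this by analyzing the two copies $L$ and $hLh^{-1}$ inside $K_2 = hK_1 h^{-1}$, both normal with torus quotient; the structural correspondence between cotoral subgroups of $K_2$ and closed subgroups of a suitable torus quotient of $K_2$ (cf.\ \cref{ssec:subgroups_continuity}), together with the fact that $N_G(K_2)$ acts transitively on the $G$-conjugates of $L$ sitting in $K_2$ as cotoral subgroups, allows one to adjust $h$ by an element of $N_G(K_2)$ so that the resulting element lies in $N_G(L)$. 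I expect this rigidity lemma to be the main obstacle of the proof.

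The topology on $\gen((L)_G)$ inherited from $\Spc(\SpGQo)$ then matches the topology generated by specialization closures on subtori: by \cref{cor:zfclosed}, closed subsets of $\Spc(\SpGQo)$ are exactly the $h$-closed and cotorally-down-closed subsets of $\Sub(G)/G$, and these transport under the bijection to downward closures of finite sets of subtori in the subtorus inclusion order. Generic Noetherianness is then a consequence of two finiteness features: the subtorus dimension gives a strictly monotone rank function bounded by $\rank(W_G(L))$, and by Montgomery--Zippin (\cref{ssec:mztheorem}, \cref{prop:topphi}) any infinite $h$-convergent family of subtori of fixed dimension must accumulate to a subtorus of strictly larger dimension. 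Any descending chain of $h$-closed cotorally-down-closed subsets must eventually stabilize: each step either reduces the number of maximal subtori or strips off an infinite convergent family together with its higher-dimensional accumulation point, both bounded by the finite height.

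Finally, weak visibility of every point follows from generic Noetherianness by a general spectral-space argument. For $x \in \Spc(\SpGQo)$, the upward closure $\bigvee(x) = \gen(x)$ is Noetherian, so the set $\gen(x) \setminus \{x\}$ equals the union of the finitely many closed up-sets $\bigvee(y_i)$ for $y_i$ the minimal elements strictly above $x$; this is itself a closed up-set in the Priestley structure, and its complement in the ambient Priestley space is a Thomason open down-set $V$. Then $\{x\} = V \cap \bigvee(x)$ exhibits $x$ as weakly visible in the sense of \cref{defn:locallyclosed}.
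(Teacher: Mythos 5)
Your overall strategy (use \cref{thm:prism} to turn $\gen((L)_G)$ into a subtorus problem in $W_G(L)$, then bound chains) is aligned with the paper, but you take a different route to the Noetherian claim and your argument contains a concrete error in the weak-visibility step. The paper avoids your rigidity lemma entirely: rather than proving that two $G$-conjugate cotoral overgroups $K_1,K_2\subseteq N_G(L)$ must be $N_G(L)$-conjugate, it sets $\TTW$ to be the preimage in $N_G(L)$ of a maximal torus $TW\leqslant W_G(L)$ and produces a homeomorphism $\gen_{\TTW}(L)\cong\gen_{TW}(1)$ using Montgomery--Zippin and \cref{lem:pstarco}. Then $\gen_G(L)$ is realized as the continuous image of $\gen_{\TTW}(L)$ under $\Sub(\TTW)/\TTW\to\Sub(G)/G$, and Noetherianness of the image follows formally (every open in the image pulls back to a quasi-compact open). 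Noetherianness of $\gen_{TW}(1)\subseteq\Sub(TW)/TW$ is then just an instance of \cref{prop:noetherian}, since $TW$ is abelian. This sidesteps any need for a bijection argument. Your sketch of the rigidity lemma hinges on the unproved claim that $N_G(K_2)$ acts transitively on the $G$-conjugates of $L$ sitting cotorally in $K_2$, which would require further justification even if true; and your chain-stabilization argument (``each step either reduces the number of maximal subtori or strips off an infinite convergent family\ldots'') is too informal to be a proof, whereas the paper's reduction to \cref{prop:noetherian} closes it cleanly.

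Your weak-visibility argument has a genuine flaw: it is \emph{not} true that a Noetherian $\gen(x)$ has only finitely many minimal elements strictly above $x$. Take $G=\T^2$ and $x=(1)$; then $\gen(x)$ is the poset of all subtori of $\T^2$, the minimal points strictly above $x$ are the one-dimensional circles, and there are countably infinitely many of these. The conclusion that $\gen(x)\setminus\{x\}$ is a closed up-set in $\Prism(\SpGQo)$ does hold (in the $\T^2$ example one sees it via Montgomery--Zippin: a limit of positive-dimensional subgroups cannot be trivial), but not by the finite-union argument you give. The correct general statement is that in a Noetherian spectral space $\{x\}$ is locally closed and hence constructible, so $\gen(x)\setminus\{x\}$ is patch-closed in $\gen(x)$ and therefore patch-closed in the ambient Priestley space; being also an up-set, its complement is the desired Thomason open down-set. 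The paper instead packages this as an appeal to \cite[Lemma~9.9]{BHS2023} (generically Noetherian implies weakly Noetherian).
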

\begin{proof} 
The generalization closure of $(L)_G$ in $\Sub(G)/G$, consists of conjugacy classes of subgroups $K \subseteq G$ with $L$ cotoral in $K$:
    \[
        \gen_G(L) = \{(K)_G \in \Sub(G)/G \mid L \cotoral K \}.
    \]
Pick such a subgroup $K$ and consider the resulting morphism of extensions:
    \[
        \xymatrix{1 \ar[r] & L \ar[r] \ar@{=}[d] & K \ar[r] \ar@{^{(}->}[d] & T \ar[r] \ar@{-->}[d] & 1 \\
        1 \ar[r] & L \ar[r] & N_G(L) \ar[r] &  W_G(L) \ar[r] & 1.}
    \]
This exhibits $T$ as a connected subgroup of a maximal torus $TW$ of $W_G(L)$. Let $\TTW$ be the inverse image of $TW$ in $N_G(L)$, so we obtain a map $\xi\colon \gen_{\TTW}(L) \to \gen_{TW}(1)$. This map is a bijection with inverse given by the pullback of a connected subgroup of $TW$ along $N_G(L) \to W_G(L)$. We claim that $\xi$ is in fact a homeomorphism. 

To this end, consider the following commutative diagram of spaces with the Hausdorff metric topology and its quotients,
\[
\xymatrix{
\sub(\TTW |\supseteq L)\ar[r] \ar[d] &\sub(\TTW |\supseteq L)/\TTW \ar[d] \ar[r]& \sub(G)/G\\
\sub(TW)\ar@{=}[r] & \sub(TW)/TW. &}
\]
Invoking \hyperref[item:mztheorem]{(MZ)} and \cref{lem:pstarco} applied to the quotient map $\TTW \twoheadrightarrow TW$, we see that the maps in the square are continuous and open. The commutative square part of the diagram then shows that $\xi$ is a homeomorphism, as claimed. 

Since a subspace of a Noetherian space is Noetherian, it follows from \cref{prop:noetherian} that $\gen_{TW}(1)$ and hence $\gen_{\TTW}(L)\subseteq \sub(\TTW)/\TTW$ are Noetherian. This implies that $\gen_G(L)$, the image of $\gen_{\TTW}(L)$ under the  map $\sub(\TTW)/\TTW \to \sub(G)/G$, has the same property. Via \cref{thm:prism}, we deduce that $\Spc(\SpGQo)$ is generically Noetherian. For the final part use that, by \cite[Lemma 9.9]{BHS2023}, a generically Noetherian spectral space is weakly Noetherian. 
\end{proof}

For convenience, we recall the formulation of the (generalized) telescope conjecture (aka smashing conjecture), motivated by Ravenel's telescope conjecture for $\Sp$ given in \cite{Ravenel1984}.

\begin{definition}\label{def:telescopeconjecture}
Let $\sfT$ be a rigidly-compactly generated tensor-triangulated category. We say that 
the \emph{telescope conjecture} holds for $\sfT$ if the kernel of every smashing localization of $\sfT$ is generated by compact objects. 
\end{definition}

\begin{corollary}\label{cor:telescopeconjecture}
For any compact Lie group $G$,  the telescope conjecture holds for the category of rational $G$-spectra.
\end{corollary}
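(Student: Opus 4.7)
The plan is to combine the two essential ingredients already at our disposal: the geometric input coming from \cref{prop:spgqgen} and the stratification of $\SpGQ$ established in \cite{greenlees_bs}, then invoke the abstract criterion of Barthel--Heard--Sanders.

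First, by \cref{prop:spgqgen}, the Balmer spectrum $\Spc(\SpGQo)$ is generically Noetherian and every point is weakly visible, so the Balmer--Favi support theory is defined on all of $\SpGQ$ (\cref{def:bigsupport}). Second, by \cite{greenlees_bs}, the category of rational $G$-spectra is stratified in the sense of Barthel--Heard--Sanders, i.e., the assignment $\cL \mapsto \{x \in \Spc(\SpGQo) \mid \Gamma_x \cL \neq 0\}$ gives a bijection between the localizing tensor-ideals of $\SpGQ$ and arbitrary subsets of $\Spc(\SpGQo)$.

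With these two facts in hand, one applies \cite[Theorem 9.11]{BHS2023}, which asserts that for a rigidly-compactly generated tt-category $\sfT$ whose Balmer spectrum is generically Noetherian and which is stratified, the telescope conjecture holds in the sense of \cref{def:telescopeconjecture}. That is, every smashing localization is generated by compact objects. Applied to $\sfT = \SpGQ$, this immediately yields the corollary.

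The only potentially non-trivial content beyond citation is making sure the hypotheses align: stratification as defined in \cite{BHS2023} requires the weakly visible hypothesis (so that $\Gamma_x$ is defined for all $x$), but this is supplied by \cref{prop:spgqgen}, and minimality of the local factors follows from stratification itself. No further argument is required, so I expect no obstacle beyond bookkeeping of conventions.
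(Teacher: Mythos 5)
Your proof is correct and follows the same route as the paper: generically Noetherian spectrum from \cref{prop:spgqgen}, stratification from \cite{greenlees_bs} (interpreted via \cite[Theorem 12.22]{BHS2023}), and then \cite[Theorem 9.11]{BHS2023}. No issues.
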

\begin{proof}
    By \cite[Theorem 9.11]{BHS2023}, we need to verify that $\SpGQ$ is stratified and has generically Noetherian spectrum. The latter condition holds by \cref{prop:spgqgen}, while the former condition was shown in \cite[Theorem 1.6]{greenlees_bs}, as interpreted in the language of stratification in \cite[Theorem 12.22]{BHS2023}.
\end{proof}

\section{Dispersions for rational \texorpdfstring{$G$}{G}-spectra}\label{sec:dispersionsforGspec}

In order to apply the general machinery of \cref{part:prisms}, we need to show that the category of rational $G$-spectra is dispersible, and to make this useful we are required to identify a specific dispersion. We begin with the observation that the dimension of subgroups forms a finite dispersion, which follows essentially from the identification of the prism in \cref{thm:prism}.  With a view towards the categorical local-to-global principle, however, it becomes very useful to work with an intrinsic dispersion. As explained in \cref{sec:dispersions}, the best possible candidate dispersion is constructed by repeatedly removing Thomason points as in \cref{defn:cbrank}. From the dimension dispersion we readily deduce that  the Thomason height provides a finite dispersion for any compact Lie group $G$. The main theorem is then a formula for the Thomason height of each subgroup $H \subseteq G$ in terms of representation-theoretic data. In fact, our arguments shows more: the Thomason filtration coincides with the Cantor--Bendixson filtration on $(\Sub(G)/G)_h$ and hence is independent of the cotoral order. 

\subsection{Overview}

The abstract theory of prisms and dispersions developed in \cref{part:prisms} supplies two different height functions on $\Prism(\SpGQo)$:

\begin{definition}\label{def:heights_subg}
    For $G$ a compact Lie group, we have two functions on the prism of $\SpGQo$, as follows:
        \begin{enumerate} 
            \item $\thht^G = \height_{\Prism(\SpGQo)}^G$ denotes the \emph{Thomason height} on $\Prism(\SpGQo)$, introduced for any Priestley space in \cref{defn:cbrank};
            \item $\cbht^G$ is the Thomason height on $((\Sub(G)/G)_h,=)$, i.e., the \emph{Cantor--Bendixson rank} on the Stone space $(\Sub(G)/G)_h$, see \cref{ex:cantorbendixsonisthomason}.
        \end{enumerate}
\end{definition}

Via our identification of $\Prism(\SpGQo)$ in \cref{thm:prism}, we can make the filtrations corresponding to these functions explicit: while the Thomason filtration iteratively removes subgroups from $\Sub(G)/G$ which are cotorally minimal and $h$-isolated, the Cantor--Bendixson filtration instead iteratively removes those subgroups which are $h$-isolated only. 

In fact we show the filtrations are the same and give a dispersion, and one can give an explicit formula for the filtration of a subgroup in terms of rational representation theory. To this end, let $H$ be a closed subgroup of $G$ and write $T(H) = Z(H_e)_e$ for the identity component of the center of $H_e$. The finite component group $H_d$ of $H$ acts on $H_1(T(H);\Q)$, and we may decompose the resulting $H_d$-representation 
    \begin{equation}\label{eq:isotypicaldecomposition}
        H_1(T(H);\Q) \cong \bigoplus_{\alpha} V_\alpha(H)
    \end{equation}
into isotypical pieces $V_\alpha(H) \cong \alpha^{\oplus m_{\alpha}}$ as in \cref{ssec:subgroups_finiteontorus}.
 
\begin{definition}\label{def:height_rep}
    For a compact Lie group $G$, we define the value of $\repht^G$ on some $H \in \Sub(G)$ as the number $\sum_{\alpha}m_{\alpha}$ of simple summands in the rational $H_d$-representation $H_1(Z(H_e)_e;\Q)$. 
\end{definition}

\begin{example}\label{ex:height_rep}
    Every torus in $G$ is conjugate to a subgroup of the maximal torus, so the range of $\repht^G$ is bounded above by the rank of $G$. This bound is achieved by the maximal torus itself. On the other hand, not all relevant simple modules are one dimensional: for example the value of $\repht^G$ on the normalizer of the maximal torus in $SU(3)$ is 1, even though the maximal torus is of dimension 2.
\end{example}

The promised height formula is given by the next result, whose proof we defer to \cref{ssec:proof_heightformula}: 

\begin{theorem}(Height formula)\label{thm:heightformula}
    For any compact Lie group $G$, there are equalities of dispersions on $\Prism(\SpGQo)$: 
        \[
            \thht(H) = \cbht(H) = \repht(H)
        \]
    for any $H \in \Sub(G)$. In particular, $\SpGQ$ is amenable in the sense of \cref{def:ttdispersible}.
\end{theorem}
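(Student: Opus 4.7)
My plan is to establish the equalities $\thht(H) = \cbht(H) = \repht(H)$ by first showing that $\repht$ defines a dispersion on $\Prism(\SpGQo)$ and then proving by induction that for each $k \geqslant 0$ both the $k$-th Thomason derivative and the $k$-th Cantor--Bendixson derivative coincide with $\{H : \repht(H) \geqslant k\}$. Amenability in the sense of \cref{def:ttdispersible} follows immediately from the coincidence of strata. Throughout I work via the identification of the prism from \cref{thm:prism}.

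The core technical ingredient is a local structural lemma: if $\Gamma \subseteq H$ is a subgroup sufficiently close to $H$ in the Hausdorff metric and $\Gamma \neq H$, then $\repht(\Gamma) < \repht(H)$. To prove this, I would apply \cref{prop:topphi}(2) to the compact Lie group $H$, ensuring that $\Gamma$ eventually contains the semisimple part $\Sigma_H$ of $H_e$ and meets every component of $H$. Writing $\Gamma_0$ for the identity component of $\Gamma$, the inclusion $\Sigma_H \subseteq \Gamma_0 \subseteq H_e$ shows that $S' \coloneqq Z(\Gamma_0)_e$ is a subtorus of $T_H \coloneqq Z(H_e)_e$ with $\Gamma_0 = \Sigma_H \cdot S'$ up to finite central isogeny; moreover $S'$ is $H_d$-invariant because the $\Gamma_d$-conjugation action on $T_H$ factors through $\Gamma_d \twoheadrightarrow H_d$, as $T_H$ is central in $H_e$. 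If $S' = T_H$ then $\Gamma_0 = H_e$ and fullness forces $\Gamma = H$; otherwise the decomposition \eqref{eq:isotypicaldecomposition} splits as $H_1(T_H;\Q) \cong H_1(S';\Q) \oplus H_1(T_H/S';\Q)$ of $H_d$-representations, with $T_H/S'$ of positive dimension. Since simple $H_d$-representations remain simple when inflated along $\Gamma_d \twoheadrightarrow H_d$, the number of simple summands of $H_1(S';\Q)$ as a $\Gamma_d$-representation, which equals $\repht(\Gamma)$, is strictly less than $\repht(H)$. The same reasoning applied to a proper cotoral inclusion $K \cotoral H$ --- for which $H = H_e K$ and $K_e \supseteq [H_e,H_e] = \Sigma_H$ --- yields the strict monotonicity of $\repht$ along the cotoral order.

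These observations together verify that $\repht$ is a dispersion: cotoral monotonicity is condition $(1)$ of \cref{defn:dispersion}, while for condition $(2)$ a non-isolated point $a$ in a closed subset admits a sequence $s_n \to a$ with $s_n \neq a$, which by \hyperref[item:mztheorem]{(MZ)} we may take inside $a$; then the local structural lemma gives $\repht(s_n) < \repht(a)$ eventually. In particular $\thht \leqslant \repht \leqslant \rank(G)$, since $\thht$ is the minimal dispersion (\cref{rem:thomasonisuniversal}). For the reverse inequality I would proceed by induction on $k$ to show $\delta_T^k \Prism(\SpGQo) = \{H : \repht(H) \geqslant k\}$, the base case being trivial. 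For the inductive step we need (a) any $H$ with $\repht(H) = k$ is both $h$-isolated and cotorally minimal in $\{H' : \repht(H') \geqslant k\}$ --- which follows from the local structural lemma, \hyperref[item:mztheorem]{(MZ)}, and strict cotoral monotonicity --- and (b) any $H$ with $\repht(H) > k$ is not $h$-isolated in this stratum, for which an explicit approximation must be constructed.

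To construct the approximation in (b), I would use \eqref{eq:isotypicaldecomposition} to peel off one simple summand of the $H_d$-representation $H_1(T_H;\Q)$, obtaining an $H_d$-invariant subtorus $S \subsetneq T_H$ such that $H_1(T_H/S;\Q)$ is a single simple. Setting $T_n \subseteq T_H$ to be the preimage of the $n$-torsion subgroup of $T_H/S$ gives an $H_d$-invariant closed subgroup of $T_H$ with $T_n \to T_H$ in the Hausdorff metric, and I would define $\Gamma_n \subseteq H$ as a subgroup containing $\Sigma_H \cdot T_n$ together with a lift of every component of $H$. The main technical obstacle lies in assembling $\Gamma_n$ as a genuine subgroup: a set-theoretic section of $\pi\colon H \to H_d$ need not be a homomorphism, and closure under multiplication requires the extension cocycle of $H/\Sigma_H$ to be representable by a cocycle valued in the image of $T_n$. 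Since this class lies in the finite torsion abelian group $H^2(H_d; T_H/(\Sigma_H \cap T_H))$ and $T_n$ eventually contains any prescribed finite subgroup of $T_H$, the representability holds for all sufficiently divisible $n$. With $\Gamma_n$ so constructed, the local structural lemma yields $\repht(\Gamma_n) = \repht(H) - 1 \geqslant k$, completing the induction. The identical argument applied to the trivialized Priestley space shows that the Cantor--Bendixson filtration has the same strata --- the cotoral minimality being redundant by strict monotonicity --- whence $\thht = \cbht = \repht$, yielding amenability.
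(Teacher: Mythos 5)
Your proof is correct and follows the same overall strategy as the paper's: both reduce to the case $H = G$ via compatibility of heights with restriction, exploit \cref{prop:topphi} and the Montgomery--Zippin theorem to control subgroups close to $G$, verify strict monotonicity of $\repht$ along proper cotoral inclusions and Hausdorff approximation, and construct an explicit approximating sequence by ``peeling off one simple summand'' of $H_1(T(G);\Q)$. The organization differs mildly: you abstract the local structural observation (a nearby proper subgroup strictly drops $\repht$) into a standalone, non-inductive lemma, deduce that $\repht$ is a dispersion, and only then induct to identify both the Thomason and Cantor--Bendixson derivatives with the $\repht$-super-level sets, whereas the paper interleaves that local analysis directly with an induction proving the three conditions $(s.a)\Leftrightarrow(s.b)\Leftrightarrow(s.c)$ for subgroups of height $s$. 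One genuine improvement in your account: you explicitly address the obstruction to forming the approximating subgroups $\Gamma_n$, namely that a section of $H \to H_d$ need not have its cocycle valued in $\Sigma_H \cdot T_n$, and you invoke torsion of $H^2(H_d;-)$ to rectify this for $n$ sufficiently divisible. The paper's proof of \cref{thm:heightformula} simply writes $K_i = H(T_i,\sigma)$ for a fixed section and leaves this adjustment tacit (the same torsion argument does appear explicitly in the proof of \cref{cor:isolated_criterion}), so your version usefully makes that implicit step visible.
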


\subsection{Dispersibility}\label{ssec:dispersible}

We begin with the dispersibility of the prism of finite rational $G$-spectra, relying on the identification of $\Prism(\SpGQo)$ given in \cref{thm:prism}:

\begin{proposition}\label{prop:dim_dispersion}
    The dimension of subgroups defines a dispersion on $\Prism(\SpGQo)$. In particular, $\SpGQ$ is finitely dispersible in the sense of \cref{def:ttdispersible} and thus satisfies the categorical local-to-global principle. 
\end{proposition}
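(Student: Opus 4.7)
By \cref{thm:prism}, $\Prism(\SpGQo)$ identifies with the Priestley space $((\Sub(G)/G)_h, \cotoral)$, so the plan is to verify that the dimension function $\chi(H) \coloneqq \dim(H)$ satisfies the two axioms of a dispersion given in \cref{defn:dispersion}. Finiteness of the resulting dispersion is then immediate since $\chi$ is bounded above by $\dim(G)$, and the final assertion about the categorical local-to-global principle for $\SpGQ$ follows by applying \cref{thm:catl2g}.

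The first axiom is immediate from the definition of the cotoral order: if $(K) \cotoral (H)$ with $(K) \neq (H)$, then, choosing representatives with $K$ normal in $H$ and $H/K$ a torus, the quotient is nontrivial, whence $\dim(K) < \dim(K) + \dim(H/K) = \dim(H)$.

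The main obstacle lies in verifying the second axiom. Suppose $S \subseteq \Sub(G)/G$ is $h$-closed and $(H) \in S$ is non-isolated, and choose a sequence of conjugacy classes $(K_i) \in S \setminus \{(H)\}$ converging to $(H)$ in the $h$-topology. By the Montgomery--Zippin theorem as recalled in \cref{ssec:mztheorem}, we may replace each representative $K_i$ by a suitable conjugate so that $K_i \subseteq H$ and $K_i \to H$ in the Hausdorff metric on $\Sub(H)$. The crux is to rule out that $\dim(K_i) = \dim(H)$ for all sufficiently large $i$. If this were the case, then each such $K_i$ would contain the identity component $H_e$, since the unique connected closed subgroup of $H$ of dimension $\dim(H)$ is $H_e$ itself. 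Hence $K_i$ would be a union of cosets of $H_e$ in $H$, but any proper such union sits at a positive Hausdorff distance from $H$, bounded below by the minimum pairwise distance between distinct cosets of $H_e$ in $H$. The convergence $K_i \to H$ would then force $K_i = H$ for all large $i$, contradicting $(K_i) \neq (H)$. We conclude that $\dim(K_i) < \dim(H)$ for infinitely many $i$, supplying the required witness in $S$.
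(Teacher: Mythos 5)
Your proof is correct and takes essentially the same route as the paper's: identify the prism via \cref{thm:prism}, check axiom (1) by observing that a nontrivial toral quotient has positive dimension, and verify axiom (2) by using Montgomery--Zippin to arrange $K_i \subseteq H$ and then arguing via dimension. The only difference is one of exposition: the paper's proof compresses the disconnected case by saying one may ``replace $K_i$ by $(K_i)_{(e)}$'' and reduce to connected $H$, leaving implicit the point you make explicit --- namely, that if $\dim K_i = \dim H$ then $K_i \supseteq H_e$, and proper subgroups of $H$ containing $H_e$ sit at a uniform positive Hausdorff distance from $H$, so that eventually $(K_i)_{(e)} \subsetneq H_e$. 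Your version spells out this step, which is a welcome clarification rather than a departure.
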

\begin{proof}
    The first condition of a dispersion holds since if $K$ is cotoral in $H$ then the dimension of $K$ are strictly less than that of $H$. Suppose then that $P$ is an $h$-closed subset of $\Sub(G)/G$ and $H$ is not $h$-isolated. Then there is a sequence of subgroups $K_i\lra H$, and by \hyperref[item:mztheorem]{(MZ)} we can suppose that $K_i\subseteq H$. Replacing $K_i$ by $(K_i)_\e $ we may suppose $H$ is connected. A proper subgroup of a connected compact Lie group has lower dimension so dimension certainly satisfies the second condition for a dispersion. 
\end{proof}

\begin{remark}\label{rem:rank_dispersion}
    The rank of subgroups of a compact Lie group $G$ defines a finite dispersion of $\Prism(\SpGQo)$ as well. This can be seen as in the proof of \cref{prop:dim_dispersion}. Indeed, the rank of a proper cotoral subgroup $K$ in $H \in \Sub(G)$ is strictly less than that of $H$. 
    
    To verify the second condition of a dispersion for the rank, write $H=\SSi \times_Z T$ and let $K_i \lra H$ is a sequence of subgroups converging to $H$. As above, we can reduce to the case that $H$ is connected and $K_i\subseteq H$. Note that by \cref{prop:topphi}, after discarding finitely many terms we then have $\SSi\subseteq K_i$. Considering the image of $K_i$ in $T$ we see $\rank (K_i)<\rank (H)$ as required. 
\end{remark}

\subsection{Proof of the height formula}\label{ssec:proof_heightformula}

It is a consequence of \cref{prop:dim_dispersion} and \cref{lem:thomasinfiltisdisp} that the Thomason height defines a finite dispersion on $\Prism(\SpGQo)$. In order to prepare for the proof of the height formula, we first observe that both the Thomason height and Cantor--Bendixson height functions do not depend on the ambient group:

\begin{lemma}\label{lem:height_restriction}
    Let $G$ be a compact Lie group and $H$ a subgroup of $G$. The inclusion $\Sub(H)/H \hookrightarrow \Sub(G)/G$ induces an equality of heights
        \[
            \thht^H(K) = \thht^G(K) \quad \text{and} \quad \cbht^H(K) = \cbht^G(K)
        \]
    for all subgroups $K \in \Sub(H)$. 
\end{lemma}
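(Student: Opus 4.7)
I would reduce both equalities to the following local isolation statement: for $K\in\Sub(H)$, the class $(K)_H$ is isolated in $\Sub(H)/H$ if and only if $(K)_G$ is isolated in $\Sub(G)/G$. Granting this, both equalities follow by iterating along the derived subspaces $\delta^\alpha_T$ and their Cantor--Bendixson analogues, noting that the inclusion $\iota\colon \Sub(H)/H \to \Sub(G)/G$ is continuous and closed by (MZ), and that cotoral inclusions among subgroups of $H$ are computed intrinsically from the pair of subgroups (a normal subgroup with torus quotient in $H$ is exactly the same data in $G$). Thus the Thomason-height version adds no further complication beyond the $\cbht$-version.

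The forward direction of the isolation statement is a direct application of the Montgomery--Zippin theorem as stated in (MZ). Any sequence $(L_i)_G\to(K)_G$ in $\Sub(G)/G$ can, after passing to a subsequence, be represented by $G$-conjugates $\tilde L_i\subseteq H$ with $\tilde L_i\to K$ in $\Sub(H)$. Isolation of $(K)_H$ then forces $(\tilde L_i)_H=(K)_H$ for large $i$, whence $(L_i)_G=(\tilde L_i)_G=(K)_G$ eventually.

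The reverse direction is where the real work lies. Given a sequence $L_i\to K$ in $\Sub(H)$ with $L_i=g_iKg_i^{-1}$ for some $g_i\in G$, by compactness we may pass to a subsequence so that $g_i\to g_\ast$, which then lies in $N_G(K)$ since $L_i\to K$. One must show that eventually $L_i$ lies in the $H$-orbit of $K$; equivalently, that the closed $H$-invariant subset
\[
    X \;=\; \{\,gN_G(K)\mid gKg^{-1}\subseteq H\,\}\;\subseteq\; G/N_G(K)
\]
coincides with the $H$-orbit $H/N_H(K)$ of the basepoint in a neighbourhood of $eN_G(K)$. I would establish this by a slice-theoretic analysis: linearising the condition $gKg^{-1}\subseteq H$ at the identity via the exponential map gives the first-order constraint $\mathrm{ad}(\xi)\,\mathrm{Lie}(K)\subseteq\mathrm{Lie}(H)$ on $\xi\in\mathrm{Lie}(G)$, to which one must add the higher-order constraints imposed by the component group of $K$. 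A careful analysis, using the structural results of \cref{sec:subgroups} to reduce to the case where $K_e$ is a torus acted on by the finite group $K_d$ as in \cref{ssec:subgroups_finiteontorus}, should then show that these constraints cut $T_eX$ down to $\mathrm{Lie}(H)+\mathrm{Lie}(N_G(K))$, matching the tangent space of the $H$-orbit; an application of the implicit function theorem then yields the required local coincidence. The main obstacle is precisely this linearisation, where the interplay between the toral and semisimple parts of $K_e$ and the finite action of $K_d$ must be handled with care.
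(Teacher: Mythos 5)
Your high-level strategy --- reduce both equalities to a local isolation equivalence and iterate along the derived subspaces --- parallels the paper's inductive argument on height, and your forward direction (isolated in $\Sub(H)/H$ implies isolated in $\Sub(G)/G$) is essentially the paper's argument. But your treatment of the reverse direction is substantially misdirected. After applying continuity, the only thing left to rule out is a sequence $(L_i)_H \to (K)_H$ in $\Sub(H)/H$ with each $L_i$ a $G$-conjugate of $K$ but $(L_i)_H \neq (K)_H$. You propose a slice-theoretic analysis via linearisation of $\{g : gKg^{-1}\subseteq H\}$ and the implicit function theorem, which you flag as the "main obstacle." This is overkill: the same invocation of \hyperref[item:mztheorem]{(MZ)} you already used handles it. Choose representatives with $L_i \to K$ in $\Sub(H)$; (MZ) says that, after discarding finitely many terms, each $L_i$ is $H$-conjugate to a subgroup $L_i'\subseteq K$. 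Since $L_i$ is $G$-conjugate to $K$, $\dim L_i' = \dim K$ and $|\pi_0(L_i')|=|\pi_0(K)|$, so $L_i' = K$ and $(L_i)_H = (K)_H$. There is no hard linearisation problem to solve, and the slice-theoretic route, as written, is neither carried out nor needed. (The paper's own proof in fact treats this direction as following "immediately" from continuity of $\iota_{\geqslant s}$; that phrasing does elide the non-injectivity of $\iota$ which you correctly spotted, but the fix is the one-line (MZ) argument, not slices.)

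Separately, "iterating along the derived subspaces" is stated too casually. Your isolation statement is about the full spaces $\Sub(H)/H$ and $\Sub(G)/G$, but the Thomason/Cantor--Bendixson derivatives require checking isolation within the filtered subspaces $\pmb{P}(H)_{\geqslant s}$ and $\pmb{P}(G)_{\geqslant s}$. To even identify $\pmb{P}(H)_{\geqslant s}$ as lying over $\pmb{P}(G)_{\geqslant s}$ one must already know that heights $<s$ agree, and one must re-run the (MZ) argument inside these closed up-sets while appealing to this hypothesis (e.g.\ to conclude that the $K_i$ appearing in the MZ argument actually lie in $\pmb{P}(H)_{\geqslant s}$). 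The paper organizes this as an explicit induction on $s$; your proposal should make that scaffolding explicit rather than asserting the iteration closes.
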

\begin{proof}
    We argue by induction on the height, supposing by induction that for subgroups of $H$ with height $<s$ (in either sense) their $H$-height agrees with their $G$-height. The induction starts since this is vacuously true for $s=0$, as there are no subgroups of height $-1$. We will abbreviate the respective prisms by $\pmb{P}(G) = \Prism(\SpGQo)$ and likewise for $H$. By a mild abuse of notation, we write $\pmb{P}(G)_{\geqslant s}$ for the $s$-th filtration step with respect to either height function. 

    Suppose then that we have proved the condition as stated, and we want to show it for height $s$. In other words we need to show that if $K\subseteq H$ such that $(K)_H \in \pmb{P}(H)_{\geqslant s}$ and $(K)_G \in \pmb{P}(G)_{\geqslant s}$, then $(K)_H$ is $h$-isolated (and cotorally minimal for $\thht$) in $\pmb{P}(H)_{\geqslant s}$ if and only if $(K)_G$ is $h$-isolated (and cotorally minimal for $\thht$) in $\pmb{P}(G)_{\geqslant s}$. 

    By construction of the Thomason filtration, the pieces fit into a diagram
        \begin{equation}\label{eq:compatiblefiltrationsquare}
            \vcenter{
            \xymatrix{\pmb{P}(H) \ar[r]^-{\iota} & \pmb{P}(G) \\
            \pmb{P}(H)_{\geqslant s} \ar@{-->}[r]_-{\iota_{\geqslant s}} \ar@{^{(}->}[u] & \pmb{P}(G)_{\geqslant s}. \ar@{^{(}->}[u]}
            }
        \end{equation}
    The vertical maps are inclusions of Priestley spaces, see \cref{cor:strataissub}. The induction hypothesis implies that the continuous map $\iota\colon \pmb{P}(H) \to \pmb{P}(G)$ restricts to $\pmb{P}(H)_{\geqslant s}$ as in the bottom horizontal map, rendering the square commutative. We now will discuss the two relevant conditions separately. 

    First, we consider $h$-isolation. The continuity of $\iota_{\geqslant s}$ shows immediately that if $K$ is isolated in $\pmb{P}(G)_{\geqslant s}$ it is isolated in $\pmb{P}(H)_{\geqslant s}$.  Conversely, suppose $(K_i)_G \to (K)_G$ is a sequence in $\pmb{P}(G)_{\geqslant s}$ convergent to $K \subseteq H$. By the Montgomery--Zippin Theorem \hyperref[item:mztheorem]{(MZ)}, we may discard finitely many initial terms and conjugate so that we can arrange that $K_i\subseteq K$. This shows the sequence of subgroups lies in $H$ so $K_i\lra K$ in $\Sub(H)$ and hence by passing to conjugacy classes also in $\Sub(H)/H$. By induction hypothesis, $K_i\in \pmb{P}(H)_{\geqslant s}$, thereby showing that $(K)_H$ is not isolated in $\pmb{P}(H)_{\geqslant s}$, as required.

    Second, we consider cotoral minimality. Certainly if $(K)_H$ is not cotorally minimal in $\pmb{P}(H)_{\geqslant s}$, we may choose representatives and find a proper cotoral subgroup $L$ of $K$ of height $\geqslant s$ in $\Sub(H)/H$. By induction $L$ is still of height $\geqslant s$ in $\pmb{P}(G)$, so $(K)_G$ is not cotorally minimal. Similarly, if  $(K)_G$ is not cotorally minimal we may again choose $L$ cotoral in $K$ of height $\geqslant s$ in $\Sub(G)/G$. Since $L\subseteq K$, it follows that $L$ is a cotoral subgroup of $H$ which by induction $L$ is of height $\geqslant s$ in $\pmb{P}(H)$.    
\end{proof}

This lemma allows us to remove the subgroups from the notation for the Thomason height and Cantor--Bendixson height functions, a practice we will adopt for the remainder of this section to simplify notation. 

\begin{proof}[Proof of \cref{thm:heightformula}]
    We will prove this theorem by induction on the height $s$ of subgroups $H$ in $G$, beginning with the case $s=0$. Because the Thomason height provides a finite dispersion of $\pmb{P}(G) = \Prism(\SpGQo)$ and hence reaches all subgroups of $G$, this will imply the theorem. Consider the following three conditions on a subgroup $H \in \Sub(G)$:
        \begin{enumerate}
            \item[(0.a)] $\thht(H) = 0$, i.e., $H$ is cotorally minimal and $h$-isolated in $\Sub(G)/G$;
            \item[(0.b)] $\cbht(H) = 0$, i.e., $H$ is $h$-isolated in $\Sub(G)/G$;
            \item[(0.c)] $\repht(H) = 0$, i.e., $Z(H_e)$ is finite or, equivalently, $H_e$ is semisimple. 
        \end{enumerate}
    The implication $(0.a) \implies (0.b)$ is obvious, while the converse follows from the observation that an $h$-isolated subgroup $H$ is cotorally minimal. Indeed, if $H/K$ is a (non-trivial) torus we may let $H\{n\}$ be the inverse image of the $n$-torsion points of $H/K$, so we see $H\{n\}\to H$, hence $H$ is not $h$-isolated. The equivalence between $(0.b)$ and $(0.c)$ has essentially been established in \cref{ssec:subgroups_reduction}: $H$ is $h$-isolated in $\Sub(G)/G$ if and only if it is isolated in $\Sub(H)/H$ (for example by \cref{lem:height_restriction}), so \cref{cor:isolated_criterion} applies. 
    
    The base of the induction also serves as a blueprint for the general case. Let $s \geqslant 1$ and suppose that the statement of the theorem has been verified for all subgroups of $G$ of height less than $s$. By induction hypothesis, we can write $\pmb{P}(G)_{\geqslant s}$ unambiguously for the $s$-th filtration step of $\pmb{P}$ with respect to any of three height functions under considerations. Given $H \in \pmb{P}(G)_{\geqslant s}$, we need to show that the following conditions are equivalent:
        \begin{enumerate}
            \item[(s.a)] $\thht(H) = s$, i.e., $H$ is cotorally minimal and $h$-isolated in $\pmb{P}(G)_{\geqslant s}$;
            \item[(s.b)] $\cbht(H) = s$, i.e., $H$ is $h$-isolated in $\pmb{P}(G)_{\geqslant s}$;
            \item[(s.c)] $\repht(H) = s$, i.e., the number of simple summands in the representation  \eqref{eq:isotypicaldecomposition} equals $s$.
        \end{enumerate}
    Let us start with a few preliminary reductions. Take a subgroup $H$ in $G$ for which we wish to show that the three notions of height coincide. By \cref{lem:height_restriction}, we may work inside $\Sub(H)/H$, in effect reducing to the case $H=G$. Suppose then we have a group $G$ where $Z(G_e)$ has identity component $T= T(G)$. The finite group $W=G_d$ acts on $H_1(T; \Q)$, with decomposition into isotypical components $H_1(T;\Q) \cong \bigoplus_{\alpha} V_\alpha$ as in \eqref{eq:isotypicaldecomposition}. Next, note that if $G_e=\SSi \times_Z T$  then by \cref{prop:topphi}, the only subgroups that can either approach $G$ or be cotoral in $G$ are ones containing $\SSi$, and we implicitly restrict attention to the subspace $\fX_\SSi$ of these conjugacy classes. 

    It is clear that $(s.a)$ implies $(s.b)$. Next, we will establish the implication $(s.b) \implies (s.c)$ by verifying its contrapositive. So suppose that $\repht(G) \geqslant s+1$ and let us show that $G$ is then not $h$-isolated in $\pmb{P}(G)_{\geqslant s}$ by constructing a sequence of subgroups converging to it. We note that $W$ preserves $T$ and omit $\SSi$, considering the toral subgroup $G_t=H(T,\sigma)$ with $\sigma$ a section of the canonical map $G \to G_d$, using the notation as in \cref{ssec:subgroups_finiteontorus}. By construction, the component group of $G_t$ coincides with that of $G$ and there is an isomorphism $H_1(T(G_t);\Q) \cong H_1(T(G);\Q)$ of representations. We claim that $G_t$ is not $h$-isolated. It then follows that $G$ is not $h$-isolated in $\pmb{P}(G)_{\geqslant s}$ either, since it is the image of $\SSi \times G_t$ as in \cref{prop:topphi}. 

    We can choose some simple $G_d$-representation $\beta$ with $V_\beta \cong \beta^{\oplus m_{\beta}} \neq 0$, which exists because $s \geqslant 0$. There is then a decomposition $V=V_\beta'\oplus V_\beta''\oplus U$ into subspaces, where $V_\beta''\cong \beta$ is a simple summand of $V_{\beta}$ with complement $V_\beta'$ and $U \cong \bigoplus_{\alpha \neq \beta} V_\alpha(H)$. Now choose $W$-invariant subgroups $T_\beta', T_\beta'', S\leqslant T$ with $H_1(T_\beta'; \Q)=V_\beta'$,  $H_1(T_\beta''; \Q)=V_\beta''$ and $H_1(S;\Q)=U$. Thus $T$ is generated by the three tori $T', T''$ and $S$: there is a surjective map $T'\times T''\times S\lra T$ which has a finite kernel. Now take $K_i=H(T_i, \sigma)$ where $T_i$ is the image of $T'\times (T''[i]) \times S$, and note $\repht(K_i) = \repht(H_t)-1 \geqslant s$, i.e., $K_i \in \pmb{P}(G)_{\geqslant s}$. Since $T'\times (T''[i])\times S\lra T'\times T''\times S$, we see $K_i\lra G_t$, so that $G_t$ is not $h$-isolated in $\pmb{P}(G)_{\geqslant s}$.

    Finally, assume $(s.c)$ holds for $G$, i.e., that $\repht(G) = s$. It follows that $G$ is cotorally minimal in $\pmb{P}(G)_{\geqslant s}$ since if $K$ is any subgroup which is properly cotoral in $G$, then $V_\alpha(K)\leqslant V_\alpha (G)$ with strict inequality for some $\alpha$.
    Thus $\repht(K)<\repht(G)$, meaning $K \neq \pmb{P}(G)_{\geqslant s}$ by induction hypothesis. Similarly, if $G$ was not $h$-isolated in $\pmb{P}(G)_{\geqslant s}$, then there is a sequence of subgroups $K_i\lra G$ with $K_i\in \pmb{P}(G)_{\geqslant s}$. This means $V_\alpha(K_i)\leqslant V_\alpha (G)$ for all $\alpha$. If there is strict inequality for some $\alpha$ then $\repht(K_i)<s$, which is not permitted, so there is equality at every stage and hence $Z((K_i)_e)$ has identity component $T$. Since $\SSi\subseteq K_i$, we get $(K_i)_e=G_e$ and therefore $K_i=G$ eventually. Therefore, $\thht(G) = s$ proving $(s.a)$ for $G$, as desired.
\end{proof}

\section{Categorical local-to-global for rational \texorpdfstring{$G$}{G}-spectra}\label{sec:catstratforGspectra}

In \cref{sec:dispersionsforGspec}  we have determined that
$\SpGQ$ is dispersible, so we can  apply the results of
\cref{sec:catstrat_strict} and build categorical models for all compact Lie
groups $G$. The first step is understanding the role of the local
factors $\Gamma_H \SpGQ \simeq \Lambda_H \SpGQ$ where $H$
is a closed subgroup of $G$.
To describe these local factors and their algebraic models we need  to 
recall some equivariant topology.

\subsection{Free and cofree $G$-spectra}
From \cite{greenlees_bs} we know that the support of an object $X \in
\SpGQ$ is its geometric isotropy. That is, $\supp(X)
= \{H \in \Sub(G)/G \mid \Phi^H(X) \not\simeq_1 0 \}$. In particular,
it follows that $\Gamma_H \SpGQ$ consists of those spectra with
geometric isotropy exactly at $H$ (which was denoted $\SpGQ
\langle H \rangle$ in \cite{greenlees_bs}).

A spectrum $X$ is said to be \emph{free} if the natural map $EG_+ \wedge
X \xrightarrow{\simeq} X$ is an equivalence, equivalently if it is in
the localizing subcategory generated by $G_+$. Similarly, $X$ is said
to be \emph{cofree} if the natural map $X \xrightarrow{\simeq}
F(EG_+,X)$ is an equivalence, equivalently if it is in the
colocalizing subcategory generated by $G_+$.\footnote{Among other names, cofree spectra are also known as \emph{Borel-complete spectra}.}
The well-known symmetric monoidal equivalence between free and cofree spectra
\[
\Sp_{G}^{\free} \simeq \Sp_{G}^{\cofree}
\]
may be viewed as an equivalence between right and left Bousfield localizations with respect to $G_+$, but is most simply seen using the adjunction between $F(EG_+, -) $ and $ EG_+\sm (-)$. 

To describe the algebraic models, recall that $G_e$ denotes the
identity component of $G$ and $G_d$ the finite group of components. 
Combining \cite{GSfree} with \cite{dwyer_complete_2002}, we see there are equivalences
\begin{equation}\label{eq:cofreemodel}
  \SpGQ^{\cofree}\simeq \SpGQ^{\free} \simeq \Gamma_I \DdgMod{H^\ast (BG_e)[G_d]} \simeq \Lambda_I\DdgMod{H^\ast (BG_e)[G_d]}.
\end{equation}
Here $\Gamma_I$ and $\Lambda_I$ refer to  the derived algebraic torsion and completion with respect to
the augmentation ideal $I$ of the  polynomial ring $H^\ast
(BG_e)$. 

The cellularization $\Gamma_I\DdgMod{H^\ast (BG_e)[G_d]}$ is equivalent to the derived category of the abelian category of $I$-power torsion modules, which does not admit a monoidal model structure. The category of derived-complete modules is not abelian, but it is proved in \cite{PWcofree} that the algebraic
category on the right of \eqref{eq:cofreemodel} is equivalent to the derived
category of an abelian category. Writing $L_0^I$ for the $0$-th derived functor of
completion, we say a module is \emph{$L^I_0$-complete} if the natural map $M\lra L_0^I M$ is an equivalence. The
category of $L^I_0$-complete modules is a monoidal abelian category
and  there is a symmetric monoidal equivalence:
\begin{equation}\label{eq:compandlcomp}
\Lambda_I \DdgMod{H^\ast (BG_e)[G_d]}\simeq 
\DdgMod[L_0^I \text{-} \comp]{H^\ast (BG_e)[G_d]}
\end{equation}
such that the composite equivalence from $\Sp^{\cofree}_{G}$ to the right hand side of \eqref{eq:compandlcomp} is monoidal \cite{PWcofree}.

\subsection{The local factors}

We obtain a description of the local factors by using standard change
of groups isomorphisms and then applying the description of free and
cofree categories to appropriate Weyl groups.

\begin{proposition}
    Let $G$ be a compact Lie group with closed subgroup $H$. There are symmetric monoidal equivalences of categories
    \[
    \Sp_{W_G(H)}^{\free} \simeq \Gamma_H \SpGQ  \simeq  \Lambda_H \SpGQ \simeq  \Sp_{W_G(H)}^{\cofree}
    \]
    where $W_G(H)=N_G(H)/H$. 
\end{proposition}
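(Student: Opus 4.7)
The plan is to establish the three equivalences by reducing to the special case where $H = 1$, in which case $\Gamma_1 \SpG \simeq \SpG^{\mathrm{free}}$ essentially by definition. The middle equivalence $\Gamma_H \SpG \simeq \Lambda_H \SpG$ is the general local duality equivalence of \cref{prop:localduality}(3), which applies here because $H \in \Sub(G)/G$ is weakly visible by \cref{prop:spgqgen} (so both $\Gamma_H$ and $\Lambda_H$ are well-defined as the associated torsion and complete functors at the weakly visible point $H$). It therefore suffices to construct a symmetric monoidal equivalence $\Gamma_H \SpG \simeq \SpWH^{\mathrm{free}}$; dualizing via $F(EW_G(H)_+, -)$ then gives the fourth category.

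For the main equivalence, I would proceed in two change-of-groups steps. First, the cotoral down-closure and $h$-topological neighbourhood of $(H)_G$ in $\Sub(G)/G$ are controlled entirely by the $N_G(H)$-conjugacy data, because the map $\Sub(N_G(H))/N_G(H) \to \Sub(G)/G$ is a homeomorphism onto its image on a neighbourhood of $H$ by \hyperref[item:mztheorem]{(MZ)}. Using \cref{prop:base_change} for the base change along the finite localization $\Sp_G \to \Sp_G(\pmb{U})$ corresponding to a small enough neighbourhood $\pmb{U}$ of $H$, combined with the standard induction-restriction equivalence between $G$-spectra supported at $(H)_G$ and $N_G(H)$-spectra supported at $H$ (i.e., Wirthm\"uller applied to $G/N_G(H)$), I would obtain a symmetric monoidal equivalence
\[
\Gamma_H \Sp_G \simeq \Gamma_H \Sp_{N_G(H)}.
\]
Second, for the normal subgroup $H \trianglelefteq N_G(H)$, geometric fixed points $\Phi^H \colon \Sp_{N_G(H)} \to \Sp_{W_G(H)}$ is a symmetric monoidal exact functor which is fully faithful on the subcategory of spectra whose geometric isotropy is contained in $\{H\}$ and has essential image the subcategory of $W_G(H)$-spectra whose geometric isotropy is contained in $\{1\}$. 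Applying this to $\Gamma_H \Sp_{N_G(H)}$, which by definition consists of spectra with geometric isotropy exactly $\{H\}$, produces a symmetric monoidal equivalence
\[
\Phi^H \colon \Gamma_H \Sp_{N_G(H)} \xrightarrow{\ \sim\ } \Gamma_1 \Sp_{W_G(H)} \simeq \Sp_{W_G(H)}^{\mathrm{free}},
\]
where the last equivalence holds because the localizing subcategory of $\Sp_{W_G(H)}$ generated by $W_G(H)_+$ is exactly the category of spectra with geometric isotropy at the trivial subgroup.

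The hard part will be assembling these two change-of-groups equivalences compatibly with the symmetric monoidal structures and verifying the fully faithfulness of $\Phi^H$ on the relevant subcategory; the latter essentially amounts to isotropy separation combined with the fact that the family $\{K \subsetneq N_G(H) \mid K \text{ does not contain } H\}$ is separated from $H$ by a universal space whose $H$-fixed points realize $\Phi^H$. Once the equivalence is set up, composing with the algebraic model for free $W_G(H)$-spectra from \eqref{eq:algebraic-free} gives the promised identification $\Gamma_H \SpG \simeq \DdgMod[\tors]{H^*(BW_G(H)_e)[W_G(H)_d]}$, confirming the conjectural shape of the local factors.
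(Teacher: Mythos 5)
Your plan mirrors the paper's proof quite closely: establish $\Gamma_H \simeq \Lambda_H$ by local duality, obtain the free/cofree equivalence by Bousfield duality, and identify $\Gamma_H\Sp_G \simeq \Gamma_H\Sp_{N_G(H)} \simeq \Gamma_1\Sp_{W_G(H)} \simeq \Sp_{W_G(H)}^{\free}$ by two change-of-groups steps (restriction to $N_G(H)$, then geometric fixed points). The paper simply cites \cite[\S 4.3]{greenlees_bs} for these two change-of-groups equivalences.

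Where your sketch goes wrong is in the justification of the first change-of-groups step. \cref{prop:base_change} applies to finite localizations within a single rigidly-compactly generated tt-category, whereas $\Sp_G \to \Sp_{N_G(H)}$ is restriction, a functor between two genuinely different equivariant stable categories; base change along a finite localization of $\Sp_G$ cannot manufacture that. Moreover, Montgomery--Zippin gives continuity/openness/closedness of $\Sub(N_G(H))/N_G(H) \to \Sub(G)/G$, not the local injectivity near $H$ that you assert; the statement that $G$-fusion of subgroups near $H$ is already realized inside $N_G(H)$ is a separate argument, and it is precisely this together with the isotropy-separation claim --- that $\res^G_{N_G(H)}$ restricts to an equivalence between $G$-spectra with geometric isotropy in $(H)_G$ and $N_G(H)$-spectra with geometric isotropy $\{H\}$ --- that the cited reference supplies. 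Describing this as ``Wirthm\"uller applied to $G/N_G(H)$'' is also a misnomer: the Wirthm\"uller isomorphism identifies induction with coinduction up to a dimension shift and, while related, is not what drives the equivalence. Your second step via geometric fixed points is fine in outline, provided the full faithfulness you flag as ``the hard part'' is carried out via isotropy separation with respect to the family of proper subgroups of $N_G(H)$ not containing $H$, as you indicate.
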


\begin{proof}
Support corresponds to geometric isotropy, and in view of the
construction $\Lambda_HX=F(\Gamma_H1, X)$, the counterpart for
cosupport works similarly. 

  The equivalences
  \[
  \Gamma_H\SpGQ\simeq \Gamma_H\Sp_{N_G(H)}\simeq
  \Gamma_1\Sp_{W_G(1)}
  \]
  are given in \cite[\S 4.3]{greenlees_bs}. The functors from left to right are first the
  forgetful functor and then  geometric fixed points.  
  
  The equivalence between complete and torsion modules
  $\Gamma_1 \SpGQ  \simeq  \Lambda_1 \SpGQ$ was described above.
\end{proof}

We will focus on the complete form of the local factors because of their 
better monoidal properties.

Assembling the above results, we have algebraic models of the complete
local factors. 
\begin{corollary}\label{cor:alg_models_for_lambda}
      Let $G$ be a compact Lie group with closed subgroup $H$. Then there are symmetric monoidal equivalences
        \begin{align*}
        \Lambda_H \SpGQ &\simeq \Sp_{W_G(H)}^{\cofree} \\
        &\simeq \Lambda_I \DdgMod{H^\ast  (W_G(H)_e)[W_G(H)_d]} \\
        &\simeq 
\DdgMod[L_0^I \text{-} \comp]{H^\ast (W_G(H)_e)[W_G(H)_d]} .
    \end{align*}
\end{corollary}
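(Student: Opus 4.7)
The plan is to assemble this corollary directly from the three equivalences already recorded in the preceding discussion, applied to the Weyl group $W_G(H)$ in place of $G$. Since each ingredient has been cited or stated immediately above, the proof should be essentially a chain of references with a brief verification that the three equivalences compose as symmetric monoidal equivalences.

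First, I would invoke the preceding proposition to identify $\Lambda_H \SpGQ$ with $\Sp_{W_G(H)}^{\cofree}$. This is the only step that uses the geometry of the subgroup $H \subseteq G$ and the local duality formalism from \cref{ssec:localization}; everything afterwards is intrinsic to the Weyl group.

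Next, I would apply the equivalences \eqref{eq:cofreemodel} with the compact Lie group taken to be $W_G(H)$, giving
\[
\Sp_{W_G(H)}^{\cofree} \simeq \Lambda_I \DdgMod{H^\ast(BW_G(H)_e)[W_G(H)_d]},
\]
using that $W_G(H)$ has identity component $W_G(H)_e$ and component group $W_G(H)_d$. Finally I would invoke \eqref{eq:compandlcomp}, again applied to $W_G(H)$, to rewrite the derived-complete category as the derived category of $L_0^I$-complete modules.

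The only substantive point to check is that each equivalence in the chain is symmetric monoidal, so that their composite is as well: the first is monoidal by the construction of $\Lambda_H$ (an internal Hom from the idempotent $\Gamma_H\unit$, see \cref{prop:localduality}) together with the monoidality of the geometric fixed point / forgetful identification in the preceding proposition; the second is monoidal by \cite{GSfree} and \cite{dwyer_complete_2002}; the third is monoidal by \cite{PWcofree}. No step presents a genuine obstacle, since the corollary is purely a packaging of results proved elsewhere, and the only mild subtlety is ensuring that the monoidal structures match up along the chain.
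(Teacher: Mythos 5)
Your proposal matches the paper's intent exactly: the corollary is explicitly introduced with "Assembling the above results," and you assemble precisely the same three ingredients (the preceding proposition for $\Lambda_H \SpGQ \simeq \Sp_{W_G(H)}^{\cofree}$, then \eqref{eq:cofreemodel} and \eqref{eq:compandlcomp} applied with $W_G(H)$ in place of $G$). The monoidality bookkeeping is also consistent with the paper, which credits \cite{PWcofree} for the monoidality of the composite $\Sp^{\cofree}_{G}\to\DdgMod[L_0^I\text{-}\comp]{H^\ast(BG_e)[G_d]}$ and the preceding proposition for the symmetric monoidal identification of the local factor with cofree Weyl group spectra.
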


\subsubsection{Finite groups}
For a finite group $G$ we have
$\Prism(\SpGQo) =((\Sub(G)/G)_h, =)$. This is  finite and discrete, so
the dispersion on $\Prism(\SpGQo)$ is of height zero, and the
categorical local-to-global contains no splicing data. Therefore we obtain a symmetric monoidal model as the product of the local factors:
    \[
    \SpGQ \simeq \prod_{(H)} \Lambda_H \SpGQ.
    \]
    Explicitly, an object of the model consists of:
    \begin{enumerate}
        \item A collection of objects $\{V_H \} \in \prod_{(H)} \Lambda_H \SpGQ$
    \end{enumerate}
    with no splicing data required.

    \begin{remark}
      By \cref{cor:alg_models_for_lambda}, we see
      $\Lambda_H\Sp_{G,H}=\sfD (\mathrm{dg} \text{-} \Mod_{\Q[ W_G(H)]})$ and we have
    \[
    \SpGQ \simeq \prod_{(H)}\sfD (\mathrm{dg} \text{-} \Mod_{\Q[ W_G(H)]})
    \]
    where the $G$-spectrum $X$ corresponds to $V_H=\pi_*(\Phi^HX)$.

    This algebraic model is due to Greenlees--May~\cite[Appendix
    A]{greenlees_may} at the derived level, with
    Barnes~\cite{barnesfinite} and K\c{e}dziorek \cite{kedziorekexceptional} upgrading it to a monoidal Quillen equivalence. 
\end{remark}

\subsubsection{The circle}\label{subsec:circle} 
The first non-trivial example is the circle group $G = \T$.  By
\cref{thm:prism} the prism is given by
$\Prism(\Sp^{\omega}_{\T})\cong (\Sub (\bT)_h, \cotoral )$. Closed
subgroups of the circle are classified by their order, and a such we
see that $\Sub (\bT)_h \cong (\N^+)^\ast$ as in \cref{ex:s1topologies}. The Thomason dispersion provides us with
\begin{itemize}
    \item $\Prism(\Sp^{\omega}_{\T})_0  = \{C_i\}_{i \geqslant 1}$;
    \item $\Prism(\Sp^{\omega}_{\T})_1  = \{SO(2)\}$.
\end{itemize}  

The strata are therefore given by
\[(\Sp_{\T})_0  = \prod_{i \geqslant 1} \Lambda_{C_i} (\Sp_{\T}) \quad \text{and} \quad (\Sp_{\T})_1 \simeq \Lambda_{\bT} (\Sp_{\T}).
\]
Therefore, applying \cref{lem:homotopy-cartesian-height1-complete}, we see that $\Sp_{\T}$ is  equivalent to the homotopy limit of the diagram
\begin{equation}\label{eq:t-model}
\begin{gathered}
\xymatrix{
& \prod_{i\geqslant 1} \Lambda_{C_i} (\Sp_{\T})  \ar[d]\\
\Fun([1],\Lambda_{\bT} (\Sp_{\T})) \ar[r]_-{\pi_1} & \Lambda_{\bT} (\Sp_{\T}) \rlap{.}
}
\end{gathered}
\end{equation}
The vertical map takes the product of objects and then the functor
$\tilde{E} \mathcal{F} \wedge -$ where $\mathcal{F}$ is the collection
of finite subgroups, so the unit is the $\cF$-Tate construction of the
sphere spectrum. In conclusion, an object of the model is given by the following data:
\begin{enumerate}
    \item An object $V \in \Lambda_{\bT} (\Sp_{\T})$;
    \item A collection of objects $\{N_i\} \in \prod_{i\geqslant 1} \Lambda_{C_i} (\Sp_{\T}) $;
    \item A morphism  $V \to \tilde{E}\mathcal{F} \wedge \prod_i N_i$ in $\Lambda_{\bT} (\Sp_{\T})$.
\end{enumerate}

\begin{remark}\label{rem:model_is_different}
As in the finite group case, we can pass to the algebraic models for
the local factors. The information required to do so is that
$W_\T(C_i) = \T/C_i\cong \T$ and $W_\T(\T)$ is the trivial group. Doing so gives the following algebraic model
\begin{equation}\begin{gathered}
\xymatrix{
& \prod_{i\geqslant 1} \DdgMod[L_0^I \text{-} \comp]{\Q[c]} \ar[d]\\
\Fun([1], \sfD(\Q)) \ar[r]_-{\pi_1} & \sfD (\Q),
}
\end{gathered}
\end{equation}
where $c$ lives in degree -2. The vertical map takes the product of $L^I_0$-complete modules and then
inverts the multiplicatively closed set
\[
\cE=\{c^v \;|\; v \colon \Nvee\lra \Z_{\geqslant 0} \mbox{ zero almost everywhere}
\}\subseteq \prod_{i\geqslant 1}\Q[c]
\]
of \emph{Euler classes}. Thus $V$ is a rational vector space, $N_i$ is
an $L_0^I$-complete module over $\Q[c]$, and the splicing map is a map
$V\lra \cE^{-1}\prod_iN_i$ of rational vector spaces. 

This packages the data differently to  the various versions of the
algebraic model appearing in the literature
(see~\cite{greenleesrationals1,shipleys1,BGKSs1, adelic1}). The
models, each with their own merits, are
all broadly similar, but there are
two ways in which they differ: first in the shape of the
diagram, and second in whether the local factors are described as
localizations of other categories or as module categories. 

In our case, the image of the unit in \eqref{eq:t-model} is the diagram
\[
\xymatrix{
& \prod_i \Q[c_i]  \ar[d] \\
(\Q \to \cE^{-1} \prod_i \Q[c]) \ar[r] & \cE^{-1} \prod_i \Q[c]
}
\]
which may be viewed as a little more complicated than the usual
cospan. We can then recover the Tate cospan
\[
\xymatrix{
& \prod_i \Q[c_i]  \ar[d] \\
\Q \ar[r] & \cE^{-1} \prod_i \Q[c] \rlap{.}
}
\]
Most of the algebraic models appearing in the literature are usually based on this
cospan, so that for example $\Sp_{\T}$ is symmetrically monoidally equivalent to the limit of the diagram:
\[
\xymatrix{
& \mathsf{D}(\prod_i \Q[c_i])  \ar[d] \\
\mathsf{D}(\Q) \ar[r] & \mathsf{D}(\cE^{-1} \prod_i \Q[c]),
}
\]
where all functors involved are given by extensions of scalars. A direct comparison between this model and the model coming from the categorical local-to-global is provided in \cite{adelic1}.
\end{remark}

\subsubsection{$O(2)$}\label{ex:o2} The next case is the group $G=O(2)$. Algebraic models for this category have been considered previously in the work of the third author and Barnes~\cite{barneso2,greenleeso2}.  Once again these models differ in style as in \cref{rem:model_is_different}.

Following the discussion of \cref{ex:dO2topologies}, we see that we have a partition of the prism into two disjoint clopen pieces: 
\[
\Prism(\Sp_{O(2)}^\omega)\cong (\Sub (SO(2)), \cotoral )\sqcup
(\mcD , =),
\]
where $\mcD$ consists of the conjugacy classes of dihedral groups
$D_{2j}$ of order $2j$ for $j\geqslant 1$, a single conjugacy class of each, together with the group $O(2)$ itself. 

Using the Thomason dispersion we find
\begin{itemize}
    \item $\Prism(\Sp_{O(2)}^\omega)_0  = \{C_i\}_{i \geqslant 1}\sqcup
      \{D_{2j}\}_{j\geqslant 1}$,
    \item $\Prism(\Sp_{O(2)}^\omega)_1  = \{SO(2), O(2)\}$,
\end{itemize}  
giving rise to the strata
\begin{align*}
(\Sp_{O(2)})_0  &= \prod_{i \geqslant 1} \Lambda_{C_i} (\Sp_{O(2)}) \times \prod_{j \geqslant 1} \Lambda_{D_{2j}} (\Sp_{O(2)}) \\ 
(\Sp_{O(2)})_1 &= \Lambda_{SO(2)} (\Sp_{O(2)}) 
\times\Lambda_{O(2)} (\Sp_{O(2)}).
\end{align*}
The splicing data for the clopen set consisting of subgroups of $SO(2)$ is already understood. As such, the cyclic part of the model is given by the homotopy limit of the following cospan
\begin{equation}\label{eq:t-model_o2}\begin{gathered}
\xymatrix{
& \prod_{i \geqslant 1} \Lambda_{C_i} (\Sp_{O(2)}) \ar[d]\\
\Fun([1], \Lambda_{SO(2)} (\Sp_{O(2)})) \ar[r]_-{\pi_1} & \Lambda_{SO(2)} (\Sp_{O(2)}),
}
\end{gathered}
\end{equation}
where the vertical map once again is given by $\tilde{E} \mathcal{F}
\wedge -$ after forming the product.

The splicing data for the dihedral part is given as in
\cref{ex:derivedabsflatring}. That is, the category is  the homotopy limit of the following cospan
\begin{equation}\label{eq:D-model_o2}\begin{gathered}
\xymatrix{
& \prod_{j \geqslant 1} \Lambda_{D_{2j}} (\Sp_{O(2)})  \ar[d]\\
\Fun([1], \Lambda_{O(2)} (\Sp_{O(2)})) \ar[r]_-{\pi_1} & \Lambda_{O(2)} (\Sp_{O(2)}),
}
\end{gathered}
\end{equation}
where the vertical map takes a collection $\{W_j\}_j \in \prod_{j \geqslant 1} \Lambda_{D_{2j}} (\Sp_{O(2)})$ of objects and forms the end space $\colim_n \prod_{j \geqslant n} W_j$.

All in all, we see that an object of the model consists of:
\begin{enumerate}
    \item an object $V \in \Lambda_{SO(2)} \Sp_{O(2)}$;
    \item a collection of objects $\{N_i\} \in \prod_{i\geqslant 1} \Lambda_{C_i} \Sp_{O(2)}$;
    \item a morphism  $V \to \tilde{E} \mathcal{F} \wedge \prod_i N_i$ in $\Lambda_{SO(2)} \Sp_{O(2)}$;
    \item an object $W_{\infty} \in \Lambda_{O(2)} \Sp_{O(2)}$;
    \item a collection  objects $\{W_j\} \in \prod_{j \geqslant 1} \Lambda_{D_{2i}} \Sp_{O(2)}$;
    \item a morphism $W_{\infty} \to \colim_n \prod_{j \geqslant n} 
W_j$ in $\Lambda_{O(2)} \Sp_{O(2)}$.
\end{enumerate}

\begin{remark}
Applying \cref{cor:alg_models_for_lambda} we see that the local
factors differ in detail from those in \cref{subsec:circle}. For the component corresponding to $SO(2)$
the Weyl groups of each  subgroup of $SO(2)$ is a group $W$ of order
2. Thus the local factor at $SO(2)$ is $ \DdgMod[]{\Q[W]}$, and the local
factor at $C_i$ is $\DdgMod[L_0^I \text{-} \comp]{\Q[c][W]}$, with $W$ acting
to negate $c$. 

Similarly, the Weyl group of $O(2)$ is trivial, and the Weyl group of
each finite dihedral group is again group of order 2, so the local
factor at $O(2)$ is $\sfD (\Q)$ and the local factor at $D_{2j}$ is
$ \DdgMod[]{\Q[W]}$. The structure map is required to commute with the group
action, so maps into the fixed part of the end space. 
\end{remark}

\subsubsection{$SO(3)$}
Our semisimple example is  $G = SO(3)$. Models for this group have
previously been considered in work of the third author
\cite{greenleesso3} and K\c{e}dziorek
\cite{kedziorekso3}. The  three differences between $O(2)$ and the
present example are 
\begin{itemize}
    \item[(a)] there are new subgroups,
    \item[(b)] there is new fusion of conjugacy classes, and
    \item[(c)] there are some changes in Weyl
groups.
\end{itemize}
These are relatively minor since
\begin{itemize}
    \item[(a)] the new subgroups are isolated and  of finite index in their normalizers,
    \item[(b)] the only fusion is that the conjugacy classes of $D_2$ and $C_2$, distinct in $O(2) $ become fused in $SO(3)$, and
    \item[(c)]  the only significant change in Weyl
groups is at the identity.
\end{itemize}
In fact the situation is easy to understand
since the prism is the disjoint union of a number of clopen pieces:
    \[
    \Prism(\Sp^{\omega}_{SO(3)})\cong (\Sub (SO(2)), \cotoral )\sqcup (\mcD' , =)\sqcup (E, =),
    \]
where $\mcD'=\{D_{2j} | j\geqslant 3\}$ along with $O(2)$ and $E=\{
SO(3), (A_5), (\Sigma_4), (A_4), (D_4)\}$. The subgroup $D_2$ is now conjugate to $C_2$ and hence
cotoral in $SO(2)$. The subgroup $D_4$ has been moved to the exceptional
category since its Weyl group is the nonabelian group of order 6. We
will see that the model splits into 7 pieces: the cyclic piece, the
dihedral piece and 5 exceptional pieces. 

The Thomason dispersion then gives
\begin{itemize}
    \item $\Prism(\Sp^{\omega}_{SO(3)})_0  = \{C_i\}_{i \geqslant
        1}\sqcup \{(D_{2i})\}_{i\geqslant 3}\sqcup \{ SO(3),  (A_5), (A_4), (\Sigma_4), (D_4)\}$;
    \item $\Prism(\Sp^{\omega}_{SO(3)})_1  = \{SO(2), O(2)\} $.
\end{itemize}  
We highlight that the group itself, $SO(3)$, appears in the height 0
part of the dispersion instead of the height 1 part by \cref{thm:heightformula}, since its centre
is zero dimensional. This does not occur for the abelian rank 1 groups.

The zeroth stratum is given by
\[
\prod_{i \geqslant 1} \Lambda_{C_i} (\Sp_{SO(3)}) \times
\prod_{j \geqslant 3} \Lambda_{D_{2j}} (\Sp_{SO(3)}) \times \prod_{(H)\in E}\Lambda_H(\Sp_{SO(3)})
\]
while the first stratum is  simpler: 
\[
\Lambda_{SO(2)} (\Sp_{SO(3)}) 
\times\Lambda_{O(2)} (\Sp_{SO(3)}).
\]
The splicing data is very similar to that for $O(2)$, because  the
exceptional subgroups are not involved in any gluing.

\begin{remark}
Applying \cref{cor:alg_models_for_lambda} we see that the local
factors differ in detail from those in \cref{ex:o2}. The change in
Weyl groups of $D_4$ has been remarked on already, but the main point
is that the Weyl group of the trivial subgroup is $SO(3)$ itself, so
the local factor at 1 is now $\DdgMod[L_0^I \text{-} \comp]{H^*(BSO(3))}$
rather than $\DdgMod[L_0^I \text{-} \comp]{H^*(BSO(2))[W]}$ as it was for $O(2)$. 
\end{remark}

\subsubsection{Tori}\label{ex:torus}

For far we have only considered compact Lie groups whose dispersion is
of height 1. In this section we will consider $G= \T^2$ which by
\cref{thm:heightformula} has a 2-dimensional dispersion.  Again, we note that algebraic models have been given by the third author in joint work with Shipley \cite{greenleestorus,GStorus}; these models
group together subgroups with the same
identity component, and an equivalence with a model treating subgroups individually was given in \cite{greenlees_tnq3}.

The three strata are formed of the 0, 1 and 2 dimensional closed subgroups. Writing $H$ for an arbitrary subgroup of dimension 1, and $K$ for an arbitrary zero-dimensional subgroup we have:
\begin{align*}
 (\Sp_{\bT^2})_0  &= \prod_{K} \Lambda_{K} (\Sp_{\bT^2})\\
 (\Sp_{\bT^2})_1  &= \prod_{H} \Lambda_{H} (\Sp_{\bT^2})\\
 (\Sp_{\bT^2})_2 &= \Lambda_{\bT^2} (\Sp_{\bT^2}).
\end{align*}
Therefore, using \cref{ex:2dimex} as a guide, we see that the categorical local-to-global principle for $G =\T^2$ provides a  model as the homotopy limit of the following punctured cube:
    \[
    \xymatrix@C=-2em{
    && \Fun([1] {\times} [1] ,\Lambda_{\T^2} (\Sp_{\T^2})) \ar[dr] \ar[dd]|\hole & \\
    &  \prod_{K} \Lambda_K (\Sp_{\T^2}) \ar[rr] \ar[dd] && \Fun([1], \Lambda_{\T^2} (\Sp_{\T^2})) \ar[dd] \\
    \prod_{H} \Fun([1],\Lambda_H (\Sp_{\T^2}))\ar[rr]|<<<<<\hole \ar[dr]&&  \Fun([1]  , \Lambda_{\T^2} (\Sp_{\T^2})) \ar[dr] & \\
    &\prod_{H} \Lambda_H (\Sp_{\T^2}) \ar[rr]&& \Lambda_{\T^2} (\Sp_{\T^2})
    }
    \]
where the splicing data is described in \cref{lemma:iteraterll}.

\addtocontents{toc}{\vspace{5mm}}

\biblio
\bibliography{bibliography}\bibliographystyle{alpha}

\end{document}